\documentclass{aart}

\usepackage{titlesec}
\titleformat{\subsection}[runin]{\normalfont\bfseries}{\thesubsection.}{.5em}{}[.]\titlespacing{\subsection}{0pt}{2ex plus .1ex minus .2ex}{.8em}
\titleformat{\subsubsection}[runin]{\normalfont\itshape}{\thesubsubsection.}{.3em}{}[.]\titlespacing{\subsubsection}{0pt}{1ex plus .1ex minus .2ex}{.5em}

\usepackage[letterpaper, hmargin=1in, top=1.5in, bottom=1.9in, footskip=0.7in]{geometry}

\usepackage{amsmath} 
\usepackage{amssymb}
\usepackage{amsfonts}
\usepackage{latexsym}
\usepackage{amsthm}
\usepackage{amsxtra}
\usepackage{amscd}
\usepackage{bbm}
\usepackage{mathrsfs}
\usepackage{bm}

\usepackage{xspace}

\flushbottom

\usepackage{color}

\oddsidemargin=0in
\evensidemargin=0in
\textwidth=6.5in



\numberwithin{equation}{section}

\newcommand{\rd}{{\rm d}}

\newcommand{\for}{\qquad \text{for} \quad}
\newcommand{\where}{\qquad \text{where} \quad}

\newcommand{\bx}{{\bf{x}}}

\newcommand{\bv}{{\bf{v}}}
\newcommand{\bw}{{\bf{w}}}

\newcommand{\be}{\begin{equation}}
\newcommand{\ee}{\end{equation}}

\newcommand{\hp}[2]{with $(#1,#2)$-high probability\xspace}

\newcommand{\e}{{\varepsilon}}

\newcommand{\om}{{\omega}}

\newcommand{\cN}{{\cal N}}



%


\setcounter{tocdepth}{1}





\setlength{\unitlength}{1cm}


\newcommand{\f}[1]{\bm{\mathrm{#1}}} 

\renewcommand{\cal}{\mathcal} 
 
\newcommand{\fra}{\mathfrak} 
\newcommand{\ol}[1]{\overline{#1} \!\,} 
\newcommand{\wh}{\widehat}
\newcommand{\wt}{\widetilde}

\newcommand{\me}{\mathrm{e}} 
\newcommand{\ii}{\mathrm{i}} 
\newcommand{\dd}{\mathrm{d}}
\newcommand{\col}{\mathrel{\vcenter{\baselineskip0.75ex \lineskiplimit0pt \hbox{.}\hbox{.}}}}
\newcommand*{\deq}{\mathrel{\vcenter{\baselineskip0.65ex \lineskiplimit0pt \hbox{.}\hbox{.}}}=}

\newcommand{\umat}{\mathbbmss{1}} 
\renewcommand{\epsilon}{\varepsilon}
\renewcommand{\leq}{\leqslant}
\renewcommand{\geq}{\geqslant}

\newcommand{\ind}[1]{\f 1 (#1)}
\newcommand{\indb}[1]{\f 1 \pb{#1}}
\newcommand{\indB}[1]{\f 1 \pB{#1}}
\newcommand{\indbb}[1]{\f 1 \pbb{#1}}


\renewcommand{\le}{\leq}
\renewcommand{\ge}{\geq}

\renewcommand{\P}{\mathbb{P}}
\newcommand{\E}{\mathbb{E}}
\newcommand{\R}{\mathbb{R}}
\newcommand{\C}{\mathbb{C}}
\newcommand{\N}{\mathbb{N}}

\newcommand{\IE}{\mathbb{I} \mathbb{E}}




\newcommand{\pb}[1]{\bigl({#1}\bigr)}
\newcommand{\pB}[1]{\Bigl({#1}\Bigr)}
\newcommand{\pbb}[1]{\biggl({#1}\biggr)}
\newcommand{\pBB}[1]{\Biggl({#1}\Biggr)}
\newcommand{\pa}[1]{\left({#1}\right)}

\newcommand{\qb}[1]{\bigl[{#1}\bigr]}
\newcommand{\qB}[1]{\Bigl[{#1}\Bigr]}
\newcommand{\qbb}[1]{\biggl[{#1}\biggr]}
\newcommand{\qBB}[1]{\Biggl[{#1}\Biggr]}
\newcommand{\qa}[1]{\left[{#1}\right]}

\newcommand{\h}[1]{\{{#1}\}}
\newcommand{\hb}[1]{\bigl\{{#1}\bigr\}}

\newcommand{\hbb}[1]{\biggl\{{#1}\biggr\}}
\newcommand{\hBB}[1]{\Biggl\{{#1}\Biggr\}}

\newcommand{\abs}[1]{\lvert #1 \rvert}
\newcommand{\absb}[1]{\bigl\lvert #1 \bigr\rvert}
\newcommand{\absB}[1]{\Bigl\lvert #1 \Bigr\rvert}
\newcommand{\absbb}[1]{\biggl\lvert #1 \biggr\rvert}
\newcommand{\absBB}[1]{\Biggl\lvert #1 \Biggr\rvert}
\newcommand{\absa}[1]{\left\lvert #1 \right\rvert}

\newcommand{\norm}[1]{\lVert #1 \rVert}

\newcommand{\avg}[1]{\langle #1 \rangle}
\newcommand{\avgb}[1]{\bigl\langle #1 \bigr\rangle}
\newcommand{\avgB}[1]{\Bigl\langle #1 \Bigr\rangle}
\newcommand{\avgbb}[1]{\biggl\langle #1 \biggr\rangle}

\newcommand{\scalar}[2]{\langle{#1} \mspace{2mu}, {#2}\rangle}




\newcommand{\bra}[1]{\langle #1 |}

\newcommand{\ket}[1]{| #1 \rangle}


\DeclareMathOperator{\tr}{Tr}

\DeclareMathOperator{\re}{Re}
\DeclareMathOperator{\im}{Im}

\theoremstyle{plain} 
\newtheorem{theorem}{Theorem}[section]
\newtheorem*{theorem*}{Theorem}
\newtheorem{lemma}[theorem]{Lemma}
\newtheorem*{lemma*}{Lemma}

\newtheorem*{corollary*}{Corollary}
\newtheorem{proposition}[theorem]{Proposition}
\newtheorem*{proposition*}{Proposition}
\newtheorem{definition}[theorem]{Definition}
\newtheorem*{definition*}{Definition}

\newtheorem*{example*}{Example}
\theoremstyle{remark} 
\newtheorem{remark}[theorem]{Remark}

\newtheorem*{remark*}{Remark}
\newtheorem*{remarks*}{Remarks}

\DeclareMathOperator{\law}{law}

\title{Spectral Statistics of Erd{\H o}s-R\'enyi Graphs II:\\ Eigenvalue Spacing and the Extreme Eigenvalues}


\author{
L\'aszl\'o Erd\H os${}^1$\thanks{Partially supported
by SFB-TR 12 Grant of the German Research Council} \quad
Antti Knowles${}^2$\thanks{Partially supported by NSF grant DMS-0757425} \quad
Horng-Tzer Yau${}^2$\thanks{Partially supported
by NSF grants DMS-0757425, 0804279}  \quad
Jun Yin${}^2$\thanks{Partially supported by NSF grant DMS-1001655} \\\\
Institute of Mathematics, University of Munich, \\
Theresienstrasse 39, D-80333 Munich, Germany \\ lerdos@math.lmu.de ${}^1$ \\ \\
Department of Mathematics, Harvard University\\
Cambridge MA 02138, USA \\ knowles@math.harvard.edu \quad htyau@math.harvard.edu \quad  jyin@math.harvard.edu ${}^2$ \\ 
\\}

\begin{document}

\maketitle

\vspace{0.5cm}

\begin{abstract}

We consider the ensemble of adjacency matrices of Erd{\H o}s-R\'enyi random graphs, i.e.\ graphs on $N$ vertices where
 every edge is chosen independently and with probability $p \equiv p(N)$. We rescale the matrix so that its bulk 
eigenvalues are of order one.  Under the assumption  $p N \gg N^{2/3}$, we prove  the universality of eigenvalue 
distributions both in the bulk and at the edge of the spectrum. More precisely, we prove (1) that the eigenvalue spacing 
of the Erd{\H o}s-R\'enyi graph in the bulk of the spectrum has the same distribution as that of the Gaussian orthogonal 
ensemble; and (2) that the second largest eigenvalue of the Erd{\H o}s-R\'enyi graph has the same distribution as the 
largest eigenvalue of the Gaussian orthogonal ensemble.
As an application of our method, we prove the bulk universality of generalized Wigner matrices under the assumption that 
the matrix entries have at least $4 + \epsilon$ moments. 
\end{abstract}

\vspace{1cm}

{\bf AMS Subject Classification (2010):} 15B52, 82B44

\medskip

\medskip

{\it Keywords:} Erd{\H o}s-R\'enyi graphs, universality, Dyson Brownian motion.

\medskip

\newpage


\section{Introduction}

The Erd{\H o}s-R\'enyi ensemble \cite{ER1, ER2} is a law of a random graph on $N$ vertices, in which
each edge is chosen independently  with probability  $p \equiv p(N)$. The corresponding adjacency matrix is called the 
Erd{\H o}s-R\'enyi matrix.  Since each row and column has typically $pN$ nonzero entries, the matrix
is sparse as long as $p\ll 1$. We shall refer to $pN$ as the sparseness parameter of the matrix.
In the companion  paper \cite{EKYY}, we  established  the local semicircle law for
the Erd{\H o}s-R\'enyi matrix for $p N \ge (\log N)^C$, i.e.\ we showed that, assuming $p N \ge (\log N)^C$, the 
eigenvalue density is given by the Wigner semicircle law in any spectral window containing on average at least $(\log 
N)^{C'}$ eigenvalues. In this paper, we  use this result to prove both the bulk  and edge universalities  for the  
Erd{\H o}s-R\'enyi matrix   under the restriction that the sparseness parameter satisfies
\be\label{res}
p N \;\gg\; N^{2/3}.
\ee
More precisely, assuming that $p$ satisfies \eqref{res},   we prove that the eigenvalue spacing of the Erd{\H 
o}s-R\'enyi graph in the bulk of the spectrum has the same distribution as that of the Gaussian orthogonal ensemble 
(GOE). In order to outline the statement of the edge universality for the Erd\H{o}s-R\'enyi graph, we observe that, 
since the matrix elements of the Erd{\H o}s-R\'enyi ensemble are either $0$ or $1$, they do not satisfy the mean zero 
condition which typically appears in the random matrix literature.  In particular, the largest eigenvalue of the Erd{\H 
o}s-R\'enyi matrix is very large and lies far away from the rest of the spectrum.  We normalize the Erd{\H o}s-R\'enyi 
matrix so that the bulk of its spectrum lies in the interval $[-2, 2]$.  By the edge universality of the Erd{\H 
o}s-R\'enyi ensemble, we therefore mean that its second largest eigenvalue has the same distribution as the largest 
eigenvalue of the GOE, which is the well-known Tracy-Widom distribution. We prove the edge universality under the 
assumption \eqref{res}.

Neglecting the mean zero condition, the Erd{\H o}s-R\'enyi matrix becomes a Wigner random matrix with a Bernoulli 
distribution when $0<p<1$ is a constant independent of $N$. Thus for $p \ll 1$ we can view the Erd{\H o}s-R\'enyi 
matrix, up to a shift in the expectation of the matrix entries, as a singular Wigner matrix for which the probability 
distributions of the matrix elements are highly concentrated at zero. Indeed, the probability for a single entry to be 
zero is $1 - p$.
Alternatively, we can express the singular nature of the Erd{\H o}s-R\'enyi ensemble by the fact that the $k$-th moment 
of a matrix entry is bounded by
\be\label{dec}
N^{-1}  (p N)^{-(k- 2)/2}\,.
\ee
For $p \ll 1$ this decay in $k$ is much slower than in the case of Wigner matrices.

There has been spectacular progress in the understanding of the universality of eigenvalue distributions for invariant 
random matrix ensembles \cite{BI, DKMVZ1, DKMVZ2, PS2, PS}. The Wigner and Erd{\H o}s-R\'enyi matrices are not invariant 
ensembles, however. The moment method \cite{SS, Sosh, So2} is a powerful means for establishing edge universality. In 
the context of sparse matrices, it was applied in \cite{So2} to prove edge universality for the zero mean version of the 
$d$-regular graph, where the matrix entries take on the values $-1$ and $1$ instead of $0$ and $1$. The need for this 
restriction can be ascribed to the two following facts.  First, the moment method is suitable for treating the largest 
and smallest eigenvalues. But in the case of the Erd{\H o}s-R\'enyi matrix, it is the second largest eigenvalue, not the 
largest one, which behaves like the largest eigenvalue of the GOE.  Second, the modification of the moment method to 
matrices with non-symmetric distributions poses a serious technical challenge.

A general approach to proving the universality of Wigner matrices was recently developed in the series of papers 
\cite{ESY1, ESY2, ESY3,  ESY4,  ESYY, EYY, EYY2, EYYrigidity}.  In this paper, we further extend this method to cover 
sparse matrices such as the Erd\H{o}s-R\'enyi matrix in the range \eqref{res}. Our approach is based on the following 
three ingredients. (1) A local semicircle law -- a precise estimate of the local eigenvalue density down to energy 
scales containing around $(\log N)^C$ eigenvalues.  (2) Establishing universality of the eigenvalue distribution of 
Gaussian
 divisible ensembles, via an estimate on the rate of decay to local equilibrium of the Dyson Brownian motion \cite{Dy}.  
(3) A density argument which shows that for any probability distribution of the matrix entries there exists a Gaussian 
divisible distribution such that the two  associated Wigner ensembles have identical local eigenvalue statistics down to 
the  scale $1/N$.  In the case of Wigner matrices, the edge universality can also be obtained by a modification of (1) 
and (3) \cite{EYYrigidity}.  The class of ensembles to which  this method applies is  extremely general. So far it 
includes all (generalized) Wigner matrices under the sole assumption that the distributions of the matrix elements have 
a uniform subexponential decay.  In this paper we extend this method to the Erd{\H o}s-R\'enyi matrix, which in fact
represents a generalization in two unrelated directions: (a) the law of the matrix entries is much more singular, and 
(b) the matrix elements have nonzero mean.

%

As an application of the local semicircle law for sparse matrices proved in \cite{EKYY}, we also prove the bulk 
universality for generalized Wigner matrices under the sole assumption that the matrix  entries have $4 + \epsilon$ 
moments.  This  relaxes the subexponential decay condition on the tail of the distributions assumed in \cite{EYY, EYY2, 
EYYrigidity}. Moreover, we prove the edge universality of Wigner matrices under the assumption that the matrix entries 
have $12 + \epsilon$ moments. These results on Wigner matrices are stated and proved in Section \ref{sec4} below.
We note that in \cite{ABAP} it was proved that the distributions of the largest eigenvalues are Poisson if the entries 
have at most $4-\e$ moments. Numerical results \cite{BBP} predict that the existence of four moments corresponds to a 
sharp transition point, where the transition is from the Poisson process to the determinantal point process with Airy 
kernel.

We remark that  the bulk universality  for Hermitian Wigner matrices was also obtained in
 \cite{TV}, partly by using the result of \cite{J} and the local semicircle law from Step (1).  For  real symmetric 
Wigner matrices, the bulk universality in \cite{TV} requires  that the first four moments of every matrix element 
coincide with those of the standard Gaussian random variable.  In particular, this restriction rules out the real 
Bernoulli Wigner matrices, which may be regarded as the simplest kind of an Erd{\H o}s-R\'enyi matrix (again neglecting 
additional difficulties arising from the nonzero mean of the entries).

As a first step in our general strategy to prove universality, we proved, in the companion paper \cite{EKYY}, a local 
semicircle law stating   that the  eigenvalue distribution of the  Erd{\H o}s-R\'enyi  ensemble   in   any spectral 
window which on average contains at least $(\log N)^C$ eigenvalues  is given by the  Wigner semicircle law.  As a 
corollary, we proved that the eigenvalue locations are equal to those predicted by the semicircle law, up to an error of 
order $(p N)^{-1}$. The second step of the strategy outlined above for Wigner matrices is to estimate the local 
relaxation time of the Dyson Brownian motion \cite{ ESY4,  ESYY}. This is achieved by constructing a pseudo-equilibrium 
measure and estimating the global relaxation time to this measure. For models with nonzero mean, such as the 
Erd\H{o}s-R\'enyi matrix, the largest eigenvalue is located very far from its equilibrium position, and moves rapidly 
under the Dyson Brownian motion. Hence a uniform approach to equilibrium is impossible. We overcome this problem by 
integrating out the largest eigenvalue from the joint probability distribution of the eigenvalues, and consider the flow 
of the marginal distribution of the remaining $N - 1$ eigenvalues. This enables us to establish bulk universality for 
sparse matrices with nonzero mean under the restriction \eqref{res}. This approach trivially also applies to Wigner 
matrices whose entries have nonzero mean.

Since the eigenvalue locations are only established with accuracy $(p N)^{-1}$, the local relaxation time for the Dyson 
Brownian motion with the initial data given by the Erd{\H o}s-R\'enyi ensemble is only shown to be less than $1/ (p^2 N) 
\gg 1/N$. For Wigner ensembles, it was proved in \cite{EYYrigidity} that the local relaxation time is of order $1/N$.  
Moreover, the slow decay of the third moment of the Erd{\H o}s-R\'enyi matrix entries, as given in \eqref{dec}, makes 
the approximation in Step (3) above less effective. These two effects impose the restriction \eqref{res} in our proof of 
bulk universality.  At the end of Section \ref{sec:def} we give a more detailed account of how this restriction arises.  
The reason for the same restriction's being needed for the edge universality is different; see Section \ref{sect: proof 
of green fn comp}. We note, however, that both the bulk and edge universalities are expected to hold without this 
restriction, as long as the graphs are not too sparse in the sense that $p N \gg \log N$; for $d$-regular graphs this condition is conjectured to be the weaker $pN \gg 1$ \cite{Sa}. A discussion of related 
problems on $d$-regular graphs can be found in \cite{ MNS}.

\medskip \noindent
{\bf Acknowledgement.} We thank P.\ Sarnak for bringing the problem of universality of sparse matrices to our 
attention.

\section{Definitions and results}\label{sec:def}

We begin this section by introducing a class of $N \times N$ sparse random matrices $A \equiv A_N$.
 Here $N$ is a large 
parameter. (Throughout the following we shall often refrain from explicitly indicating $N$-dependence.)

The motivating example is the \emph{Erd\H{o}s-R\'enyi matrix}, or the
adjacency matrix of the \emph{Erd\H{o}s-R\'enyi random graph}. Its entries are independent (up to the constraint that 
the matrix be symmetric), and equal to $1$ with probability $p$ and $0$ with probability $1 - p$.  For our purposes it 
is convenient to replace $p$ with the new parameter $q \equiv q(N)$, defined through $p = q^2 / N$.  Moreover, we 
rescale the matrix in such a way that its bulk eigenvalues typically lie in an interval of size of order one.

Thus we are led to the following definition. Let $A = (a_{ij})$ be the symmetric $N\times N$ matrix whose entries 
$a_{ij}$ are independent
(up to the symmetry constraint $a_{ij}=a_{ji}$) and each element is distributed according to
\begin{equation}\label{sparsedef}
a_{ij} \;=\;
\frac{\gamma}{q}
\begin{cases}
1 & \text{with probability } \frac{q^2}{N}
\\
0 & \text{with probability } 1 - \frac{q^2}{N}\,.
\end{cases}
\end{equation}
Here $\gamma \deq (1 - q^2 / N)^{-1/2}$ is a scaling introduced for convenience.  The parameter $q \leq N^{1/2}$ 
expresses the sparseness of the matrix; it may depend on $N$.
Since $A$ typically has $q^2 N$ nonvanishing entries, we find that if $q \ll N^{1/2}$ then the matrix is sparse.

We extract the mean of each matrix entry and write
\begin{equation*}
A \;=\; H + \gamma q \, \ket{\f e} \bra{\f e}\,,
\end{equation*}
where the entries of $H$ (given by $h_{ij} = a_{ij} - \gamma q/N$) have mean zero, and we defined the vector
\begin{equation} \label{def of b e}
\f e \;\equiv\; \f e_N \;\deq\; \frac{1}{\sqrt{N}} (1, \dots, 1)^T\,.
\end{equation}
Here we use the notation $\ket{\f e} \bra{\f e}$ to denote the orthogonal projection onto $\f e$, i.e.\ $(\ket{\f e} 
\bra{\f e})_{ij} \deq N^{-1}$.


One readily finds that the matrix elements of $H$ satisfy the moment bounds
\begin{equation}
\E h_{ij}^2 \;=\; \frac{1}{N}\,, \qquad
\E \absb{h_{ij}}^p \;\leq\; \frac{1}{N q^{p - 2}}\,,
\end{equation}
where $p \geq 2$.

More generally, we consider the following class of random matrices
with non-centred entries characterized by two parameters $q$ and $f$,
which may be $N$-dependent. The parameter $q$ expresses how singular
the distribution of $h_{ij}$ is; in particular, it expresses
the sparseness of $A$ for the special case \eqref{sparsedef}. The parameter $f$ determines the nonzero expectation value 
of the matrix elements.

\begin{definition}[$H$] \label{definition of H}
We consider $N \times N$ random matrices $H = (h_{ij})$ whose entries are real and independent up to the symmetry 
constraint $h_{ij} = h_{ji}$. We assume that the elements of $H$ satisfy the moment conditions
\begin{equation} \label{moment conditions}
\E h_{ij} \;=\; 0\,, \qquad \E \abs{h_{ij}}^2 \;=\; \frac{1}{N}
\,, \qquad
\E \abs{h_{ij}}^p \;\leq\; \frac{C^p}{N q^{p - 2}}
\end{equation}
for $1 \leq i,j \leq N$ and $2 \leq p \leq (\log N)^{10 \log \log N}$, where $C$ is a positive constant. Here $q \equiv 
q(N)$ satisfies
\begin{equation} \label{lower bound on d}
(\log N)^{15 \log \log N} \;\leq\; q \;\leq\; C N^{1/2}
\end{equation}
for some positive constant $C$.
\end{definition}

\begin{definition}[$A$] \label{definition of A}
Let $H$ satisfy Definition \ref{definition of H}. Define the matrix $A = (a_{ij})$ through
\begin{align} \label{A = H + fee}
A \;\deq\; H + f \ket{\f e} \bra{\f e}\,,
\end{align}
where $f \equiv f(N)$ is a deterministic number that satisfies
\begin{align} \label{lower bound on f}
1 + \epsilon_0 \;\leq\; f \;\leq\; N^C\,,
\end{align}
for some constants $\epsilon_0 > 0$ and $C$.
\end{definition}



%
%
\begin{remark}
For definiteness, and bearing the Erd\H{o}s-R\'enyi matrix in mind, we restrict ourselves to real symmetric matrices 
satisfying Definition \ref{definition of A}. However, our proof applies equally to complex Hermitian sparse matrices.
\end{remark}

\begin{remark} \label{remark: GOE assumption}
As observed in \cite{EKYY}, Remark 2.5, we may take $H$ to be a Wigner matrix whose entries have subexponential decay 
$\E \abs{h_{ij}}^p \leq (Cp)^{\theta p} N^{-p/2}$ by choosing $q = N^{1/2} (\log N)^{-5 \theta \log \log N}$.
\end{remark}

We shall use $C$ and $c$ to denote generic positive constants which may only depend on the constants in assumptions such 
as \eqref{moment conditions}.
Typically, $C$ denotes a large constant and $c$ a small constant.
Note that the fundamental large parameter of our model is $N$, and the notations $\gg, \ll, O(\cdot), o(\cdot)$ always 
refer to the limit $N \to \infty$. Here $a \ll b$ means $a = o(b)$. We  write $a \sim b$ for $C^{-1} a \leq b \leq C a$.

After these preparations, we may now state our results. They concern the distribution of the eigenvalues of $A$, which 
we order in a nondecreasing fashion and denote by
$\mu_1 \leq \cdots \leq \mu_N$. We shall only consider the distribution of the $N - 1$ first eigenvalues $\mu_1, \dots, 
\mu_{N - 1}$. The largest eigenvalue $\mu_N$ lies far removed from the others, and its distribution is known to be 
normal with mean $f + f^{-1}$ and variance $N^{-1/2}$; see \cite{EKYY}, Theorem 6.2, for more details.

First, we establish the bulk universality of eigenvalue correlations. Let $p(\mu_1, \dots, \mu_N)$ be the probability 
density\footnote{Note that we use the density of the law of the eigenvalue density for simplicity of notation, but our 
results remain valid when no such density exists.} of the ordered eigenvalues $\mu_1 \leq \cdots \leq \mu_N$ of $A$.  
Introduce the marginal density
\begin{align*}
p_N^{(N - 1)}(\mu_1, \dots, \mu_{N - 1}) \;\deq\; \frac{1}{(N - 1)!} \sum_{\sigma \in S_{N - 1}} \int \dd
\mu_N \, p(\mu_{\sigma(1)}, \dots, \mu_{\sigma(N - 1)}, \mu_N)\,.
\end{align*}
In other words, $p_N^{(N - 1)}$ is the symmetrized probability density of the first $N - 1$ eigenvalues of $H$.  For $n 
\leq N-1$ we define the $n$-point correlation function (marginal) through
\begin{align} \label{definition of marginal}
p_N^{(n)}(\mu_1, \dots, \mu_n) \;\deq\; \int \dd \mu_{n + 1} \cdots \dd \mu_{N - 1} \, p_N^{(N - 
1)}(\mu_1, \dots, \mu_{N - 1})\,.
\end{align}
Similarly, we denote by $p_{{\rm GOE}, N}^{(n)}$ the $n$-point correlation function of the symmetrized eigenvalue density of an $N \times N$ GOE matrix.

\begin{theorem}[Bulk universality] \label{theorem: bulk universality}
Suppose that $A$ satisfies Definition \ref{definition of A} with $q \geq N^\phi$ for some $\phi$ satisfying $0 < \phi
\leq 1/2$, and that $f$ additionally satisfies $f \leq C N^{1/2}$ for some $C > 0$.  Let $\beta > 0$ and assume that
\begin{equation} \label{exponent condition for bulk universality}
\phi \;>\; \frac{1}{3} + \frac{\beta}{6}\,.
\end{equation}
Let $E \in (-2,2)$ and take a sequence $(b_N)$ satisfying $N^{\epsilon - \beta} \leq b_N \leq \abs{\abs{E} - 2} / 2$ for 
some $\epsilon > 0$.  Let $n \in \N$ and $O : \R^n \to \R$ be compactly supported and continuous.
Then
\begin{multline*}
\lim_{N \to \infty} \int_{E - b_N}^{E + b_N} \frac{\dd E'}{2 b_N} \int \dd \alpha_1 \cdots \dd \alpha_n\, O(\alpha_1, 
\dots, \alpha_n)
\\
{}\times{} \frac{1}{\varrho_{sc}(E)^n}\pb{p_{N}^{(n)} - p_{{\rm GOE}, N}^{(n)}} \pbb{E' +
\frac{\alpha_1}{N\varrho_{sc}(E)}, 
\dots, E' + \frac{\alpha_n}{N\varrho_{sc}(E)}} \;=\; 0\,,
\end{multline*}
where we abbreviated
\begin{equation} \label{def rho sc}
\varrho_{sc}(E) \;\deq\; \frac{1}{2 \pi}\sqrt{[4 - E^2]_+}
\end{equation}
for the density of the semicircle law.
\end{theorem}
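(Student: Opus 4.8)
The plan is to follow the three-step strategy outlined in the introduction, adapted to handle the singular law and the nonzero-mean rank-one perturbation $f\ket{\f e}\bra{\f e}$. First I would invoke the local semicircle law from \cite{EKYY}: under the hypothesis $q \geq N^\phi$, the eigenvalue density of $A$ agrees with $\varrho_{sc}$ down to scales containing $(\log N)^C$ eigenvalues, and the eigenvalue locations $\mu_i$ (for $i \leq N-1$) coincide with the classical locations $\gamma_i$ up to an error of order $q^{-2} = N^{-2\phi}$. Crucially, the large eigenvalue $\mu_N \approx f + f^{-1}$ sits far from the bulk and, since $f \leq CN^{1/2}$, does not interfere with the rigidity estimates in $(-2+\kappa, 2-\kappa)$. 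This step is essentially imported wholesale.

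The heart of the argument is Step (2): bulk universality for the Gaussian-divisible ensemble $H_t \deq e^{-t/2}H + (1 - e^{-t})^{1/2}V$ obtained by running Dyson Brownian motion for a short time $t$, where $V$ is an independent GOE matrix. Here one must deal with the fact that the full matrix $A$ has a runaway largest eigenvalue. The idea is to integrate out $\mu_N$ and study the flow of the marginal distribution of $\mu_1,\dots,\mu_{N-1}$. One constructs a pseudo-equilibrium (reference) measure, writes the Dirichlet-form / entropy estimate for the relative entropy of the time-$t$ marginal with respect to this measure, and shows that the local relaxation time is bounded by $1/q^2 = N^{-2\phi}$ (rather than $1/N$, because rigidity only holds to accuracy $q^{-2}$). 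Thus for $t = N^{-1+\epsilon}/q^2 \cdot N^{\delta}$, say $t \sim N^{\beta - 2\phi + \epsilon'}$, the local correlation functions of $H_t$ in the bulk, averaged over an energy window of size $b_N \geq N^{\epsilon-\beta}$, match those of the GOE; this is where the constraint $b_N \geq N^{\epsilon-\beta}$ and the interplay with $\phi$ enters, via the requirement that the relaxation time be small compared to $b_N$.

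The third step is a Green-function comparison (or a moment-matching / reverse heat flow argument): one shows that the original matrix $A$ and an appropriately chosen Gaussian-divisible matrix $A_t = H_t + f\ket{\f e}\bra{\f e}$ have the same local eigenvalue statistics at scale $1/N$. Since the $k$-th moment of $h_{ij}$ decays only like $N^{-1}q^{-(k-2)}$, one cannot match the third moments to $A_t$ (the best one can do is build $H_t$ from a distribution that matches the first two moments and whose third moment differs from that of $h_{ij}$ by $O(N^{-1}q^{-1})$); tracking the resulting error through the comparison scheme produces the condition $\phi > 1/3 + \beta/6$, i.e.\ \eqref{exponent condition for bulk universality}. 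Finally, combining the three steps — $p_N^{(n)} \approx p_{N,t}^{(n)} \approx p_{\mathrm{GOE},N}^{(n)}$ after averaging over $E' \in (E - b_N, E + b_N)$ and integrating against the test function $O$ — yields the claimed limit.

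I expect the main obstacle to be Step (2): setting up the Dyson Brownian motion analysis for the $(N-1)$-dimensional marginal obtained after removing $\mu_N$. One has to check that the marginal process is still (approximately) governed by a well-behaved interacting particle system, construct the right reference measure, and obtain the logarithmic Sobolev / entropy-dissipation bound with the weaker rigidity input $|\mu_i - \gamma_i| \lesssim q^{-2}$, all while controlling the influence of the dynamically moving outlier $\mu_N$ on the remaining eigenvalues. The bookkeeping of how $q^{-2}$-rigidity (rather than $N^{-1}$) degrades the relaxation-time estimate, and how this feeds into the admissible range of $b_N$ and $\phi$, is the delicate part; the Green-function comparison in Step (3), while it is where the moment decay \eqref{dec} bites, is more routine given the template in \cite{EYYrigidity}.
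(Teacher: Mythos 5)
Your overall three-step architecture matches the paper's, and your description of Step 2 — integrating out the runaway eigenvalue $\mu_N$, constructing a pseudo-equilibrium reference measure, and running an entropy/Dirichlet-form estimate for the marginal flow of $\mu_1,\dots,\mu_{N-1}$, with the relaxation degraded by the weaker rigidity input — captures the right picture. (One quantitative slip: the relevant timescale is governed by $Q = \sum_i \E|\mu_i-\gamma_i|^2 \sim N\cdot q^{-4} = N^{1-4\phi}$, not by $q^{-2}=N^{-2\phi}$; this is \eqref{Q in intro} and \eqref{exponent condition 1}, and it is what makes the first condition read $1-4\phi+\rho<0$.)

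There is, however, a genuine gap in your Step 3. You assert that ``one cannot match the third moments to $A_t$'' and that the resulting third-moment discrepancy is of order $N^{-1}q^{-1}$. If that were the situation, the Green-function comparison would not close: a third-moment mismatch of size $N^{-1-\phi}$ dwarfs the tolerance $N^{-2-\delta}$ needed in Proposition \ref{proposition: Green function comparison}. The actual proof (Section \ref{sect: bulk universality}) constructs an initial sparse matrix $A_0$, with entries $\widetilde\xi_0$ supported on at most three points and parametrized by two free numbers $(a,b)$, chosen so that after evolving to time $\bar\tau$ the entries of $A_{\bar\tau}$ match the first \emph{three} moments of $A$ \emph{exactly}: since the Gaussian increment has vanishing third cumulant, it suffices to take $m_3(\widetilde\xi_0)=(1-\gamma)^{-3/2}m_3(\widetilde\xi)$. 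The only remaining discrepancy is in the fourth moment, and it is $O(N^{-1-2\phi}\bar\tau)$ — the factor $\bar\tau=N^{-\rho}$ being essential (this is \eqref{moment matching result}). It is precisely the competition between this fourth-moment error, giving $1-2\phi-\rho<0$, and the relaxation requirement $1-4\phi+\rho<0$ that produces $\phi>1/3$, with $\beta/6$ coming from the window size $b_N$. With only two matched moments and a third-moment gap of the full size $N^{-1-\phi}$, the scheme fails, so your route as stated does not yield \eqref{exponent condition for bulk universality}.
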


\begin{remark}
Theorem \ref{theorem: bulk universality} implies bulk universality for sparse matrices provided that $1/3 < \phi \leq
1/2$. See
the end of this section for an account on the origin of the condition \eqref{exponent condition for bulk universality}.
\end{remark}

We also prove the universality of the extreme eigenvalues.

\begin{theorem}[Edge universality] \label{thm: edge}
Suppose that $A$ satisfies Definition \ref{definition of A} with $q \geq N^\phi$ for some $\phi$ satisfying $1/3 < \phi
\leq 1/2$. Let $V$ be an $N \times N$ GOE matrix whose eigenvalues we denote by $\lambda_1^V \leq \cdots \leq 
\lambda_N^V$.  Then there is a $\delta > 0$ such that for any $s$ we have
\begin{equation}\label{tw}
\P^V\pB{N^{2/3}(\lambda_N^V - 2) \leq s - N^{-\delta}} - N^{- \delta} \;\leq\; \P^A \pb{N^{2/3}(\mu_{N - 1} - 2) \leq s}
\;\leq\;
\P^V\pB{N^{2/3}(\lambda_N^V - 2) \leq s + N^{-\delta}} + N^{- \delta}
\end{equation}
as well as
\begin{equation}\label{tw01}
\P^V\pB{N^{2/3}(\lambda_1^V +2) \leq s - N^{-\delta}} - N^{- \delta} \;\leq\; \P^A
 \pb{N^{2/3}(\mu_1+ 2) \leq s}
\;\leq\;
\P^V\pB{N^{2/3}(\lambda_1^V+ 2) \leq s + N^{-\delta}} + N^{- \delta}\,,
\end{equation}
for $N \geq N_0$, where $N_0$ is independent of $s$. Here $\P^V$ denotes the law of the GOE matrix $V$, and $\P^A$ the
law of the sparse matrix $A$.
\end{theorem}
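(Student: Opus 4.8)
\smallskip
\noindent\textbf{Proof strategy.}\
The plan is to split the argument into two essentially independent parts: (i) edge universality for the centred matrix $H$, obtained by a direct Green function comparison with a GOE matrix; and (ii) a rank-one perturbation analysis showing that passing from $H$ to $A = H + f\ket{\f e}\bra{\f e}$ displaces the relevant extreme eigenvalues by only $O(N^{-1+\eps})$, far below the Tracy--Widom scale $N^{-2/3}$. Write $\la_1^H \le \cdots \le \la_N^H$ for the eigenvalues of $H$, with orthonormal eigenvectors $\f u_1, \dots, \f u_N$, and $G^H(z) \deq (H-z)^{-1}$. First I would prove that $N^{2/3}\pb{\la_N^H - 2}$ and $N^{2/3}\pb{\la_N^V - 2}$ have the same distribution up to an error $N^{-\de}$, in the two-sided sense of \eqref{tw}; the lower edge then follows by applying this to $-H$, since both Definition~\ref{definition of H} and the GOE law are invariant under $H \mapsto -H$. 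It then remains to transfer the conclusion from $\la_N^H$ to $\mu_{N-1}$ and from $\la_1^H$ to $\mu_1$.

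For part (ii) I would use the identity $\det(A - z) = \det(H-z)\pb{1 + f\avg{\f e, G^H(z)\f e}}$, valid for $z \notin \operatorname{spec}(H)$, so that the eigenvalues of $A$ are exactly the zeros of $g(z) \deq 1 + f\sum_\al \abs{\avg{\f e,\f u_\al}}^2(\la_\al^H - z)^{-1}$. Since $g' > 0$ between consecutive poles, $g$ has exactly one zero in each gap $(\la_\al^H, \la_{\al+1}^H)$ and one zero above $\la_N^H$; in particular $\mu_{N-1} \in (\la_{N-1}^H, \la_N^H)$ and $\mu_1 \in (\la_1^H, \la_2^H)$, which recovers the interlacing $\la_\al^H \le \mu_\al \le \la_{\al+1}^H$. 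To pin down $\mu_{N-1}$ within its gap, I would, in the equation $g(\mu_{N-1}) = 0$, separate the poles at the (at most polylogarithmically many) eigenvalues within $N^{-2/3+\eps}$ of $2$ and estimate the remaining regular part by the Stieltjes transform of the semicircle law, $m_{sc}(z) = -1 + o(1)$ near the edge, using the control on $\avg{\f e, G^H(z)\f e}$ near the spectral edge from \cite{EKYY}. Because $f \ge 1 + \eps_0$, the constant term $1 + f\,m_{sc}(2) = 1 - f \le -\eps_0$ is bounded away from zero, so the zero of $g$ in $(\la_{N-1}^H, \la_N^H)$ can occur only where the $\la_N^H$-pole compensates it, which gives
\begin{equation*}
\la_N^H - \mu_{N-1} \;=\; \pb{1+o(1)}\,\frac{\abs{\avg{\f e,\f u_N}}^2}{1 - f^{-1}} \;\leq\; N^{-1+\eps}
\end{equation*}
with high probability, where the delocalization bound $\abs{\avg{\f e,\f u_N}}^2 \leq N^{-1+\eps}$ comes from the isotropic form of the local semicircle law near the edge. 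Since $N^{-1+\eps} \ll N^{-2/3-\de}$, this displacement is absorbed by the error in \eqref{tw}. The lower edge is identical, with $m_{sc}(-2) = 1$, yielding $\mu_1 - \la_1^H = \pb{1+o(1)}\abs{\avg{\f e,\f u_1}}^2(1 + f^{-1})^{-1} \leq N^{-1+\eps}$ and hence \eqref{tw01}; note that this part needs no upper bound on $f$ beyond $f \leq N^C$.

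For part (i) I would adapt the edge Green function comparison of \cite{EYYrigidity} to sparse matrices. One tracks a smooth functional of $\im\frac1N\tr G^H(x + \ii\eta)$ with $\eta = N^{-2/3-\eps}$ and $x$ ranging over an $N^{-2/3+\eps}$-neighbourhood of $2$; by a now-standard argument the joint law of such functionals determines the distribution of $N^{2/3}(\la_N^H - 2)$ up to errors $N^{-\de}$. One then interpolates from $H$ to $V$ by replacing the matrix entries one at a time by Gaussian entries with the same first two moments (the mismatch in the diagonal variance is harmless at the edge), and Taylor-expands the functional in the replaced entry. The first two moments match; the leading discrepancy is the third moment, which vanishes for the Gaussian entries and satisfies $\E\abs{h_{ij}}^3 \leq C(Nq)^{-1}$ for $H$ by \eqref{moment conditions}. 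The a priori inputs controlling the third-order term at each step --- the near-edge entrywise local law together with the attendant bound on $\im\frac1N\tr G^H(x+\ii\eta)$ at scale $\eta \sim N^{-2/3}$ --- are precisely what \cite{EKYY} supplies. Summing the per-step errors over the $\sim N^2$ replacements, the total error is bounded by $N^{-\de}$ exactly when $q \geq N^\phi$ with $\phi > 1/3$; the higher moments, with the extra decay $q^{-(p-3)}$ exhibited in \eqref{dec}, contribute strictly less and impose no further constraint. The independence of $N_0$ from $s$ is automatic, since these functionals are controlled for all relevant $x$ simultaneously.

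The main obstacle is the third-moment term in part (i). For ordinary Wigner matrices the odd moments are negligible and this comparison has ample room; for the Erd\H{o}s--R\'enyi matrix the third moment decays only like $(Nq)^{-1}$, and controlling its cumulative effect over all $\sim N^2$ replacement steps requires both the sharp near-edge resolvent estimates of \cite{EKYY} and careful bookkeeping of the sizes of $G^H$, $(G^H)^2$ and $(G^H)^3$ at the edge scale $\eta \sim N^{-2/3}$. The balance between the number of steps, the third moment, and these resolvent sizes is precisely what forces $\phi > 1/3$. Part (ii), by contrast, is soft once the delocalization estimate $\abs{\avg{\f e,\f u_N}}^2 \leq N^{-1+\eps}$ is in hand, since the displacement it produces lies a full power of $N^{-1/3}$ below the Tracy--Widom window.
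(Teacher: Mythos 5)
Your decomposition is attractive but it contains a genuine gap in part (ii), and this gap is precisely what the paper's different organization is designed to avoid.

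The crux of your part (ii) is the delocalization bound $\abs{\avg{\f e, \f u_N}}^2 \leq N^{-1+\eps}$ for the top eigenvector $\f u_N$ of the \emph{sparse} centred matrix $H$, which you need in order to conclude $\la_N^H - \mu_{N-1} \lesssim N^{-1+\eps} \ll N^{-2/3}$. You appeal to ``the isotropic form of the local semicircle law near the edge from \cite{EKYY}'', but \cite{EKYY} only proves the \emph{entrywise} local law (the bound \eqref{Gij estimate} on $G_{ij}$), not an isotropic one. An entrywise bound $\abs{G_{ij} - \delta_{ij}m_{sc}} \leq \Lambda$ is useless for $\avg{\f e, G^H \f e} = \frac{1}{N}\sum_{ij}G_{ij}$: the off-diagonal sum has $N^2$ terms each of size $\Lambda$, with no cancellation coming from the entrywise bound alone, so the naive estimate overshoots by a full factor $N$. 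Extracting the needed cancellation in the direction $\f e$ is exactly what an isotropic local law would do, and for sparse matrices this is not in the paper's toolbox. (The heuristic that $\sum_i u_N(i)$ is a random-sign sum is correct, but proving it requires one of: rotational invariance, an isotropic law, or a bespoke fluctuation argument---none of which you supply.) In short, sticking for the sparse matrix $A = H + f\ket{\f e}\bra{\f e}$ is not ``soft''; it is the hard part.

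The paper sidesteps this exactly by \emph{dualizing} your split. It proves eigenvalue sticking (Lemma~\ref{16SD}) only for GOE, where orthogonal invariance replaces $\f e$ with a coordinate vector and the required bound $\max_\beta\abs{u_\beta(1)}^2 \leq (\log N)^{C\xi}N^{-1}$ is just the ordinary coordinate delocalization \eqref{deloc of u alpha} from \cite{EYYrigidity}. The price is that the Green-function comparison (Proposition~\ref{twthm}) must then be run not only on the centred matrix (comparison of $\la_N^H$ with $\la_N^V$, your part (i), which becomes \eqref{tw0}), but also on the rank-one-shifted matrix $A = H + f_*\ket{\f e}\bra{\f e}$ at $f = f_*$ (comparison of $\mu_{N-1}$, \eqref{tw1}). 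That second comparison is markedly harder---this is what forces the non-centred $Z$-lemma, Lemma~\ref{non-centred Z-lemma}, and the full machinery of Lemma~\ref{lem: 52}---but it avoids any isotropic input. Theorem~\ref{thm: edge} is then assembled from interlacing and monotonicity (Lemma~\ref{lemma: perturbation}), the comparison \eqref{tw0}--\eqref{tw1}, and GOE sticking, in that order. Your part (i), including the use of the gain from $p^{-3}$ to $p^{-4}$ to reach $\phi>1/3$ (which you allude to as ``careful bookkeeping''), is essentially the paper's $f=0$ comparison; but until the delocalization of $\f u_N$ against $\f e$ is actually proved for sparse $H$, part (ii) does not close.
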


\begin{remark}
Theorem \ref{twthm} can be easily extended to correlation functions of a finite collection of extreme eigenvalues.
\end{remark}

\begin{remark}
The GOE distribution function $F_1(s) \deq \lim_N \P^V \pb{N^{2/3}(\lambda_N^V - 2) \leq s}$ of the largest
eigenvalue of $V$ has been identified by Tracy and Widom \cite{TW, TW2}, and can be computed in terms of Painlev\'e
equations. A similar result holds for the smallest
eigenvalue $\lambda_1^V$ of $V$.  \end{remark}

\begin{remark}
A result analogous to Theorem \ref{thm: edge} holds for the extreme eigenvalues of the centred sparse matrix $H$; see 
\eqref{tw0} below.
\end{remark}

We conclude this section by giving a sketch of the origin of the restriction
$\phi > 1/3$ in Theorem \ref{theorem: bulk universality}.
To simplify the outline of the argument, we set $\beta = 0$ in Theorem \ref{theorem: bulk universality} and ignore any
powers of $N^\epsilon$.  The proof of Theorem \ref{theorem: bulk universality} is based on an analysis of the local
relaxation properties of the marginal Dyson Brownian motion, obtained from the usual Dyson Brownian motion by integrating
out the largest eigenvalue $\mu_N$. As an input, we need the bound
\begin{equation} \label{Q in intro}
Q \;\deq\; \E \sum_{\alpha = 1}^{N - 1} \abs{\mu_\alpha - \gamma_\alpha}^2 \;\leq\; N^{1 - 4 \phi},
\end{equation}
where $\gamma_\alpha$ denotes the classical location of the $\alpha$-th eigenvalue (see \eqref{def of gamma} below). The
bound \eqref{Q in intro} was proved in \cite{EKYY}. In that paper we prove, roughly, that $\abs{\mu_\alpha -
\gamma_\alpha} \leq q^{-2} \leq N^{-2 \phi}$, from which \eqref{Q in intro} follows. The precise form is given in
\eqref{main estimate on Q}. We then take an arbitrary initial
sparse matrix ensemble $A_0$ and evolve it according to the Dyson Brownian motion up to a time $\tau = N^{- \rho}$, for
some $\rho > 0$. We
prove that the local spectral statistics, in the first $N - 1$ eigenvalues, of the evolved ensemble $A_\tau$ at time
$\tau$ coincide with those of a GOE
matrix $V$, provided that
\begin{equation}
Q \tau^{-1} \;=\; Q N^\rho \;\ll\; 1\,.
\end{equation}
The precise statement is given in \eqref{local ergodicity of DBM}. This gives us the condition
\begin{equation} \label{exponent condition 1}
1 - 4 \phi + \rho < 0\,.
\end{equation}
Next, we compare the local spectral statistics of a given Erd\H{o}s-R\'enyi matrix $A$ with those of the time-evolved
ensemble $A_\tau$ by constructing an appropriate initial $A_0$, chosen so that the first four moments of $A$ and $A_\tau$ are
close. More precisely, by comparing Green functions, we prove that the local spectral statistics of $A$ and $A_\tau$ coincide if the first three moments of the entries
of $A$ and $A_\tau$ coincide and their fourth moments differ by at most $N^{-2 - \delta}$ for some $\delta > 0$. (See Proposition
\ref{proposition: Green function comparison}.) Given $A$ we find, by explicit construction, a sparse matrix $A_0$ such
that the first three moments of the entries of $A_\tau$ are equal to those of $A$, and their fourth moments differ by at
most $N^{-1 - 2 \phi} \tau = N^{-1 - 2 \phi - \rho}$; see \eqref{moment matching result}. Thus the local spectral statistics
of $A$ and $A_\tau$ coincide provided that
\begin{equation} \label{exponent condition 2}
1 - 2 \phi - \rho  < 0\,.
\end{equation}
From the two conditions \eqref{exponent condition 1} and \eqref{exponent condition 2} we find that the local spectral
statistics of $A$ and $V$ coincide provided that $\phi > 1/3$.

\section{The strong local semicircle law and eigenvalue locations}

In this preliminary section we collect the main notations and tools from the companion paper \cite{EKYY} that we shall 
need for the proofs. Throughout this paper we shall make use of the parameter
\begin{align} \label{bounds on xi}
\xi \;\equiv\; \xi_N \;\deq\; 5 \log \log N\,,
\end{align}
which will keep track of powers of $\log N$ and probabilities of high-probability events.
Note that in \cite{EKYY}, $\xi$ was a free parameter. In this paper we choose the special form \eqref{bounds on xi} for 
simplicity.

We introduce the spectral parameter
\begin{equation*}
z \;=\; E + \ii \eta
\end{equation*}
where $E \in \R$ and $\eta > 0$. Let $\Sigma \geq 3$ be a fixed but arbitrary constant and define the domain
\begin{equation} \label{definitionD}
D_L \;\deq\; \hb{z \in \C \col \abs{E} \leq \Sigma \,,\, (\log N)^LN^{-1} \leq \eta \leq 3}\,,
\end{equation}
with a parameter $L \equiv L(N)$ that always satisfies
\begin{equation} \label{lower bound on L}
L \;\geq\; 8 \xi\,.
\end{equation}
For $\im z > 0$ we define the Stieltjes transform of the local semicircle law
\begin{equation} \label{def m sc}
m_{sc}(z) \;\deq\; \int_\R \frac{\varrho_{sc}(x)}{x - z} \, \dd x\,,
\end{equation}
where the density $\varrho_{sc}$ was defined in \eqref{def rho sc}.
The Stieltjes transform $m_{sc}(z) \equiv m_{sc}$ may also be characterized as the unique solution of
\begin{equation} \label{identity of msc}
m_{sc} + \frac{1}{z + m_{sc}} \;=\; 0
\end{equation}
satisfying $\im m_{sc}(z) > 0$ for $\im z > 0$. This implies that
\begin{equation} \label{formula for m sc}
m_{sc}(z) \;=\; \frac{-z + \sqrt{z^2 - 4}}{2}\,,
\end{equation}
where the square root is chosen so that $m_{sc}(z) \sim - z^{-1}$ as $z \to \infty$. We define the resolvent of $A$ 
through
\begin{align*}
G(z) \;\deq\; (A - z)^{-1}\,,
\end{align*}
as well as the Stieltjes transform of the empirical eigenvalue density
\begin{equation*}
m(z) \;\deq\; \frac{1}{N} \tr G(z)\,.
\end{equation*}
For $x \in \R$ we define the distance $\kappa_x$ to the spectral edge through
\begin{align} \label{def kappa}
\kappa_x \;\deq\; \absb{\abs{x} - 2}\,.
\end{align}
At this point we warn the reader that we depart from our conventions in \cite{EKYY}. In that paper, the quantities 
$G(z)$ and $m(z)$ defined above in terms of $A$ bore a tilde to distinguish them from the same quantities defined in 
terms of $H$. In this paper we drop the tilde, as we shall not need resolvents defined in terms of $H$.

We shall frequently have to deal with events of very high probability, for which the following definition is useful. It 
is characterized by two positive parameters, $\xi$ and $\nu$, where $\xi$ is given by \eqref{bounds on xi}.

\begin{definition}[High probability events]
We say that an $N$-dependent event $\Omega$ holds with \emph{$(\xi, \nu)$-high probability} if
\begin{equation} \label{high prob}
\P(\Omega^c) \;\leq\; \me^{-\fra \nu (\log N)^\xi}
\end{equation}
for $N \geq N_0(\nu)$.

Similarly, for a given event $\Omega_0$, we say that $\Omega$ \emph{holds with $(\xi, \nu)$-high probability on 
$\Omega_0$} if
\begin{equation*}
\P(\Omega_0 \cap \Omega^c) \;\leq\; \me^{- \nu (\log N)^\xi}
\end{equation*}
for $N \geq N_0(\nu)$.
\end{definition}

\begin{remark}
In the following we shall not keep track of the explicit value of $\nu$;
in fact we allow $\nu$ to decrease from one line to another
without introducing a new notation. All of our results will hold for $\nu \leq \nu_0$, where $\nu_0$ depends only on the 
constants $C$ in Definition \ref{definition of H} and the parameter $\Sigma$ in \eqref{definitionD}.
\end{remark}

\begin{theorem}[Local semicircle law \cite{EKYY}] \label{LSCTHM}
Suppose that $A$ satisfies Definition \ref{definition of A} with the condition \eqref{lower bound on f} replaced with
\begin{equation} \label{lax bounds on f}
0 \;\leq\; f \;\leq\; N^C\,.
\end{equation}
Moreover, assume that
\begin{align} \label{assumptions for SLSC}
q &\;\geq\; (\log N)^{120 \xi}\,,
\\
\label{def of L}
L &\;\geq\; 120 \xi\,.
\end{align}
Then there is a constant $\nu > 0$, depending on $\Sigma$ and the constants $C$ in \eqref{moment conditions} and 
\eqref{lower bound on d}, such that the following holds.

We have the local semicircle law: the event
\begin{align}\label{scm}
\bigcap_{z \in D_L} \hBB{\absb{m(z) - m_{sc}(z)} \leq (\log N)^{40 \xi} \pbb{
\min \hbb{\frac{ (\log N)^{40 \xi} }{\sqrt{\kappa_E+\eta}}\frac{1}{q^2}, \frac{1}{q}} + \frac{1}{N \eta}}}
\end{align}
holds \hp{\xi}{\nu}.
Moreover, we have the following estimate on the individual matrix elements of $G$. If instead of \eqref{lax bounds on f} 
$f$ satisfies
\begin{equation} \label{f upper bound}
0 \;\leq\; f \;\leq\; C_0 N^{1/2}\,,
\end{equation}
for some constant $C_0$, then the event
\begin{align}\label{Gij estimate}
\bigcap_{z \in D_L} \hBB{\max_{1 \leq i,j \leq N} \absb{G_{ij}(z) - \delta_{ij} m_{sc}(z)} \leq (\log N)^{40 \xi}  
\pbb{\frac{1}{q} + \sqrt{\frac{\im m_{sc}(z)}{N \eta}} + \frac{1}{N \eta}}}
\end{align}
holds \hp{\xi}{\nu}.

\end{theorem}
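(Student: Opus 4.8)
The plan is to carry out the standard self-consistent resolvent analysis, adapted to the slow moment decay in \eqref{moment conditions} and to the rank-one shift in \eqref{A = H + fee}. First I would remove the shift $f\ket{\f e}\bra{\f e}$: since it has rank one, eigenvalue interlacing between $A$ and $H$ gives $\absb{m(z)-m_H(z)} \le (N\eta)^{-1}$, which is already within the error claimed in \eqref{scm}, so the averaged law \eqref{scm} for $A$ reduces to the same statement for the centred matrix $H$. For the entrywise bound \eqref{Gij estimate} I would instead use the resolvent identity
\[
G \;=\; G_H - \frac{f\, G_H \ket{\f e}\bra{\f e} G_H}{1 + f\,\bra{\f e} G_H \ket{\f e}}\,.
\]
Once the $H$-analogue of \eqref{Gij estimate} is known, $\bra{\f e} G_H \ket{\f e} = N^{-1}\sum_{ij}(G_H)_{ij}$ is close to $m_{sc}$, and since $\re m_{sc} < 0$, $\abs{m_{sc}} \le 1$ and $f \ge 1 + \epsilon_0$, the denominator is bounded away from $0$; this is the point where the stronger hypothesis $f \le C_0 N^{1/2}$ of \eqref{f upper bound} is used, to keep the rank-one correction from spoiling the $1/q$-size bound. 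So the substance is the local law for $H$.

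For $H$, I would let $G^{(i)}$ denote the resolvent of the minor with row and column $i$ deleted and invoke the Schur complement identity $G_{ii}^{-1} = -z - \sum^{(i)}_{k,l} h_{ik} G^{(i)}_{kl} h_{il}$. Splitting off the conditional expectation of the quadratic form and using $m - m^{(i)} = O((N\eta)^{-1})$, this becomes $G_{ii}^{-1} = -z - m(z) + \Upsilon_i$ with $\Upsilon_i$ a fluctuation term. The crucial input is a large-deviation estimate for the quadratic form $\sum_{k,l} h_{ik} B_{kl} h_{il}$ (and for the linear forms appearing in the off-diagonal entries), valid for the sparse entries of \eqref{moment conditions}, schematically of the form $(\log N)^{C\xi}\big(q^{-1}\max_k\abs{B_{kk}} + q^{-2}(\cdots) + N^{-1/2}(\sum_{k\ne l}\abs{B_{kl}}^2)^{1/2}\big)$; applied with $B = G^{(i)}$, together with the Ward identity $\sum_l \abs{G_{kl}}^2 = \eta^{-1}\im G_{kk}$ and the a priori bound $\im m \lesssim \im m_{sc} + \abs{m - m_{sc}}$, this produces exactly the right-hand side of \eqref{Gij estimate}. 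Inverting and averaging over $i$ then yields a perturbation of \eqref{identity of msc}, of the form $m + (z+m)^{-1} = (\text{error of the above size})$.

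The next step is stability. Subtracting \eqref{identity of msc} for $m_{sc}$, the difference $\Theta \deq m - m_{sc}$ satisfies a quadratic inequality roughly of the shape $\abs{\Theta}^2 + \sqrt{\kappa_E+\eta}\,\abs{\Theta} \lesssim \abs{\text{error}}$, using $\abs{z+2m_{sc}} \sim \sqrt{\kappa_E+\eta}$ and $\abs{m_{sc}}\le 1$; this gives $\abs{\Theta}\lesssim \abs{\text{error}}/\sqrt{\kappa_E+\eta}$ once $\abs{\Theta}$ is a priori small, and $\abs{\Theta}\lesssim\sqrt{\abs{\text{error}}}$ otherwise. Feeding in both the crude piece $q^{-1}$ of the error (which carries no $(\kappa_E+\eta)^{-1/2}$ penalty) and the sharper piece controlled by $q^{-2}$ produces the $\min\{\cdots\}$ structure in \eqref{scm}. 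Finally I would close the argument by a continuity/bootstrap in $\eta$: at $\eta = 3$ the bound is trivial, and decreasing $\eta$ along a fine grid — using that $G$ is Lipschitz in $z$ with constant $O(\eta^{-2})$ — the self-consistent equation is self-improving, so the estimate propagates down to $\eta = (\log N)^L N^{-1}$; a union bound over the grid plus continuity in $z$ upgrades this to the intersection over all $z\in D_L$, with $(\xi,\nu)$-high probability.

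I expect the main obstacle to be the sparse large-deviation machinery. Because \eqref{moment conditions} only bounds the $p$-th moments by $(Nq^{p-2})^{-1}$, the concentration of the linear and quadratic forms is far weaker than in the Wigner case, and one must extract the precise $q^{-1}$ dependence (for the edge-robust estimate) and $q^{-2}$ dependence (for the bulk-improved estimate) while carefully tracking the $(\log N)^{C\xi}$ factors through high-moment bounds. These estimates then have to survive the bootstrap down to $\eta \sim N^{-1}(\log N)^L$ without deteriorating near the spectral edge, where the stability factor $(\kappa_E+\eta)^{-1/2}$ amplifies the error; making the two regimes combine into exactly the $\min$ appearing in \eqref{scm}, uniformly over $D_L$, is the technically delicate heart of the proof.
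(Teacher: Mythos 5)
This theorem is not proved in the present paper: the bracket in its heading, ``[Local semicircle law \cite{EKYY}]'', and the opening line of the section (``In this preliminary section we collect the main notations and tools from the companion paper \cite{EKYY}'') make clear it is imported verbatim from the companion paper and quoted without argument, so there is no in-paper proof to compare your sketch against.

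That said, your outline does match the standard self-consistent strategy that \cite{EKYY} follows: rank-one reduction from $A$ to the centred matrix $H$ (interlacing for the averaged law, Sherman--Morrison for the entrywise law), Schur complement for $G_{ii}$ leading to a perturbed version of \eqref{identity of msc}, sparse large-deviation estimates that produce the two error scales $q^{-1}$ and $q^{-2}$, a quadratic stability analysis with the $(\kappa_E+\eta)^{-1/2}$ amplification near the edge giving the $\min\{\cdots\}$ in \eqref{scm}, and a continuity bootstrap in $\eta$ from $\eta = 3$ down to $(\log N)^L N^{-1}$. One step in your argument does not survive inspection, however. To keep the Sherman--Morrison denominator $1 + f\,\bra{\f e}G_H\ket{\f e}$ away from zero you invoke $f \geq 1 + \epsilon_0$, but Theorem \ref{LSCTHM} explicitly \emph{replaces} the hypothesis \eqref{lower bound on f} by \eqref{lax bounds on f}, and for the entrywise bound the stated assumption is only $0 \leq f \leq C_0 N^{1/2}$ in \eqref{f upper bound}. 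So $f$ may be anywhere in $[0, C_0 N^{1/2}]$, including $f = 0$ and values near $1$ where at the edge $m_{sc}\to -1$ and hence $1 + f m_{sc}\to 0$. The denominator therefore has to be controlled without a lower bound on $f$, typically by combining $\im\bra{\f e}G_H\ket{\f e}>0$ with the entrywise law for $H$, the identity $\bra{\f e}G_H\ket{\f e}\approx m_{sc}$, and the fact that $\eta \geq (\log N)^L N^{-1}$ keeps $\im m_{sc}$ strictly positive on $D_L$. As written, this step of your proof uses an assumption the theorem does not provide.
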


Next, we recall that the $N - 1$ first eigenvalues of $A$ are close the their classical locations predicted by the 
semicircle law. Let $n_{sc}(E) \deq \int_{-\infty}^E \varrho_{sc}(x) \, \dd x$ denote the integrated density of the 
local semicircle law. Denote by $\gamma_\alpha$ the classical location of the $\alpha$-th eigenvalue, defined through
\begin{equation} \label{def of gamma}
n_{sc}(\gamma_\alpha) \;=\; \frac{\alpha}{N} \for \alpha = 1, \dots, N\,.
\end{equation}
The following theorem compares the locations of the eigenvalues $\mu_1, \dots, \mu_{N - 1}$ to their classical locations 
$\gamma_1, \dots, \gamma_{N - 1}$.

\begin{theorem}[Eigenvalue locations \cite{EKYY}] \label{thm: eigenvalue locations}
Suppose that $A$ satisfies Definition \ref{definition of A}, and let $\phi$ be an exponent satisfying $0 < \phi \leq 
1/2$, and set $q = N^{\phi}$.
Then there is a constant $\nu > 0$
 -- depending on $\Sigma$ and the constants $C$ in \eqref{moment conditions}, \eqref{lower bound on d}, and \eqref{lower 
bound on f} -- as well as a constant $C > 0$
such that the following holds.

We have \hp{\xi}{\nu} that
\begin{equation} \label{main estimate on Q}
\sum_{\alpha = 1}^{N - 1} \abs{\mu_\alpha - \gamma_\alpha}^2 \;\leq\; (\log N)^{C \xi} \, \pb{N^{1 - 4 \phi} + N^{4/3 - 
8 \phi}}\,.
\end{equation}
Moreover, for all $\alpha = 1, \dots, N - 1$ we have \hp{\xi}{\nu} that
\begin{equation} \label{detailed estimate for large phi}
\abs{\mu_\alpha - \gamma_\alpha} \;\leq\; (\log N)^{C \xi} \pbb{N^{-2/3} \qB{\wh \alpha^{-1/3} + \indB{\wh \alpha \leq 
(\log N)^{C \xi} (1 + N^{1 - 3 \phi})}} + N^{2/3 - 4 \phi} \wh \alpha^{-2/3} + N^{-2 \phi}}\,,
\end{equation}
where we abbreviated $\wh \alpha \deq \min \h{\alpha, N - \alpha}$.
\end{theorem}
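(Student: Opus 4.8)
Since the statement is quoted from \cite{EKYY}, the plan is to indicate how it is derived there from the strong local semicircle law, Theorem \ref{LSCTHM}, by the standard passage from a local law to rigidity of eigenvalues.

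\emph{Step 1: reduce to a counting-function estimate and invert it.} I would let $\mathfrak n(E) \deq N^{-1}\#\{\alpha : \mu_\alpha \le E\}$ and, since $\gamma_\alpha < 2$ for every $\alpha \le N-1$, only control $\mathfrak n(E) - n_{sc}(E)$ for $E \le 2$ — a regime in which the single outlier eigenvalue $\mu_N$ (which sits at distance of order one above the bulk and which we never claim to be close to $\gamma_N = 2$) is harmless. The target of this step is a two-sided bound, with $(\xi,\nu)$-high probability,
\begin{equation*}
\absb{\mathfrak n(E) - n_{sc}(E)} \;\le\; (\log N)^{C\xi}\,\delta(\kappa_E)\,, \qquad E \le 2\,,
\end{equation*}
where $\delta(\kappa) \sim N^{-1} + q^{-2}$ away from the edge and degrades as $\kappa \to 0$ according to the two regimes of \eqref{scm} (the bulk bound $q^{-2}(\kappa+\eta)^{-1/2}$ and its truncation at $q^{-1}$); concretely the edge term should come out of order $N^{-1} + q^{-4}\kappa^{-1/2}$ down to $\kappa \sim N^{-2/3}$. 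From such a bound I would obtain \eqref{detailed estimate for large phi} by the usual inversion: for $\alpha \le N-1$ choose $E^{\pm}$ with $n_{sc}(E^{\pm}) = \alpha/N \pm (\log N)^{C\xi}\delta$, note that $\gamma_\alpha \in [E^-, E^+]$, and use $\varrho_{sc}(\gamma_\alpha) \sim \kappa_{\gamma_\alpha}^{1/2} \sim (\wh\alpha/N)^{1/3}$ to convert a counting-function error into a location error. The $N^{-1}$ part of $\delta$ together with the square-root vanishing of $\varrho_{sc}$ gives the universal term $N^{-2/3}\wh\alpha^{-1/3}$; the constant sparseness error $q^{-2} = N^{-2\phi}$ in the bulk gives the term $N^{-2\phi}$; the edge term $q^{-4}\kappa^{-1/2}$, divided by the density at $\gamma_\alpha$, gives $N^{2/3-4\phi}\wh\alpha^{-2/3}$; and for the innermost $\wh\alpha \lesssim N^{1-3\phi}$ eigenvalues — where $n_{sc}$ cannot be inverted with the nominal accuracy because the sparseness error already exceeds the ideal $1/N$ there — only the crude bound $N^{-2/3}$ survives, which is the indicator term.

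\emph{Step 2: prove the counting-function estimate by Helffer--Sjöstrand.} Fixing $E$ and a smooth cutoff $\chi$ equal to $1$ on $(-\infty, E-\eta_1]$ and to $0$ on $[E,\infty)$, with a transition scale $\eta_1$ to be optimized, I would write $\int \chi \,\dd(\varrho_{\mathrm{emp}} - \varrho_{sc})$ as a two-dimensional integral of $(m - m_{sc})(z)$ against $\bar\partial$ of an almost-analytic extension of $\chi$ and split the $\eta = \im z$ integration at the scale $\eta_0 \deq (\log N)^L N^{-1}$: for $\eta \ge \eta_0$ insert \eqref{scm}, and for $0 < \eta < \eta_0$ use the monotonicity of $\eta \mapsto \eta\,\im m(E+\ii\eta)$, the bound at height $\eta_0$, and analyticity in $\eta$ to recover the usual cancellation. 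Performing the integral, and running the same estimate with $\chi$ translated, sandwiches $\mathfrak n(E)$ between $n_{sc}(E \pm \eta_1)$ up to $(\log N)^{C\xi}\delta(\kappa_E)$; optimizing $\eta_1$ — which near the edge must be taken comparable to the local eigenvalue spacing $\sim N^{-2/3}\wh\alpha^{-1/3}$ — produces the $\wh\alpha$-dependence of Step 1. Here \eqref{scm} applies throughout because $q = N^{\phi} \gg (\log N)^{120\xi}$; only the bound on $m$ (not the one on the $G_{ij}$) is needed, so the hypothesis $f \le N^C$ suffices; and $\eta_0 \ll N^{-2/3}$, so the optimal scale at the very edge is within reach.

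\emph{Step 3: the $L^2$ bound, and the main obstacle.} For \eqref{main estimate on Q} I would square \eqref{detailed estimate for large phi}, use $(a+b+c+d)^2 \le 4(a^2+b^2+c^2+d^2)$, and sum over $\alpha = 1,\dots,N-1$ with $\sum_\alpha \wh\alpha^{-2/3} \sim N^{1/3}$, $\sum_\alpha \wh\alpha^{-4/3} = O(1)$, and $\#\{\alpha : \wh\alpha \lesssim (\log N)^{C\xi} N^{1-3\phi}\} \lesssim (\log N)^{C\xi} N^{1-3\phi}$; the four contributions are then, up to $(\log N)^{C\xi}$, respectively $N^{-4/3}\cdot N^{1/3} = N^{-1}$, $N^{1-3\phi}\cdot N^{-4/3} = N^{-1/3-3\phi}$, $N^{4/3-8\phi}$, and $N\cdot N^{-4\phi} = N^{1-4\phi}$, and since $1 - 4\phi \ge -1$ and $1 - 4\phi \ge -1/3 - 3\phi$ for $\phi \le 1/2$ the first two are dominated, leaving the claimed bound. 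The routine part of all of this is the bulk, which is identical to the Wigner case; the real obstacle is the spectral edge, where the input \eqref{scm} degrades and one must track precisely how this weaker control propagates through the Helffer--Sjöstrand step and through the non-Lipschitz inversion of $n_{sc}$ near $\pm 2$ — this is what produces the indicator term in \eqref{detailed estimate for large phi} and the $N^{4/3-8\phi}$ contribution to \eqref{main estimate on Q} — all the while keeping the numerous $(\log N)^{C\xi}$ factors and the accompanying high-probability bookkeeping under control.
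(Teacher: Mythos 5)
Your sketch is correct and follows the same route as the proof in \cite{EKYY}, which this paper simply cites: the strong local semicircle law \eqref{scm} is converted via a Helffer--Sj\"ostrand argument into a two-sided estimate on the counting function, which is then inverted through $n_{sc}$ (degrading near the edge exactly where the $q^{-2}(\kappa+\eta)^{-1/2}$ branch of \eqref{scm} overtakes $1/(N\eta)$, producing the $N^{2/3-4\phi}\wh\alpha^{-2/3}$ term and the indicator for $\wh\alpha\lesssim N^{1-3\phi}$), and the $L^2$ bound \eqref{main estimate on Q} is obtained by squaring, summing, and a union bound over $\alpha$. The auxiliary observations you flag -- that only the macroscopic bound on $m$ (hence $f\le N^C$) is needed, that restricting to $E\le 2$ disposes of the outlier $\mu_N$, and that the indicator threshold is where $q^{-4}\kappa^{-1/2}$ exceeds $\wh\alpha/N$ -- all match the source.
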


\begin{remark}
Under the assumption $\phi \geq 1/3$ the estimate \eqref{detailed estimate for large phi} simplifies to
\begin{equation} \label{rigidity for large phi}
\abs{\mu_\alpha - \gamma_\alpha} \;\leq\; (\log N)^{C \xi} \pB{N^{-2/3} \wh \alpha^{-1/3} + N^{-2 \phi}}\,,
\end{equation}
which holds \hp{\xi}{\nu}.
\end{remark}

Finally, we record two basic results from \cite{EKYY} for later reference.
From \cite{EKYY}, Lemmas 4.4 and 6.1, we get, \hp{\xi}{\nu},
\begin{equation} \label{bound on bulk eigenvalues}
\max_{1 \leq \alpha \leq N} \abs{\lambda_\alpha} \;\leq\; 2 + (\log N)^{C \xi} \pB{q^{-2} + N^{-2/3}}\,, \qquad
\max_{1 \leq \alpha \leq N - 1} \abs{\mu_\alpha} \;\leq\; 2 + (\log N)^{C \xi} \pB{q^{-2} + N^{-2/3}}\,.
\end{equation}
Moreover, from \cite{EKYY}, Theorem 6.2, we get, \hp{\xi}{\nu},
\begin{equation} \label{position of mu max}
\mu_N \;=\; f + \frac{1}{f} + o(1)\,.
\end{equation}
In particular, using \eqref{lower bound on f} we get, \hp{\xi}{\nu},
\begin{equation} \label{gap 1}
2 + \sigma \;\leq\; \mu_N \;\leq\; N^C\,,
\end{equation}
where $\sigma > 0$ is a constant spectral gap depending only on the constant $\epsilon_0$ from \eqref{lower bound on f}.

\section{Local ergodicity of the marginal Dyson Brownian motion} \label{section: DBM}

In Sections \ref{section: DBM} and \ref{sect: bulk universality} we give the proof of Theorem \ref{theorem: bulk 
universality}. Throughout Sections \ref{section: DBM} and \ref{sect: bulk universality} it is convenient to adopt a 
slightly different notation for the eigenvalues of $A$. In these two sections we shall consistently use $x_1 \leq \cdots 
\leq x_N$ to denote the ordered eigenvalues of $A$, instead of $\mu_1 \leq \cdots \leq \mu_N$ used in the rest of this 
paper.  We abbreviate the collection of eigenvalues by $\f x = (x_1, \dots, x_N)$.

The main tool in the proof of Theorem \ref{theorem: bulk universality} is the marginal Dyson Brownian motion, obtained 
from the usual Dyson Brownian motion of the eigenvalues $\f x$ by integrating out the largest eigenvalue $x_N$.  In this 
section we establish the local ergodicity of the marginal Dyson Brownian and derive an upper bound on its local 
relaxation time.

Let $A_0 = (a_{ij,0})_{ij}$ be a matrix satisfying Definition \ref{definition of A} with constants $q_0 \geq N^\phi$ and 
$f_0 \geq 1 + \epsilon_0$. Let $(B_{ij,t})_{ij}$ be a symmetric matrix of independent Brownian motions, whose 
off-diagonal entries have variance $t$ and diagonal entries variance $2t$. Let the matrix $A_t = (a_{ij,t})_{ij}$ 
satisfy the stochastic differential equation
\begin{align} \label{simple bm}
\dd a_{ij} \;=\; \frac{\dd B_{ij}}{\sqrt{N}} - \frac{1}{2} a_{ij} \, \dd t\,.
\end{align}
It is easy to check that the distribution of $A_t$ is equal to the distribution of
\begin{align} \label{law of Ht}
\me^{-t/2} A_0 + (1 - \me^{-t})^{1/2} V\,,
\end{align}
where $V$ is a GOE matrix independent of $A_0$.

Let $\rho$ be a constant satisfying $0 < \rho < 1$ to be chosen later. In the following we shall consider times $t$ in 
the interval $[t_0, \tau]$, where
\begin{equation*}
t_0 \;\deq\; N^{-\rho - 1}\,, \qquad \tau \;\deq\; N^{- \rho}\,.
\end{equation*}
One readily checks that, for any fixed $\rho$ as above, the matrix $A_t$ satisfies Definition \ref{definition of A}, 
with constants
\begin{equation*}
f_t \;=\; f (1 + O(N^{- \delta_0})) \;\geq\; 1 + \frac{\epsilon_0}{2}\,, \qquad
q_t \;\sim\; q_0 \;\geq\; N^\phi\,,
\end{equation*}
where all estimates are uniform for $t \in [t_0, \tau]$. Denoting by $x_{N,t}$ the largest eigenvalue of $A_t$, we get 
in particular from \eqref{gap 1} that
\begin{equation} \label{gap}
\P \pB{\exists \, t \in [t_0, \tau] \col x_{N,t} \notin [2 + \sigma, N^C]} \;\leq\; \me^{-\nu (\log N)^\xi}
\end{equation}
for some $\sigma > 0$ and $C > 0$.

From now on we shall never use the symbols $f_t$ and $q_t$ in their above sense. The only information we shall need 
about $x_N$ is \eqref{gap}. In this section we shall not use any information about $q_t$, and in Section \ref{sect: bulk 
universality} we shall only need that $q_t \geq c N^\phi$ uniformly in $t$. Throughout this section $f_t$ will denote 
the joint eigenvalue density evolved under the Dyson Brownian motion. (See Definition \ref{definition of f_t} below.)

It is well known that the eigenvalues $\f x_t$ of $A_t$ satisfy the stochastic differential equation (Dyson Brownian 
motion)
\begin{align} \label{DBM}
\dd x_i \;=\; \frac{\dd B_i}{\sqrt{N}} + \pbb{-\frac{1}{4} x_i + \frac{1}{2N} \sum_{j \neq i} \frac{1}{x_i - x_j}} \dd t 
\for i = 1, \dots, N\,,
\end{align}
where $B_1, \dots, B_N$ is a family of independent standard Brownian motions.

In order to describe the law of $V$, we define the equilibrium Hamiltonian
\begin{equation} \label{GOEH}
\cal H(\f x) \;\deq\; \sum_i \frac{1}{4} x_i^2 - \frac{1}{N} \sum_{i < j} \log \abs{x_i - x_j}
\end{equation}
and denote the associated probability measure by
\begin{equation} \label{GOE measure}
\mu^{(N)}(\dd \f x) \;\equiv\; \mu(\dd \f x) \;\deq\; \frac{1}{Z} \me^{-N \cal H(\f x)} \, \dd \f x\,,
\end{equation}
where $Z$ is a normalization. We shall always consider the restriction of $\mu$ to the domain
\begin{align*}
\Sigma_N \;\deq\; \h{\f x \col x_1 < \cdots < x_N}\,,
\end{align*}
i.e.\ a factor $\ind{\f x \in \Sigma_N}$ is understood in expressions like the right-hand side of \eqref{GOE measure}; 
we shall usually omit it. The law of the ordered eigenvalues of the GOE matrix $V$ is $\mu$.

Define the Dirichlet form $D_\mu$ and the associated generator $L$ through
\begin{equation} \label{DBMD}
D_\mu(f) \;=\; - \int f (L f) \, \dd \mu \;\deq\; \frac{1}{2N} \int \abs{\nabla f}^2 \, \dd \mu\,,
\end{equation}
where $f$ is a smooth function of compact support on $\Sigma_N$.
One may easily check that
\begin{align*}
L \;=\; \sum_i \frac{1}{2N} \partial_i^2 + \sum_i \pbb{-\frac{1}{4} x_i + \frac{1}{2N}\sum_{j \neq i} \frac{1}{x_i - 
x_j}} \partial_i\,,
\end{align*}
and that $L$ is the generator of the Dyson Brownian motion \eqref{DBM}. More precisely, the law of $\f x_t$ is given by 
$f_t(\f x) \, \mu(\dd \f x)$, where $f_t$ solves $\partial_t f_t = L f_t$ and $f_0(\f x) \mu(\dd \f x)$ is the law of 
$\f x_0$.

\begin{definition} \label{definition of f_t}
Let $f_t$ to denote the solution of $\partial_t f_t = L f_t$ satisfying $f_t |_{t = 0} = f_0$.  It is well known that 
this solution exists and is unique, and that $\Sigma_N$ is invariant under the Dyson Brownian motion, i.e.\ if $f_0$ is 
supported in $\Sigma_N$, so is $f_t$ for all $t \geq 0$. For a precise formulation of these statements and their proofs, 
see e.g.\ Appendices A and B in \cite{ESYY}. In Appendix \ref{appendix: DBM}, we present a new, simpler and more 
general, proof.
\end{definition}

\begin{theorem} \label{theorem: flow}
Fix $n \geq 1$ and let $\f m = (m_1, \dots, m_n) \in \N^n$ be an increasing family of indices. Let $G \col \R^n \to \R$ be a 
continuous function of compact support and set
\begin{align*}
\cal G_{i, \f m}(\f x) \;\deq\; G \pb{N(x_i - x_{i + m_1}), N(x_{i + m_1} - x_{i + m_2}), \dots, N(x_{i + m_{n - 1}} - 
x_{i + m_n})}\,.
\end{align*}
Let $\gamma_1, \dots, \gamma_{N - 1}$ denote the classical locations of the first $N - 1$ eigenvalues, as defined in 
\eqref{def of gamma}, and set
\begin{equation}
Q \;\deq\; \sup_{t \in [t_0, \tau]} \sum_{i = 1}^{N - 1} \int (x_i - \gamma_i)^2 f_t \, \dd \mu\,.
\end{equation}
Choose an $\epsilon > 0$. Then for any $\rho$ satisfying $0 < \rho < 1$ there exists a $\bar \tau \in [\tau/2, \tau]$ 
such that, for any $J \subset \{1, 2, \dots, N - m_n - 1\}$, we have
\begin{align} \label{local ergodicity of DBM}
\absbb{\int \frac{1}{\abs{J}} \sum_{i \in J} \cal G_{i, \f m} \, f_{\bar \tau} \, \dd \mu - \int \frac{1}{\abs{J}} 
\sum_{i \in J} \cal G_{i, \f m} \, \dd \mu^{(N - 1)}} \;\leq\; C N^\epsilon \sqrt{\frac{N^{1 + \rho} Q + 
N^{\rho}}{\abs{J}}}
\end{align}
for all $N \geq N_0(\rho)$.
Here $\mu^{(N - 1)}$ is the equilibrium measure of $(N - 1)$ eigenvalues (GOE).
\end{theorem}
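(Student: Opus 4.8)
The proof follows the local relaxation flow method of \cite{ESYY} --- an entropy and Dirichlet-form analysis of the Dyson Brownian motion relative to a well-chosen pseudo-equilibrium measure --- the essential new point being that the largest eigenvalue $x_N$, which sits near $f + f^{-1}$ rather than near the right spectral edge $\gamma_N = 2$, relaxes far too slowly for the usual global scheme and must be integrated out. First, on the overwhelmingly likely event of \eqref{gap} I would integrate out $x_N$ and work with the induced dynamics of $(x_1, \dots, x_{N - 1})$: because $x_N$ stays at distance $\geq\sigma$ from the bulk, its coupling to the remaining eigenvalues enters only through a drift of size $O(1/N)$ with $O(1/N)$ derivatives, so the induced flow is a negligible perturbation of the $(N - 1)$-particle Dyson Brownian motion, whose equilibrium is $\mu^{(N - 1)}$. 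It therefore suffices to prove \eqref{local ergodicity of DBM} for this $(N - 1)$-particle flow, carrying the $x_N$-induced drift along as a negligible error at every step.

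Second, I would introduce the pseudo-equilibrium (``instantaneous relaxation'') measure
\[
\omega \;\deq\; \frac1Z\,\exp\pbb{-\frac{N}{2\tau}\sum_{i = 1}^{N - 1}(x_i - \gamma_i)^2}\,\mu^{(N - 1)}\,,
\]
which pins each $x_i$ to within $\sqrt{\tau/N}$ of its classical location $\gamma_i$. Since the added quadratic term contributes $N/\tau$ to the Hessian of the (already convex) log-density, Bakry--\'Emery yields a logarithmic Sobolev inequality for $\omega$ with relaxation time of order $\tau$ with respect to the Dirichlet form $D_\omega$. As $\tau = N^{-\rho} \gg N^{-1}$, the pinning acts on a scale much larger than the typical eigenvalue gap, so $\omega$ and $\mu^{(N - 1)}$ have the same $N$-rescaled local gap statistics; writing $\bar{\cal G} \deq |J|^{-1}\sum_{i\in J}\cal G_{i,\f m}$, the comparison $\int\bar{\cal G}\,\dd\omega \approx \int\bar{\cal G}\,\dd\mu^{(N - 1)}$ then follows by an argument parallel to the one comparing pseudo-equilibrium and equilibrium in \cite{ESYY}.

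Third, writing $g_t$ for the density of the time-$t$ law of $(x_1, \dots, x_{N - 1})$ with respect to $\omega$ and $S(t) \deq \int g_t\log g_t\,\dd\omega$, I would differentiate the entropy along the flow. The generator driving $g_t$ differs from the $\omega$-reversible one by a drift of order $\tau^{-1}\sum_i (x_i - \gamma_i)\partial_i$ (together with the $x_N$-induced drift above), so the dissipation identity and Cauchy--Schwarz yield
\[
\partial_t S(t) \;\leq\; -c\,D_\omega\pb{\sqrt{g_t}} \;+\; \frac{CN}{\tau^2}\sum_{i = 1}^{N - 1}\int(x_i - \gamma_i)^2 f_t\,\dd\mu \;+\; (\text{negligible})\,.
\]
Integrating over $[t_0, \tau]$, bounding the middle term by $CN\tau^{-1}Q$, and feeding in an a priori bound on $S(t_0)$ (coming from the short-time smoothing of the Dyson Brownian motion and the rigidity estimate \eqref{main estimate on Q}), one obtains a bound on $\int_{\tau/2}^\tau D_\omega(\sqrt{g_t})\,\dd t$ which, after averaging in $t$, produces a $\bar\tau \in [\tau/2, \tau]$ with $D_\omega(\sqrt{g_{\bar\tau}}) \lesssim N^\epsilon\tau^{-2}(1 + NQ)$.

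Finally I would convert this into \eqref{local ergodicity of DBM}. Each summand $\cal G_{i,\f m}$ depends on only the eigenvalues $x_i, \dots, x_{i + m_n}$ and has gradient of size $O(N)$, so a fixed eigenvalue enters at most $O(1)$ of the summands and $\int|\nabla\bar{\cal G}|^2\,\dd\omega = O(N^2/|J|)$ --- the origin of the factor $|J|^{-1}$. Combining this gradient bound with the spectral gap of $\omega$ and a covariance estimate in the spirit of \cite{ESYY} turns the smallness of $D_\omega(\sqrt{g_{\bar\tau}})$ into $\absb{\int\bar{\cal G}g_{\bar\tau}\,\dd\omega - \int\bar{\cal G}\,\dd\omega} \lesssim N^\epsilon\sqrt{(N^{1 + \rho}Q + N^\rho)/|J|}$, which together with the comparison of the second step is the claim. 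I expect the principal obstacle to be the first step --- controlling, uniformly in time, the errors incurred by integrating out $x_N$, which is unavoidable since the slowly relaxing $x_N$-direction otherwise wrecks the logarithmic Sobolev constant --- with the a priori control of $S(t_0)$, and the attendant choice $t_0 = N^{-\rho - 1}$, as the second technical point.
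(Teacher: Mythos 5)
Your proposal captures the correct high-level skeleton — pseudo-equilibrium measure pinning each $x_i$ to $\gamma_i$ on scale $\sqrt{\tau/N}$, Bakry--\'Emery giving relaxation time $\sim\tau$, entropy dissipation with error $NQ/\tau^2$, then converting a Dirichlet-form bound into an observable estimate — and you do land on the right quantitative conclusion. However, the step you treat as a preliminary, ``integrate out $x_N$ and work with the induced $(N-1)$-particle dynamics, which is a negligible perturbation of the $(N-1)$-particle DBM,'' is exactly where the new content of this theorem lives, and the paper does not do it that way. The marginal of the $N$-particle Dyson flow on $(x_1,\dots,x_{N-1})$ is not Markovian, so there is no ``induced dynamics'' to compare with an $(N-1)$-particle DBM; the paper instead keeps the full $N$-particle generator and differentiates the \emph{marginal} entropy $S_{\wh\omega}(\avg{h_t})$, where $\avg{\cdot}=\E^\omega[\,\cdot\mid\wh x\,]$ is the conditional expectation obtained by integrating out $x_N$ with respect to the pseudo-equilibrium $\omega$ (which is defined on all $N$ variables, with a fictitious $\gamma_N=2+\sigma$). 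The resulting dissipation identity (Proposition \ref{proposition: dtS}) carries genuine error terms $\cal E_3,\cal E_4$ coming from the $x_N$-coupling, which are not ``negligible'' but must be bounded by $C/\eta$ using the spectral gap $x_N-x_i\ge\sigma/5$ and a nontrivial lower bound \eqref{rough bound on w} on the conditional expectation $\avg{1/(x_N-x_i)}$.

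Two further ingredients your sketch omits are essential and not automatic. First, your $\omega$ is declared on $(N-1)$ variables, so Bakry--\'Emery applies ``for free''; but the paper's $\wh\omega$ is a marginal, and its log-concavity with constant $R^{-2}$ is a genuine theorem (Lemma \ref{lemma: log convex}) proved via the Brascamp--Lieb inequality after a careful $\delta$-regularization of the logarithmic singularities. Second, the cutoff functions $\theta_1,\theta_2,\theta_3$ are needed to make sense of $\log\avg{\theta g_t}$ uniformly (Lemma \ref{lemma: pointwise bound on f}) and to keep $x_N$ away from $x_{N-1}$; without them the entropy calculation degenerates. Your outline and final estimate are correct in spirit, but the ``negligible perturbation'' heuristic hides the conditional-expectation bookkeeping, the Brascamp--Lieb log-concavity argument, and the cutoff analysis — and in particular it gives no route to a log-Sobolev inequality for the correct (marginal) measure.
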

Note that, by definition, the observables $\cal G_{i, \f m}$ in \eqref{local ergodicity of DBM} only depend on the 
eigenvalues $x_1, \dots, x_{N - 1}$.

The rest of this section is devoted to the proof of Theorem \ref{theorem: flow}. We begin by introducing a pseudo 
equilibrium measure. Abbreviate
\begin{align*}
R \;\deq\; \sqrt{\tau N^{-\epsilon}} \;=\; N^{-\rho/2 - \epsilon/2}
\end{align*}
and define
\begin{equation*}
W(\f x) \;\deq\; \sum_{i = 1}^N \frac{1}{2 R^2}(x_i - \gamma_i)^2\,.
\end{equation*}
Here we set $\gamma_N \deq  2 + \sigma$ for convenience, but one may easily check that the proof remains valid for any 
larger choice of $\gamma_N$.  Define the probability measure
\begin{align*}
\omega(\dd \f x) \;\deq\; \psi(\f x) \, \mu(\dd \f x) \where \psi(\f x) \;\deq\; \frac{Z}{\wt Z} \me^{- N W(\f 
x)}\,.
\end{align*}

Next, we consider marginal quantities obtained by integrating out the largest eigenvalue $x_N$. To that end we write
\begin{align*}
\f x \;=\; (\wh x,x_N) \,, \qquad \wh x \;=\; (x_1, \dots, x_{N - 1})
\end{align*}
and denote by $\wh \omega(\dd \wh x)$ the marginal measure of $\omega$ obtained by integrating out $x_N$. By a slight 
abuse of notation, we sometimes make use of functions $\mu$, $\omega$, and $\wh \omega$, defined as the densities (with 
respect to Lebesgue measure) of their respective measures. Thus,
\begin{align*}
\mu(\f x) \;=\; \frac{1}{Z} \, \me^{-N \cal H(\f x)}\,, \qquad \omega(\f x) \;=\; \frac{1}{\wt Z} \, \me^{- N \cal H(\f 
x) - N W(\f x)}\,, \qquad \wh \omega(\wh x) \;=\; \int_{x_{N - 1}}^\infty \, \omega(\wh x, x_N) \, \dd x_N\,.
\end{align*}

For any function $h(\f x)$ we introduce the conditional expectation
\begin{align*}
\avg h(\wh x) \;\deq\; \E^\omega [h | \wh x] \;=\; \frac{\int_{x_{N - 1}}^\infty h(\wh x, x_N) \, \omega(\wh x, x_N) \, 
\dd x_N}{\wh \omega(\wh x)}\,.
\end{align*}

Throughout the following, we write $g_t \deq f_t/\psi$.
In order to avoid pathological behaviour of the extreme eigenvalues, we introduce cutoffs. Let $\sigma$ be the spectral 
gap from \eqref{gap}, and choose $\theta_1, \theta_2, \theta_3 \in [0,1]$ to be smooth functions that satisfy
\begin{align*}
\theta_1(x_1) &\;=\;
\begin{cases}
0 & \text{if } x_1 \leq -4
\\
1 & \text{if } x_1 \geq -3
\end{cases}\,,
\\
\theta_2(x_{N - 1}) &\;=\;
\begin{cases}
1 & \text{if } x_{N - 1} \leq 2 + \frac{\sigma}{5}
\\
0 & \text{if } x_{N - 1} \geq 2 + \frac{2 \sigma}{5}
\end{cases}\,,
\\
\theta_3(x_N) &\;=\;
\begin{cases}
0 & \text{if } x_N \leq 2 + \frac{3 \sigma}{5}
\\
1 & \text{if } x_N \geq 2 + \frac{4 \sigma}{5}
\end{cases}\,.
\end{align*}
Define $\theta \equiv \theta(x_1, x_{N - 1}, x_N) = \theta_1(x_1) \, \theta_2(x_{N - 1}) \, \theta_3(x_N)$. One easily 
finds that
\begin{align} \label{nabla theta}
\frac{\abs{\nabla \theta}^2}{\theta} \;\leq\; C \ind{-4 \leq x_1 \leq -3} + C \indbb{\frac{\sigma}{2} \leq x_{N - 1} - 2 
\leq \frac{2 \sigma}{5}} + C \indbb{\frac{3 \sigma}{5} \leq x_N - 2 \leq \frac{4 \sigma}{5}}\,,
\end{align}
where the left-hand side is understood to vanish outside the support of $\theta$.

Define the density
\begin{align*}
h_t\;\deq\; \frac{1}{\wh Z_t} \, \theta g_t\,, \qquad \wh Z_t \;\deq\; \int \theta g_t \, \dd \omega\,.
\end{align*}

If $\nu$ is a probability measure and $q$ a density such that 
$q \nu$ is also a probability measure, we define the entropy
\begin{align*}
S_\nu(q) \;\deq\; \int q \log q \, \dd \nu\,.
\end{align*}

The following result is our main tool for controlling the local ergodicity of the marginal Dyson Brownian motion.

\begin{proposition} \label{proposition: dtS}
Suppose that
\begin{align} \label{SA1}
&\text{(i)} \quad S_\mu(f_{t_0}) \;\leq\; N^C\,,
\\ \label{SA2}
&\text{(ii)} \quad \sup_{t \in [t_0,\tau]} \int \qbb{\ind{x_1 \leq -3} + \indbb{x_{N - 1} \geq 2 + \frac{\sigma}{5}} + 
\indbb{x_N \leq 2 + \frac{4 \sigma}{5}}} \, f_t \, \dd \mu \;\leq\; \me^{- \nu(\log N)^\xi}\,,
\\ \label{SA3}
&\text{(iii)} \quad \sup_{t \in [t_0, \tau]} \sup_{\wh x \in \Sigma_{N - 1}} (\theta_1 \theta_2)(\wh x) \absb{\log 
\avg{\theta g_t}(\wh x)}^2 \;\leq\; N^C\,.
\end{align}
Then for $t \in [t_0, \tau]$ we have
\begin{align} \label{dtS}
\partial_t S_{\wh \omega}(\avg h) \;\leq\;
- D_{\wh \omega}\pb{\sqrt{\avg h}} +
S_{\wh \omega}(\avg h) \, \me^{-c (\log N)^\xi} +
C N Q R^{-4} + C\,.
\end{align}
\end{proposition}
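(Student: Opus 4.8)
The plan is to compute $\partial_t S_{\wh\omega}(\avg h)$ by carefully commuting the time derivative through the two operations that define $\avg h$ from $f_t$: multiplication by the cutoff $\theta$ and normalization $\wt Z_t^{-1}$, and then integration over $x_N$. Recall $g_t = f_t/\psi$ solves a drift-diffusion equation with respect to $\omega$, namely $\partial_t g_t = \wt L g_t$ where $\wt L$ is the generator reversible with respect to $\omega$; this is the standard conjugation of $L$ (the Dyson Brownian motion generator) by $\psi$, and it differs from $L$ by a first-order term coming from $\nabla(NW)$. First I would write
\begin{align*}
\partial_t S_{\wh\omega}(\avg h) \;=\; \int (\partial_t \avg h)(1 + \log \avg h)\, \dd\wh\omega\,,
\end{align*}
and use $\int (\partial_t \avg h)\,\dd\wh\omega = 0$ (since $\avg h\, \wh\omega$ stays a probability measure, using assumption (ii) to control the flux of mass across the cutoff region, which is exponentially small). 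So only the $\log\avg h$ term survives.

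Next I would handle the chain of identities $\partial_t \avg h = \avg{\wt L h_t} + (\text{boundary/cutoff corrections})$. Writing $h_t = \wt Z_t^{-1}\theta g_t$, the factor $\wt Z_t^{-1}$ contributes $-(\partial_t \log \wt Z_t)\avg{h}$, which integrates against $\log\avg h$ to something bounded by $S_{\wh\omega}(\avg h)$ times a small factor plus lower order, after using (ii) and (iii) to bound $\partial_t \log \wt Z_t$ (this is a derivative of a $\theta$-truncated mass, again exponentially small by (ii)). The term $\theta \wt L g_t$ must be rewritten as $\wt L(\theta g_t) - $ (commutator of $\wt L$ with multiplication by $\theta$). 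The commutator produces exactly the terms $\frac{\abs{\nabla\theta}^2}{\theta}$ and $\nabla\theta\cdot\nabla g_t$ weighted on the small regions identified in \eqref{nabla theta}; these are absorbed using (ii), producing the $S_{\wh\omega}(\avg h)\me^{-c(\log N)^\xi}$ term. Then for the main term $\int \avg{\wt L h_t} \log\avg h\, \dd\wh\omega$, I would integrate by parts. The crucial point here is that $\wt L$ acts on all $N$ variables but $\avg h$ depends only on $\wh x$; conditioning/Jensen (as in the standard ``integrating out'' argument, e.g.\ the superadditivity of Dirichlet forms under marginals) gives
\begin{align*}
\int \avg{\wt L h_t}\, \log\avg h\, \dd\wh\omega \;\leq\; -\,\tfrac{2}{N} D_{\wh\omega}\big(\sqrt{\avg h}\,\big) + (\text{error from the }W\text{-drift in }\wt L)\,,
\end{align*}
up to constants in the normalization of the Dirichlet form matching \eqref{DBMD}.

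The last ingredient is the error from the extra drift $-\nabla(NW)$ hidden inside $\wt L$ relative to $L$. This is where the factor $NQR^{-4}$ comes from: the drift is $-N R^{-2}(x_i - \gamma_i)$, and the cross term in the integration by parts is controlled by Schwarz as (entropy-dissipation term) $\times$ (a term of size $N R^{-2}\cdot R^{-2}\sum_i \E^{\,\cdot}(x_i-\gamma_i)^2$), i.e.\ after splitting off the Dirichlet form one is left with $C N R^{-4}\sum_{i} \int (x_i-\gamma_i)^2 h_t\,\dd\omega$. Comparing $h_t$-averages with $f_t$-averages (the cutoff $\theta$ only removes mass, and on its support the relevant eigenvalues are order one, with $x_N$ handled by the $\gamma_N$ convention and (ii)) bounds this by $C N R^{-4} Q + C$, where the additive $C$ collects the contribution of $x_N$ and of the boundary term at $x_N = x_{N-1}$ in the integration by parts (which is nonnegative with the right sign, or exponentially small by (ii)). Putting the four contributions together yields \eqref{dtS}.

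The main obstacle I expect is the rigorous treatment of the $x_N$-integration: one must show that integrating out $x_N$ and then running the derivative-of-entropy computation does not generate uncontrolled boundary terms at $x_N = x_{N-1}$, and that the generator $\wt L$ restricted to the marginal behaves well despite the logarithmic singularity $\log|x_i - x_N|$ in $\cal H$. Assumptions (ii) and (iii) are precisely designed to kill these: (ii) confines $x_N$ away from the bulk and $x_{N-1}$ away from $x_N$ with overwhelming probability, and (iii) gives the a priori $L^\infty$-type control on $\log\avg{\theta g_t}$ needed to make the entropy manipulations (which involve $\log\avg h = \log\avg{\theta g_t} - \log\wt Z_t$) legitimate. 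The bookkeeping of which small-probability region absorbs which error term, keeping everything uniform in $t \in [t_0,\tau]$, is the technically delicate part.
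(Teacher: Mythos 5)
Your overall shape is right: use assumptions (i)--(iii) to kill all cutoff/boundary contributions up to $\me^{-c(\log N)^\xi}$ factors, isolate the $W$-drift to produce $NQR^{-4}$, and recover the Dirichlet form $D_{\wh\omega}(\sqrt{\avg h})$ from the main term. The conjugation by $\psi$ that you propose is a cosmetic reorganization of what the paper does (the paper works directly with $\partial_t f_t = L f_t$ and the measure $\mu$, rewriting $f_t = g_t\psi$ on the fly rather than conjugating the generator), so the two routes are essentially equivalent up to bookkeeping.

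However, there is a genuine gap in your sketch at the marginalization step, and it is exactly the technically new point of this proposition. When you pass from a Dirichlet-form inequality in the full variable $\f x$ to one in $\wh x$ for $\avg h$, the interaction term $-\frac{1}{N}\sum_{i<N}\log|x_N - x_i|$ in $\cal H$ couples $x_N$ with $\wh x$. Computing $\avg{\nabla_{\wh x}(\theta g)}$ does not simply give $\nabla_{\wh x}\avg{\theta g}$: there is a commutator term proportional to the conditional \emph{fluctuation} of $\nabla_{\wh x}\log\omega$, whose $i$-th component is $\frac{1}{x_N-x_i} - \avgb{\frac{1}{x_N-x_i}}$. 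After Cauchy--Schwarz this produces, beside the Dirichlet form, the two error integrals
\begin{align*}
\cal E_3 \;=\; \frac{1}{N\eta}\int\sum_{i<N}\pbb{\frac{1}{x_N-x_i}}^2\theta f\,\dd\mu\,, \qquad
\cal E_4 \;=\; \frac{1}{N\eta}\int\sum_{i<N}\avgbb{\frac{1}{x_N-x_i}}^2\theta f\,\dd\mu\,.
\end{align*}
You attribute these to ``the contribution of $x_N$ and the boundary term at $x_N=x_{N-1}$, which is nonnegative or exponentially small by (ii),'' but that is not correct: these terms are $O(1)$, not exponentially small, and (ii) controls the mass near $x_{N-1} = 2+\sigma/5$ under $f_t\mu$ but says nothing about the \emph{conditional} law of $x_N$ given $\wh x$ under $\omega$, which could in principle concentrate arbitrarily close to $x_{N-1}$ for unfavorable $\wh x$. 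For $\cal E_3$ the cutoff $\theta$ enforces $x_N - x_i \geq \sigma/5$ pointwise, so it is $O(1/\eta)$. The delicate term is $\cal E_4$: the conditional expectation $\avg{(x_N-x_i)^{-1}}$ must be shown to be $O(1)$ uniformly in $\wh x \in \Sigma_{N-1}$ on $\supp\theta$, which requires a separate argument. The paper proves the lower bound \eqref{rough bound on w}, $\E^\omega[x_N - x_i \mid \wh x] \geq c\gamma_N$, by writing the conditional density as a Gaussian centered near $\gamma_N(1+R^{-2})$ times a nonnegative polynomial and integrating by parts in $x_N$ to show the mean of $x_N - \wt\gamma_N$ is nonnegative. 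Without this input you cannot close the estimate; the additive $C$ in \eqref{dtS} is exactly $\cal E_3 + \cal E_4 = O(1/\eta)$, and without \eqref{rough bound on w} there is no control on $\cal E_4$. This is the step your proposal would need to supply.
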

\begin{proof}
First we note that
\begin{align} \label{Z = 1}
\wh Z_t \;=\; \int \theta f_t \, \dd \mu \;=\; 1 - O \pb{\me^{- \nu (\log N)^\xi}}
\end{align}
uniformly for $t \in [t_0, \tau]$, by \eqref{SA2}.
Dropping the time index to avoid cluttering the notation, we find
\begin{align*}
\partial_t S_{\wh \omega}(\avg h) \;=\; \partial_t \int \frac{\avg{\theta g}}{\wh Z} \log \avg{\theta g} \, \dd \wh 
\omega
- \partial_t \log \wh Z \;=\; \frac{1}{\wh Z} \partial_t \int \theta g \, \log \avg{\theta g} \, \dd \omega - \pb{1 + 
\log \wh Z + S_{\wh \omega}(\avg h)} \, \partial_t \log \wh Z\,.
\end{align*}
We find that
\begin{align*}
\partial_t \wh Z \;=\; \int \theta (Lf) \, \dd \mu \;=\; - \frac{1}{2N} \int \nabla \theta \cdot \nabla f \, \dd \mu 
\;\leq\;
\pbb{\frac{1}{N} \int \abs{\nabla \theta}^2 \, f \, \dd \mu}^{1/2} D_\mu(\sqrt{f})^{1/2}\,.
\end{align*}
Bounding the Dirichlet form in terms of the entropy (see e.g.\ \cite{Enotes}, Theorem 3.2), we find that
\begin{align} \label{bound on D mu}
D_\mu(\sqrt{f_t}) \;\leq\; \frac{2}{t} S_\mu(f_{t_0}) \;\leq\; N^C\,,
\end{align}
by \eqref{SA1}.
Using \eqref{nabla theta} we therefore find
\begin{align} \label{Sh 1}
\partial_t \wh Z \;\leq\; N^C \me^{-c (\log N)^\xi}\,.
\end{align}
Thus we have
\begin{align} \label{Sh 2}
\partial_t S_{\wh \omega}(\avg h)  \;\leq\; 2 \partial_t \int \theta g \, \log \avg{\theta g} \, \dd \omega + \pb{1 + 
S_{\wh \omega}(\avg h)} N^C \me^{-c (\log N)^\xi}\,.
\end{align}
We therefore need to estimate
\begin{align} \label{Sh 3}
\partial_t \int \theta g \, \log \avg{\theta g} \, \dd \omega \;=\; \int \theta (L f) \log \avg{\theta g} \, \dd \mu + 
\int \avg{\theta g} \, \frac{\partial_t \avg{\theta g}}{\avg{\theta g}} \, \dd \wh \omega\,.
\end{align}
The second term of \eqref{Sh 3} is given by
\begin{align*}
\int \partial_t \avg{\theta g} \, \dd \wh \omega \;=\; \int \theta (Lf) \, \dd \mu \;=\; \partial_t \wh Z\,.
\end{align*}
Therefore \eqref{Sh 2} yields
\begin{align} \label{Sh 4}
\partial_t S_{\wh \omega}(\avg h) \;\leq\; 2 \int \theta (L f) \log \avg{\theta g} \, \dd \mu + (1 + S_{\wh \omega}(\avg 
h)) N^C \me^{-c (\log N)^\xi}\,.
\end{align}
The first term of \eqref{Sh 4} is given by
\begin{align} \label{Sh 5}
-\frac{1}{N} \int \nabla f \cdot \nabla \pb{\theta \log \avg{\theta g}} \, \dd \mu \;=\; - \frac{1}{N} \int \nabla 
(\theta f) \cdot \nabla \pb{\log \avg{\theta g}} \, \dd \mu + \cal E_1 + \cal E_2\,,
\end{align}
where we defined
\begin{align*}
\cal E_1 \;\deq\; \frac{1}{N} \int \nabla \theta \cdot \nabla (\log \avg{\theta g}) \, f \, \dd \mu\,, \qquad \cal E_2 
\;\deq\; -\frac{1}{N} \int \nabla \theta \cdot \nabla f \, \log \avg{\theta g} \, \dd \mu\,.
\end{align*}

Next, we estimate the error terms $\cal E_1$ and $\cal E_2$. Using \eqref{nabla theta} we get
\begin{align*}
\cal E_1 \;=\; \frac{1}{N} \int \frac{\nabla \theta}{\sqrt{\theta}} \cdot \nabla (\log \avg{\theta g}) \, \sqrt{\theta} 
f \, \dd \mu
\;\leq\; \pbb{\int \frac{\abs{\nabla \theta}^2}{\theta} f \, \dd \mu}^{1/2} \pbb{\int \frac{\abs{\nabla \avg{\theta 
g}}^2}{\avg{\theta g}^2} \, \theta f \, \dd \mu}^{1/2}
\\
\leq\; \me^{- \nu(\log N)^\xi}
\pbb{\int \frac{\abs{\nabla \avg{\theta g}}^2}{\avg{\theta g}^2} \, \avg{\theta g} \, \dd \wh \omega}^{1/2} \;\leq\; 
\me^{-c (\log N)^\xi}  +  \me^{-c (\log N)^\xi} D_{\wh \omega}\pb{\sqrt{\avg h}}\,,
\end{align*}
where we used \eqref{Z = 1}. Similarly, we find
\begin{align*}
\cal E_2 \;\leq\; \pbb{\int \abs{\nabla \theta}^2 \, \absb{\log \avg{\theta g}}^2 \, f \, \dd \mu}^{1/2} \pbb{\int 
\frac{\abs{\nabla f}^2}{f} \, \dd \mu}^{1/2}\,.
\end{align*}
Using \eqref{nabla theta}, \eqref{SA3}, and \eqref{bound on D mu} we therefore get
\begin{align*}
\cal E_2 \;\leq\; N^C \pbb{\int \frac{\abs{\nabla \theta}^2}{\theta} \, \theta \absb{\log \avg{\theta g}}^2 f \, \dd 
\mu}^{1/2} \;\leq\; N^C \me^{-c (\log N)^\xi}\,.
\end{align*}

Having dealt with the error terms $\cal E_1$ and $\cal E_2$, we compute the first term on the right-hand side of 
\eqref{Sh 5},
\begin{align} \label{Sh 6}
- \frac{1}{N} \int \nabla (\theta f) \cdot \nabla \pb{\log \avg{\theta g}} \, \dd \mu
\;=\;
- \frac{1}{N} \int \nabla_{\wh x} (\theta g) \cdot \nabla_{\wh x} \pb{\log \avg{\theta g}} \, \psi \, \dd \mu
- \frac{1}{N} \int \nabla_{\wh x} (\log \psi) \cdot \nabla_{\wh x} \pb{\log \avg{\theta g}} \, \theta g \psi \, \dd 
\mu\,.
\end{align}
The second term of \eqref{Sh 6} is bounded by
\begin{align*}
\frac{\eta^{-1}}{N} \int \abs{\nabla_{\wh x} \log \psi}^2 \, f \, \dd \mu + \frac{\eta}{N} \int \frac{\abs{\nabla \, 
\avg{\theta g}}^2}{\avg{\theta g}^2} \, \avg{\theta g} \, \dd \wh \omega &\;\leq\;
\eta^{-1}N \int \frac{1}{R^4} \sum_{i = 1}^{N - 1} (x_i - \gamma_i)^2\, f \, \dd \mu + 4 \eta D_{\wh \omega}(\sqrt{\avg 
h})
\\
&\;\leq\; \frac{N Q}{\eta R^4} + 4 \eta D_{\wh \omega}(\sqrt{\avg h})\,,
\end{align*}
where $\eta > 0$.

The first term of \eqref{Sh 6} is equal to
\begin{align*}
- \frac{1}{N} \int \avg{\nabla_{\wh x} (\theta g)} \cdot \nabla_{\wh x} \pb{\log \avg{\theta g}} \, \dd \wh \omega\,.
\end{align*}
A simple calculation shows that
\begin{align*}
\avg{\nabla_{\wh x} (\theta g)} \;=\; \nabla_{\wh x} \avg{\theta g} - \avgb{\theta g \nabla_{\wh x} \log \omega} + 
\avg{\theta g} \, \avg{\nabla_{\wh x} \log \omega}\,,
\end{align*}
so that the first term of \eqref{Sh 6} becomes
\begin{align*}
- \frac{1}{N} \int \nabla_{\wh x} \avg{\theta g} \cdot \nabla_{\wh x} \pb{\log \avg{\theta g}} \, \dd \wh \omega
+ \frac{1}{N} \int \pB{\avgb{\theta g \nabla_{\wh x} \log \omega} - \avg{\theta g} \, \avg{\nabla_{\wh x} \log \omega}}
\cdot \nabla_{\wh x} \pb{\log \avg{\theta g}} \, \dd \wh \omega
\\
\;\leq\;
- 4 (1 - \eta) D_{\wh \omega}\pb{\sqrt{\avg h}}
+ \frac{1}{N \eta} \int \frac{\absb{\avgb{\theta g \nabla_{\wh x} \log \omega} - \avg{\theta g} \, \avg{\nabla_{\wh x} 
\log \omega}}^2}{\avg{\theta g}} \, \dd \wh \omega
\,.
\end{align*}
Using the Cauchy-Schwarz inequality $\avg{ab}^2 \leq \avg{a^2} \, \avg{b^2}$ we find that the second term 
is bounded by
\begin{align*}
\frac{1}{N \eta} \int \frac{\absB{\avgb{\theta g \pb{\nabla_{\wh x} \log \omega- \avg{\nabla_{\wh x} \log 
\omega}}}}^2}{\avg{\theta g}} \, \dd \wh \omega
&\;\leq\;
\frac{1}{N \eta} \int \avgB{\theta g \absb{\nabla_{\wh x} \log \omega- \avg{\nabla_{\wh x} \log \omega}}^2} \, \dd \wh 
\omega
\\
&\;=\;
\frac{1}{N \eta} \int \absb{\nabla_{\wh x} \log \omega- \avg{\nabla_{\wh x} \log \omega}}^2 \, \theta f \, \dd \mu
\\
&\;=\; \frac{1}{N \eta} \int \sum_{i = 1}^{N - 1} \pbb{\frac{1}{x_N - x_i} - \avgbb{\frac{1}{x_N - x_i}}}^2\, \theta f 
\, \dd \mu\,.
\end{align*}
Thus, we have to estimate
\begin{align*}
\cal E_3 \;\deq\; \frac{1}{N \eta} \int \sum_{i = 1}^{N - 1} \pbb{\frac{1}{x_N - x_i}}^2\, \theta f \, \dd \mu
\,, \qquad
\cal E_4 \;\deq\; \frac{1}{N \eta} \int \sum_{i = 1}^{N - 1} \avgbb{\frac{1}{x_N - x_i}}^2\, \theta f \, \dd \mu\,.
\end{align*}
Since $x_N - x_i \geq \sigma/5$ on the support of $\theta f \mu$, one easily gets from \eqref{bound on bulk eigenvalues} 
that
\begin{align*}
\cal E_3 \;\leq\; \frac{C}{\eta}\,.
\end{align*}
In order to estimate $\cal E_4$, we write
\begin{align*}
\avgbb{\frac{1}{x_N - x_i}} \;=\; \pbb{\frac{\int \dd x_N \, (x_N - x_i) \, w_i(x_N)}{\int \dd x_N \, w_i(x_N)}}^{-1}\,,
\end{align*}
where
\begin{align*}
w_i(x_N) \;\deq\; \ind{x_N \geq x_{N - 1}} \, \me^{-\frac{N}{4} x_N^2 - \frac{N}{2R^2} (x_N - \gamma_N)^2} \prod_{j \neq 
i,N} (x_N - x_j)\,.
\end{align*}
We now claim that on the support of $\theta$, in particular for $-4 \leq x_1 < x_{N - 1} \leq 2 + 2 \sigma / 5$, we have
\begin{align} \label{rough bound on w}
\frac{\int \dd x_N \, (x_N - x_i) \, w_i(x_N)}{\int \dd x_N \, w_i(x_N)} \;\geq\; c \, \gamma_N\,,
\end{align}
uniformly for $\wh x \in \Sigma_{N - 1}$. Indeed, writing $\wt \gamma_N \deq \gamma_N(1 + R^{-2})$, we have on the support of $\theta$
\begin{align*}
\frac{\int \dd x_N \, (x_N - x_i) \, w_i(x_N)}{\int \dd x_N \, w_i(x_N)} \;\geq\; \wt \gamma_N/2 + \frac{\int \dd x_N \,
(x_N - \wt \gamma_N) \, w_i(x_N)}{\int \dd x_N \, w_i(x_N)}\,.
\end{align*}
Moreover, the second term is nonnegative: 
\begin{align*}
\int \dd x_N \, (x_N - \wt \gamma_N) \, w_i(x_N) &\;=\; - C_N(\wh x) \int_{x_{N - 1}}^\infty \dd
x_N \, \pbb{\frac{\partial}{\partial
x_N} \me^{- \frac{N}{R^2} (x_N - \wt \gamma_N)^2}} \prod_{j \neq i,N}(x_N - x_j)
\\
&\;=\; C_N(\wh x) \, \me^{- \frac{N}{R^2} (x_{N - 1} - \wt \gamma_N)^2} \prod_{j \neq i,N}(x_{N - 1} - x_j)
\\
&\qquad {}+{}
C_N(\wh x) \int_{x_{N - 1}}^\infty \dd
x_N \, \me^{- \frac{N}{R^2} (x_N - \wt \gamma_N)^2} \sum_{k \neq i,j} \prod_{j \neq i,k,N}(x_N - x_j)
\\
&\;\geq\; 0\,,
\end{align*}
where $C_N(\wh x)$ is nonnegative.
This proves \eqref{rough bound on w}.
Using \eqref{rough bound on w} we get
\begin{align*}
\cal E_4 \;\leq\; \frac{C}{\eta} \int \gamma_N^{-2} \, f \, \dd \mu \;=\; \frac{C}{\eta}\,.
\end{align*}

Summarizing, we have proved that
\begin{align*}
\partial_t S_{\wh \omega}(\avg h) \;\leq\;
- \pb{4 - 8 \eta -\me^{-c (\log N)^\xi}} D_{\wh \omega}\pb{\sqrt{\avg h}}
+ \pb{1 + S_{\wh \omega}(\avg h)} \me^{-c (\log N)^\xi} +
\frac{N Q}{\eta R^4} + \frac{C}{\eta}\,.
\end{align*}
Choosing $\eta$ small enough completes the proof.
\end{proof}

Next, we derive a logarithmic convexity bound for the marginal measure $\wh \omega$.

\begin{lemma} \label{lemma: log convex}
We have that
\begin{align*}
\wh \omega(\wh x) \;=\; \frac{1}{\wt Z} \, \me^{- N \wh {\cal H}(\wh x)}\,,
\end{align*}
where
\begin{align} \label{property of V}
\wh {\cal H}(\wh x) \;=\; - \frac{1}{N} \sum_{i < j < N} \log \abs{x_i - x_j} + V(\wh x)\,,
\end{align}
and $\nabla^2 V(\wh x) \geq R^{-2}$.
\end{lemma}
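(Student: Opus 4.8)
The plan is to carry out the $x_N$-integration in $\wh\omega$ explicitly — which both verifies the asserted form of $\wh{\cal H}$ and produces a formula for $V$ — and then to deduce the Hessian bound from the logarithmic concavity of a one-dimensional integral, via Pr\'ekopa's theorem. Starting from $\omega(\f x) = \wt Z^{-1}\me^{-N\cal H(\f x) - N W(\f x)}$ on $\Sigma_N$, I would split each sum over particle indices into a part depending only on $\wh x = (x_1,\dots,x_{N-1})$ and a part involving $x_N$; since $x_N > x_i$ on $\Sigma_N$ one has $\sum_{i<j}\log\abs{x_i-x_j} = \sum_{i<j<N}\log\abs{x_i-x_j} + \sum_{i<N}\log(x_N-x_i)$, and similarly for the quadratic terms of $\cal H$ and $W$. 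The $\wh x$-only part factors out of $\int_{x_{N-1}}^\infty\dd x_N$, leaving
\begin{equation*}
\wh\omega(\wh x) \;=\; \frac{1}{\wt Z}\exp\pbb{-N\sum_{i<N}\pB{\tfrac14 x_i^2 + \tfrac{1}{2R^2}(x_i-\gamma_i)^2} + \sum_{i<j<N}\log\abs{x_i-x_j}}\,I(\wh x)\,,
\end{equation*}
where
\begin{equation*}
I(\wh x) \;\deq\; \int_{x_{N-1}}^\infty \exp\pbb{-\frac{N}{4}x_N^2 + \sum_{i<N}\log(x_N-x_i) - \frac{N}{2R^2}(x_N-\gamma_N)^2}\,\dd x_N\,.
\end{equation*}
The integrand vanishes linearly as $x_N\downarrow x_{N-1}$ and decays like a Gaussian as $x_N\to\infty$, so $0 < I(\wh x) < \infty$ for every $\wh x\in\Sigma_{N-1}$. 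Writing $I(\wh x) = \me^{\log I(\wh x)}$ and reading off the exponent, this is exactly $\wh\omega(\wh x) = \wt Z^{-1}\me^{-N\wh{\cal H}(\wh x)}$ with $\wh{\cal H}$ as in \eqref{property of V} and
\begin{equation*}
V(\wh x) \;=\; \sum_{i<N}\pB{\tfrac14 x_i^2 + \tfrac{1}{2R^2}(x_i-\gamma_i)^2} - \frac{1}{N}\log I(\wh x)\,.
\end{equation*}

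Differentiating twice in $\wh x$ gives $\nabla^2 V(\wh x) = \pb{\tfrac12 + R^{-2}}\umat - \tfrac1N\nabla^2\log I(\wh x)$, so it suffices to prove that $\wh x\mapsto\log I(\wh x)$ is concave on $\Sigma_{N-1}$; then $\nabla^2 V \geq \pb{\tfrac12 + R^{-2}}\umat \geq R^{-2}\umat$. The only real issue is that the lower endpoint $x_{N-1}$ of the integral defining $I(\wh x)$ is itself one of the variables, so I would remove this with the affine substitution $s = x_N - x_{N-1}$, giving $I(\wh x) = \int_0^\infty\me^{-N\wt\Phi(\wh x,s)}\,\dd s$ with
\begin{equation*}
\wt\Phi(\wh x,s) \;\deq\; \tfrac14(x_{N-1}+s)^2 + \tfrac{1}{2R^2}(x_{N-1}+s-\gamma_N)^2 - \tfrac1N\sum_{i<N}\log(x_{N-1}+s-x_i)\,.
\end{equation*}
On the convex set $\Sigma_{N-1}\times(0,\infty)$ each term of $\wt\Phi$ is jointly convex: the first two are convex functions of the affine quantity $x_{N-1}+s$, and each $-\tfrac1N\log(x_{N-1}+s-x_i)$ is $-\log$ of a positive affine function. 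Hence $\me^{-N\wt\Phi}$, extended by $0$ to $\R^{N-1}\times\R$, is log-concave (its support $\Sigma_{N-1}\times(0,\infty)$ is convex). Pr\'ekopa's theorem then gives that the marginal $\wh x\mapsto I(\wh x) = \int_\R\me^{-N\wt\Phi(\wh x,s)}\,\dd s$ is log-concave on $\R^{N-1}$, so $\nabla^2\log I\leq 0$ on $\Sigma_{N-1}$, and combined with the previous paragraph this yields $\nabla^2 V(\wh x)\geq R^{-2}\umat$.

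The bookkeeping in the first paragraph and the differentiation reducing the claim to concavity of $\log I$ are routine; the one step requiring an idea is the last one, and there the entire point is to recognise that after passing to a fixed domain of integration the exponent $\wt\Phi$ is jointly convex — which works precisely because the logarithmic interaction term involves $x_N$ sitting at the top of the spectrum, so that $-\log(x_N-x_i)$ is a convex function of the relevant affine combination — whereupon the standard log-concavity-of-marginals principle (Pr\'ekopa, or equivalently a Brascamp--Lieb-type variance estimate) applies directly. I expect the verification that the integrand is genuinely log-concave after extension by zero (i.e.\ convexity of the support and the mild integrability of $\me^{-N\wt\Phi}$ near $s=0$ and $s=\infty$) to be the only place where a little care is needed.
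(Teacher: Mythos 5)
Your proof is correct and follows a genuinely different decomposition from the paper's. The paper keeps the entire $x_N$-dependent and cross-interaction part \emph{plus all of $W$} inside the exponent to be marginalized, and invokes the full Brascamp--Lieb inequality to conclude that the strictly positive Hessian lower bound $\geq R^{-2}$ survives marginalization; to meet the $C^2$ and full-line-integration requirements of that inequality the paper first regularizes the logarithm via $\log_\delta$, derives $\nabla^2 V_\delta \geq R^{-2}$, and then passes $\delta\to 0$ by a second-difference argument. You instead pull the $\wh x$-quadratic confinement $\sum_{i<N}\pb{\tfrac14 x_i^2 + \tfrac{1}{2R^2}(x_i-\gamma_i)^2}$ outside the integral, so that it contributes $(\tfrac12 + R^{-2})\umat$ to $\nabla^2 V$ directly, and the remaining marginal $-\tfrac1N\log I$ need only be convex --- i.e.\ $I$ log-concave --- which is exactly Pr\'ekopa's theorem (the $K=0$ case of Brascamp--Lieb), applied after the affine substitution $s = x_N - x_{N-1}$ turns the integrand, extended by zero, into a log-concave function on the convex set $\h{s>0,\; x_{N-1}+s>x_i}\subset\R^{N-1}\times\R$. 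This buys you an argument with no $\delta$-regularization and no limiting step, and even a slightly stronger bound $\nabla^2 V\geq \tfrac12 + R^{-2}$. Incidentally your decomposition also repairs a small slip in the paper's: the $\tfrac14 x_i^2$ terms are missing from the stated $\cal H'+\cal H''$, though this only helps the convexity. One detail worth adding: $I\in C^\infty(\Sigma_{N-1})$ by differentiation under the integral (justified by the Gaussian decay in $s$ and the linear vanishing of the integrand at $s=0$), so Pr\'ekopa's concavity conclusion for $\log I$ genuinely translates into the pointwise inequality $\nabla^2\log I\leq 0$ that you use. The paper's route, though heavier here, makes visible the more general principle that \emph{any} Hessian lower bound on the joint exponent is preserved under marginalization; yours is tailored to this particular split but is cleaner.
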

\begin{proof}
Write $\cal H(\wh x, x_N) = \cal H'(\wh x) + \cal H''(\wh x, x_N)$ where
\begin{align*}
\cal H'(\wh x) \;\deq\; -\frac{1}{N} \sum_{i < j < N} \log \abs{x_i - x_j} \,,
\qquad \cal H''(\wh x, x_N) \;\deq\; -\frac{1}{N} \sum_{i < N} \log \abs{x_N - x_i} + \sum_i \frac{1}{2R^2}(x_i -
\gamma_i)^2\,.
\end{align*}
By definition, we have
\begin{align*}
\wh \omega(\wh x) \;=\; \frac{1}{\wt Z} \me^{-N \cal H'(\wh x)} \, \int_{x_{N - 1}}^\infty \me^{- N \cal H''(\wh x, 
x_N)} \, \dd x_N\,.
\end{align*}

The main tool in our proof is the Brascamp-Lieb inequality \cite{BrascampLieb}. In order to apply it, we need to extend
the integration over $x_N$ to $\R$ and replace the singular logarithm with a $C^2$-function. To that end, we introduce the approximation parameter $\delta > 0$ and define, for
$\wh x \in \Sigma_{N - 1}$,
\begin{align*}
V_\delta(\wh x) \;\deq\; -\frac{1}{N} \log \int \exp \qBB{\sum_{i < N} \log_\delta(x_N - x_i) - \frac{N}{2 R^2} \sum_i
(x_i - \gamma_i)^2} \, \dd x_N\,,
\end{align*}
where we defined
\begin{align*}
\log_\delta(x) \;\deq\; \ind{x \geq \delta} \log x + \ind{x < \delta} \pbb{\log \delta + \frac{x - \delta}{\delta} -
\frac{1}{2 \delta^2} (x - \delta)^2}\,.
\end{align*}
It is easy to check that $\log_\delta \in C^2(\R)$, is concave, and satisfies
\begin{align*}
\lim_{\delta \to 0} \log_\delta(x) \;=\;
\begin{cases}
\log x &\text{if } x > 0
\\
-\infty &\text{if } x \leq 0\,.
\end{cases}
\end{align*}
Thus we find that $V_\delta \in C^2(\Sigma_{N - 1})$ and that we have the pointwise convergence, for all $\wh x \in
\Sigma_{N - 1}$,
\begin{align*}
\lim_{\delta \to 0} V_\delta(\wh x) \;=\; V(\wh x) \;\deq\; - \frac{1}{N} \log \int_{x_{N - 1}}^\infty \me^{- N \cal 
H''(\wh x, x_N)} \, \dd x_N\,,
\end{align*}
where $V \in C^2(\Sigma_{N - 1})$ satisfies \eqref{property of V}.

Next, we claim that if $\varphi = \varphi(x,y)$ satisfies $\nabla^2 \varphi(x,y) \geq K$
then $\psi(x)$, defined
by
\begin{align*}
\me^{- \psi(x)} \;\deq\; \int \me^{- \varphi(x,y)} \, \dd y\,,
\end{align*}
satisfies $\nabla^2 \psi(x) \geq K$. In order to prove the claim, we use subscripts to denote partial derivatives and 
recall the Brascamp-Lieb inequality for log-concave functions (Equation 4.7 in \cite{BrascampLieb})
\begin{align*}
\psi_{xx} \;\geq\; \frac{\int \pb{\varphi_{xx} - \varphi_{xy} \varphi_{yy}^{-1} \varphi_{yx}} \, \me^{- \varphi} \, \dd
y}{\int \me^{-\varphi} \, \dd y}\,.
\end{align*}
Then the claim follows from
\begin{equation*}
\begin{pmatrix}
\varphi_{xx} & \varphi_{xy}
\\
\varphi_{yz} & \varphi_{yy}
\end{pmatrix}^{-1}
\;\leq\; \frac{1}{K}
\qquad \Longrightarrow \qquad
\pb{\varphi_{xx} - \varphi_{xy} \varphi_{yy}^{-1} \varphi_{yx}} \;\geq\; K\,.
\end{equation*}


Using this claim, we find that $\nabla^2 V_\delta(\wh x) \geq R^{-2}$ for all $\wh x \in \Sigma_{N - 1}$. In order to 
prove that $\nabla^2 V(\wh x) \geq R^{-2}$ -- and hence complete the proof -- it suffices to consider directional 
derivatives and prove the following claim.
If $(\zeta_\delta)_{\delta > 0}$ is a family of functions on a neighbourhood $U$ that converges pointwise to a 
$C^2$-function $\zeta$ as $\delta \to 0$, and if $\zeta_\delta''(x) \geq
K$ for all $\delta > 0$ and $x \in U$, then $\zeta''(x) \geq K$ for all $x \in U$. Indeed, taking $\delta \to 0$ in
\begin{align*}
\zeta_\delta(x+h) + \zeta_\delta(x - h) - 2 \zeta_\delta(x) \;=\; \int_0^h \pb{\zeta_\delta''(x + \xi) +
\zeta_\delta''(x - \xi)}(h - \xi) \, \dd \xi \;\geq\; K h^2
\end{align*}
yields $\pb{\zeta(x+h) + \zeta(x - h) - 2 \zeta(x)} h^{-2} \geq K$, from which the claim follows by taking the limit $h 
\to 0$.
\end{proof}

As a first consequence of Lemma \ref{lemma: log convex}, we derive an estimate on the expectation of observables 
depending only on eigenvalue differences.

\begin{proposition} \label{proposition: q omega}
Let $q \in L^\infty(\dd \wh \omega)$ be probability density. Then for any $J \subset \{1, 2, \dots, N - m_n - 1\}$ and 
any $t > 0$ we have
\begin{align*}
\absbb{\int \frac{1}{\abs{J}} \sum_{i \in J} \cal G_{i, \f m} \, q \, \dd \wh \omega - \int \frac{1}{\abs{J}} \sum_{i 
\in J} \cal G_{i, \f m} \, \dd \wh \omega} \;\leq\; C \sqrt{\frac{D_{\wh \omega}(\sqrt{q}) \, t}{\abs{J}}} + C 
\sqrt{S_{\wh \omega}(q)} \, \me^{- c t / R^2}\,.
\end{align*}
\end{proposition}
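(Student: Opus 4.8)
The plan is to run the reversible diffusion associated with the measure $\wh\omega$, which by Lemma \ref{lemma: log convex} is log-concave, and to compare $q$ with its time-$t$ evolution, which has relaxed exponentially fast towards equilibrium. Let $\wh L$ denote the generator reversible with respect to $\wh\omega$ whose Dirichlet form is $D_{\wh\omega}$, and let $q_s$ solve $\partial_s q_s = \wh L q_s$ with $q_0 = q$; as in Definition \ref{definition of f_t} (applied to $\wh\omega$ on $\Sigma_{N-1}$ in place of $\mu$), this flow preserves positivity and total mass, so each $q_s$ is again a probability density with respect to $\wh\omega$. Writing $\ol{\cal G} \deq \abs{J}^{-1}\sum_{i \in J}\cal G_{i,\f m}$, I would split
\begin{align*}
\int \ol{\cal G}\,q\,\dd\wh\omega - \int \ol{\cal G}\,\dd\wh\omega \;=\; \pbb{\int \ol{\cal G}\,q_0\,\dd\wh\omega - \int \ol{\cal G}\,q_t\,\dd\wh\omega} + \pbb{\int \ol{\cal G}\,q_t\,\dd\wh\omega - \int \ol{\cal G}\,\dd\wh\omega}
\end{align*}
and bound the two brackets separately.

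For the second bracket, since $\abs{\ol{\cal G}} \le \norm{G}_\infty = O(1)$ it is at most $\norm{G}_\infty\int\abs{q_t - 1}\,\dd\wh\omega$. By Lemma \ref{lemma: log convex} we have $\wh\omega \propto \me^{-N\wh{\cal H}}$ with $\nabla^2(N\wh{\cal H}) \ge NR^{-2}$, because the logarithmic interaction is convex on $\Sigma_{N-1}$ and $\nabla^2 V \ge R^{-2}$; hence the Bakry--\'Emery criterion gives a logarithmic Sobolev inequality for $\wh\omega$ with constant of order $R^2$, and therefore the relative entropy decays along the flow as $S_{\wh\omega}(q_t) \le \me^{-ct/R^2}S_{\wh\omega}(q)$. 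Pinsker's inequality then gives $\int\abs{q_t - 1}\,\dd\wh\omega \le \sqrt{2S_{\wh\omega}(q_t)} \le C\me^{-ct/R^2}\sqrt{S_{\wh\omega}(q)}$, which is the second term on the right-hand side of the claim.

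For the first bracket, I would write it as $-\int_0^t \int\ol{\cal G}\,(\wh Lq_s)\,\dd\wh\omega\,\dd s$ and integrate by parts in the Dirichlet form, so that each time-slice integrand becomes $-\tfrac1{2N}\int\nabla\ol{\cal G}\cdot\nabla q_s\,\dd\wh\omega$ (up to the normalization of $\wh L$). Writing $\nabla q_s = 2\sqrt{q_s}\,\nabla\sqrt{q_s}$ and using Cauchy--Schwarz, this is controlled, up to constants, by $\pb{\int\abs{\nabla\ol{\cal G}}^2\,q_s\,\dd\wh\omega}^{1/2}\,D_{\wh\omega}(\sqrt{q_s})^{1/2}$. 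The essential ingredient is then the combinatorial gradient estimate $\norm{\nabla\ol{\cal G}}_\infty^2 \le CN^2/\abs{J}$: each $\cal G_{i,\f m}$ depends only on the $n+1$ coordinates $x_i, x_{i+m_1}, \dots, x_{i+m_n}$, and on each of these its derivative is $O(N)$ because of the $N$-rescaling in its argument, so for each fixed coordinate at most $n+1$ of the summands contribute to $\partial_k\ol{\cal G}$, each of size $O(N/\abs{J})$, while only $O(\abs{J})$ coordinates are active; since $q_s$ is a probability density this gives $\int\abs{\nabla\ol{\cal G}}^2\,q_s\,\dd\wh\omega \le CN^2/\abs{J}$. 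Inserting this bound and integrating over $s\in[0,t]$ -- using either the monotonicity of the Fisher information $s\mapsto D_{\wh\omega}(\sqrt{q_s})$ along the flow (a consequence of the log-concavity of $\wh\omega$) or Cauchy--Schwarz in $s$ combined with the entropy--dissipation identity $\int_0^t D_{\wh\omega}(\sqrt{q_s})\,\dd s \le cS_{\wh\omega}(q)$ and the logarithmic Sobolev inequality -- one arrives at the bound $C\sqrt{t\,D_{\wh\omega}(\sqrt q)/\abs{J}}$ for the first bracket. Together with the second bracket this proves the claim.

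The step I expect to require the most care is the bookkeeping of the powers of $N$ in the estimate of the first bracket: one has to verify that the factor $N$ carried by $\nabla\ol{\cal G}$ -- inherited from the microscopic rescaling of the gap variables -- is correctly compensated by the normalization of $D_{\wh\omega}$ in the integration by parts and by the relaxation time $R^2$, so that no spurious power of $N$ remains and the dependence of the final estimate on $D_{\wh\omega}(\sqrt q)$, $S_{\wh\omega}(q)$, $\abs{J}$, and $R^{-2}$ is exactly the one stated. A more routine point is to justify that the $\wh\omega$-reversible diffusion is well-defined on the open simplex $\Sigma_{N-1}$, with $q_s$ remaining a probability density despite the logarithmic singularity of $\wh{\cal H}$ on $\partial\Sigma_{N-1}$; this is handled exactly as for the Dyson Brownian motion itself (Definition \ref{definition of f_t} and the appendix cited there).
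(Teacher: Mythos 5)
The decomposition into two brackets, and the treatment of the second bracket by exponential entropy decay (LSI from Bakry--\'Emery via Lemma \ref{lemma: log convex}) and Pinsker, are correct and match the route taken in the paper's reference, Theorem 4.3 of \cite{ESYY}. The treatment of the first bracket, however, has a genuine gap: the $L^\infty$ gradient bound $\norm{\nabla\ol{\cal G}}_\infty^2\le CN^2/\abs{J}$ combined with a naive Cauchy--Schwarz does not produce the claimed estimate but is off by a polynomial power of $N$. Concretely, with $D_{\wh\omega}(f)=\frac{1}{2N}\int\abs{\nabla f}^2\dd\wh\omega$, the time-slice bound reads
\[
-\frac{1}{2N}\int\nabla\ol{\cal G}\cdot\nabla q_s\,\dd\wh\omega
\;\le\; \frac{1}{N}\pbb{\int\abs{\nabla\ol{\cal G}}^2 q_s\,\dd\wh\omega}^{1/2}\pb{2N D_{\wh\omega}(\sqrt{q_s})}^{1/2}
\;\le\; C\sqrt{\frac{N\,D_{\wh\omega}(\sqrt{q_s})}{\abs{J}}}\,,
\]
and integrating over $[0,t]$ -- whether by monotonicity of $D_{\wh\omega}(\sqrt{q_s})$ or by Cauchy--Schwarz in $s$ together with entropy dissipation and the LSI -- leaves a spurious factor of at least $\sqrt{NR^2}$, which in the regime where the proposition is applied (Theorem \ref{theorem: flow}, with $R^2\sim N^{-\rho-\epsilon}$) is $\gg1$; so the argument does not close. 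The concern you flag at the end about ``bookkeeping of the powers of $N$'' is precisely where the proof breaks down, and the assertion that no spurious power remains is unsupported.

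The two ingredients that rescue the estimate, and which drive the proof of \cite{ESYY} Theorem 4.3, are absent from your proposal. First, $\nabla\cal G_{i,\f m}$ lies in the span of the gap directions $e_k-e_l$ with $\abs{k-l}\le m_n$, and since $G$ has compact support each $\partial_jG$ is nonzero only when the corresponding gap is $O(1/N)$; hence one should Cauchy--Schwarz with the weight $\abs{x_k-x_l}$, converting $\norm{(\partial_k-\partial_l)\ol{\cal G}}_\infty\sim N/\abs{J}$ into $\norm{(x_k-x_l)(\partial_k-\partial_l)\ol{\cal G}}_\infty\sim 1/\abs{J}$, at the cost of replacing $D_{\wh\omega}(\sqrt{q_s})$ on the other side by the weighted Dirichlet form $\cal N(\sqrt{q_s}) \deq \sum_{k<l}\int(x_k-x_l)^{-2}\abs{(\partial_k-\partial_l)\sqrt{q_s}}^2\dd\wh\omega$. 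Second, the time integral of $\cal N$ is not controlled by entropy dissipation but by the second-order Bakry--\'Emery inequality: in Lemma \ref{lemma: log convex}, $\nabla^2 (N\wh{\cal H})$ contains, besides $NR^{-2}$ from $V$, the logarithmic interaction term $\sum_{k<l}(x_k-x_l)^{-2}(e_k-e_l)\otimes(e_k-e_l)$, which yields $\frac{\dd}{\dd s}D_{\wh\omega}(\sqrt{q_s})\le -\frac{c}{N^2}\cal N(\sqrt{q_s})-\frac{c}{R^2}D_{\wh\omega}(\sqrt{q_s})$ and hence $\int_0^\infty\cal N(\sqrt{q_s})\,\dd s\le CN^2 D_{\wh\omega}(\sqrt{q})$. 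The $N^2$ here exactly cancels the $N^{-2}$ gained from the weight, producing the stated $C\sqrt{D_{\wh\omega}(\sqrt{q})\,t/\abs{J}}$ with no extraneous power of $N$. Without the weighted Cauchy--Schwarz and the interaction term in the Bakry--\'Emery estimate, the approach you describe cannot reach the claimed bound.
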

\begin{proof}
Using Lemma \ref{lemma: log convex}, the proof of Theorem 4.3 in \cite{ESYY} applies with merely cosmetic changes.
\end{proof}

Another, standard, consequence of Lemma \ref{lemma: log convex} is the logarithmic Sobolev inequality
\begin{align} \label{log Sobolev}
S_{\wh \omega}(q) \;\leq\; C R^2 D_{\wh \omega}(\sqrt{q})\,.
\end{align}
Using \eqref{log Sobolev} and Proposition \ref{proposition: dtS}, we get the following estimate on the Dirichlet form.

\begin{proposition} \label{proposition: D leq}
Under the assumptions of Proposition \ref{proposition: dtS}, there exists a $\bar \tau \in [\tau/2, \tau]$ such that
\begin{align*}
S_{\wh \omega}(\avg{h_{\bar \tau}}) \;\leq\; CN R^{-2} Q + C R^2\,,
\qquad
D_{\wh \omega}(\sqrt{\avg{h_{\bar \tau}}}) \;\leq\; C N R^{-4} Q + C\,.
\end{align*}
\end{proposition}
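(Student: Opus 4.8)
The plan is to integrate the differential inequality of Proposition \ref{proposition: dtS}, closing it by means of the logarithmic Sobolev inequality \eqref{log Sobolev}. Abbreviate $S(t) \deq S_{\wh \omega}(\avg{h_t})$ and $D(t) \deq D_{\wh \omega}(\sqrt{\avg{h_t}})$; since $\avg{h_t}$ is a probability density with respect to $\wh \omega$, \eqref{log Sobolev} reads $D(t) \geq (C R^2)^{-1} S(t)$. Because $R^{-2} = N^{\rho + \epsilon} \gg \me^{-c(\log N)^\xi}$, inserting this into Proposition \ref{proposition: dtS} gives, for all large $N$ and all $t \in [t_0, \tau]$,
\[
\partial_t S(t) \;\leq\; - \frac{1}{2 C R^2} \, S(t) + b\,, \qquad b \;\deq\; C N R^{-4} Q + C\,.
\]
By Gronwall's inequality, $S(t) \leq \me^{-(t - t_0)/(2 C R^2)} S(t_0) + 2 C R^2 b \leq \me^{-(t - t_0)/(2 C R^2)} S(t_0) + C N R^{-2} Q + C R^2$.

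Next I would bound the initial entropy $S(t_0)$ by a power of $N$. Taking conditional expectations cannot increase entropy, so $S(t_0) \leq S_\omega(h_{t_0})$; since $h_{t_0} \, \dd \omega = \wh Z_{t_0}^{-1} \theta f_{t_0} \, \dd \mu$ with $0 \leq \theta \leq 1$ and $\wh Z_{t_0} = 1 - o(1)$ by \eqref{Z = 1}, expanding the logarithm and discarding the manifestly nonpositive contributions reduces $S_\omega(h_{t_0})$, up to bounded errors, to $S_\mu(f_{t_0})$ plus $N \int W f_{t_0} \, \dd \mu$. The first is $\leq N^C$ by assumption (i) of Proposition \ref{proposition: dtS}. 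For the second, $N \int W f_{t_0} \, \dd \mu = \frac{N^2}{2 R^2} \sum_{i = 1}^N \int (x_i - \gamma_i)^2 f_{t_0} \, \dd \mu$, where the partial sum over $i \leq N - 1$ is at most $Q$, which is polynomially bounded by the rigidity estimate \eqref{main estimate on Q}, while the $i = N$ term is handled using \eqref{bound on bulk eigenvalues} and \eqref{gap 1} on the high-probability set, together with the crude deterministic bound $x_N \leq \norm{A_{t_0}} \leq N^C$ on its complement. This yields $S(t_0) \leq N^C$.

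For $t \in [\tau/2, \tau]$ one has $(t - t_0)/(2 C R^2) \geq c N^\epsilon$, because $R^2 = \tau N^{-\epsilon}$ and $t_0 = \tau/N$; hence $\me^{-(t - t_0)/(2 C R^2)} S(t_0) \leq \me^{-c N^\epsilon} N^C \leq R^2$ for large $N$, and the Gronwall bound becomes $S(t) \leq C N R^{-2} Q + C R^2$ throughout $[\tau/2, \tau]$, which is the first asserted bound, in particular at the $\bar \tau$ selected below. To obtain the Dirichlet form bound, integrate the inequality of Proposition \ref{proposition: dtS} over $[\tau/2, \tau]$ and use $S \geq 0$ together with the bound on $S$ just derived:
\[
\int_{\tau/2}^\tau D(t) \, \dd t \;\leq\; S(\tau/2) + \tau \, \qBB{\sup_{[\tau/2,\tau]} S} \me^{-c(\log N)^\xi} + \frac{\tau}{2} b \;\leq\; C N R^{-2} Q + C R^2 + \frac{\tau}{2} \pb{C N R^{-4} Q + C}\,.
\]
By the mean value theorem for integrals there is $\bar \tau \in [\tau/2, \tau]$ with $D(\bar \tau) \leq \frac{2}{\tau} \int_{\tau/2}^\tau D(t) \, \dd t$; substituting and using the identities $R^{-2}/\tau = N^{-\epsilon} R^{-4}$ and $R^2/\tau = N^{-\epsilon}$ gives $D(\bar \tau) \leq C N R^{-4} Q + C$. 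Together with the first bound evaluated at this same $\bar \tau$, this is the claim.

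The only step that is not purely mechanical is the initial entropy bound $S(t_0) \leq N^C$, and within it the single genuine point is the contribution of the largest eigenvalue to $\int W f_{t_0} \, \dd \mu$: one must complement the high-probability bound $x_N \leq N^C$ by an a priori deterministic polynomial bound on $\norm{A_{t_0}}$ so that the rare event $\{x_N > N^C\}$ contributes negligibly. Everything else is a standard Gronwall/averaging argument, the structural input being that the logarithmic Sobolev inequality produces relaxation on the time scale $R^2 = \tau N^{-\epsilon}$, which is shorter than the available window $[\tau/2, \tau]$ precisely by the factor $N^{-\epsilon}$.
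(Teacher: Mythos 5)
Your argument follows the paper's proof essentially verbatim: the logarithmic Sobolev inequality \eqref{log Sobolev} closes the entropy dissipation inequality \eqref{dtS}, Gronwall gives the exponential decay of the entropy, the initial entropy is bounded by a power of $N$ via \eqref{SA1} and the quadratic confining potential $W$, and averaging \eqref{dtS} over $[\tau/2,\tau]$ yields the Dirichlet form bound and the choice of $\bar\tau$ via the mean value theorem. The one step that requires a correction is your handling of the $i=N$ term in $N\int W f_{t_0}\,\dd\mu$ on the low-probability complement: since $A_{t_0}$ contains a Gaussian component (cf.\ \eqref{law of Ht}), there is no deterministic polynomial bound on $\norm{A_{t_0}}$, so the rare event cannot be controlled by a pointwise bound on $x_N$ there. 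It should instead be estimated by Cauchy--Schwarz together with a moment bound such as $\E\tr A_{t_0}^4 \leq N^C$, which is precisely the device the paper uses for the analogous low-probability contribution in the proof of Theorem \ref{theorem: universality of Gaussian divisibles}. You also write $N\int W f_{t_0}\,\dd\mu = \frac{N^2}{2R^2}\sum_i\int(x_i-\gamma_i)^2 f_{t_0}\,\dd\mu$ with a stray extra factor of $N$; the correct prefactor is $\frac{N}{2R^2}$, but this is harmless since the only conclusion drawn is that the expression is $\leq N^C$.
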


\begin{proof}
Combining \eqref{log Sobolev} with \eqref{dtS} yields
\begin{align}
\partial_t S_{\wh \omega}(\avg{h_t}) \;\leq\; - C R^{-2} S_{\wh \omega}(\avg{h_t})
+
C N Q R^{-4} + C\,,
\end{align}
which we integrate from $t_0$ to $t$ to get
\begin{align} \label{S S0}
S_{\wh \omega}(\avg{h_t}) \;\leq\; \me^{-C R^{-2} (t - t_0)} S_{\wh \omega}(\avg{h_{t_0}}) + CN Q R^{-2} + C R^2\,.
\end{align}
Moreover, \eqref{Z = 1} yields
\begin{multline*}
S_{\wh \omega}(\avg{h_{t_0}}) \;\leq\; C S_{\wh \omega}(\avg{g_{t_0}}) + \me^{- \nu (\log N)^\xi}
\;\leq\; C S_{\omega}(g_{t_0}) + \me^{- \nu (\log N)^\xi}
\\
=\; C S_\mu(f_{t_0}) - C \int \log \psi \, f_{t_0} \, \dd \mu + \me^{- \nu (\log N)^\xi}\,,
\end{multline*}
where the second inequality follows from the fact that taking marginals reduces the relative entropy; see the proof of 
Lemma \ref{lemma: initial entropy} below for more details. Thus we get
\begin{align*}
S_{\wh \omega}(\avg{h_{t_0}}) \;\leq\; N^C + N R^{-2} Q \;\leq\; N^C\,.
\end{align*}
Thus \eqref{S S0} yields
\begin{align}
S_{\wh \omega}(\avg{h_t}) \;\leq\; N^C \me^{-C R^{-2} (t - t_0)} + CN R^{-2} Q + C R^2
\end{align}
for $t \in [t_0, \tau]$.
Integrating \eqref{dtS} from $\tau/2$ to $\tau$ therefore gives
\begin{align*}
\frac{2}{\tau} \int_{\tau/2}^\tau D_{\wh \omega}(\sqrt{\avg{h_t}}) \, \dd t \;\leq\; C N R^{-4} Q + C\,,
\end{align*}
and the claim follows.
\end{proof}

We may finally complete the proof of Theorem \ref{theorem: flow}.

\begin{proof}[Proof of Theorem \ref{theorem: flow}]
The assumptions of Proposition \ref{proposition: dtS} are verified in Subsection \ref{subsect: verifying dtS} below.  
Hence Propositions \ref{proposition: q omega} and \ref{proposition: D leq} yield
\begin{align*}
\absbb{\int \frac{1}{\abs{J}} \sum_{i \in J} \cal G_{i, \f m} \, h_{\bar \tau} \, \dd \omega - \int \frac{1}{\abs{J}} 
\sum_{i \in J} \cal G_{i, \f m} \, \dd \omega} \;\leq\; C N^\epsilon \sqrt{\frac{N^{1 + \rho} Q}{\abs{J}}} + C 
\sqrt{\frac{N^{- 2 \phi - \rho}}{\abs{J}}}\,.
\end{align*}
Using \eqref{Z = 1} and \eqref{SA2} we get
\begin{align} \label{ft mu omega}
\absbb{\int \frac{1}{\abs{J}} \sum_{i \in J} \cal G_{i, \f m} \, f_{\bar \tau} \, \dd \mu - \int \frac{1}{\abs{J}} 
\sum_{i \in J} \cal G_{i, \f m} \, \dd \omega} \;\leq\; C N^\epsilon \sqrt{\frac{N^{1 + \rho} Q}{\abs{J}}} + C 
\sqrt{\frac{N^{- 2 \phi - \rho}}{\abs{J}}}\,.
\end{align}

In order to compare the measures $\wh \omega$ and $\mu^{(N - 1)}$, we define the density
\begin{align*}
q(\f x) \;\deq\; \frac{1}{Z'} \exp \hBB{\sum_{i < N} \frac{1}{4} x_i^2 + \sum_{i < N} \frac{N}{2 R^2} (x_i - 
\gamma_i)^2 - \sum_{i < N} \log \abs{x_N - x_i}}\,,
\end{align*}
where $Z'$ is a normalization chosen so that $\theta q \, \dd \omega$ is a probability measure. It is easy to see that
\begin{align*}
q \, \dd \omega \;=\; \dd \mu^{(N - 1)} \otimes \dd g\,,
\end{align*}
where $\dd g = C \me^{- \frac{N}{4} x_N^2 - \frac{N}{2R^2} (x_N - \gamma_N)^2} \dd x_N$ is a Gaussian measure.  
Similarly to Proposition \ref{proposition: q omega}, we have
\begin{align*}
\absbb{\int \frac{1}{\abs{J}} \sum_{i \in J} \cal G_{i, \f m} \, \theta q \, \dd \omega - \int \frac{1}{\abs{J}} \sum_{i 
\in J} \cal G_{i, \f m} \, \dd \omega} \;\leq\; C \sqrt{\frac{D_{\omega}(\sqrt{\theta q}) \tau}{\abs{J}}} + C 
\sqrt{S_{\omega}(\theta q)} \, \me^{- c \tau / R^2}\,.
\end{align*}
Thus we have to estimate
\begin{align*}
D_\omega(\sqrt{\theta q}) &\;\leq\; \frac{C}{N} \int \abs{\nabla \log q}^2 \, \theta q \, \dd \omega + \frac{C}{N} \int 
\frac{\abs{\nabla \theta}^2}{\theta} \, q \, \dd \omega
\\
&\;\leq\; \frac{C}{N} \sum_{i < N} \int \pbb{\frac{1}{4} x_i^2 + \frac{N^2}{R^4} (x_i - \gamma_i)^2 + \frac{1}{(x_N - 
x_i)^2}} \, \theta q \, \dd \omega + \frac{1}{N}
\\
&\;\leq\; C + N R^{-4} \int \sum_{i < N} (x_i - \gamma_i)^2 \, \dd \mu^{(N - 1)}
\end{align*}
where the second inequality follows from standard large deviation results for GOE. Since $\int \sum_{i < N} (x_i - 
\gamma_i)^2 \, \dd \mu^{(N - 1)} \leq C N^{-1 + \epsilon'}$ for arbitrary $\epsilon'$ is known to hold for GOE (see 
\cite{EYYrigidity} where this is proved for more general Wigner matrices), we find
\begin{align*}
\absbb{\int \frac{1}{\abs{J}} \sum_{i \in J} \cal G_{i, \f m} \, \theta q \, \dd \omega - \int \frac{1}{\abs{J}} \sum_{i 
\in J} \cal G_{i, \f m} \, \dd \omega} \;\leq\; C \sqrt{\frac{N^{- \rho}}{\abs{J}}} + C \sqrt{\frac{N^{\rho + 2 
\epsilon + \epsilon'}}{\abs{J}}}\,.
\end{align*}
The cutoff $\theta$ can be easily removed using the standard properties of $\dd \mu^{(N - 1)}$. Choosing $\epsilon' = 
\epsilon$, replacing $\epsilon$ with $\epsilon / 2$, and recalling \eqref{ft mu omega} completes the proof.
\end{proof}

\subsection{Verifying the assumptions of Proposition \ref{proposition: dtS}} \label{subsect: verifying dtS}
The estimate \eqref{SA1} is an immediate consequence of the following lemma.

\begin{lemma} \label{lemma: initial entropy}
Let the entries of $A_0$ have the distribution $\zeta_0$.
Then for any $t > 0$ we have
\begin{align*}
S_\mu(f_t) \;\leq\; N^2 (N m_2(\zeta_0) - \log \pb{1 - \me^{-t})}\,,
\end{align*}
where $m_2(\zeta_0)$ is the second moment of $\zeta_0$.
\end{lemma}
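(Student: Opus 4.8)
The plan is to bound $S_\mu(f_t)$ by the relative entropy of the law of the \emph{matrix} $A_t$ with respect to the GOE matrix law, and then to estimate the latter entry by entry via explicit one-dimensional Gaussian computations. The point is that, although the eigenvalue dynamics $\f x_t$ are strongly interacting, the matrix Ornstein--Uhlenbeck flow \eqref{simple bm} has product structure, and relative entropy only decreases under the eigenvalue map.

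Concretely, I would first recall from \eqref{law of Ht} that $A_t$ has the same law as $\me^{-t/2} A_0 + (1 - \me^{-t})^{1/2} V$ with $V$ a GOE matrix, so that the invariant law of \eqref{simple bm} is exactly the GOE law whose ordered-eigenvalue pushforward is $\mu$. The ordered-eigenvalue map sends $\mathcal L(A_t)$ to $f_t \, \mu$ (this is the content of Definition \ref{definition of f_t}, since $f_t \mu$ is the law of $\f x_t$) and $\mathcal L(V)$ to $\mu$. Hence, by the data-processing inequality for relative entropy,
\[
S_\mu(f_t) \;=\; \int f_t \log f_t \, \dd \mu \;\leq\; D\pb{\mathcal L(A_t) \,\big\|\, \mathcal L(V)} \;=\; \sum_{i \leq j} D\pb{\mathcal L(a_{ij,t}) \,\big\|\, N(0, \sigma_{ij}^2)}\,,
\]
where the factorization uses that the entries of $A_t$ and of $V$ are independent up to the symmetry constraint, and $\sigma_{ij}^2 \in \h{N^{-1}, 2N^{-1}}$ is the GOE entry variance.

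Next I would estimate each summand. Since \eqref{simple bm} is an entrywise SDE, the law of $a_{ij,t}$ is the mixture $\int N\pb{\me^{-t/2} x,\, (1 - \me^{-t}) \sigma_{ij}^2} \, \zeta_0(\dd x)$ obtained by conditioning on $a_{ij,0} \sim \zeta_0$. By the joint convexity of relative entropy,
\[
D\pb{\mathcal L(a_{ij,t}) \,\big\|\, N(0, \sigma_{ij}^2)} \;\leq\; \int D\pB{N\pb{\me^{-t/2} x,\, (1 - \me^{-t}) \sigma_{ij}^2} \,\Big\|\, N(0, \sigma_{ij}^2)} \, \zeta_0(\dd x)\,.
\]
Using the closed form $D\pb{N(m,v) \| N(0,\sigma^2)} = \tfrac12\pb{-\log(v/\sigma^2) + v/\sigma^2 - 1 + m^2/\sigma^2}$ with $v = (1 - \me^{-t}) \sigma^2$, together with $(1-\me^{-t}) - 1 \le 0$, $\me^{-t} \le 1$, and $\sigma_{ij}^{-2} \le N$, this gives the bound $\tfrac12\pb{-\log(1 - \me^{-t}) + N m_2(\zeta_0)}$ for each pair $(i,j)$. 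Summing over the at most $N^2$ pairs and using $-\log(1 - \me^{-t}) > 0$ yields
\[
S_\mu(f_t) \;\leq\; \frac{N^2}{2}\pB{-\log(1-\me^{-t}) + N m_2(\zeta_0)} \;\leq\; N^2\pB{N m_2(\zeta_0) - \log(1-\me^{-t})}\,,
\]
which is the claim.

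The only genuinely non-routine step is the first one: realizing that the interacting-particle entropy is dominated, via contraction of relative entropy under the eigenvalue map, by the entropy of the product-structure matrix flow. After that reduction everything is the explicit Gaussian computation above, and the remaining care is just to confirm that the stationary law of \eqref{simple bm} is precisely the GOE law underlying $\mu$ — which is exactly \eqref{law of Ht} in the limit $t \to \infty$.
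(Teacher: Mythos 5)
Your proof is correct and takes essentially the same route as the paper: first contract relative entropy under the eigenvalue map (the paper phrases this as ``marginals reduce relative entropy'' using the parametrization $A = A(\f x, \f v)$, you invoke data-processing directly), then tensorize over the independent matrix entries, use the mixture representation from \eqref{law of Ht} with joint convexity (the paper's ``Jensen's inequality''), and finish with the explicit Gaussian relative entropy formula. The only cosmetic difference is that you track the diagonal and off-diagonal variances $\sigma_{ij}^2 \in \{N^{-1}, 2N^{-1}\}$ explicitly, whereas the paper writes the off-diagonal case and remarks that the diagonal is similar.
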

\begin{proof}
Recall that the relative entropy is defined, for $\nu \ll \mu$, as $S(\nu | \mu) \;\deq\; \int \log \frac{\dd \nu}{\dd 
\mu} \, \dd \nu$. If $\wh \nu$ and $\wh \mu$ are marginals of $\nu$ and $\mu$ with respect to the same variable, it is 
easy to check that $S(\wh \nu | \wh \mu) \leq S(\nu | \mu)$. Therefore
\begin{align*}
S_\mu(f_t) \;=\; S(f_t \mu | \mu) \;\leq\; S(A_t | V) \;=\; N^2 S(\zeta_t | g_{2/N})\,,
\end{align*}
where $\zeta_t$ denotes the law of the off-diagonal entries of $A_t$, and $g_\lambda$ is a standard Gaussian with 
variance $\lambda$ (the diagonal entries are dealt with similarly).  Setting $\gamma = 1 - \me^{-t}$, we find from 
\eqref{law of Ht} that $\zeta_t$ has probability density $\varrho_\gamma * g_{2 \gamma/N}$, where $\varrho_\gamma$ is 
the probability density of $(1 - \gamma)^{1/2} \zeta_0$.  Therefore Jensen's inequality yields
\begin{align*}
S(\zeta_t | g_{2/N}) \;=\; S \pbb{\int \dd y \, \varrho_\gamma(y) \, g_{2 \gamma/N}(\cdot - y) \,\bigg\vert\, g_{2/N}} 
\;\leq\; \int \dd y \, \varrho_\gamma(y) S\pb{g_{2 \gamma/N}(\cdot - y) | g_{2/N}}\,.
\end{align*}
By explicit computation one finds
\begin{align*}
S\pb{g_{2 \gamma/N}(\cdot - y) | g_{2 /N}} \;=\; \frac{1}{2} \pbb{\frac{N}{2} y^2 - \log \gamma + \gamma - 1}\,.
\end{align*}
Therefore
\begin{align*}
S(\zeta_t | g_{2/N}) \;\leq\; N m_2(\zeta_0) - \log \gamma\,,
\end{align*}
and the claim follows.
\end{proof}

The estimate \eqref{SA2} follows from \eqref{gap} and \eqref{bound on bulk eigenvalues}.
It only remains to verify \eqref{SA3}.

\begin{lemma}\label{lemma: pointwise bound on f}
For any $t \in [t_0, \tau]$ we have
\begin{align} \label{bounds on log g}
(\theta_1 \theta_2)(\wh x) \absb{\log \avg{\theta g_t}(\wh x)}^2 \;\leq\; N^C\,.
\end{align}
\end{lemma}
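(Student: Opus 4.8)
The plan is to prove the two-sided bound \eqref{bounds on log g} by analysing the conditional expectation $\avg{\theta g_t}(\wh x)$ on the support of $\theta_1 \theta_2$ and showing that it is bounded above and below by $\me^{\pm N^{C}}$ for an appropriate constant $C$. Since $g_t = f_t/\psi$, writing out the definitions we have
\begin{align*}
\avg{\theta g_t}(\wh x) \;=\; \frac{\int_{x_{N-1}}^\infty \theta(x_1,x_{N-1},x_N)\, f_t(\f x)\, \dd x_N}{\wh \omega(\wh x)}\,,
\end{align*}
so the task reduces to controlling $f_t$ pointwise from above and the marginal $\wh \omega(\wh x)$ from below (and the matching opposite bounds). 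First I would obtain a crude pointwise upper bound on the density $f_t(\f x)$: since $f_t$ solves the heat-type equation $\partial_t f_t = L f_t$ with the explicit representation \eqref{law of Ht} for $A_t$, one can write $f_t$ as a convolution and use that at the fixed time scale $t \geq t_0 = N^{-\rho-1}$ the Gaussian component regularizes everything, giving a bound of the form $f_t(\f x) \leq \me^{N^C}$ on the relevant region where the eigenvalues are $O(1)$-separated from pathologies — and this region is exactly carved out by $\theta_1\theta_2$ together with the gap estimate \eqref{gap}. The factor $\psi$ is bounded above and below by $\me^{\pm N^C}$ wherever $W(\f x)$ is $O(N^{C})$, which again holds on the support of $\theta$ after invoking the rigidity bound \eqref{rigidity for large phi}.

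Next I would bound $\wh\omega(\wh x)$ from below. Using Lemma \ref{lemma: log convex}, $\wh\omega(\wh x) = \wt Z^{-1} \me^{-N\wh{\cal H}(\wh x)}$ with $\wh{\cal H}$ given by \eqref{property of V}; the logarithmic part $-N^{-1}\sum_{i<j<N}\log|x_i-x_j|$ is controlled on $\Sigma_{N-1}$ for configurations with the expected spacing, and the smooth part $V(\wh x)$ — being the $-N^{-1}\log$ of a Gaussian-type integral over $x_N$ — is likewise $O(N^C)$ on the support of $\theta_1\theta_2$, where $x_{N-1} \leq 2 + 2\sigma/5$ keeps the domain of $x_N$-integration well away from where the integrand degenerates. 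One also needs the normalization $\wt Z$ to be bounded above and below by $\me^{\pm N^C}$, which follows from standard GOE partition-function asymptotics together with the fact that $W$ contributes only polynomially in the exponent. Combining the upper bound on the numerator $\int \theta f_t \,\dd x_N$ (which is at most $\me^{N^C}$ times the length of the effective integration region, itself $O(1)$) with the lower bound on $\wh\omega(\wh x)$ gives $\avg{\theta g_t}(\wh x) \leq \me^{N^C}$; the reverse inequality $\avg{\theta g_t}(\wh x) \geq \me^{-N^C}$ follows symmetrically, using a lower bound on $f_t$ valid on a suitably chosen sub-region of positive $x_N$-measure — here one exploits that $\theta_3$ forces $x_N \geq 2 + 3\sigma/5$, and on such configurations $f_t$ is bounded below thanks again to the representation \eqref{law of Ht} and the fact that $A_0$ satisfies Definition \ref{definition of A} so its largest eigenvalue is $f_0 + f_0^{-1} + o(1)$ with positive probability density nearby.

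The main obstacle I anticipate is the pointwise \emph{lower} bound on $f_t$: unlike upper bounds, which follow from rough operator-norm control of the heat semigroup, a lower bound on the evolved density requires showing that the initial law $f_0\,\dd\mu$ places enough mass on configurations that, under the short-time ($t \asymp N^{-\rho}$) Dyson Brownian motion, can reach a neighbourhood of the given $\wh x$ with $x_N$ in the window dictated by $\theta_3$. This is where one must carefully combine the explicit Ornstein–Uhlenbeck-type representation \eqref{simple bm}–\eqref{law of Ht} with the rigidity input \eqref{main estimate on Q} for $A_0$ and the spectral gap \eqref{gap 1}, and track that all the resulting exponents are merely polynomial in $N$ — which is all that \eqref{bounds on log g} asks for, so there is considerable room. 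The remaining ingredients (GOE partition function estimates, large-deviation bounds for $\sum(x_i-\gamma_i)^2$) are standard and quotable from \cite{EYYrigidity, ESYY}, so the write-up should be short once the lower-bound mechanism is in place.
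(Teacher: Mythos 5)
Your overall plan — establishing two-sided bounds $\me^{-N^C} \leq \avg{\theta g_t}(\wh x) \leq \me^{N^C}$ on the support of $\theta_1\theta_2$ — is the right shape, and you correctly identify that you need pointwise control on $f_t$ and $\psi$. But there are two genuine problems.

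First, you propose to bound the numerator and the denominator $\wh\omega(\wh x)$ \emph{separately} by quantities of order $\me^{\pm N^C}$. This cannot work: $\wh\omega(\wh x)$ is proportional to $\int_{x_{N-1}}^\infty \psi \,\mu(\wh x, x_N)\,\dd x_N$, which contains the Vandermonde factor $\prod_{i<j<N}|x_i - x_j|$ and therefore vanishes as any two of $x_1,\dots,x_{N-1}$ collide, no matter how $x_1$ and $x_{N-1}$ are restricted by $\theta_1\theta_2$. Your remark that the logarithmic part of $\wh{\cal H}$ is ``controlled on $\Sigma_{N-1}$ for configurations with the expected spacing'' concedes this — but the lemma must hold for \emph{all} $\wh x$, not only rigidity-typical ones, so this is a restriction you are not entitled to impose. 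The paper avoids the issue by never detaching $\mu$ from either side: both $\avg{\theta_3 g_t}$ and its reciprocal are treated as ratios of the form $\bigl(\int \phi_1 \mu \,\dd x_N\bigr)/\bigl(\int \phi_2 \mu \,\dd x_N\bigr)$, so the Vandermonde and the normalization $Z$ cancel identically and only the bounded-above-and-below factors $\phi_1 = \theta_3 f_t$, $\phi_2 = \psi$ (and a comparison like \eqref{lower bound on mu exp}) need to be estimated.

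Second, you identify the pointwise lower bound on $f_t$ as ``the main obstacle'' and propose attacking it via DBM reachability — tracking which configurations the short-time Dyson flow can reach from the support of $f_0\mu$, invoking rigidity \eqref{main estimate on Q} and the spectral gap \eqref{gap 1}. This is both much harder than necessary and not obviously feasible at the eigenvalue level. The paper's route is entirely elementary and works upstream, at the matrix-entry level: by \eqref{law of Ht} the law $\zeta_t$ of a single entry is the convolution $\varrho_\gamma * g_{2\gamma/N}$ with $\gamma = 1 - \me^{-t} \geq c\,t_0 = c\,N^{-\rho-1}$, and since the Gaussian kernel has variance bounded below by $N^{-C}$, an explicit computation gives $\me^{-N^C - N^C a^2} \leq \dd\zeta_t/\dd a$ (and the analogous Radon–Nikodym bound against the GOE Gaussian). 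Taking the product over the $O(N^2)$ independent entries yields $\me^{-N^C - N^C\tr A^2} \leq F_t(A) \leq \me^{N^C}$ for the density $F_t$ of $\law(A_t)$ with respect to GOE, and integrating out the eigenvector marginal $P(\dd \f v)$ transfers this directly to $\me^{-N^C - N^C \sum_i x_i^2} \leq f_t(\f x) \leq \me^{N^C}$. No rigidity, no gap information, and no probabilistic reachability argument is needed — only the explicit Ornstein–Uhlenbeck representation and the fact that the number of entries is polynomial in $N$, which is exactly the slack you correctly note the statement affords.
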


\begin{proof}
Let $\zeta_t$ be the law of an off-diagonal entry $a$ of $A_t$ (the diagonal entries are treated similarly). From 
\eqref{law of Ht} we find
\begin{align*}
\zeta_t \;=\; \varrho_\gamma * g_{2 \gamma/N}\,,
\end{align*}
where $\gamma = 1 - \me^{-t}$, $\varrho_\gamma$ is the law of $(1 - \gamma)^{1/2} \zeta_0$, and $g_\lambda$ is a 
standard Gaussian with variance $\lambda$. Using $\dd a$ to denote Lebesgue measure, we find by explicit calculation that
\begin{align*}
\me^{-N^C - N^C a^2} \;\leq\; \frac{\dd \zeta_t}{\dd a} \;\leq\; \me^{N^C - \frac{N}{4} a^2}\,,
\end{align*}
which gives
\begin{align*}
\me^{-N^C - N^C a^2} \;\leq\; \frac{\dd \zeta_t}{\dd g_{2 \gamma/N}} \;\leq\; \me^{N^C}\,.
\end{align*}
Therefore, the density $F_t(A)$ of the law of $A$ with respect to the GOE measure satisfies
\begin{align*}
\me^{-N^C - N^C \tr A^2} \;\leq\; F_t(A) \;\leq\; \me^{N^C}\,.
\end{align*}
Parametrizing $A = A(\f x, \f v)$ using the eigenvalues $\f x$ and eigenvectors $\f v$, the GOE measure can be written 
in the factorized form $\mu(\dd \f x) P(\dd \f v)$, where $\mu$ is defined in \eqref{GOE measure} and $P$ is a 
probability measure. Thus we get that the density
\begin{align*}
f_t(\f x) \;=\; \int F_t(\f x, \f v) \, P(\dd \f v)
\end{align*}
satisfies
\begin{align} \label{bounds on f}
\me^{-N^C - N^C \sum_i x_i^2} \;\leq\; f_t(\f x) \;\leq\; \me^{N^C}\,.
\end{align}

Next, it is easy to see that
\begin{align} \label{bounds on psi}
\me^{-N^C - N^C \sum_i x_i^2} \;\leq\; \psi(\f x) \;\leq\; \me^{N^C}\,.
\end{align}
Using \eqref{bounds on psi} we may now derive an upper bound on $\avg{\theta_3 g_t}$:
\begin{align*}
\avg{\theta_3 g_t}(\wh x) &\;=\; \frac{\int \dd x_N \, \theta_3(x_N) f_t(\wh x, x_N) \mu(\wh x, x_N)}{\int \dd x_N \, 
\psi(\wh x, x_N) \mu(\wh x, x_N)} \\
&\;\leq\; \me^{N^C + N^C \sum_{i < N} x_i^2} \frac{\int \dd x_N \, \mu(\wh x, x_N)}{\int \dd x_N \, \me^{- N^C x_N^2} 
\mu(\wh x, x_N)}\,.
\end{align*}
Since
\begin{align} \label{lower bound on mu exp}
\frac{\int \dd x_N \, \me^{- N^C x_N^2} \mu(\wh x, x_N)}{\int \dd x_N \, \mu(\wh x, x_N)} \;=\; \frac{\int_{x_{N - 
1}}^\infty \dd x_N \, \me^{-N^C x_N^2} \prod_{i < N} (x_N - x_i) \me^{-\frac{N}{4} x_N^2}}{\int_{x_{N - 1}}^\infty \dd 
x_N \, \prod_{i < N} (x_N - x_i) \me^{-\frac{N}{4} x_N^2}} \;\geq\; \me^{-N^C -N^C \sum_{i < N} x_i^2}
\end{align}
by a straightforward calculation, we get
\begin{align*}
\avg{\theta_3 g_t}(\wh x) \;\leq\; \me^{N^C + N^C \sum_{i < N} x_i^2}\,.
\end{align*}

We now derive a lower bound on $\avg{\theta_3 g_t}$. Using \eqref{bounds on psi} and \eqref{bounds on f} we find
\begin{align*}
\avg{\theta_3 g_t}(\wh x) &\;\geq\; \me^{-N^C} \frac{\int \dd x_N \, \theta_3(x_N) f_t(\wh x, x_N) \mu(\wh x, x_N)}{\int 
\dd x_N \, \mu(\wh x, x_N)} \\
&\;\geq\;
\me^{-N^C - N^C \sum_{i < N} x_i^2} \frac{\int_{2 + \sigma/2}^\infty \dd x_N \, \me^{-N^C x_N^2} \mu(\wh x, 
x_N)}{\int_{x_{N - 1}}^\infty \dd x_N \, \mu(\wh x, x_N)}
\\
&\;\geq\; \me^{-N^C - N^C \sum_{i < N} x_i^2}\,,
\end{align*}
by a calculation similar to \eqref{lower bound on mu exp}. The claim follows from
\begin{equation*}
(\theta_1 \theta_2)(\wh x) \abs{\log \avg{\theta g_t}(\wh x)}^2 \;\leq\; 2 (\theta_1 \theta_2)(\wh x) \abs{\log \theta_1 
\theta_2}^2 + 2 (\theta_1 \theta_2)(\wh x) \abs{\log \avg{\theta_3 g_t}(\wh x)}^2 \;\leq\; 2 + N^C\,. \qedhere
\end{equation*}
\end{proof}

\section{Bulk universality: proof of Theorem \ref{theorem: bulk universality}} \label{sect: bulk universality}
Similarly to \eqref{definition of marginal}, we define $p_{t, N}^{(N - 1)}(x_1, \dots, x_{N - 1})$ as the probability 
density obtained by symmetrizing (in the variables $x_1, \dots, x_{N - 1}$) the function
$\int \dd x_N \, f_t(\f x) \, \mu(\f x)$,
and set, for $n \leq N - 1$,
\begin{align*}
p_{t, N}^{(n)}(x_1, \dots, x_n) \;\deq\; \int \dd x_{n + 1} \cdots \dd x_{N - 1} \, p_{t,N}^{(N - 1)}(x_1, \dots, x_{N - 
1})\,.
\end{align*}
We begin with a universality result for sparse matrices with a small Gaussian convolution.
\begin{theorem} \label{theorem: universality of Gaussian divisibles}
Let $E \in [-2 + \kappa,2 - \kappa]$ for some $\kappa > 0$ and let $b \equiv b_N$ satisfy $\abs{b} \leq \kappa/2$. Pick 
$\epsilon, \beta > 0$, and set $\tau \deq N^{-2 \alpha + \beta}$, where
\begin{equation} \label{def of alpha phi}
\alpha \;\equiv\; \alpha(\phi) \;\deq\; \min \hbb{2 \phi - \frac{1}{2} \,,\, 4 \phi - \frac{2}{3}}\,.
\end{equation}
Let $n \in \N$ and $O : \R^n \to \R$ be compactly supported and continuous.  Then there is a $\bar \tau \in [\tau/2, 
\tau]$ such that
\begin{multline}
\absBB{\int_{E - b}^{E + b} \frac{\dd E'}{2 b} \int \dd \alpha_1 \cdots \dd \alpha_n\, O(\alpha_1, \dots, \alpha_n) \, 
\frac{1}{\varrho_{sc}(E)^n}\pb{p_{\bar \tau, N}^{(n)} - p_{{\rm GOE}, N}^{(n)}} \pbb{E' + 
\frac{\alpha_1}{N\varrho_{sc}(E)}, \dots, E' + \frac{\alpha_n}{N\varrho_{sc}(E)}}}
\\
\leq\; C_n N^\epsilon \qB{b^{-1} N^{- 2 \phi} + b^{-1/2} N^{-\beta / 2}}\,.
\end{multline}
\end{theorem}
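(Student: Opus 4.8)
The plan is to combine the local ergodicity of the marginal Dyson Brownian motion (Theorem~\ref{theorem: flow}) with the eigenvalue rigidity estimates of Theorem~\ref{thm: eigenvalue locations}, and then to pass from averaged gap statistics to averaged correlation functions by the now-standard argument of \cite{ESYY}. Throughout we may assume $0 < \beta < 2\alpha$, so that $\rho \deq 2\alpha - \beta$ satisfies $0 < \rho < 1$ (using $\alpha \le 2\phi - \tfrac12 \le \tfrac12$); this is the only regime relevant to our applications. With this choice $\tau = N^{-\rho}$ and $t_0 = N^{-\rho-1}$ as in Section~\ref{section: DBM}, and the hypotheses of Theorem~\ref{theorem: flow} --- namely the assumptions of Proposition~\ref{proposition: dtS}, verified in Subsection~\ref{subsect: verifying dtS} --- hold because the initial matrix $A$ satisfies Definition~\ref{definition of A}.

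The quantitative input to Theorem~\ref{theorem: flow} is a bound on $Q = \sup_{t \in [t_0, \tau]} \sum_{i = 1}^{N - 1} \int (x_i - \gamma_i)^2 f_t \, \dd \mu$. As recorded in Section~\ref{section: DBM}, $A_t$ satisfies Definition~\ref{definition of A} with $q_t \sim q_0 \ge N^\phi$ uniformly for $t \in [t_0, \tau]$, so \eqref{main estimate on Q} applies to each $A_t$; writing $\sum_i \int (x_i - \gamma_i)^2 f_t \, \dd \mu = \E^{A_t} \sum_i (x_i - \gamma_i)^2$ and controlling the complementary small-probability event by a crude second-moment estimate together with \eqref{bound on bulk eigenvalues}, I obtain $Q \le (\log N)^{C \xi} (N^{1 - 4\phi} + N^{4/3 - 8\phi})$. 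By the definition \eqref{def of alpha phi} we have $2\alpha \le 4\phi - 1$ and $2\alpha \le 8\phi - \tfrac43$, hence $N^{2\alpha} Q \le (\log N)^{C\xi}$; together with $\rho \le 1 - \beta$ (a consequence of $\phi \le \tfrac12$) this gives $N^{1 + \rho} Q + N^\rho \le C (\log N)^{C\xi} N^{1 - \beta}$.

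Next I would apply Theorem~\ref{theorem: flow}. For $E' \in [E - b, E + b]$ let $J = J_{E'}$ be the set of indices $i \le N - m_n - 1$ with $\gamma_i$ lying in a window of width of order $b$ around $E'$, so that $\abs{J} \sim 2 b N \varrho_{sc}(E)$; since $E \in [-2 + \kappa, 2 - \kappa]$ and $\abs{b} \le \kappa/2$, the density $\varrho_{sc}$ is bounded above and below and varies by $O(b)$ on the relevant range. Then \eqref{local ergodicity of DBM}, which holds uniformly in $J$ and in the test function $G$, produces $\bar\tau \in [\tau/2, \tau]$ with
\[
\absbb{\int \frac{1}{\abs{J}} \sum_{i \in J} \cal G_{i, \f m} \, f_{\bar\tau} \, \dd \mu - \int \frac{1}{\abs{J}} \sum_{i \in J} \cal G_{i, \f m} \, \dd \mu^{(N - 1)}} \;\le\; C N^\epsilon \sqrt{\frac{N^{1 + \rho} Q + N^\rho}{\abs{J}}} \;\le\; C N^\epsilon\, b^{-1/2} N^{-\beta/2}\,,
\]
after absorbing $(\log N)^{C\xi}$ into $N^\epsilon$ and renaming $\epsilon$. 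This is precisely the second error term in the assertion.

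The remaining, and principal, step is to convert this bound on averaged gap statistics into the claimed bound on the energy-averaged $n$-point correlation functions; this is where the first error term $b^{-1} N^{-2\phi}$ appears, and I expect it to be the main obstacle. Following \cite{ESYY} (see also \cite{EYY, EYYrigidity}), one expands
\[
\int_{E - b}^{E + b} \frac{\dd E'}{2b} \int \dd \alpha_1 \cdots \dd \alpha_n \, O(\alpha_1, \dots, \alpha_n) \, \frac{1}{\varrho_{sc}(E)^n} \, p_{\bar\tau, N}^{(n)}\pbb{E' + \frac{\alpha_1}{N \varrho_{sc}(E)}, \dots, E' + \frac{\alpha_n}{N \varrho_{sc}(E)}}\,,
\]
and the corresponding GOE quantity, in terms of expectations of observables $\cal G_{i, \f m}$ built from $O$ against $f_{\bar\tau}\,\dd\mu$ and against $\dd\mu^{(N-1)}$, by identifying each energy $E'$ with the index $i$ for which $\gamma_i \approx E'$. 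The rigidity estimate \eqref{rigidity for large phi} (or \eqref{detailed estimate for large phi} when $\phi \le \tfrac13$), valid uniformly along the flow since each $A_t$ satisfies Definition~\ref{definition of A}, controls the ambiguity of this identification by the rigidity scale $N^{-2\phi}$, which, divided by the averaging window of width of order $b$, contributes an error of order $N^\epsilon b^{-1} N^{-2\phi}$; the variation of $\varrho_{sc}$ over $[E - b, E + b]$, the passage between the ordered and the symmetrized densities, and the much finer rigidity for GOE all contribute only lower-order terms. Combining this reduction with the flow estimate of the previous paragraph and renaming $\epsilon$ yields the claimed bound $C_n N^\epsilon \pb{b^{-1} N^{-2\phi} + b^{-1/2} N^{-\beta/2}}$. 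The bookkeeping in this last step --- handling all $n$ energy points at once, tracking the ordering of the eigenvalues against that of the points $E' + \alpha_j / (N\varrho_{sc}(E))$, and bounding every rigidity error uniformly in $E'$ and in $t$ --- is the part requiring genuine care; the Dyson-Brownian-motion analysis itself enters only through the already-proven Theorem~\ref{theorem: flow}.
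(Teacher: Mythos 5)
Your proposal is correct and follows essentially the same route as the paper: apply Theorem~\ref{theorem: flow} with the bound $Q \leq (\log N)^{C\xi} N^{-2\alpha}$ coming from Theorem~\ref{thm: eigenvalue locations}, choose $\abs{J} \sim bN$, and pass from averaged gap statistics to correlation functions as in \cite{ESYY}, with the rigidity scale $N^{-2\phi}$ producing the $b^{-1}N^{-2\phi}$ error. One small inaccuracy: to control the contribution of the low-probability complement event to $Q$, a ``second-moment estimate together with \eqref{bound on bulk eigenvalues}'' does not suffice (the latter is itself only a high-probability statement); the paper instead applies Cauchy--Schwarz against the deterministic bound $\E^t \sum_i x_i^4 = \E^t \tr A_t^4 \leq N^C$, and one also needs the local semicircle law to supply the density-of-states input required by the ESYY argument (``assumption IV''), which you reference only implicitly.
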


\begin{proof}
The claim follows from Theorem \ref{theorem: flow} and Theorem \ref{thm: eigenvalue locations}, similarly to the proof 
of Theorem 2.1 in \cite{ESYY}. We use that $Q \leq (\log N)^{C \xi} N^{- 2 \alpha}$, as follows from \eqref{main 
estimate on Q}; the contribution of the low probability complement event to \eqref{main estimate on Q} may be easily 
estimated using
Cauchy-Schwarz and the estimate $\sum_i \E^t x_i^4 = \E^t \tr A^4 \leq N^C$, uniformly for $t \geq 0$. The assumption IV 
of \cite{ESYY} is a straightforward consequence of the local semicircle law, Theorem \ref{LSCTHM}.
\end{proof}

\begin{proposition} \label{proposition: Green function comparison}
Let $A^{(1)} = (a_{ij}^{(1)})$ and $A^{(2)} = (a_{ij}^{(2)})$ be sparse random matrices, both satisfying Definition 
\ref{definition of A} with
\begin{align*}
q^{(1)} \;\sim\; q^{(2)} \;\geq\; N^\phi
\end{align*}
(in self-explanatory notation). Suppose that, for each $i,j$, the first three moments of $a_{ij}^{(1)}$ and 
$a_{ij}^{(2)}$ are the same, and that the fourth moments satisfy
\begin{align} \label{assumption on fourth moments}
\absb{\E \pb{a_{ij}^{(1)}}^4 - \E \pb{a_{ij}^{(2)}}^4} \;\leq\; N^{-2 - \delta}\,,
\end{align}
for some $\delta > 0$.

Let $n \in \N$ and let $F \in C^5(\C^n)$. We assume that, for any multi-index $\alpha \in \N^{n}$ with $1 \leq 
\abs{\alpha} \leq 5$ and any sufficiently small $\epsilon' > 0$, we have
\begin{align*}
\max \hbb{\absb{\partial^\alpha F(x_1, \dots, x_n)} \col \sum_i \abs{x_i} \leq N^{\epsilon'}} \;\leq\; N^{C_0 \epsilon'}
\,, \quad
\max \hbb{\absb{\partial^\alpha F(x_1, \dots, x_n)} \col \sum_i \abs{x_i} \leq N^2} \;\leq\; N^{C_0}\,,
\end{align*}
where $C_0$ is a constant.

Let $\kappa > 0$ be arbitrary. Choose a sequence of positive integers $k_1, \dots, k_n$ and real parameters $E_j^m \in 
[-2 + \kappa, 2 - \kappa]$, where $m = 1, \dots, n$ and $j = 1, \dots, k_m$.
Let $\epsilon > 0$ be arbitrary and choose $\eta$ with $N^{-1 -\epsilon} \leq \eta \leq N^{-1}$.
Set $z_j^m \deq E_j^m \pm \ii \eta$ with an arbitrary choice of the $\pm$ signs.

Then, abbreviating $G^{(l)}(z) \deq (A^{(l)} - z)^{-1}$, we have
\begin{multline*}
\absa{\E F \pa{\frac{1}{N^{k_1}} \tr \qa{\prod_{j = 1}^{k_1} G^{(1)}(z_j^1)}, \dots, \frac{1}{N^{k_n}} \tr \qa{\prod_{j 
= 1}^{k_n} G^{(1)}(z_j^n)}} - \E F \pb{G^{(1)} \to G^{(2)}}}
\\
\leq\; C N^{1 - 3 \phi + C \epsilon} + CN^{-\delta + C
\epsilon} \,.
\end{multline*}
\end{proposition}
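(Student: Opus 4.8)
The plan is to run a Green-function comparison of Lindeberg type, exactly as in \cite{EYY, EYYrigidity}, with the moment decay \eqref{moment conditions} in place of the subexponential decay used there; it is \eqref{moment conditions} that produces both error exponents. First note that matching the first moments forces $f^{(1)}=f^{(2)}=:f$ (since $\E a_{ij}=f/N$ for $i\neq j$), so the comparison is really between the centred matrices $H^{(1)}$ and $H^{(2)}$, and matching the lower moments of $a_{ij}$ is equivalent to matching them for $h_{ij}$; in particular \eqref{assumption on fourth moments} gives $\absb{\E(h^{(1)}_{ij})^4-\E(h^{(2)}_{ij})^4}\le N^{-2-\delta}$. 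I would enumerate the index pairs $(i,j)$ with $i\le j$ and interpolate $A^{(1)}=B_0,B_1,\dots,B_{M}=A^{(2)}$ with $M=N(N+1)/2$, where $B_r$ arises from $B_{r-1}$ by replacing the $r$-th entry (and its mirror) by that of $A^{(2)}$. Writing $\Phi(B)\deq F\pa{N^{-k_1}\tr\prod_j(B-z^1_j)^{-1},\dots,N^{-k_n}\tr\prod_j(B-z^n_j)^{-1}}$, it suffices to bound $\absb{\E\Phi(B_{r-1})-\E\Phi(B_r)}$ by $N^{-2}\pb{N^{1-3\phi+C\epsilon}+N^{-\delta+C\epsilon}}$ for each $r$.

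Fix $r$ and the corresponding pair $(i,j)$; let $S$ be obtained from $B_{r-1}$ (equivalently from $B_r$) by zeroing the $(i,j)$ and $(j,i)$ entries, let $v,v'$ be the entries of $A^{(1)},A^{(2)}$ there, put $\Delta\deq\f e_i\f e_j^*+\f e_j\f e_i^*$ and $G^S(z)\deq(S-z)^{-1}$. The resolvent identity gives the finite expansion $(B_{r-1}-z)^{-1}=\sum_{\ell=0}^{4}(-v)^\ell G^S(\Delta G^S)^\ell+(-v)^5(B_{r-1}-z)^{-1}(\Delta G^S)^5$. Substituting this into each argument of $F$ and Taylor-expanding $F$ to fourth order about the values at $v=0$, I would write
\begin{align*}
\Phi(B_{r-1}) \;=\; \sum_{\ell=0}^4 v^\ell\,\mathcal P_\ell(S) \;+\; \mathcal R(v,S)\,,
\end{align*}
where $\mathcal P_\ell(S)$ depends only on $S$ and $\mathcal R$ collects all contributions of $v$-order $\ge5$; the same identity holds for $B_r$ with $v\to v'$ and the \emph{same} $\mathcal P_\ell,\mathcal R$. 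The order-zero term is $\Phi(S)$ and cancels; taking expectations and using $\E v=\E v'=0$, $\E v^2=\E(v')^2=N^{-1}$ and $\E v^3=\E(v')^3$, the orders $\ell\le3$ cancel, leaving
\begin{align*}
\E\Phi(B_{r-1})-\E\Phi(B_r) \;=\; \pb{\E v^4-\E(v')^4}\,\E\mathcal P_4(S) \;+\; \E\mathcal R(v,S)-\E\mathcal R(v',S)\,.
\end{align*}

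For the a priori control, note that $S$, $B_{r-1}$ and $B_r$ all satisfy Definition \ref{definition of A} with the same parameters up to constants, so Theorem \ref{LSCTHM} applies; at $z=E\pm\ii\eta$ with $E\in[-2+\kappa,2-\kappa]$ and $\eta\ge N^{-1-\epsilon}$ the error terms in \eqref{scm}--\eqref{Gij estimate} are $O\pb{(\log N)^{C\xi}}$, so on an event $\Omega$ of $(\xi,\nu)$-high probability $\max_{a,b}\absb{G^S_{ab}(z)}\le(\log N)^{C\xi}$, and likewise for $B_{r-1},B_r$. Iterating the Ward identity $\sum_b\absb{G^S_{ab}(z)}^2=\eta^{-1}\im G^S_{aa}(z)$ then gives, on $\Omega$, the chain bound $\absb{[G^S(w_1)\cdots G^S(w_L)]_{ab}}\le(\log N)^{C\xi}\eta^{-(L-1)}$ for any $L$ and any $a,b$, and the same for chains mixing $G^S$ with the full resolvents. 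The key bookkeeping point is that every monomial of $\mathcal P_\ell$ (and of $\mathcal R$ at $v$-order $\ell$) equals, up to a derivative $\partial^\alpha F$ of $F$, the quantity $N^{-K}$ times a product of chain-entries whose indices lie in $\{i,j\}$, where $K$ is the total number of Green functions in the $\abs\alpha$ arguments being differentiated and the chain lengths sum to $K+\ell$; hence the $\eta$-powers multiply to $\eta^{-K}$ and $N^{-K}\eta^{-K}\le N^{K\epsilon}=N^{C\epsilon}$ since $K\le5\sum_m k_m=O(1)$. Thus $\abs{\mathcal P_\ell(S)}\le N^{C\epsilon}$ and, using the first growth bound on $F$ at the intermediate Taylor point (where $\sum_i\abs{x_i}\le N^{C\epsilon}$), the $v$-order-$\ell$ part of $\mathcal R$ is at most $\abs v^\ell N^{C\epsilon}$, uniformly on $\Omega$. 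Off $\Omega$ the deterministic bounds (resolvent norms $\le\eta^{-1}\le N^{1+\epsilon}$, hence $\sum_i\abs{x_i}\le N^2$, and the second growth bound on $F$) together with $\P(\Omega^c)\le\me^{-\nu(\log N)^\xi}$ make the total $\Omega^c$-contribution, even summed over all $O(N^2)$ pairs, negligible. Therefore $\absb{\sum_{(i,j)}\E\mathcal P_4(S)}\le N^{2+C\epsilon}$, so the fourth-moment term contributes at most $N^{-2-\delta}\cdot N^{2+C\epsilon}=N^{-\delta+C\epsilon}$ by \eqref{assumption on fourth moments}; and $\E\abs v^\ell\le C^\ell N^{-1-(\ell-2)\phi}$ from \eqref{moment conditions} (and similarly for $v'$), so summing $\mathcal R$ over the finitely many relevant orders $\ell\ge5$ and over the $O(N^2)$ pairs, the dominant term $\ell=5$ gives $N^2\cdot N^{-1-3\phi}\cdot N^{C\epsilon}=N^{1-3\phi+C\epsilon}$. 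Collecting the two contributions proves the bound.

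The hard part will be the last bookkeeping claim: one must check that in \emph{every} monomial produced by iterating the resolvent expansion inside the Taylor expansion of $F$ the total excess chain length is exactly $K$, so that the $\eta^{-K}$ blow-up of the chain-entries is compensated precisely by the normalization $N^{-K}$ coming from the traces — together with the parallel verification for the unexpanded fifth-order remainder blocks, in which one chain factor is a full resolvent rather than a $G^S$. Once this is in place, the rest is a routine, if lengthy, adaptation of the argument in \cite{EYY}.
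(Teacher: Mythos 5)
Your proposal follows essentially the same route as the paper, which invokes the Green function comparison theorem of \cite{EYY} (Theorem 2.3) and estimates the resolvent-expansion rest term by an $L^\infty$--$L^1$ bound using $\E\abs{a^{(l)}_{ij}}^5 \le C N^{-1-3\phi}$; your Lindeberg swap, fourth-order Taylor/resolvent expansion, Ward-identity chain bounds, and the two error sources ($N^{-\delta+C\epsilon}$ from the fourth-moment mismatch and $N^{1-3\phi+C\epsilon}$ from the fifth-order remainder) match the paper's intended argument. One small slip: you write $\E v = \E v' = 0$, but $v = a^{(1)}_{ij}$ has mean $f/N \ne 0$; this does not affect the conclusion since only the equality $\E v = \E v'$ (together with equality of the second and third moments) is needed for the cancellation of orders $\le 3$, or equivalently one can absorb $f/N$ into $S$ and expand in the centred variables $h_{ij}$ as you yourself indicate earlier.
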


\begin{proof}
The proof of Theorem 2.3 in  \cite{EYY} may be reproduced almost verbatim; the rest term in the Green function 
expansion is estimated by an $L^\infty$-$L^1$ bound using $\E \abs{a_{ij}^{(l)}}^5 \leq C N^{-1 - 3 \phi}$.
\end{proof}

As in  \cite{EYY} (Theorem 6.4), Proposition \ref{proposition: Green function comparison} readily implies the 
following correlation function comparison theorem.

\begin{theorem} \label{theorem: correlation function comparison}
Suppose the assumptions of Proposition \ref{proposition: Green function comparison} hold. Let $p^{(n)}_{(1),N}$ and 
$p^{(n)}_{(2),N}$ be $n$-point correlation functions of the eigenvalues of $A^{(1)}$ and $A^{(2)}$ respectively. Then 
for any $\abs{E} < 2$, any $n \geq 1$ and any compactly supported test function $O : \R^n \to \R$ we have
\begin{align*}
\lim_{N \to \infty} \int \dd \alpha_1 \cdots \dd \alpha_n \, O(\alpha_1, \dots, \alpha_n) \pB{p^{(n)}_{(1),N} - 
p^{(n)}_{(2),N}} \pbb{E + \frac{\alpha_1}{N}, \dots, E + \frac{\alpha_n}{N}} \;=\; 0\,.
\end{align*}
\end{theorem}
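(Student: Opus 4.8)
The plan is to deduce Theorem \ref{theorem: correlation function comparison} from Proposition \ref{proposition: Green function comparison} by the mollification argument used in \cite{EYY} to pass from its Theorem 2.3 to its Theorem 6.4; the only change is that the local semicircle law (Theorem \ref{LSCTHM}) and the rigidity estimate (Theorem \ref{thm: eigenvalue locations}) for sparse matrices play the role of their Wigner counterparts as the a priori input. Fix $\abs{E} < 2$ and $\kappa > 0$ with $E \in [-2 + 2\kappa, 2 - 2\kappa]$, a small $\epsilon > 0$, and set $\eta \deq N^{-1-\epsilon}$. The first step is to rewrite, for $l = 1, 2$, the smeared $n$-point function $\int \dd \alpha_1 \cdots \dd \alpha_n\, O(\alpha_1, \dots, \alpha_n)\, p^{(n)}_{(l),N}(E + \alpha_1/N, \dots, E + \alpha_n/N)$ in terms of the resolvent $G^{(l)}(z) = (A^{(l)} - z)^{-1}$: one approximates each factor of the empirical spectral density by a Poisson kernel at scale $\eta$, using $\frac1N \sum_i \frac{\eta/\pi}{(x - \lambda_i^{(l)})^2 + \eta^2} = \frac{1}{\pi N}\im \tr G^{(l)}(x + \ii \eta)$, so that the smeared correlation function becomes, up to a controllable replacement error, an expectation of the form $\E\, F\pb{\frac1N \tr G^{(l)}(z_1), \dots, \frac1N \tr G^{(l)}(z_n)}$ with $z_j = E + \alpha_j / N + \ii \eta$ and $F$ a smooth, polynomially bounded function built from $O$ and a fixed mollifier (one may need products $\frac{1}{N^{k_m}} \tr \qb{\prod_j G^{(l)}(z_j^m)}$ with $k_m$ slightly above $1$, to absorb an additional averaging over $E$, exactly as in \cite{EYY}). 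The derivative bounds on $F$ demanded by Proposition \ref{proposition: Green function comparison} follow from the trivial bound $\abs{\im \tr G(z)} \le N/\eta$ together with $\abs{\im \tr G(z)} \le C N$ on the high-probability event of Theorem \ref{LSCTHM}.

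With both correlation functions in this Green-function form, the second step is a direct application of Proposition \ref{proposition: Green function comparison}. Since the first three moments of $a_{ij}^{(1)}$ and $a_{ij}^{(2)}$ agree and their fourth moments differ by at most $N^{-2-\delta}$, and since $q^{(1)} \sim q^{(2)} \ge N^\phi$ with $\phi > 1/3$ (as needed for the bound of that proposition to be $o(1)$, and as holds in all our applications), the two expectations differ by at most $C N^{1 - 3 \phi + C \epsilon} + C N^{-\delta + C \epsilon}$, which tends to $0$ once $\epsilon$ is chosen small enough relative to $3\phi - 1$ and $\delta$. Combining this with the replacement errors from the first step and letting $N \to \infty$ yields the stated limit.

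The main obstacle is controlling the replacement error in the first step, since $\eta = N^{-1-\epsilon} \ll 1/N$ lies below the eigenvalue spacing and $\im \tr G(x + \ii\eta)$ does not concentrate; one therefore cannot replace Dirac masses by Poisson kernels pointwise. The resolution --- carried out in full in \cite[Section 6]{EYY} and transferring here with only cosmetic changes --- is to keep the product of resolvents as the object compared between the two ensembles, and to show that for each ensemble separately this product, integrated against $O$ and the mollifier and then expected, reproduces the smeared $n$-point function up to $o(1)$. The two error sources are (i) the nonzero width $\eta$ of the Poisson kernel, negligible because $\E$ of the $\eta$-smeared density is smooth and close to $\varrho_{sc}$ on scales $\ge (\log N)^C / N$ by Theorem \ref{LSCTHM}, and (ii) the ``diagonal'' terms in the product of empirical measures, where one eigenvalue index is shared between two or more factors; these are lower order in $N$ and are controlled by the local law together with the rigidity bound \eqref{rigidity for large phi}. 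Tracking these terms, and the extra energy-averaging that makes $F$ satisfy the polynomial derivative growth required by Proposition \ref{proposition: Green function comparison}, is the only technical point, and none of it is sensitive to the sparseness of $A$ beyond what Theorems \ref{LSCTHM} and \ref{thm: eigenvalue locations} already provide.
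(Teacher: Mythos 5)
Your proposal is correct and takes the same route as the paper, which simply cites the corresponding argument in \cite{EYY} (Theorem 6.4) as readily adaptable once Proposition \ref{proposition: Green function comparison} is in hand; you have essentially unpacked what that citation asserts, with the sparse local law (Theorem \ref{LSCTHM}) and rigidity (Theorem \ref{thm: eigenvalue locations}) correctly identified as the a priori inputs replacing their Wigner analogues.
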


We may now complete the proof of Theorem \ref{theorem: bulk universality}.

\begin{proof}[Proof of Theorem \ref{theorem: bulk universality}]
In order to invoke Theorems \ref{theorem: universality of
Gaussian divisibles} and \ref{theorem: correlation function comparison}, we construct a sparse matrix $A_0$, satisfying
Definition \ref{definition of A}, such that its time evolution $A_{\bar \tau}$ is close to $A$ in the sense of the 
assumptions of Proposition \ref{proposition: Green function comparison}. For definiteness, we concentrate on 
off-diagonal elements (the diagonal elements are dealt with similarly).

For the following we fix $i < j$; all constants in the following are uniform in $i,j$, and $N$.
Let $\xi, \xi', \xi_0$ be random variables equal in distribution to $a_{ij}, (a_{\bar \tau})_{ij}, (a_0)_{ij}$
respectively. For any random variable $X$ we use the notation $\wt X \deq X - \E X$.
Abbreviating $\gamma \deq 1 - \me^{- \bar \tau}$, we have
\begin{align*}
\xi' \;=\; \sqrt{1 - \gamma} \, \xi_0 + \sqrt{\gamma} \, g\,,
\end{align*}
where $g$ is a centred Gaussian with variance $1/N$, independent of $\xi_0$. We shall construct a random variable $\xi_0$, supported on at most three points,
such that $A_0$ satisfies Definition \ref{definition of A} and the first four moments of $\xi'$ are sufficiently close 
to
those of $\xi$. For $k =
1,2,\dots$ we denote by $m_k(X)$ the $k$-th moment of a random variable $X$.
We set
\begin{align}
\xi_0 \;=\; \frac{f}{\sqrt{1 - \gamma} \, N} + \wt \xi_0\,,
\end{align}
where $m_1(\wt \xi_0) = 0$ and $m_2(\wt \xi_0) = N^{-1}$. It is easy to see that $m_k(\xi) = m_k(\xi')$ for $k = 1,2$.

We take the law of $\wt \xi_0$ to be of the form
\begin{align*}
p \delta_a + q \delta_{-b} + (1 - p - q) \delta_0
\end{align*}
where $a,b,p,q \geq 0$ are parameters satisfying $p+q \leq 1$.
The conditions $m_1(\wt \xi_0) = 0$ and $m_2(\wt \xi_0) = N^{-1}$ imply
\begin{align*}
p \;=\; \frac{1}{aN(a+b)} \,, \qquad q \;=\; \frac{1}{bN(a+b)}\,.
\end{align*}
Thus, we parametrize $\xi_0$ using $a$ and $b$; the condition $p+q \leq 1$ reads $ab \geq N^{-1}$. Our aim
is to determine $a$ and $b$ so that $\xi_0$ satisfies \eqref{moment conditions}, and so that the third and
fourth moments of $\xi'$ and $\xi$ are close.
By explicit computation we find
\begin{align} \label{m in terms of ab}
m_3(\wt \xi_0) \;=\; \frac{a - b}{N}\,, \qquad m_4(\wt \xi_0) \;=\; N m_3(\wt \xi_0)^2 + \frac{ab}{N}\,.
\end{align}
Now we require that $a$ and $b$ be chosen so that $ab \geq N^{-1}$ and
\begin{align*}
m_3(\wt \xi_0) \;=\; (1 - \gamma)^{-3/2} m_3(\wt \xi)\,, \qquad m_4(\wt \xi_0) \;=\; N m_3(\wt \xi_0)^2 + m_4(\wt \xi) -
N m_3(\wt \xi)^2\,.
\end{align*}
Using \eqref{m in terms of ab}, it is easy to see that such a pair $(a,b)$ exists provided that $m_4(\wt \xi) - N m_3(\wt \xi)^2 \geq
N^{-2}$. This latter estimate is generally valid for any random variable with $m_1 = 0$; it follows from the elementary
inequality $m_4 m_2 - m_3^2 \geq m_2^3$ valid whenever $m_1 = 0$.

Next, using \eqref{m in terms of ab} and the estimates $m_3(\wt \xi) = O(N^{-1 -\phi})$, $m_4(\wt \xi) = O(N^{-1 - 2
\phi})$, we find
\begin{align*}
a - b \;=\; O(N^{-\phi})\,, \qquad ab \;=\; O(N^{-2 \phi})\,,
\end{align*}
which implies $a,b = O(N^{-\phi})$. We have hence proved that $A_0$ satisfies Definition \ref{definition of A}.

One readily finds that $m_3(\xi') = m_3(\xi)$. Moreover, using
\begin{align*}
m_4(\wt \xi_0) - m_4(\wt \xi) \;=\; N m_3(\wt \xi)^2 \qb{(1 - \gamma)^{-3} - 1} \;=\; O(N^{-1 - 2 \phi} \gamma)\,,
\end{align*}
we find
\begin{align*}
m_4(\wt \xi') - m_4(\wt \xi) \;=\; (1 - \gamma)^2 m_4(\wt \xi_0) + \frac{6 \gamma}{N^2} - \frac{3 \gamma^2}{N^2} -
m_4(\wt \xi) \;=\; O(N^{-1 - 2 \phi}\gamma)\,.
\end{align*}
Summarizing, we have proved
\begin{align} \label{moment matching result}
m_k(\xi') \;=\; m_k(\xi)  \quad (k = 1,2,3), \qquad \abs{m_4(\xi') - m_4(\xi)} \;\leq\; C N^{- 1 - 2 \phi} \bar \tau\,.
\end{align}

The claim follows now by setting $\delta = 2 \alpha(\phi) + 2 \phi - 1 - \beta$ in \eqref{assumption on fourth moments}, 
and invoking Theorems \ref{theorem: universality of Gaussian divisibles} and \ref{theorem: correlation function 
comparison}.
\end{proof}

\section{Edge universality: proof of Theorem \ref{thm: edge}}

\subsection{Rank-one perturbations of the GOE} \label{sect: GOE perturbation}
We begin by deriving a simple, entirely deterministic, result on the eigenvalues of rank-one perturbations of matrices.  
We choose the perturbation to be proportional to $\ket{\f e}\bra{\f e}$, but all results of this subsection hold 
trivially if $\f e$ is replaced with an arbitrary $\ell^2$-normalized vector.

\begin{lemma}[Monotonicity and interlacing] \label{lemma: perturbation}
Let $H$ be a symmetric $N \times N$ matrix. For $f \geq 0$ we set
\begin{equation*}
A(f) \;\deq\; H + f \ket{\f e} \bra{\f e}\,.
\end{equation*}
Denote by $\lambda_1 \leq \cdots \leq \lambda_N$ the eigenvalues of $H$, and by $\mu_1(f) \leq \cdots \leq \mu_N(f)$ the 
eigenvalues of $A(f)$. Then for all $\alpha = 1, \dots, N - 1$ and $f \geq 0$ the function $\mu_\alpha(f)$ is nondecreasing, satisfies
$\mu_\alpha(0) = \lambda_\alpha$, and has the interlacing property
\begin{equation} \label{interlacing property}
\lambda_\alpha \;\leq\; \mu_\alpha(f) \;\leq\; \lambda_{\alpha + 1}\,.
\end{equation}
\end{lemma}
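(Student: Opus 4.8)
The plan is to derive all three assertions from the Courant--Fischer min-max characterization of the eigenvalues, which has the advantage of being insensitive both to degeneracies of $H$ and to $\f e$ being orthogonal to some eigenvectors of $H$. Recall that for any symmetric $N \times N$ matrix $M$ with eigenvalues $\nu_1 \le \cdots \le \nu_N$ one has
\begin{equation*}
\nu_\alpha \;=\; \min_{\substack{V \subseteq \R^N \\ \dim V = \alpha}} \ \max_{\substack{v \in V \\ \norm{v} = 1}} \scalar{v}{Mv}\,.
\end{equation*}
The identity $\mu_\alpha(0) = \lambda_\alpha$ is then immediate, since $A(0) = H$.

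For the monotonicity I would fix $0 \le f_1 < f_2$ and note that $A(f_2) - A(f_1) = (f_2 - f_1) \ket{\f e}\bra{\f e}$ is positive semidefinite, so that $\scalar{v}{A(f_1)v} \le \scalar{v}{A(f_2)v}$ for every $v$. Inserting this pointwise inequality into the min-max formula for each $\alpha$ gives $\mu_\alpha(f_1) \le \mu_\alpha(f_2)$ at once. Combining monotonicity with $\mu_\alpha(0) = \lambda_\alpha$ already yields the lower half $\lambda_\alpha \le \mu_\alpha(f)$ of the interlacing inequality \eqref{interlacing property} for all $f \ge 0$.

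It remains to prove the upper bound $\mu_\alpha(f) \le \lambda_{\alpha + 1}$ for $\alpha = 1, \dots, N-1$. Let $u_1, \dots, u_N$ be an orthonormal eigenbasis with $H u_k = \lambda_k u_k$ and set $W \deq \operatorname{span}\{u_1, \dots, u_{\alpha + 1}\}$, so that $\dim W = \alpha + 1$ (here $\alpha + 1 \le N$, which is why the bound is claimed only for $\alpha \le N - 1$). Then $V \deq W \cap \{\f e\}^\perp$ has $\dim V \ge \alpha$; fix any $\alpha$-dimensional subspace $V' \subseteq V$. Every unit vector $v \in V'$ satisfies $\scalar{v}{\f e} = 0$, hence $\scalar{v}{A(f)v} = \scalar{v}{Hv} \le \lambda_{\alpha + 1}$ since $v \in W$. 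Using $V'$ as a competitor in the min-max formula gives $\mu_\alpha(f) \le \lambda_{\alpha + 1}$, which completes the proof of \eqref{interlacing property}.

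The only genuinely delicate point — and the reason I would phrase the last step via intersection with $\{\f e\}^\perp$ rather than through the explicit secular equation $1 + f \sum_k \abs{\scalar{u_k}{\f e}}^2 (\lambda_k - \mu)^{-1} = 0$ — is that $\f e$ may be orthogonal to some $u_k$ and the $\lambda_k$ may be degenerate, which would force an annoying case analysis in the rational-function approach. The subspace estimate $\dim\pb{W \cap \{\f e\}^\perp} \ge \dim W - 1$ handles all of these situations uniformly, so no real obstacle remains.
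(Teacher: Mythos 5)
Your proof is correct, but it takes a genuinely different route from the paper. The paper invokes the secular equation $\sum_\alpha \abs{\scalar{\f u_\alpha}{\f e}}^2 / (\mu - \lambda_\alpha) = 1/f$ for a rank-one perturbation (quoted from the companion paper \cite{EKYY}), and argues that the left-hand side, as a function of $\mu$, is decreasing between its poles at the $\lambda_\alpha$, from which monotonicity and interlacing ``follow easily.'' You instead run the Courant--Fischer min-max characterization: monotonicity because $(f_2 - f_1)\ket{\f e}\bra{\f e} \geq 0$ raises all quadratic forms, the lower bound from monotonicity plus $\mu_\alpha(0) = \lambda_\alpha$, and the upper bound by intersecting $\operatorname{span}\{u_1, \dots, u_{\alpha+1}\}$ with $\{\f e\}^\perp$ to produce an $\alpha$-dimensional test subspace on which $A(f)$ agrees with $H$ and is bounded by $\lambda_{\alpha+1}$. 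The variational argument is self-contained, handles $f = 0$ without special-casing, and -- as you point out -- is genuinely insensitive to degenerate $\lambda_\alpha$ and to $\scalar{\f u_\alpha}{\f e} = 0$, situations where the secular equation has fewer poles and the paper's ``follows easily'' would require a short extra case analysis. The paper's route, on the other hand, introduces the secular equation as a tool it reuses later in the proof of the eigenvalue-sticking lemma (Lemma~\ref{16SD}), so there is an economy to stating it here even though your proof of this particular lemma is cleaner.
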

\begin{proof}
From \cite{EKYY}, Equation (6.3), we find that $\mu$ is an eigenvalue of $H + f \ket{\f e}\bra{\f e}$ if and only if
\begin{equation} \label{interlacing equation}
\sum_\alpha \frac{\abs{\scalar{\f u_\alpha}{\f e}}^2}{\mu - \lambda_\alpha} \;=\; \frac{1}{f}\,,
\end{equation}
where $\f u_\alpha$ is the eigenvector of $H$ associated with the eigenvalue $\lambda_\alpha$. The right-hand side of 
\eqref{interlacing equation} has $N$ singularities at $\lambda_1, \dots, \lambda_N$, away from which it is decreasing. All 
claims now follow easily.
\end{proof}

Next, we establish the following ``eigenvalue sticking'' property for GOE. Let $\alpha$ label an eigenvalue close to the 
right (say) spectral edge. Roughly we prove that, in the case where $H = V$ is a GOE matrix and $f > 1$, the eigenvalue 
$\mu_\alpha$ of $V + f \ket{\f e}\bra{\f e}$ ``sticks'' to $\lambda_{\alpha + 1}$ with a precision $(\log N)^{C \xi} 
N^{-1}$.  This behaviour can be interpreted as a form of long-distance level repulsion, in which the eigenvalues 
$\mu_\beta$, $\beta < \alpha$, repel the eigenvalue $\mu_\alpha$ and push it close to its maximum possible value, 
$\lambda_{\alpha + 1}$.

\begin{lemma}[Eigenvalue sticking] \label{16SD}
Let $V$ be an $N\times N$ GOE matrix. Suppose moreover that $\xi$ satisfies \eqref{assumptions for SLSC} and that $f$ 
satisfies $f \geq 1 + \epsilon_0$. Then there is a $\delta \equiv \delta(\epsilon_0) > 0$ such that for all $\alpha$ 
satisfying $N (1 - \delta) \leq \alpha \leq N - 1$ we have \hp{\xi}{\nu}
\begin{equation} \label{pp38}
\abs{\lambda_{\alpha + 1} - \mu_\alpha} \;\leq\; \frac{(\log N)^{C \xi}}{N}\,.
\end{equation}
Similarly, if $\alpha$ instead satisfies $\alpha \leq N \delta$ we have \hp{\xi}{\nu}
\begin{equation} \label{pp39}
\abs{\mu_\alpha - \lambda_\alpha} \;\leq\; \frac{(\log N)^{C \xi}}{N}\,.
\end{equation}
\end{lemma}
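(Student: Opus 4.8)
The plan is to exploit the secular equation for rank-one perturbations already recorded in Lemma~\ref{lemma: perturbation}. Abbreviate $R_\beta \deq \absb{\scalar{\f u_\beta}{\f e}}^2$, so $\sum_{\beta} R_\beta = 1$, and recall from the proof of Lemma~\ref{lemma: perturbation} that $\mu_\alpha$ is the unique zero on $(\lambda_\alpha, \lambda_{\alpha+1})$ of $\mu \mapsto \sum_\beta R_\beta (\lambda_\beta - \mu)^{-1} + 1/f$. Isolating the summand $\beta = \alpha+1$ and solving for $\lambda_{\alpha+1}-\mu_\alpha$ gives the identity
\begin{equation*}
\lambda_{\alpha + 1} - \mu_\alpha \;=\; \frac{R_{\alpha + 1}}{\displaystyle -\frac{1}{f} \,-\, \sum_{\beta \neq \alpha + 1} \frac{R_\beta}{\lambda_\beta - \mu_\alpha}}\,,
\end{equation*}
in which the left-hand side is nonnegative by the interlacing \eqref{interlacing property} and $R_{\alpha+1}\ge 0$, hence the denominator is nonnegative. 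I would prove \eqref{pp38} by showing that the numerator is at most $(\log N)^{C\xi}/N$ and the denominator is bounded below by a constant $c(\epsilon_0)>0$; the bound \eqref{pp39} is then obtained by the identical argument isolating the summand $\beta=\alpha$ and using $\re m_{sc}(-2)=1$ in place of $\re m_{sc}(2)=-1$.

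\emph{Numerator.} Since $V$ is GOE, its eigenvectors are Haar distributed and independent of the eigenvalues, so $(\scalar{\f u_1}{\f e},\dots,\scalar{\f u_N}{\f e})$ is uniform on the unit sphere; hence $NR_\beta$ is, up to a global factor concentrated at $1$, a $\chi^2_1$ variable, and a union bound over $\beta$ gives $\max_\beta R_\beta \le (\log N)^{C\xi}N^{-1}$ \hp{\xi}{\nu}. (Alternatively one reads this off \eqref{Gij estimate}, which controls $\im G_{\f e\f e}$ and hence $R_\beta \le C\eta\,\im G_{\f e\f e}(\lambda_\beta + \ii\eta)$ at $\eta=(\log N)^{L}N^{-1}$.)

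\emph{Denominator.} Fix $\delta$ small, to be chosen in terms of $\epsilon_0$. For $N(1-\delta)\le\alpha\le N-1$ one has $N-\alpha\le N\delta$, so by \eqref{bound on bulk eigenvalues} together with the local semicircle law \eqref{scm} (which places the $(N-\alpha)$-th eigenvalue from the top near $2-c((N-\alpha)/N)^{2/3}$) both $\lambda_\alpha$ and $\lambda_{\alpha+1}$, hence $\mu_\alpha$, lie in $[\,2-C\delta^{2/3},\,2+(\log N)^{C\xi}N^{-2/3}\,]$ \hp{\xi}{\nu}. It therefore suffices to prove, \hp{\xi}{\nu} and uniformly in such $\alpha$, the estimate
\begin{equation*}
\sum_{\beta \neq \alpha + 1} \frac{R_\beta}{\lambda_\beta - \mu_\alpha} \;\leq\; -1 + C\delta^{1/3} + o(1) \qquad (N\to\infty)\,,
\end{equation*}
for then the denominator above is at least $-1/f + 1 - C\delta^{1/3} - o(1)\ge \epsilon_0/(1+\epsilon_0) - C\delta^{1/3} - o(1)$, which exceeds $\epsilon_0/(2(1+\epsilon_0))$ once $\delta=\delta(\epsilon_0)$ is small and $N$ large; combined with the numerator bound this yields \eqref{pp38}. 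To prove the displayed estimate I would pass from the real point $\mu_\alpha$ to the spectral parameter $z_\ast\deq\mu_\alpha+\ii N^{-1/2}\in D_L$: for GOE $q=N^{1/2}$ and $\im m_{sc}(z_\ast)\le C\delta^{1/3}$, so \eqref{Gij estimate} gives $\absb{G_{\f e\f e}(z_\ast)-m_{sc}(z_\ast)}=o(1)$ and $\re m_{sc}(z_\ast)\le -1+C\delta^{1/3}$, while
\begin{equation*}
\re G_{\f e\f e}(z_\ast) \;=\; \sum_\beta R_\beta \frac{\lambda_\beta - \mu_\alpha}{(\lambda_\beta - \mu_\alpha)^2 + N^{-1}}\,.
\end{equation*}
This differs from $\sum_{\beta\ne\alpha+1}R_\beta/(\lambda_\beta-\mu_\alpha)$ by the single regularised term in $\beta=\alpha+1$ (of size at most $N^{1/2}R_{\alpha+1}/2\le(\log N)^{C\xi}N^{-1/2}$) together with the contribution of the eigenvalues $\lambda_\beta$, $\beta\ne\alpha+1$, within $N^{-1/2}$ of $\mu_\alpha$.

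The hard part is controlling this last contribution, i.e.\ the behaviour of $\sum_{\beta\ne\alpha+1}R_\beta/(\lambda_\beta-\mu_\alpha)$ at the real energy $\mu_\alpha$. A term-by-term bound is too lossy: the delocalisation bound $R_\beta\le(\log N)^{C\xi}/N$ carries powers of $\log N$ that do not cancel, and individual denominators $\lambda_\beta-\mu_\alpha$ can be as small as $(\log N)^{C\xi}/N$. One must instead use that these eigenvalues occur in pairs straddling $\mu_\alpha$ with opposite signs, whose contributions nearly cancel, and quantify this cancellation using the local regularity of the GOE eigenvalue configuration (spacing estimates and rigidity) furnished by \eqref{scm} and \eqref{bound on bulk eigenvalues}. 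It is convenient first to dispose of the trivial case $\lambda_{\alpha+1}-\lambda_\alpha\le(\log N)^{C\xi}/N$, in which \eqref{pp38} holds immediately by interlacing, so that one may work on the event where the relevant gaps near the edge are comparably large. Assembling the numerator bound, the identity above, and the key estimate then completes the proof; the whole argument is deterministic once the high-probability events for delocalisation, rigidity and the local law are fixed, so it holds uniformly in $\alpha$ without any further union bound.
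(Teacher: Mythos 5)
Your setup is correct: isolating the $\beta=\alpha+1$ term in the secular equation \eqref{interlacing equation}, bounding the numerator $R_{\alpha+1}$ by delocalisation, and comparing the remaining sum to $\re G^V_{\f e\f e}$ at a complex energy is exactly the paper's strategy. The numerator bound and the $m_{sc}$ evaluation are fine. But the key step -- the lower bound on the denominator -- has a genuine gap, and the route you sketch for closing it does not work.

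The first problem is the choice $\eta = N^{-1/2}$. Write $a\deq\lambda_{\alpha+1}-\mu_\alpha$. When you pass from the real point $\mu_\alpha$ to $\mu_\alpha+\ii\eta$, the terms with $\beta<\alpha+1$ produce errors with a \emph{favourable} sign (they make the positive sum $\sum_{\beta<\alpha+1}\zeta_\beta/(\mu_\alpha-\lambda_\beta)$ smaller, hence can simply be dropped in a lower bound); but for $\beta>\alpha+1$ the regularisation error has the wrong sign and must be estimated. That error is, after using $\lambda_\beta-\mu_\alpha\ge a$, of the order $m\,\zeta_{\max}\,\eta^2/a^3$ for the $m$ eigenvalues nearest $\lambda_{\alpha+1}$. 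One needs a self-consistent bootstrap: assume $a\ge(\log N)^{C'\xi}/N$, show the error is $\le c(\epsilon_0)/2$, deduce $a\lesssim(\log N)^{C\xi}/N$, contradiction. With $\eta\sim(\log N)^{O(\xi)}/N$, $m=(\log N)^{C_2\xi}$, and $\zeta_{\max}\le(\log N)^{C\xi}/N$, the error is $(\log N)^{O(\xi)}/(N^3 a^3)$, which is $o(1)$ for $a\ge(\log N)^{C'\xi}/N$ with $C'$ large -- the bootstrap closes. With your $\eta=N^{-1/2}$, the same computation gives an error of order $m(\log N)^{C\xi}/(N^2 a^3)$, which at $a\sim(\log N)^{C'\xi}/N$ is of size $mN(\log N)^{O(\xi)}$ -- diverging, not small. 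No choice of $m$ helps: taking $m$ small blows up the tail $\beta>\alpha+m$, taking $m$ large blows up the near sum. So your argument cannot conclude at this $\eta$.

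The second problem is that the replacement you propose for this estimate -- ``pairs of eigenvalues straddling $\mu_\alpha$ whose contributions nearly cancel'' -- is not available. The weights $R_\beta=\absb{\scalar{\f u_\beta}{\f e}}^2$ are, up to normalisation, independent $\chi^2_1/N$ variables; their fluctuation is of the same order as their mean, so for a straddling pair $(\beta,\beta')$ the difference $R_\beta-R_{\beta'}$ is typically comparable to $R_\beta$ itself, and the cancellation you invoke fails at exactly the order you need. The paper sidesteps this entirely: it never cancels anything against anything; it uses the one-sided nature of the regularisation error for $\beta<\alpha+1$ together with $\lambda_\beta-\mu_\alpha\ge a$ for $\beta>\alpha+1$, and then the bootstrap in $a$ at scale $\eta\sim(\log N)^{O(\xi)}/N$ (which requires \eqref{sc law for Wigner} at $\eta$ essentially down to $1/N$, not merely $N^{-1/2}$). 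Your reduction to the ``nontrivial case $\lambda_{\alpha+1}-\lambda_\alpha>(\log N)^{C\xi}/N$'' does not substitute for this bootstrap either, since it gives no lower bound on $a$ itself, which is what controls the problematic denominators $\lambda_\beta-\mu_\alpha$.
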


For the proof of Lemma \ref{16SD} we shall need the following result about Wigner matrices, proved in 
\cite{EYYrigidity}.
\begin{lemma}
Let $H$ be a Wigner matrix with eigenvalues $\lambda_1 \leq \cdots \leq \lambda_N$ and associated eigenvectors $\f u_1, 
\dots, \f u_N$. Assume that $\xi$ is given by \eqref{bounds on xi}. Then the following two statements hold 
\hp{\xi}{\nu}:
\begin{equation} \label{deloc of u alpha}
\max_\alpha \norm{\f u_\alpha}_\infty \;\leq\; \frac{(\log N)^{C \xi}}{\sqrt{N}}\,,
\end{equation}
and
\begin{equation} \label{rigidity for Wigner}
\abs{\lambda_\alpha - \gamma_\alpha} \;\leq\; (\log N)^{C \xi} N^{-2/3} \pb{\min \h{\alpha, N + 1 - \alpha}}^{-1/3}\,.
\end{equation}
Moreover, let $L$ satisfy \eqref{def of L} and write $G^H_{ij}(z) \deq \qb{(H - z)^{-1}}_{ij}$. Then we have, 
\hp{\xi}{\nu},
\begin{equation} \label{sc law for Wigner}
\bigcap_{z \in D_L} \hBB{\max_{1 \leq i,j \leq N} \absb{G^H_{ij}(z) - \delta_{ij} m_{sc}(z)} \leq (\log N)^{C \xi}  
\pbb{\sqrt{\frac{\im m_{sc}(z)}{N \eta}} + \frac{1}{N \eta}}}\,,
\end{equation}
where $D_L$ was defined in \eqref{definitionD}
\end{lemma}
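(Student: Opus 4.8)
The plan is to recognize that all three assertions are established for Wigner matrices in \cite{EYYrigidity}, so the task reduces to checking that our setup is an admissible special case and recalling the logical skeleton of the argument. Concretely, \eqref{deloc of u alpha} is the eigenvector delocalization bound, \eqref{rigidity for Wigner} the rigidity estimate, and \eqref{sc law for Wigner} the entrywise (strong) local semicircle law of \cite{EYYrigidity}; one needs only that $H$ is a Wigner matrix with the assumed tail decay, that the choice $\xi = 5 \log \log N$ in \eqref{bounds on xi} is of the form permitted there, and that $L$ obeying \eqref{def of L} lies in the allowed range. Alternatively, by Remark \ref{remark: GOE assumption} such an $H$ may be viewed as an instance of Definition \ref{definition of H} with $q = N^{1/2}(\log N)^{-5\theta \log \log N}$, so that Theorem \ref{LSCTHM} already furnishes a local law; the point of quoting \cite{EYYrigidity} instead is that the dedicated Wigner analysis removes the spurious $q^{-1}$ term visible in \eqref{Gij estimate}, giving the sharp form \eqref{sc law for Wigner}.

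For completeness I would recall the three steps. First, prove \eqref{sc law for Wigner}. Fixing $z = E + \ii \eta \in D_L$, I would use the Schur complement identity $G^H_{ii}(z) = (h_{ii} - z - \sum_{k,l \neq i} h_{ik} G^{(i)}_{kl}(z) h_{li})^{-1}$, where $G^{(i)}$ is the resolvent of the $i$-th minor, decompose the quadratic form as $N^{-1} \tr G^{(i)}(z) + Z_i$ with $Z_i$ a centred fluctuation, and bound $\abs{Z_i}$ together with the off-diagonal sums $\sum_{k \neq i} h_{ik} G^{(i)}_{kj}$ by $(\log N)^{C\xi} \sqrt{\im m_{sc}(z)/(N\eta)}$ with $(\xi,\nu)$-high probability, using large-deviation estimates for quadratic forms and the moment hypotheses on the $h_{ij}$. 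Inserting this into the self-consistent equation $m + (z + m)^{-1} = 0$ and exploiting its stability (which degrades like $(\kappa_E + \eta)^{-1/2}$ near $\pm 2$) gives the bound on $\max_{i,j} \abs{G^H_{ij}(z) - \delta_{ij} m_{sc}(z)}$ for a fixed $z$; a net over $D_L$ together with the Lipschitz continuity of $z \mapsto G^H(z)$ upgrades this to the uniform statement, the control being bootstrapped from $\eta \sim 1$ down to $\eta \sim (\log N)^L N^{-1}$.

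Second, delocalization \eqref{deloc of u alpha} would follow from the spectral identity $\im G^H_{ii}(\lambda_\alpha + \ii \eta) = \sum_\beta \eta \abs{(\f u_\beta)_i}^2 ((\lambda_\beta - \lambda_\alpha)^2 + \eta^2)^{-1} \geq \eta^{-1} \abs{(\f u_\alpha)_i}^2$, so that $\norm{\f u_\alpha}_\infty^2 \leq \eta \max_i \im G^H_{ii}(\lambda_\alpha + \ii \eta) \leq (\log N)^{C\xi}(\eta + (N\eta)^{-1})$; taking $\eta = (\log N)^L N^{-1}$ yields the claim. Third, rigidity \eqref{rigidity for Wigner} would come from integrating $\im m$ against a Helffer--Sjöstrand-type smooth approximation of the indicator of $(-\infty, E]$ to control $\absb{\frac1N \#\{\alpha \col \lambda_\alpha \leq E\} - n_{sc}(E)}$, and then inverting $n_{sc}(\gamma_\alpha) = \alpha/N$; the square-root vanishing of $\varrho_{sc}$ at the edges converts the uniform bound into the improved factor $(\min\{\alpha, N + 1 - \alpha\})^{-1/3}$.

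The hard part is the first step: pushing the local law to the optimal scale $\eta \sim (\log N)^L N^{-1}$, which demands uniform control of the fluctuations $Z_i$ along a fine grid in $D_L$ and a delicate treatment of the loss of stability of the self-consistent equation as $z$ approaches $\pm 2$. Since precisely this is carried out in \cite{EYYrigidity}, the actual proof is just an invocation of that reference, after the routine verification that the parameters $\xi$ and $L$ fixed here fall within its hypotheses.
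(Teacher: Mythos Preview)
Your proposal is correct and matches the paper's approach: the lemma is stated without proof and attributed entirely to \cite{EYYrigidity}, exactly as you identify. Your additional sketch of the underlying arguments (Schur complement plus large deviations for the local law, the spectral-representation trick for delocalization, and Helffer--Sj\"ostrand for rigidity) is accurate but goes beyond what the paper provides.
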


\begin{proof}[Proof of Lemma \ref{16SD}]
We only prove \eqref{pp38}; the proof of \eqref{pp39} is analogous.
By orthogonal invariance of $V$, we may replace $\f e$ with the vector $(1,0, \dots, 0)$. Let us abbreviate $\zeta_\beta 
\deq \abs{u_\beta(1)}^2$. Note that \eqref{deloc of u alpha} implies
\begin{equation} \label{bound on zeta}
\max_\beta \zeta_\beta \;\leq\; (\log N)^{C \xi} N^{-1}
\end{equation}
\hp{\xi}{\nu}.
Now from we \eqref{interlacing equation} we get
\begin{equation*}
\frac{\zeta_\alpha}{\mu_\alpha - \lambda_{\alpha + 1}} + \sum_{\beta \neq \alpha + 1} \frac{\zeta_\beta}{\mu_\alpha - 
\lambda_\beta} \;=\; \frac{1}{f}\,,
\end{equation*}
which yields
\begin{equation} \label{lambda mu step 1}
\abs{\lambda_{\alpha + 1} - \mu_\alpha} \;=\; \zeta_\alpha \absbb{\sum_{\beta \neq \alpha + 1} 
\frac{\zeta_\beta}{\lambda_\beta - \mu_\alpha} + \frac{1}{f}}^{-1}\,.
\end{equation}
We estimate from below, introducing an arbitrary $\eta > 0$,
\begin{align}
- \sum_{\beta \neq \alpha + 1} \frac{\zeta_\beta}{\lambda_\beta - \mu_\alpha} &\;=\; \sum_{\beta < \alpha + 1} 
\frac{\zeta_\beta}{\mu_\alpha - \lambda_\beta} - \sum_{\beta > \alpha + 1} \frac{\zeta_\beta}{\lambda_\beta - 
\mu_\alpha}
\notag \\
&\;\geq\;
\sum_{\beta < \alpha + 1} \frac{\zeta_\beta (\mu_\alpha - \lambda_\beta)}{(\mu_\alpha - \lambda_\beta)^2 + \eta^2} - 
\sum_{\beta > \alpha + 1} \frac{\zeta_\beta}{\lambda_\beta - \mu_\alpha}
\notag \\
&\;=\;
- \re G^V_{11}(\mu_\alpha + \ii \eta) + \sum_{\beta > \alpha + 1} \frac{\zeta_\beta (\lambda_\beta - 
\mu_\alpha)}{(\lambda_\beta - \mu_\alpha)^2 + \eta^2} - \sum_{\beta > \alpha + 1} \frac{\zeta_\beta}{\lambda_\beta - 
\mu_\alpha}
\notag \\ \label{lower bound on zeta sum}
&\;\geq\;
- \re G^V_{11}(\mu_\alpha + \ii \eta) - \sum_{\beta > \alpha + 1} \frac{\zeta_\beta \eta^2}{(\lambda_\beta - 
\mu_\alpha)^3}\,,
\end{align}
where in the third step we used that $\lambda_{\alpha + 1} \geq \mu_\alpha$ by \eqref{interlacing property}.

We now choose $\eta = (\log N)^{C_1 \log \log N} N^{-1}$. For $C_1$ large enough, we get from \eqref{sc law for Wigner} 
that $G^V_{11}(\mu_\alpha + \ii \eta) = m_{sc}(\mu_\alpha + \ii \eta) + o(1)$. Therefore \eqref{formula for m sc} yields
\begin{equation} \label{first bound on G11}
- \re G^V_{11}(\mu_\alpha + \ii \eta) \;\geq\; 1 - 2 \sqrt{\abs{2 - \mu_\alpha}} + o(1)\,.
\end{equation}
From \eqref{rigidity for Wigner} and \eqref{interlacing property} we get that $\abs{\mu_\alpha - \gamma_\alpha} \leq 
(\log N)^{C \xi} N^{-2/3}$ \hp{\xi}{\nu}. Moreover, the definition \eqref{def of gamma} and $\alpha \geq N (1 - \delta)$ 
imply $\abs{\gamma_\alpha - 2} \leq C \delta^{2/3}$. Thus we get, \hp{\xi}{\nu}, that $\abs{2 - \mu_\alpha} = o(1) + C 
\delta^{2/3}$.  Therefore \eqref{first bound on G11} yields, \hp{\xi}{\nu},
\begin{equation*}
- \re G^V_{11}(\mu_\alpha + \ii \eta) \;\geq\; 1 + o(1) - C \delta^{1/3}\,.
\end{equation*}
Recalling \eqref{bound on zeta}, we therefore get from \eqref{lower bound on zeta sum}, \hp{\xi}{\nu},
\begin{equation} \label{zeta estimate step 2}
\absBB{\sum_{\beta \neq \alpha + 1} \frac{\zeta_\beta}{\lambda_\beta - \mu_\alpha}} \;\geq\; 1 + o(1) - C \delta^{1/3}- 
\frac{m (\log N)^{C \xi}}{N^3 \abs{\lambda_{\alpha + 1} - \mu_\alpha}^3} - \frac{(\log N)^{C \xi}}{N^3} \sum_{\beta > 
\alpha + m} \frac{1}{\abs{\lambda_\beta - \mu_\alpha}^3}\,,
\end{equation}
for any $m \in \N$.

Next, from \eqref{rigidity for Wigner} we find that, provided $C_2$ is large enough, $m \deq (\log N)^{C_2 \xi}$, and 
$\beta > \alpha + m$, then we have \hp{\xi}{\nu}
\begin{equation*}
\abs{\lambda_\beta - \lambda_{\alpha + 1}} \;\geq\; \abs{\gamma_{\beta} - \gamma_{\alpha + 1}} - \frac{(\log N)^{C 
\xi}}{N^{2/3} (N + 1 - \beta)^{1/3}}
\;\geq\; c \abs{\gamma_\beta - \gamma_{\alpha + 1}}\,.
\end{equation*}
Then for $C_2$ large enough we have, \hp{\xi}{\nu},
\begin{equation*}
\sum_{\beta > \alpha + m} \frac{1}{\abs{\lambda_\beta - \mu_\alpha}^3} \;\leq\; C \sum_{\beta > \alpha + m} 
\frac{1}{(\gamma_\beta - \gamma_{\alpha + 1})^3} \;\leq\; \frac{C N^3}{(\log N)^{3 C_2 \xi}}\,.
\end{equation*}
Thus we get from \eqref{zeta estimate step 2}, \hp{\xi}{\nu},
\begin{equation*}
\absBB{\sum_{\beta \neq \alpha + 1} \frac{\zeta_\beta}{\lambda_\beta - \mu_\alpha}} \;\geq\; 1 + o(1) - C \delta^{1/3} - 
\frac{(\log N)^{C \xi}}{N^3 \abs{\lambda_{\alpha + 1} - \mu_\alpha}^3}\,.
\end{equation*}
Plugging this into \eqref{lambda mu step 1} and recalling that $f \geq 1 + \epsilon_0 > 1$ yields, \hp{\xi}{\nu},
\begin{equation*}
\abs{\lambda_{\alpha + 1} - \mu_\alpha} \;\leq\; \frac{(\log N)^{C \xi}}{N} \pBB{\epsilon_0 - C \delta^{1/3} - o(1) - 
\frac{(\log N)^{C \xi}}{N^3 \abs{\lambda_{\alpha + 1} - \mu_\alpha}^2}}^{-1}\,,
\end{equation*}
from which the claim follows.
\end{proof}

\subsection{Proof of Theorem \ref{thm: edge}}
In this section we prove Theorem \ref{thm: edge} by establishing the following comparison result for sparse
matrices. Throughout the following we shall abbreviate the lower bound in \eqref{lower bound on f} by
\begin{equation} \label{def f star}
f_* \;\deq\; 1 + \epsilon_0\,.
\end{equation}

\begin{proposition}\label{twthm}
Let $\P^\bv$ and $\P^\bw$ be laws on the symmetric $N \times N$ random matrices $H$, each satisfying Definition 
\ref{definition of H} with $q \geq N^\phi$ for some $\phi$ satisfying $1/3 < \phi \leq 1/2$. In particular, we have the 
moment matching condition
\begin{equation} \label{2m}
\E^\bv h_{ij} \;=\; \E^\bw h_{ij} \;=\; 0\,, \qquad \E^\bv h_{ij}^2 \;=\; \E^\bw h_{ij}^2 \;=\; \frac{1}{N}\,.
\end{equation}
Set $f \deq f_*$ in Definition \ref{definition of A}: $A \equiv A(f_*) = (a_{ij}) \deq H + f_* \ket{\f e} \bra{\f e}$.  
As usual, we denote the ordered
eigenvalues of $H$ by $\lambda_1 \leq \cdots \leq \lambda_N$ and the ordered eigenvalues of $A$ by $\mu_1 \leq \cdots
\leq \mu_N$.

Then there is a $\delta>0$ such that 
for any $s \in \R$ we have
\begin{multline}
\label{tw0}
 \P^\bv \pB{ N^{2/3} ( \lambda_{N } -2) \le s- N^{-\delta}}- N^{-\delta}
\\
\le\;   \P^\bw \pb{ N^{2/3} ( \lambda_{N } -2) \le s }   \;\le\; \P^\bv \pB{ N^{2/3} ( \lambda_{N } -2) \le s+ 
N^{-\delta} } + N^{-\delta}
\end{multline}
as well as
\begin{multline}\label{tw1}
 \P^\bv \pB{ N^{2/3} ( \mu_{N - 1} -2) \le s- N^{-\delta}}- N^{-\delta}
\\
\le\;   \P^\bw \pb{ N^{2/3} ( \mu_{N - 1} -2) \le s }   \;\le\; \P^\bv \pB{ N^{2/3} ( \mu_{N - 1} -2) \le s+ N^{-\delta} 
} + N^{-\delta}
\end{multline}
%
for $N \ge N_0$ sufficiently  large, where $N_0$ is independent of $s$. 
%
%
%
\end{proposition}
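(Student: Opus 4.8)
The plan is to prove \eqref{tw1}, the statement about the second-largest eigenvalue $\mu_{N-1}$ of $A$, by a Green function comparison at the right spectral edge $E=2$, and then to deduce \eqref{tw0} from it using the interlacing and eigenvalue-sticking results of Subsection \ref{sect: GOE perturbation}. The two inputs I rely on throughout are the local semicircle law \eqref{Gij estimate} for $G(z)=(A-z)^{-1}$ and the edge rigidity \eqref{rigidity for large phi}; both are available here because $q\ge N^\phi$ with $\phi>1/3$.

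For \eqref{tw1} I would first reduce to the range $|s|\le(\log N)^{C\xi}$: for larger $|s|$ the bound \eqref{bound on bulk eigenvalues} makes both sides equal up to $\me^{-\nu(\log N)^\xi}$, and, together with $\phi>1/3$, it confines $\mu_1,\dots,\mu_{N-1}$ to within $(\log N)^{C\xi}N^{-2/3}$ of $2$ while keeping $\mu_N$ a constant distance $\sigma$ above $2$ by \eqref{gap 1}. Fixing $E=2+sN^{-2/3}$ and the scale $\eta_1=N^{-2/3-\epsilon_1}$, I would, following the edge Green function comparison of \cite{EYYrigidity}, rewrite
\[
\P(\mu_{N-1}\le E)\;=\;\E\,K\!\pbb{\frac1N\int\chi(y)\,\im\tr G(y+\ii\eta_1)\,\dd y}+O(N^{-c}),
\]
where $\chi$ is a smooth cut-off equal to $1$ on $[E,2+\sigma/3]$ and supported in $[E-N^{-2/3-\epsilon_1},2+\sigma/2]$ (so $\mu_N$ is never counted), $K$ is a fixed smooth decreasing profile, and the $O(N^{-c})$ error is controlled by \eqref{rigidity for large phi}, which prevents eigenvalues from accumulating at $E$. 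One then compares $\E^\bv$ and $\E^\bw$ of this functional by a Lindeberg exchange, replacing the entries $h_{ij}$ one at a time (equivalently the $a_{ij}$, since $\bv$ and $\bw$ carry the same deterministic shift $f_*\ket{\f e}\bra{\f e}$). Every intermediate matrix still satisfies Definition \ref{definition of H} with the same $q$, so \eqref{Gij estimate} applies uniformly and, at scale $\eta_1$ near $E$, gives $|G_{ij}-\delta_{ij}m_{sc}|\le N^{-1/3+o(1)}$ (using $q\ge N^\phi\ge N^{1/3}$), which supplies the needed bounds on the derivatives of the functional in $h_{ij}$. Because the first two moments of $h_{ij}$ agree (condition \eqref{2m}), the Taylor terms of orders $0,1,2$ cancel in the exchange; the leading discrepancy is the third-moment term, bounded by $\E|h_{ij}|^3\le CN^{-1}q^{-1}\le CN^{-1-\phi}$ times a derivative factor, while the remainders are smaller still via $\E|h_{ij}|^4\le CN^{-1-2\phi}$ and $\E|h_{ij}|^5\le CN^{-1-3\phi}$. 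A careful estimate, again following \cite{EYYrigidity} and using the edge bounds on the derivatives from \eqref{Gij estimate}, shows that the error accumulated over the $O(N^2)$ entries is $\le N^{-\delta}$ for some $\delta>0$ precisely when $\phi>1/3$; this is the origin of the sparseness restriction, and, as noted in the introduction, it arises for a reason different from the one in the bulk. Undoing the smoothing then yields \eqref{tw1}.

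To pass from \eqref{tw1} to \eqref{tw0}, recall $A=H+f_*\ket{\f e}\bra{\f e}$ with $f_*=1+\epsilon_0>1$. By Lemma \ref{lemma: perturbation} one has $\mu_{N-1}\le\lambda_N$, and because $f_*>1$ the eigenvalue-sticking phenomenon of Lemma \ref{16SD} upgrades this to $|\lambda_N-\mu_{N-1}|\le(\log N)^{C\xi}N^{-1}$ with $(\xi,\nu)$-high probability. (Lemma \ref{16SD} is stated for the GOE, but its proof uses only eigenvector delocalization in the direction $\f e$, the local law, and edge rigidity: for the matrices $H$ here the first two are supplied by the analysis of \cite{EKYY} — these are exactly the inputs used there to locate $\mu_N$ — and the third is \eqref{rigidity for large phi}, whose fluctuation $N^{-2\phi}$ beats the edge spacing $N^{-2/3}$ exactly when $\phi>1/3$, so the tail sum in that proof is controlled and the argument transfers with only cosmetic changes.) Hence $N^{2/3}(\lambda_N-2)=N^{2/3}(\mu_{N-1}-2)+O(N^{-1/3+o(1)})$ with $(\xi,\nu)$-high probability under each of $\P^\bv,\P^\bw$; absorbing this error, and the probability $\me^{-\nu(\log N)^\xi}$ of the exceptional set, into the $N^{-\delta}$-terms (at the cost of a slightly smaller $\delta$) turns \eqref{tw1} into \eqref{tw0}.

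The step I expect to be hardest is the edge Green function comparison: doing the power-counting carefully enough to produce a genuine negative power $N^{-\delta}$, and in particular controlling the third- and higher-moment Taylor contributions — which for sparse matrices are considerably larger ($\E|h_{ij}|^3\sim N^{-1-\phi}$ rather than $N^{-3/2}$) — so that they stay summable over all $O(N^2)$ entries under the single hypothesis $\phi>1/3$. Two secondary technical points require attention: the smoothing argument relating the sharp distribution function of $\mu_{N-1}$ to the expectation of the resolvent functional, which relies on \eqref{rigidity for large phi} to exclude eigenvalue accumulation at the cut-off energy; and the verification that the GOE-specific Lemma \ref{16SD} does extend to the present sparse setting with the isotropic-type estimates available from \cite{EKYY}.
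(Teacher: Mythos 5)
Your overall architecture — express $\P(\mu_{N-1}\le E)$ via a smoothed counting function of the resolvent (Lemma \ref{23}), then compare $\E^{\bv}$ and $\E^{\bw}$ by a Lindeberg exchange (Proposition \ref{GFCT}) — is the paper's route, and your reduction to $|s|\le(\log N)^{C_0\xi}$ and the choice of cut-off are correct. But there are two substantive problems.

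First, the deduction of \eqref{tw0} from \eqref{tw1} via eigenvalue sticking does not go through. Lemma \ref{16SD} is a GOE statement, and its proof begins by invoking orthogonal invariance to replace the direction $\f e$ by a coordinate vector, so that $\zeta_\beta=|\scalar{\f u_\beta}{\f e}|^2$ can be bounded by the $\ell^\infty$ delocalization \eqref{deloc of u alpha}. For a sparse $H$ that first step is unavailable: $\scalar{\f u_\beta}{\f e}=N^{-1/2}\sum_i u_\beta(i)$, and the bound $\max_i|u_\beta(i)|\le N^{-1/2+o(1)}$ only gives $|\scalar{\f u_\beta}{\f e}|\le N^{o(1)}$, which is useless. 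What you would need is an isotropic delocalization estimate $|\scalar{\f u_\beta}{\f e}|^2\lesssim N^{-1+o(1)}$ for all $\beta$ near $N$; this is not a consequence of \eqref{Gij estimate} (summing the entrywise error over $N^2$ indices is far too lossy), is not in \cite{EKYY}, and is not cosmetic. The paper avoids this entirely by proving \eqref{tw0} \emph{directly}: it is the same Green function comparison run with $f=0$, i.e.\ with $Q$ replaced by $H$, which is strictly \emph{easier} than \eqref{tw1} (for instance, Lemma \ref{lem: 52} simplifies and Lemma \ref{non-centred Z-lemma} is not needed). Eigenvalue sticking (Lemma \ref{16SD}) is then used in the paper only once, in the proof of Theorem \ref{thm: edge}, and only for the GOE matrix $V$.

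Second, in the Green function comparison you underestimate what is required. A naive power count of the third-order Taylor term gives, per off-diagonal entry $(b,d)$,
\begin{equation*}
\eta^2\cdot N^2\cdot p^{-3}\cdot\E|h_{bd}|^3 \;\lesssim\; N^{-4/3}\cdot N^2 \cdot N^{-1}\cdot N^{-1}q^{-1} \;=\; N^{-4/3}q^{-1}\,,
\end{equation*}
and summing over $O(N^2)$ entries yields $N^{2/3}q^{-1}=N^{2/3-\phi}$, which requires $\phi>2/3$, not $\phi>1/3$. The crucial missing idea — explained after \eqref{naive power counting bound} — is that the \emph{expectation} of the leading third-order term is smaller than its typical size by an extra factor $p^{-1}$, turning $N^{2/3-\phi}$ into $N^{1/3-\phi}$. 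Establishing this gain is the content of Lemma \ref{lem: 52}, and it is genuinely delicate here because the entries $a_{ij}=h_{ij}+f_*/N$ have nonzero mean: one must control $\E_b S_{ib}$ in terms of partial expectations, and this in turn requires Lemma \ref{non-centred Z-lemma}, which quotes a separate large-deviation estimate from \cite{EKYY}. Your sketch acknowledges that ``careful power-counting'' is needed but, as written, does not contain the idea that would actually make the count close at $\phi>1/3$.
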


Assuming Proposition \ref{twthm} is proved, we may easily complete the proof of Theorem \ref{thm: edge} using the 
results of Section
\ref{sect: GOE perturbation}.

\begin{proof}[Proof of Theorem \ref{thm: edge}]
Choose $\P^\bv$ to be the law of GOE (see Remark \ref{remark: GOE assumption}), and choose $\P^\bw$ to be the law of a 
sparse matrix satisfying Definition
\ref{definition of H} with $q \geq N^{\phi}$. We prove \eqref{tw}; the proof of \eqref{tw01} is similar.

For the following we write $\mu_\alpha(f) \equiv \mu_\alpha$ to emphasize the $f$-dependence of the eigenvalues of 
$A(f)$. Using first \eqref{interlacing property} and then \eqref{tw0} we get
\begin{equation*}
\P^\bw \pB{N^{2/3}(\mu_{N - 1}(f) -2) \leq s} \;\geq\;
\P^\bw \pB{N^{2/3}(\lambda_N -2) \leq s} \;\geq\;
\P^\bv \pB{N^{2/3}(\lambda_N -2) \leq s - N^{-\delta}} - N^{- \delta}\,,
\end{equation*}
for some $\delta > 0$.
Next, using first the monotonicity of $\mu_\alpha(f)$ from Lemma \ref{lemma: perturbation}, then \eqref{tw1}, and 
finally \eqref{pp38}, we get
\begin{multline*}
\P^\bw \pB{N^{2/3}(\mu_{N - 1}(f) -2) \leq s} \;\leq\;
\P^\bw \pB{N^{2/3}(\mu_{N - 1}(f_*) -2) \leq s}
\\
\leq\;
\P^\bv \pB{N^{2/3}(\mu_{N - 1}(f_*) -2) \leq s + N^{-\delta}} + N^{-\delta}
\;\leq\;
\P^\bv \pB{N^{2/3}(\lambda_N -2) \leq s + 2N^{-\delta}} + 2N^{-\delta}\,,
\end{multline*}
for some $\delta > 0$. This concludes the proof of \eqref{tw}, after a renaming of $\delta$.
\end{proof}

The rest of this section is devoted to the proof of Proposition \ref{twthm}. We shall only prove \eqref{tw1}. The proof 
of \eqref{tw0} is similar (in fact easier), and relies on the local semicircle law, Theorem \ref{LSCTHM}, with $f = 0$; 
if $f = 0$ some of the following analysis simplifies (e.g.\ the proof of Lemma \ref{lem: 52} below may be completed 
without the estimate from Lemma \ref{non-centred Z-lemma}.)

From now on we always assume the setup of Proposition \ref{twthm}. In particular, $f$ will always be equal to $f_*$.


We begin by outlining the proof of Proposition \ref{twthm}.
The basic strategy is similar to the one used for Wigner matrices in \cite{EYYrigidity} and \cite{KY}.
For any $E_1\le E_2$, let
$$
	\cN(E_1, E_2) \;\deq\; \absb{\{\alpha \col E_1 \leq \mu_\alpha \le E_2\}}
$$
denote the number of eigenvalues of $A$ in the interval $[E_1, E_2]$. In the first step, we express the distribution 
function in terms of Green functions according to
\begin{equation} \label{heuristic first step}
\P^{\f u}\pb{\mu_{N - 1} \geq E} \;=\; \E^{\f u} K\pb{\cal N(E, \infty) - 2} \;\approx\; \E^{\f u} K\pb{\cal N(E, E_*) - 
1} \;\approx\; \E^{\f u} K \pBB{\int_{E}^{E_*} \dd y \, N\, \im m(y + \ii \eta) - 1}\,.
\end{equation}
Here $\f u$ stands for either $\f v$ or $\f w$, $\eta \deq N^{-2/3 - \epsilon}$ for some $\epsilon > 0$ small enough, 
$K:\R \to\R_+$ is a smooth cutoff function satisfying
\begin{equation} \label{definition of K}
K(x) = 1      \quad  \text{if} \quad |x| \le 1/9   \qquad \text{and} \qquad K(x) = 0   \quad  \text{if} \quad |x| \ge 
2/9\,,
\end{equation}
and
\be\label{defEL}
E_* \;\deq\; 2 + 2(\log N)^{C_0 \xi} N^{-2/3}
\ee
for some $C_0$ large enough.
The first approximate identity in \eqref{heuristic first step} follows from Theorem \ref{thm: eigenvalue locations}
which guarantees that $\mu_{N - 1} \leq E_*$ \hp{\xi}{\nu}, and from \eqref{gap 1} which guarantees that $\mu_N \geq 2 + 
\sigma$ \hp{\xi}{\nu}. The second approximate identity in \eqref{heuristic first step} follows from the approximation
\begin{equation*}
\int_{E_1}^{E_2} \dd y \, N \, \im m(y + \ii \eta) \;=\; \sum_{\alpha} \int_{E_1}^{E_2} \dd y \, \frac{\eta}{(y - 
\mu_\alpha)^2 + \eta^2} \;\approx\; \cal N(E_1, E_2)\,,
\end{equation*}
which is valid for $E_1$ and $E_2$ near the spectral edge, where the typical eigenvalue separation is $N^{-2/3} \gg 
\eta$.

The second step of our proof is to compare expressions such as the right-hand side of \eqref{heuristic first step} for 
$\f u = \f v$ and $\f u = \f w$. This is done using a Lindeberg replacement strategy and a resolvent expansion of the 
argument of $K$. This step is implemented in Section \ref{sect: proof of green fn comp}, to which we also refer for a 
heuristic discussion of the argument.

Now we give the rigorous proof of the steps outlined in \eqref{heuristic first step}.
We first collect the tools we shall need. From \eqref{rigidity for large phi} and \eqref{gap 1} we get that there is a 
constant $C_0 > 0$ such that, under both $\P^\bv$ and $\P^\bw$, we have \hp{\xi}{\nu}
\be\label{6-1}
 \abs{N^{2/3} ( \mu_{N-1} -2) } \;\leq\; (\log N)^{C_0 \xi}\,, \qquad \mu_N \;\geq\; 2 + \sigma\,,
\ee
and
\be\label{6-2}
  \cN\left(2-\frac{2(\log N)^{C_0 \xi}}{ N^{2/3}},\;2+\frac{2(\log N)^{C_0 \xi}}{ N^{2/3}}\right) \;\leq\;
  (\log N)^{2 C_0 \xi}\,.
\ee
Therefore in \eqref{tw1} we can assume that $s$ satisfies
\be\label{25}
-(\log N)^{C_0 \xi} \;\le\; s \;\le\; (\log N)^{C_0 \xi}.
\ee
Recall the definition \eqref{defEL} of $E_*$ and
and introduce, for any $E \leq E_*$, the characteristic function on the interval $[E, E_*]$,
\[
\chi_E \;\deq\; {\bf 1}_{[E, E_*]}\,.
\]
For
any $\eta>0$ we define
\be\label{thetam}
\theta_\eta(x) \;\deq\; \frac{\eta }{\pi(x^2+\eta^2)} \;=\; \frac{1}{\pi} \im \frac{1}{x-\ii\eta}
\ee
to be an approximate delta function on scale $\eta$. 

The following result allows us to replace the sharp counting function
$\cN(E, E_*) = \tr \chi_E(H)$ with its approximation smoothed on scale $\eta$. 

\begin{lemma} \label{23} Suppose that $E$ satisfies
\be\label{condE-}
|E-2|N^{2/3} \;\leq\; (\log N)^{C_0 \xi}.
\ee
Let $\ell \deq \frac{1}{2}N^{-2/3 - \e}$ and $\eta \deq N^{-2/3-9\e}$, and recall the definition of the function $K$ 
from \eqref{definition of K}. Then the following statements hold for both ensembles $\P^\bv$ and $\P^\bw$. For some 
$\e>0$ small enough the inequalities
\be\label{41new}
\tr (\chi_{E+ \ell  }  \ast \theta_\eta) (H)  -  N^{-\e} \;\le\;  \cN (E, \infty)-1  \;\le\;  \tr  (\chi_{E- \ell  }  
\ast \theta_\eta) (H)  +  N^{-\e}
\ee
hold \hp{\xi}{\nu}. Furthermore, we have
\be\label{67E}
\E \, K \pB{\tr (\chi_{E-\ell}  \ast \theta_\eta)  (H)}
\;\le\; \P (\cN(E,  \infty)  = 1 )  \;\le\; \E \, K \pB{\tr ( \chi_{E+\ell}  \ast \theta_\eta ) (H)} + \me^{- \nu (\log 
N)^\xi}
\ee for sufficiently large $N$ independent of $E$, as long as \eqref{condE-} holds.
\end{lemma}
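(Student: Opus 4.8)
The plan is to derive \eqref{41new} and \eqref{67E} from three ingredients: the interlacing and monotonicity of the eigenvalues of $A$ relative to those of $H$ (Lemma \ref{lemma: perturbation}); the edge rigidity \eqref{rigidity for Wigner} for $H$ together with the confinement \eqref{rigidity for large phi}, \eqref{gap 1} of the edge eigenvalues of $A$; and the local law \eqref{sc law for Wigner}, which controls the resolvent of $H$ down to the scale $\eta = N^{-2/3 - 9\e}$.

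For \eqref{41new} I would first invoke \eqref{gap 1} to peel off the outlier, so that \hp{\xi}{\nu} one has $\cN(E, \infty) - 1 = \absb{\h{\alpha \le N - 1 : \mu_\alpha \ge E}}$, and then use the interlacing $\lambda_\alpha \le \mu_\alpha \le \lambda_{\alpha + 1}$ to sandwich this count between $\absb{\h{\beta \le N - 1 : \lambda_\beta \ge E}}$ and $\absb{\h{2 \le \beta \le N : \lambda_\beta \ge E}}$; by \eqref{rigidity for Wigner} (with $C_0$ in \eqref{defEL} chosen large) all eigenvalues of $H$ lie in $(\lambda_1, E_*)$ with $\lambda_1 < E$, so both bounds agree with $\tr \chi_E(H) = \tr \mathbf 1_{[E, E_*]}(H)$ up to one unit \hp{\xi}{\nu}. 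It then remains to compare the sharp counting functions $\tr \chi_{E \mp \ell}(H)$ with their mollifications $\tr(\chi_{E \mp \ell} \ast \theta_\eta)(H) = \sum_\beta (\chi_{E \mp \ell} \ast \theta_\eta)(\lambda_\beta)$. The point of shifting by $\ell = \frac12 N^{-2/3 - \e} = \frac12 N^{8\e} \eta$ is that every eigenvalue counted by $\chi_E$ sits at distance $\ge \ell \gg \eta$ from the moved endpoint $E \mp \ell$ and — again by rigidity, since $C_0$ is large — at distance $\gg \eta$ from $E_*$, so convolving with $\theta_\eta$ changes its contribution by only $O(\eta/\ell) = O(N^{-8\e})$; since \eqref{sc law for Wigner} bounds the number of eigenvalues of $H$ in the window $[E \mp \ell, E_*]$ (of width $O((\log N)^{C_0 \xi} N^{-2/3})$) by $(\log N)^{C\xi}$, the total mollification loss from eigenvalues inside the window is $\ll N^{-\e}$. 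The contribution of eigenvalues outside the window is also $\ll N^{-\e}$: each is weighted by at most $\abs{I}\, \eta\, \mathrm{dist}^{-2}$ with $\abs{I} \le (\log N)^{C_0 \xi} N^{-2/3}$, and by \eqref{rigidity for Wigner} the edge eigenvalues of $H$ are spread out on scale $\ge N^{-2/3}$, so these weights are summable (comparable to $(\log N)^{C_0 \xi} N^{-9\e} \sum_{k \ge 1} k^{-4/3}$). Collecting these bounds gives $\absb{\tr \chi_{E \mp \ell}(H) - \tr(\chi_{E \mp \ell} \ast \theta_\eta)(H)} \le N^{-\e}$ \hp{\xi}{\nu}, which together with the interlacing sandwich yields \eqref{41new}.

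For \eqref{67E} I would use that $\cN(E, \infty) - 1$ is a nonnegative integer, so $\h{\cN(E, \infty) = 1} = \h{\cN(E, \infty) - 1 \le 1/9} = \h{\cN(E, \infty) - 1 \le 2/9}$, together with $\mathbf 1_{[-1/9, 1/9]} \le K \le \mathbf 1_{[-2/9, 2/9]}$ from \eqref{definition of K}. On the event where \eqref{41new} holds, its lower bound gives $\cN(E, \infty) = 1 \Rightarrow \tr(\chi_{E + \ell} \ast \theta_\eta)(H) \le N^{-\e} < 1/9 \Rightarrow K(\tr(\chi_{E + \ell} \ast \theta_\eta)(H)) = 1$, hence $\mathbf 1(\cN(E, \infty) = 1) \le K(\tr(\chi_{E + \ell} \ast \theta_\eta)(H))$; its upper bound gives $\tr(\chi_{E - \ell} \ast \theta_\eta)(H) \le 2/9 \Rightarrow \cN(E, \infty) - 1 \le 2/9 + N^{-\e} < 1 \Rightarrow \cN(E, \infty) = 1$, hence $K(\tr(\chi_{E - \ell} \ast \theta_\eta)(H)) \le \mathbf 1(\cN(E, \infty) = 1)$. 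Taking expectations and charging the exceptional set of probability $\le \me^{-\nu (\log N)^\xi}$ to the stated error term gives \eqref{67E}.

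The hard part is the mollification estimate inside the proof of \eqref{41new} — controlling the smoothing error by $N^{-\e}$ rather than a subpolynomial factor. The local law alone pins down pointwise eigenvalue counts on scale $\eta$ only to additive precision $(\log N)^{C\xi}$, which is too weak, so the argument must exploit both the $\ell$-gap (which makes the per-eigenvalue distortion a genuine negative power of $N$) and the smallness $\abs{I} \le (\log N)^{C_0 \xi} N^{-2/3}$ of the whole edge window (which makes the polynomial $\theta_\eta$-tails summable against the edge eigenvalue positions); this is precisely why the exponents in $\ell$, $\eta$ and $E_*$ are tuned as they are. A secondary nuisance is the bookkeeping of the rank-one outlier $\mu_N$ and the one-unit interlacing discrepancy between the spectra of $A$ and $H$, but \eqref{gap 1} and rigidity confine $\mu_N$ and all edge eigenvalues to explicit intervals, and the $\ell$-shift supplies the slack to absorb the discrepancy.
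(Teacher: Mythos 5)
The first thing to fix is your reading of the statement. The trace $\tr(\chi \ast \theta_\eta)(H)$ is a notational conflation for the same trace applied to $A$: the paper has declared the convention that $G$ and $m$ always mean $(A-z)^{-1}$ and $N^{-1}\tr(A-z)^{-1}$, the identity $\theta_\eta(H) = \tfrac1\pi \im G(\ii\eta)$ stated right after the lemma already computes with $G$, and when the lemma is invoked in the proof of Proposition \ref{twthm}, the estimate \eqref{c1} on $m$ (hence on $A$'s resolvent) is applied directly to the argument of $K$ from \eqref{67E}. Read as a trace of a function of $A$, no interlacing step is needed: \hp{\xi}{\nu} one has exactly $\cN(E,\infty) - 1 = \#\{\alpha \le N-1 \col \mu_\alpha \ge E\} = \tr\chi_E(A)$, because rigidity for the $\mu_\alpha$ (this is \eqref{rigidity for large phi}, which is about the $\mu_\alpha$, not the $\lambda_\alpha$ --- another reason to stay with $A$) keeps $\mu_{N-1} \le E_*$, while the outlier $\mu_N \ge 2+\sigma$ supplies the $-1$ in $\cN(E,\infty)-1$ and contributes only $O((\log N)^{C\xi} N^{-2/3}\eta) = o(N^{-\e})$ to the smoothed trace. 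The $\ell$-shift then only has to absorb the mollification error, which your tail estimate handles, and your deduction of \eqref{67E} from \eqref{41new} via the integrality of $\cN(E,\infty)-1$ and the shape of $K$ is exactly right. This is Corollary~6.2 of \cite{EYYrigidity} applied to $A$, as the paper's proof says; the new input is that the sparse local semicircle law \eqref{scm} still gives $\absb{m(E+\ii\ell) - m_{sc}(E+\ii\ell)} \le (\log N)^{C\xi}/(N\ell)$ for $\ell \ge N^{-1+c}$ once $q \ge N^{1/3}$.

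The interlacing detour through $H$, by contrast, leaves a genuine gap. Writing $N^H(E_1,E_2) \deq \absb{\{\beta \col E_1 \le \lambda_\beta \le E_2\}}$, interlacing together with $\lambda_1 < E$ and $\lambda_N \le E_*$ gives only the two-sided bound $N^H(E,E_*) - \mathbf 1(\lambda_N \ge E) \le \cN(E,\infty)-1 \le N^H(E,E_*)$. On the event $\{\lambda_N \ge E+\ell\} \cap \{\mu_{N-1} < E\}$ --- which interlacing does not forbid, since it says only $\mu_{N-1} \in [\lambda_{N-1},\lambda_N]$ --- one has $\cN(E,\infty)-1 = N^H(E,E_*) - 1$ while $\tr(\chi_{E+\ell}\ast\theta_\eta)(H) \approx N^H(E+\ell,E_*) = N^H(E,E_*)$, so the lower bound of \eqref{41new} would fail by a full unit. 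The $\ell$-shift cannot absorb this: $\ell = \tfrac12 N^{-2/3-\e}$ is smaller than the typical eigenvalue spacing $\sim N^{-2/3}$ near $2$, so moving from $E$ to $E+\ell$ will generically not lose an eigenvalue of $H$. To rule out the bad event for the $H$-based statement you would need the sticking estimate \eqref{pp38}, $\mu_{N-1} = \lambda_N + O((\log N)^{C\xi}/N)$; but \eqref{pp38} is proved only for the GOE ensemble $\P^\bv$, not for the general sparse ensemble $\P^\bw$ to which the lemma must also apply. Working with $A$ directly sidesteps the problem entirely.
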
 

\begin{proof}
The proof of Corollary 6.2 in \cite{EYYrigidity} can be reproduced almost verbatim. In the estimate (6.17) of 
\cite{EYYrigidity}, we need the bound, \hp{\xi}{\nu},
\begin{equation*}
\absb{m(E + \ii \ell) - m_{sc}(E + \ii \ell)} \;\leq\; \frac{(\log N)^{C \xi}}{N \ell}
\end{equation*}
for $N^{-1 + c} \leq \ell \leq N^{-2/3}$. This is an easy consequence of the local semicircle law \eqref{scm} and the 
assumption $q \geq
N^{1/3}$.

Note that, when compared to Corollary 6.2 in \cite{EYYrigidity}, the quantity $\cal N(E, \infty)$ has been incremented 
by one; the culprit is the single eigenvalue $\mu_N \geq 2 + \sigma$.
\end{proof}

Recalling that $\theta_\eta(H)= \frac{1}{\pi}\im G(\ii\eta)$,
Lemma  \ref{23}  bounds the probability of $\cN(E,\infty)=1$ in terms of  expectations
 of functionals of Green functions. We now show that the difference between the expectations of these functionals, with 
respect to the two probability distributions $\P^\bv$ and $\P^\bw$, is negligible assuming their associated second 
moments of $h_{ij}$ coincide. The precise statement is the following Green function
 comparison theorem at the edge. All statements are formulated
for the upper spectral edge 2, but with the same proof they hold for the lower
spectral edge $-2$ as well. 

For the following it is convenient to introduce the shorthand
\be \label{def Ie}
I_\e \;\deq\; \{x \col |x-2| \leq N^{-2/3+\e}\}
\ee
where $\epsilon > 0$.

\begin{proposition} [Green function comparison theorem on the edge] \label{GFCT}
Suppose that the assumptions of Proposition \ref{twthm}  hold.
Let    $F:\R\to \R$ be a function whose derivatives satisfy  
\be\label{gflowder}
\sup_{x}|F^{(n)}(x)| (1 + |x|)^{-C_1} \;\leq\; C_1 \for n \;=\; 1,2,3,4\,,
\ee
with some constant $C_1>0$.
Then there exists a constant $\wt \e>0$, depending only on $C_1$, such that for any $\e<\wt \e$
and for any real numbers $E, E_1, E_2 \in I_\epsilon$,
and setting $\eta \deq N^{-2/3-\e}$,
we have
\begin{align}\label{maincomp}
\Bigg|\E^\bv  F  
\left (   
N \eta \im m (z)    \right )  &  -
\E^\bw  F  
\left (  N \eta \im m (z)      \right )  \Bigg|
\;\le\;  C N^{1/3+C \e}q^{-1} \for z \;=\; E + \ii \eta\,,
\end{align}
and 
\be\label{c1}
\left|\E^\bv  F\left(N \int_{E_1}^{E_2} \rd y \;  \im m (y +\ii\eta)\right)-\E^\bw F \left(N \int_{E_1}^{E_2}  \rd y \; 
\im m (y+\ii\eta ) \right)\right| \;\leq\; N^{1/3+C \e}q^{-1}
\ee for some constant $C$ and large enough $N$.
\end{proposition}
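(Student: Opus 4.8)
The plan is to prove Proposition \ref{GFCT} by a Lindeberg-type replacement argument, swapping the matrix entries $h_{ij}$ one at a time and controlling each swap via a resolvent expansion. Since the proof is essentially the edge version of the Green function comparison theorem from \cite{EYYrigidity} adapted to the sparse setting, I would first set up the standard framework: enumerate the off-diagonal pairs $(i,j)$ (and diagonal entries) in some order $1, \dots, \gamma_{\max}$ with $\gamma_{\max} = O(N^2)$, and define an interpolating family of matrices $H_\gamma$ where the first $\gamma$ entries are taken from $\P^\bw$ and the rest from $\P^\bv$, so that $H_0$ has law $\P^\bv$ and $H_{\gamma_{\max}}$ has law $\P^\bw$. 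It then suffices to bound the change $\absb{\E F(N\eta \im m(z))|_{H_\gamma} - \E F(N\eta \im m(z))|_{H_{\gamma-1}}}$ at each step by $o(N^{-2} \cdot N^{1/3+C\epsilon} q^{-1})$ after summing over the $O(N^2)$ steps.

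First I would fix a single entry $h \deq h_{ij}$ and write $H_\gamma = Q + V_\gamma$ where $Q$ has the $(i,j)$ and $(j,i)$ entries set to zero and $V_\gamma$ is the rank-at-most-two matrix carrying the single entry. Using the resolvent identity, I would Taylor-expand $G(z) = (Q + V_\gamma - z)^{-1}$ in powers of the entry $h$, obtaining $G = R - R V_\gamma R + R V_\gamma R V_\gamma R - \cdots$, where $R \deq (Q-z)^{-1}$. The key input is the bound on individual Green function entries: from \eqref{Gij estimate} (applied to $Q$, which satisfies Definition \ref{definition of A} with comparable parameters) we have $\absb{R_{ab}(z) - \delta_{ab} m_{sc}(z)} \leq (\log N)^{40\xi}(q^{-1} + \sqrt{\im m_{sc}/(N\eta)} + 1/(N\eta))$ \hp{\xi}{\nu}, which at $z = E + \ii\eta$ with $E \in I_\epsilon$, $\eta = N^{-2/3-\epsilon}$ gives $\im m_{sc}(z) \sim N^{-1/3}$ (up to $N^{C\epsilon}$) and hence $\absb{R_{ab}} \leq N^{-1/6 + C\epsilon}$ for $a \neq b$ and $R_{aa} = O(1)$, and similarly $N\eta \im m(z) = O(N^{C\epsilon})$ so that the derivatives of $F$ are bounded by $N^{C\epsilon}$ via \eqref{gflowder}. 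Substituting the expansion into $N\eta \im m(z) = N\eta \cdot \frac{1}{N} \im \tr G(z)$ and then into $F$, I would Taylor-expand $F$ itself around the value at $h=0$; since the first three moments of $h$ agree under $\P^\bv$ and $\P^\bw$, the terms of order $h^0, h^1, h^2, h^3$ cancel between the two ensembles, leaving only the order-$h^4$ and higher remainder terms.

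The main work is then bounding these remainder terms. The crucial point specific to the sparse case is that the fourth moment satisfies $\E \abs{h_{ij}}^4 \leq C N^{-1} q^{-2}$ (from \eqref{moment conditions}), which is much smaller than the $N^{-2}$ one would have for a Wigner matrix; summing an order-$h^4$ term over $O(N^2)$ entries thus produces a factor $N^2 \cdot N^{-1} q^{-2} = N q^{-2} = N^{1-2\phi}$, and tracking the accompanying powers of Green-function entries (each $h$ in the expansion is paired with off-diagonal resolvent entries of size $N^{-1/6+C\epsilon}$, but the combinatorics of which indices are summed must be done carefully, exactly as in \cite{EYYrigidity}) yields the claimed bound $N^{1/3+C\epsilon} q^{-1}$. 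Higher-order terms ($h^5$ and beyond) are controlled by an $L^\infty$–$L^1$ estimate: one bounds the resolvent entries of the fully interpolated matrix deterministically (using that $\abs{h} \leq \gamma/q \leq N^{-\phi+\epsilon}$ on the high-probability event, so $\norm{V_\gamma}$ is small) and uses $\E\abs{h}^5 \leq C N^{-1} q^{-3}$, which is summable to something negligible. This is precisely the strategy of the proof of Theorem 2.3 in \cite{EYY} and Proposition \ref{proposition: Green function comparison} above, now applied with the spectral parameter pushed down to the edge scale $\eta = N^{-2/3-\epsilon}$; the edge-specific ingredient is only that the a priori bounds on $G_{ij}$ at this scale come from Theorem \ref{LSCTHM} rather than from bulk estimates. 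For the integrated version \eqref{c1}, I would note that $N\int_{E_1}^{E_2} \im m(y+\ii\eta)\,\dd y$ is, up to an affine change, a smooth functional of the family $\{m(y+\ii\eta)\}$ and that the same entry-by-entry swap and resolvent expansion go through verbatim — equivalently one can discretize the $y$-integral on a fine grid and apply \eqref{maincomp} with $F$ replaced by a function of several resolvent traces, exactly as in the comparison arguments of \cite{EYYrigidity}. The hard part is the bookkeeping of the power-counting in the fourth-order term — making sure that the bad factor $q^{-2}$ combined with the number of free summation indices and the sizes of the resolvent entries indeed collapses to $N^{1/3+C\epsilon}q^{-1}$ and not something larger — but since $\phi > 1/3$ this is exactly the regime in which the bound is better than the trivial $O(1)$, and the computation is identical in structure to the one already carried out in the Wigner case.
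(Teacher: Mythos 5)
The proposal contains a genuine gap that goes to the heart of the proof. You assert that ``since the first three moments of $h$ agree under $\P^\bv$ and $\P^\bw$, the terms of order $h^0, h^1, h^2, h^3$ cancel between the two ensembles, leaving only the order-$h^4$ and higher remainder terms.'' But the hypothesis of Proposition \ref{twthm}, stated in \eqref{2m}, is matching of only the \emph{first two} moments. The third moments of $v_{ij}$ and $w_{ij}$ are in general different, so the third-order term in the Taylor/resolvent expansion does \emph{not} cancel, and it is precisely this term that is the bottleneck of the whole argument. If you run the naive power count on it, you get a per-swap contribution of size roughly $\eta^2 N^2 p^{-3}\,\E\abs{v_{bd}}^3 \sim N^{-1/3}p^{-3}q^{-1}$ with $p^{-1}\sim N^{-1/3+C\epsilon}$ (not $N^{-1/6}$ as you wrote — check \eqref{imscbound} and \eqref{def of p}: at $\eta = N^{-2/3-\epsilon}$ one has $\im m_{sc}\lesssim N^{-1/3+\epsilon/2}$, so the off-diagonal Green function entries are $O(N^{-1/3+C\epsilon})$), and after summing over $O(N^2)$ swaps this is too large by a full factor $p^{-1}\sim N^{-1/3}$ compared to the target $N^{1/3+C\epsilon}q^{-1}$.

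What the paper's proof actually does — and what your proposal is missing — is show that the \emph{expectation} of the leading third-order term is smaller than its typical size by an extra factor $p^{-1}$. The second-order contributions, which depend only on the law of $Q$ and on $m_2(v_{bd})=m_2(w_{bd})=1/N$, are absorbed into a constant $B_N$ that cancels when comparing the two ensembles (Lemma \ref{lemGamma}); the third-order terms are reduced, after decomposition by the number of $\{b,d\}$-indices and of off-diagonal resolvent entries, to quantities of the form $\E F'(x^R)R_{ib}R_{dj}\overline{R_{ji}}$, which Lemma \ref{lem: 52} bounds by $Cp^{-4}$ rather than the naive $p^{-3}$. Proving Lemma \ref{lem: 52} in turn requires the identity of Lemma \ref{lemma: 376} and the estimate of Lemma \ref{non-centred Z-lemma}, and the key complication — which is absent in the centered Wigner analogue of \cite{EYYrigidity} and hence not addressed by ``the computation is identical in structure to the one already carried out in the Wigner case'' — is the nonzero mean $f/N$ of the entries of $A$, which generates the term $(-m_{sc}f/N)\sum_k S_{ik}$ in \eqref{EbSibeq} and forces one to exploit $m_{sc}(z)\approx -1$ near the edge together with $f=1+\epsilon_0$ to make the fixed-point argument for $X$ in \eqref{bound on X} converge. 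Your outline reduces the proposition to moment matching plus a fourth-order estimate, but that route is closed off by the hypotheses: the bottleneck is third-order, and it needs the nontrivial cancellation, not a moment-matching miracle.
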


We postpone the proof of Proposition \ref{GFCT} to the next section. Assuming it proved, we now have all the ingredients 
needed to complete the proof of Proposition \ref{twthm}.

\begin{proof}[Proof of Proposition \ref{twthm}]
As observed after \eqref{6-1} and \eqref{6-2}, we may assume that \eqref{25} holds.
We define $E:=2+sN^{-2/3}$ that satisfies \eqref{condE-}.
We define $E_*$ as in \eqref{defEL} with the  $C_0$ such that \eqref{6-1} and \eqref{6-2} hold.
From \eqref{67E} we get, for any sufficiently small $\e>0$,
\be
\E^\bw \,  K \pB {\tr (\chi_{E-\ell}  \ast \theta_\eta)  (H)}
\;\le\; \P^\bw (\cN(E, \infty)  = 1 )
\ee
where we set
$$
	\ell \;\deq\; \frac{1}{2}N^{-2/3-\e}\,, \qquad \eta \;\deq\; N^{-2/3-9\e}.
$$
Now \eqref{c1}
applied to the case $E_1=E-\ell$ and $E_2=E_*$
shows that  there exists a $\delta>0$ such that for sufficiently small $\e>0$ we have
\be\label{645}
\E^\bv  K \pB{\tr (\chi_{E-\ell}  \ast \theta_\eta)  (H)} \;\leq\; \E^\bw\, K \pB{\tr (\chi_{E-\ell}  \ast \theta_\eta)  
(H)} + N^{-\delta}
\ee
(note that here $9\e$  plays the role of $\e$ in the Proposition \ref{GFCT}).
Next, the second bound of \eqref{67E} yields
\be
\P^\bv (\cN(E-2\ell, \infty)  = 1 ) \;\leq\; \E^\bv\,  K \pB {\tr (\chi_{E-\ell}  \ast \theta_\eta)  (H)} + \me^{- \nu 
(\log N)^\xi}
\ee

Combining these inequalities, we have
\be\label{Pbv}
\P^\bv (\cN(E- 2\ell, \infty)  = 1 ) \;\leq\; \P^\bw (\cN(E, \infty)  = 1 ) + 2N^{-\delta}
\ee
for sufficiently small $\e>0$ and  sufficiently large $N$. Setting $E = 2+sN^{-2/3}$ proves the first inequality  of 
\eqref{tw1}. Switching the roles of $\bv$ and $\bw$ in \eqref{Pbv} yields the second inequality of \eqref{tw1}.
\end{proof}

\subsection{ Proof of Proposition \ref{GFCT}} \label{sect: proof of green fn comp}
All that remains is the proof of Proposition \ref{GFCT}, to which this section is devoted. Throughout this section we 
suppose that the assumptions of Proposition \ref{twthm} hold, and in particular that $f = 1 + \epsilon_0$.

We now set up notations to  replace the matrix elements one by one. This step is identical for the proof of both  
\eqref{maincomp} and \eqref{c1}; we use the notations of the case \eqref{maincomp}, for which they are less involved. 

For the following it is convenient to slightly modify our notation. We take two copies of our probability space, one of 
which carries the law $\P^\bv$ and the other the law $\P^\bw$. We work on the product space and write $H^\bv$ for the 
copy carrying the law $\P^\bv$ and $H^\bw$ for the copy carrying the law $\P^\bw$. The matrices $A^\bv$ and $A^\bw$ are 
defined in the obvious way, and we use the notations $A^\bv = (v_{ij})$ and $A^\bw = (w_{ij})$ for their entries.  
Similarly, we denote by $G^\bv(z)$ and $G^\bw(z)$ the Green functions of the matrices $A^\bv$ and $A^\bw$.

Fix a bijective ordering map on the index set of
the independent matrix elements,
\[
\phi \col \{(i, j) \col 1 \le i\le  j \le N \} \;\to\; \Big\{0, \ldots, \gamma_{\rm max}\Big\} \where \gamma_{\rm max} 
\;\deq\; \frac{N(N+1)}{2} - 1\,,
\] and denote by  $A_\gamma$  the generalized Wigner matrix whose matrix elements $a_{ij}$ follow
the $v$-distribution if $\phi(i,j)\le \gamma$ and they follow the $w$-distribution
otherwise; in particular $A_0 = A^\bv$ and $ A_{\gamma_{\rm max}} = A^\bw$.

Next, set $\eta \deq N^{-2/3 - \epsilon}$. We use the identity
\be\label{imscbound}
\im m_{sc}(E+\ii \eta) \;\le\; \sqrt{|E-2|+\eta} \;\le\; CN^{-1/3+\e/2}\,.
\ee
Therefore Theorem 2.9 of \cite {EKYY} yields, \hp{\xi}{\nu},
\be\label{basic}
\max_{0 \le \gamma \le \gamma_{\rm max}} \max_{1 \le k,l \le N}  \max_{E\in I_\e}\left |  \left (\frac 1 {  
A_{\gamma}-E- \ii \eta} \right )_{k l }
 -\delta_{kl}m_{sc}(E+ \ii \eta)
\right | \;\le\; \frac{1}{p}
\ee
where we defined
\be \label{def of p}
\frac{1}{p} \;\deq\; N^{\e}\left(q^{-1}+\frac1{N\eta}\right) \;\leq\; N^{-1/3+2\e} \,.
\ee

We set $z=E+\ii\eta$ where $E\in I_\e$ and $\eta=N^{-2/3-\e}$.
Using \eqref{basic}, \eqref{def of p}, and the identity
$$
  \im m \;=\; \frac{1}{N}\im \tr G \;=\; \frac{\eta}{N}\sum_{ij}G_{ij}\overline{G_{ij}}\,,
$$
we find, as in (6.36) of \cite{EYYrigidity}, that in order to prove \eqref{maincomp} it is enough to prove
\be\label{c1new}
\left|
\E  F\left(\eta^2 \sum_{i\neq j}G_{ij}^\bv\overline {G_{ji}^\bv} \right) -
\E F\left ( G^\bv \to  G^\bw\right )
\right| \;\leq\; C N^{1/3+C \e}q^{-1}
\ee
at $z = E + \ii \eta$.
We write the quantity in the absolute value on the left-hand side of \eqref{c1new} as a telescopic sum,
\begin{multline}\label{tel}
\E \, F \left (\eta^2 \sum_{i\neq j}\left(\frac{1}{A^\bv-z}\right)_{ij}
\overline{\left(\frac{1}{A^\bv-z}\right)}_{ji} \right )  - \E \, F \left  ( A^\bv\to A^\bw\right )  \\
=\; - \sum_{\gamma=2}^{\gamma_{\rm max}} \pB{  \E \, F ( A^\bv\to A_{\gamma }) -  \E \, F ( A^\bv\to A_{\gamma-1 })}\,.
\end{multline}

Let $E^{(ij)}$ denote the matrix whose matrix elements are zero everywhere except
at the $(i,j)$ position, where it is 1, i.e.\ $E^{(ij)}_{kl} \deq \delta_{ik}\delta_{jl}$.
Fix $\gamma\ge 1$ and let $(b,d)$ be determined by  $\phi (b, d) = \gamma$. For definiteness, we assume the off-diagonal 
case $b \neq d$; the case $b = d$ can be treated similarly. Note that the number of diagonal terms is $N$ and 
the number of off-diagonal terms is $O(N^2)$. We shall compare $A_{\gamma-1}$ with $A_\gamma$ for each $\gamma$
and then sum up the differences in \eqref{tel}.

Note that these two matrices differ only in the entries $(b,d)$ and $(b,d)$, and they can be written as
\be\label{defHg1}
	 A_{\gamma-1} \;=\; Q + V \where V \;\deq\; (v_{bd}-\E v_{bd})E^{(bd)}+ (v_{db}-\E v_{db}) E^{(db)}\,,
\ee
and
$$
	 A_\gamma \;=\; Q + W \where W \;\deq\; (w_{bd}-\E w_{bd})E^{(bd)}+ (w_{db}-\E w _{db}) E^{(db)}\,,
$$
where the matrix $Q$ satisfies
$$
Q_{bd} \;=\; Q_{db} \;=\; f/N \;=\; \E v_{bd} \;=\; \E v_{db} \;=\; \E w _{bd} \;=\; \E w_{db}\,,
$$
where, we recall $f = 1 + \epsilon_0$.
It is easy to see that
\be\label{364d}
\max_{i,j} |v_{ij}| +  \max_{i,j} |w_{ij}| \;\leq\; (\log N)^{C\xi} q^{-1}
\ee
\hp{\xi}{\nu}, and that
\be \label{moment bounds on v and w}
\E v_{ij} \;=\; \E w_{ij} \;=\; 0\,, \qquad \E (v_{ij})^2 \;=\; \E (w_{ij})^{2} \;\leq\; C/N\,,\qquad \E |v_{ij}|^k + \E 
|w_{ij}|^k \;\leq\; C N^{-1}q^{2-k}
\ee
for $k = 2,3,4,5,6$.

We define the  Green functions
\be\label{defG}
		R \;\deq\; \frac{1}{Q-z}\,, \qquad S \;\deq\; \frac{1}{A_{\gamma-1}-z}\,, \qquad T \;\deq\; 
\frac{1}{A_{\gamma}-z}\,.
\ee
We now claim that the estimate \eqref{basic} holds for the Green function $R$ as well, i.e.
\be
	\max_{1 \le k,l \le N}  \max_{E\in I_\e}\big| R_{k l}(E+\ii\eta) -\delta_{kl}m_{sc}(E+\ii \eta)
\big| \;\le\; p^{-1 }
\label{defOMR}
\ee
holds \hp{\xi}{\nu}. 
To see this, we use the 
resolvent expansion 
\be\label{relRS}
 R \;=\; S  +  SV S +  (SV)^2 S+ \ldots +  (SV)^9S+
 (SV)^{10} R.
\ee
Since $V$ has only at most two nonzero elements, when
computing the entry $(k,l)$ of this matrix identity,
each term is a sum of finitely many terms (i.e.\ the number of summands
is independent of $N$) that  involve
matrix elements of $S$ or $R$ and $v_{ij}$, e.g.\ of the form  $(SVS)_{kl} =S_{ki} v_{ij} S_{jl}
+ S_{kj} v_{ji} S_{il}$.  Using the bound \eqref{basic} for the $S$ matrix elements,
the bound \eqref{364d}    for $v_{ij}$ and the trivial bound $|R_{ij}| \le  \eta^{-1}\leq N$, we get \eqref{defOMR}.

Having introduced these notations, we may now give an outline of the proof of Proposition \ref{GFCT}. We have to 
estimate each summand of the telescopic sum \eqref{tel} with $b \neq d$ (the generic case) by $o(N^{-2})$; in the 
non-generic case $b = d$, a bound of size $o(N^{-1})$ suffices. For simplicity, assume that we are in the generic case 
$b \neq d$ and that $F$ has only one argument.  Fix $z = E + \ii \eta$, where $E \in I_\e$ (see \eqref{def Ie}) and 
$\eta \deq N^{-2/3 - \epsilon}$.  Define
\begin{equation} \label{def of yS}
y^S \;\deq\; \eta^2 \sum_{i \neq j} S_{ij}(z) \ol{S_{ji}(z)}\,;
\end{equation}
the random variable $y^R$ is defined similarly. We shall show that
\begin{equation} \label{outline step 1}
\E F(y^S) \;=\; B + \E F(y^R) + O(N^{-1/3 + C \epsilon} p^{-4} q^{-1})\,,
\end{equation}
for some deterministic $B$ which depends only on the law of $Q$ and the first two moments of $v_{bd}$.  From 
\eqref{outline step 1} we immediately conclude that \eqref{maincomp} holds. In order to prove \eqref{outline step 1}, we 
expand
\begin{equation} \label{outline step 2}
F(y^S) - F(y^R) \;=\; F'(y^R) (y^S - y^R) + \frac{1}{2} F''(y^R) (y^S - y^R)^2 + \frac{1}{6} F'''(\zeta) (y^S - 
y^R)^3\,,
\end{equation}
where $\zeta$ lies between $y^S$ and $y^R$. Next, we apply the resolvent expansion
\begin{equation} \label{resolvent exp in sketch}
S \;=\; R  +  RV R +  (RV)^2 R+ \ldots +  (RV)^m R+
 (RV)^{m+1} S
\end{equation}
to each factor $S$ in \eqref{outline step 2} for some $m \geq 2$. Here we only concentrate on the linear term in 
\eqref{outline step 2}. The second term is dealt with similarly. (The rest term in \eqref{outline step 2} requires a 
different treatment because $F'''(\zeta)$ is not independent of $v_{bd}$. It may however by estimated cheaply using a 
naive power counting.) By definition, $Q$ is independent of $v_{bd}$, and hence $F'(y^R)$ and $R$ are independent of 
$h_{bd}$.  Therefore the expectations of the first and second order terms (in the variable $v_{bd}$) in $\E F'(y^R) 
(y^S - y^R)$ are put into $B$. The third order terms in $\E F'(y^R) (y^S - y^R)$ are bounded, using a naive power 
counting, by
\begin{equation} \label{naive power counting bound}
\eta^2 N^2 p^{-3} \E \abs{v_{bd}}^3 \;\leq\; N^{-4/3} N^2 p^{-3} N^{-1} q^{-1}\,.
\end{equation}
Here we used that, thanks to the assumption $i \neq j$, in the generic terms $\{i,j\} \cap \{b,d\} = \emptyset$ there 
are at least three off-diagonal matrix elements $R$ in the resolvent expansion of \eqref{def of yS}. Indeed, since $b,d 
\notin \{i,j\}$, the terms of order greater than one in \eqref{resolvent exp in sketch} have at least two off-diagonal 
resolvents matrix elements, and other factor in \eqref{def of yS} has at least one off-diagonal resolvent matrix element 
since $i \neq j$.  Thus we get a factor $p^{-3}$ by \eqref{basic} (the non-generic terms are suppressed by a factor 
$N^{-1}$).  Note that the bound \eqref{naive power counting bound} is still too large compared to $N^2$, since $p \geq 
N^{-1/3}$.  The key observation to solve this problem is that the expectation of the leading term is much smaller than 
its typical size; this allows us to gain an additional factor $p^{-1}$. A similar observation was used in 
\cite{EYYrigidity}, but in the present case this estimate (performed in Lemma \ref{lem: 52} below) is substantially 
complicated by the non-vanishing expectation of the entries of $A$. Much of the heavy notation in the following argument 
arises from the need to keep track of the
non-generic terms, which have fewer off-diagonal elements than the generic terms, but have a smaller entropy factor.
The improved bound on the difference $\E F'(y^R)(y^S - y^R)$ is
\begin{equation*}
N^{-4/3} N^2 p^{-4} N^{-1} q^{-1} \;=\; N^{-1/3} p^{-4} q^{-1}\,,
\end{equation*}
which is much smaller than $N^{-2}$ provided that $q \geq N^{\phi}$ for $\phi > 1/3$ and $\epsilon$ is small enough.

The key step to the proof of Proposition \ref{GFCT} is the following lemma.

\begin{lemma}\label{lemGamma} Fix an index $\gamma = \phi(b,d)$ and recall the definitions of $Q$, $R$ and $S$ from 
\eqref{defG}.  For any small enough $\e>0$ and
under the assumptions in Proposition \ref{GFCT}, there exists a constant $C$ depending on $F$ (but independent of 
$\gamma$) and constants $B_N$ and $D_N$, depending  on the law $\law(Q)$ of the Green function $Q$ and on the second 
moments $m_2(v_{bd})$ of $v_{bd}$, such that, for large enough $N$ (independent of $\gamma$) we have
\begin{multline}\label{c3f} \Bigg|\E  F\left( \eta  \int_{E_1}^{E_2} \rd y  \sum_{i\neq 
j}S_{ij}\overline{S}_{ji}(y+\ii\eta)\right)
- \E  F\Bigg( \eta  \int_{E_1}^{E_2} \rd y  \sum_{i\neq j}R_{ij}\overline{R}_{ji}(y+\ii\eta)\Bigg)
- B_N\big(m_2(v_{bd}), \law(Q) \big)  \Bigg|
\\
\leq\; N^{{\bf1}(b=d)-5/3+C\e}q^{-1}\,,
\end{multline}
where, we recall, $\eta=N^{-2/3-\e}$, as well as
\be\label{new612}
\left|\E \, F \left (\eta^2 \sum_{i\neq j}S_{ij}\overline{S}_{ji}(z) \right ) -
\E \, F \left (\eta^2 \sum_{i\neq j}R_{ij}\overline{R}_{ji}(z) \right )-
 D_N\big(m_2(v_{bd}), \law(Q)\big)\right| \;\leq\; N^{{\bf1}(b=d)-5/3+C\e}q^{-1}\,,
\ee
where $z=E+\ii\eta$. The constants $B_N$ and $D_N$ may also depend on $F$, but they depend on the centered random 
variable $v_{bd}$ only through its second moments.

\end{lemma}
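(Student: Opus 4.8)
The plan is to prove \eqref{c3f} and \eqref{new612} by a Lindeberg-type resolvent expansion, replacing the single entry pair $(v_{bd}, v_{db})$ by its mean-zero analogue and tracking all terms up to fourth order in the small parameter. I would work with the $\eta^2 \sum_{i\neq j} S_{ij}\overline{S}_{ji}$ version \eqref{new612} first; the integrated version \eqref{c3f} follows by the same argument after inserting $\int_{E_1}^{E_2}\dd y$ and using $\eta(E_2 - E_1) \le \eta N^{-2/3+\e} = N^{-4/3}$ to supply the extra $\eta$-factor needed to match the exponents. Throughout, I abbreviate $y^S \deq \eta^2\sum_{i\neq j}S_{ij}\overline S_{ji}(z)$ and define $y^R$ analogously with $R$ in place of $S$.

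The first step is the Taylor expansion \eqref{outline step 2}, $F(y^S) = F(y^R) + F'(y^R)(y^S - y^R) + \tfrac12 F''(y^R)(y^S-y^R)^2 + \tfrac16 F'''(\zeta)(y^S-y^R)^3$, combined with the resolvent expansion \eqref{resolvent exp in sketch}, $S = R + RVR + (RV)^2 R + \cdots + (RV)^m R + (RV)^{m+1}S$, applied to each $S$ appearing in $y^S - y^R$; I would take $m = 4$, which is enough since each application of $V$ carries a factor $v_{bd}$ with $\E|v_{bd}|^k \le CN^{-1}q^{2-k}$. Plugging in, $y^S - y^R$ becomes a polynomial in $v_{bd}$ (up to a controlled remainder) whose coefficients are polynomials in the entries of $R$; by \eqref{defOMR}, $R_{kl} = \delta_{kl}m_{sc} + O(p^{-1})$ with $p^{-1}\le N^{-1/3+2\e}$, while off-diagonal entries are $O(p^{-1})$. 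The crucial bookkeeping is the count of off-diagonal resolvent factors: because $i \neq j$ in \eqref{def of yS} and (in the generic case $b\neq d$) $\{b,d\}\cap\{i,j\}=\emptyset$, every term of order $\ge 1$ in $v_{bd}$ carries at least three off-diagonal $R$-entries, hence a factor $p^{-3}$; combined with $\eta^2 N^2 = N^{2/3 - 2\e}$ for the double sum, the $v_{bd}$-linear and $v_{bd}$-quadratic contributions, after taking expectation and using independence of $R$ and $F^{(k)}(y^R)$ from $v_{bd}$, are deterministic functions of $m_2(v_{bd})$ and $\law(Q)$ — these I collect into $D_N$. The third- and fourth-order terms, and the remainder $\tfrac16 F'''(\zeta)(y^S-y^R)^3$ (which must be handled by naive power counting since $\zeta$ depends on $v_{bd}$), are bounded by $\eta^2 N^2 p^{-3}\cdot \E|v_{bd}|^3 \le N^{2/3}\,p^{-3}\,N^{-1}q^{-1} = N^{-1/3}p^{-3}q^{-1}$ at leading order; the extra gain of $p^{-1}$ needed to reach $N^{-5/3+C\e}q^{-1}$ comes from the observation, to be made precise in Lemma \ref{lem: 52} below, that the expectation of the leading term is smaller than its typical size by a further factor $p^{-1}$. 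For the non-generic case $b=d$ there are only $N$ such terms (entropy factor $N^{-1}$ weaker in the telescopic sum \eqref{tel}) but one fewer forced off-diagonal factor, which is exactly the content of the exponent $\mathbf 1(b=d) - 5/3 + C\e$.

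The main obstacle is the gain of the additional factor $p^{-1}$ beyond the naive power count, i.e.\ showing that $\E[F'(y^R)(y^S - y^R)]$ at its leading (third) order in $v_{bd}$ is genuinely of order $N^{-1/3}p^{-4}q^{-1}$ rather than $N^{-1/3}p^{-3}q^{-1}$. In the Wigner case this was achieved in \cite{EYYrigidity}, but here the nonzero mean $f/N$ of the entries of $A$ (encoded in $Q_{bd} = Q_{db} = f/N$) means $Q$ is a genuine rank-one perturbation of a mean-zero matrix, so $R$ does not satisfy the naive a priori bounds unless one first transfers \eqref{basic} from $S$ to $R$ via \eqref{defOMR}; moreover many ``non-generic'' index configurations — where some of $i,j$ coincide with $b$ or $d$, or where the resolvent expansion produces diagonal $R$-entries near $m_{sc}$ rather than small off-diagonal ones — must be enumerated and shown individually to be either absorbed into $D_N$ or negligible. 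This case analysis is the heavy notational core referred to in the paragraph preceding the lemma, and I expect it to be by far the longest part; the remaining steps (Taylor expansion, resolvent expansion, routine power counting of the remainder terms using $\E|v_{bd}|^k \le CN^{-1}q^{2-k}$ and $|R_{ij}| \le Cp^{-1}$) are mechanical once the bookkeeping framework is set up.
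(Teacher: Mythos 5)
Your proposal matches the paper's proof of Lemma \ref{lemGamma} in all essentials: a Taylor expansion of $F$ to second order plus a cubic remainder (the paper's \eqref{FF123}), a resolvent expansion of $S$ around $R$ tracking powers of $v_{bd}$ together with a count of forced off-diagonal $R$-factors (paper's decomposition into $Q_k^{(s,t)}$, \eqref{51}--\eqref{byi1232}), absorption of the first- and second-order terms into $B_N$ (or $D_N$) using that $Q$ and $F^{(k)}(x^R)$ are independent of $v_{bd}$ (paper's \eqref{Adef}), and the crucial additional gain of one factor $p^{-1}$ on the third-order term $Q_0^{(3)}$ via Lemma \ref{lem: 52}. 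The only cosmetic difference is that you start from the pointwise version \eqref{new612} and deduce \eqref{c3f}, whereas the paper does the reverse; this makes no difference since the two are related by replacing one $\eta$-factor by $\int_{E_1}^{E_2}\dd y \le N^{-2/3+\e}$, a change of at most $N^{2\e}$ which is absorbed into $N^{C\e}$.
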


Assuming Lemma \ref{lemGamma}, we now complete the proof of Proposition \ref{GFCT}.

\begin{proof}[Proof of Proposition \ref{GFCT}]
Clearly, Lemma \ref{lemGamma} also holds if $S$ is replaced by $T$. Since $Q$ is independent of $v_{bd}$ and
$w_{bd}$, and $m_2(v_{bd})=m_2(w_{bd})=1/N$, we have $D_N\big(m_2(v_{bd}), \law(Q)\big)
= D_N\big(m_2(w_{bd}), \law(Q)\big)$.
Thus we get from Lemma \ref{lemGamma} that 
\be\label{RSest}
  \left|\E\, F \left (\eta^2 \sum_{i\neq j}S_{ij}\overline{S}_{ji}(z) \right ) -
  \E\, F \left (\eta^2 \sum_{i\neq j}T_{ij}\overline{T}_{ji}(z) \right ) \right|
  \le CN^{{\bf1}(b=d)-5/3+C\e}q^{-1}.
\ee
Recalling the definitions of $S$ and $T$ from \eqref{defG},
the bound \eqref{RSest} compares
the expectation of a function of the resolvent of $A_\gamma$ and that of $A_{\gamma-1}$.
 The telescopic summation in \eqref{tel} then implies
  \eqref{c1new},
since the number of summands with $b \ne d$ is of order $N^2$ but the number
of summands with $b=d$ is only $N$. Similarly, \eqref{c3f} implies \eqref{c1}. This completes the proof.
\end{proof}

\begin{proof}[Proof of Lemma \ref{lemGamma}]
Throughout the proof we abbreviate $A_{\gamma - 1} = A = (a_{ij})$ where $a_{ij} = h_{ij} + f / N$.
We shall only prove the more complicated case \eqref{c3f}; the proof of \eqref{new612} is similar. In fact, we shall 
prove the bound
\begin{multline}\label{c3}
\Bigg|\E  F\left( \eta  \int_{E_1}^{E_2} \rd y  \sum_{i\neq j}S_{ij}\overline{S}_{ji}(y+\ii\eta)\right)
- \E  F\Bigg( \eta  \int_{E_1}^{E_2} \rd y  \sum_{i\neq j}R_{ij}\overline{R}_{ji}(y+\ii\eta)\Bigg)
- B_N\big(m_2(h_{bd}), \law(Q) \big)  \Bigg|
\\
\leq\; CN^{{\bf1}(b=d)-1/3+C\e}p^{-4}q^{-1}\,,
\end{multline}
from which \eqref{c3f} follows by \eqref{def of p}.

From \eqref{basic} we get
\be\label{defOMS}
  \max_{1 \le k,l \le N}  \max_{E\in I_\e}\big| S_{k l}(E+\ii\eta) -\delta_{kl}m_{sc}(E+\ii \eta)
\big|
\;\le\;
p^{-1}
\ee
\hp{\xi}{\nu}. Define $\Omega$ as the event on which \eqref{defOMS}, \eqref{defOMR}, and \eqref{364d} hold. We have 
proved that $\Omega$ holds \hp{\xi}{\nu}.  Since the arguments of $F$ in \eqref{c3}
are bounded by $CN^{2+2\e}$ and $F(x)$ increases at most
polynomially, it is easy to see that the contribution
of the event $\Omega^c$
to the expectations  in \eqref{c3} is negligible.

Define $x^S$ and $x^R$ by 
\be\label{defxs}
x^{S} \;\deq\;  \eta  \int_{E_1}^{E_2} \rd y \sum_{i\neq j}S_{ij}\overline{S}_{ji}(y+\ii\eta)\,, \qquad x^{R} \;\deq\; 
\eta  \int_{E_1}^{E_2} \rd y \sum_{i\neq j}R_{ij}\overline{R}_{ji}(y+\ii\eta),
\ee
and decompose  $x^S$ into three parts
\be
x^{S} \;=\; x^{S}_2+x^S_1+x^S_0 \where x^S_k \;\deq\;  \eta  \int_{E_1}^{E_2} \dd y \, \sum_{i\neq j} \indb{|\{i,j\}\cap 
\{b,d\}|=k } \, S_{ij}\overline{S}_{ji}(y+\ii\eta)\,;
\ee
$x^R_k$ is defined similarly.  Here $k=|\{i,j\}\cap \{b,d\}|$ is the number of times the indices $b$ and $d$ appear 
among the summation indices $i,j$. Clearly $k=0$,  $1$ or $2$.  The number of the  terms in the sum of the definition of 
$x^S_k$ is $O(N^{2-k})$. A resolvent expansion yields
\be\label{SR-N}
	 S \;=\;  R -   RVR+  (RV)^2R -    (RV)^3R+   (RV)^4R-  (RV)^5R+   (RV)^6S\,.
\ee
In the following formulas we shall, as usual, omit the spectral parameter from the notation
of the resolvents. The spectral parameter is always $y+\ii\eta$
with $y\in [E_1,E_2]$; in particular, $y \in I_\e$.

If $|\{i,j\}\cap \{b,d\}|=k$, recalling that $i \neq j$ we find that there are at least $2 - k$ off-diagonal resolvent 
elements in $\big[(RV)^m R\big]_{ij}$, so that \eqref{basic} yields in $\Omega$
\be\label{51}
\big| \big[(RV)^m R\big]_{ij}\big| \;\leq\; C_m  \left(N^{\e}q^{-1}\right)^m p^{- (2-k)} \where  m \in \N_+\,, \quad 
m\leq 6\,, \quad k = 0,1, 2\,.
\ee
Similarly, we have in $\Omega$
\be\label{666}
\big| \big[(RV)^m S\big]_{ij}\big| \;\leq\; C_m  \left(N^{\e}q^{-1}\right)^m p^{- (2-k)} \where  m \in \N_+\,, \quad 
m\leq 6\,, \quad k = 0,1, 2\,.
\ee
Therefore we have in $\Omega$ that
\be
 |x^S_{k}-x^R_{k}| \;\leq\; C N^{ 2/3-k}p^{-(3-k)} N^\e q^{-1} \for k = 0,1, 2\,,
 \ee
where the factor $N^{ 2/3-k}$ comes from $\sum_{i\neq j}$, $\eta$ and $\int \dd E$. Inserting these bounds into  the 
Taylor expansion of $F$, using \be
\label{pqrel}
q \;\geq\; N^\phi \;\geq\; N^{1/3+C\e} \;\geq\; p \;\geq\; N^{1/3-2\e}
\ee
and keeping only the terms larger than $O(N^{-1/3+C\e}p^{-4}q^{-1})$,
we obtain
\begin{multline} \label{FF123}
\left|\E \pb{ F(x^S)- F(x^R)} -\E\left( F'(x^R) (x^S_0-x^R_0) +\frac12 F''(x^R)  (x^S_0-x^R_0 )^2+ 
F'(x^R)(x^S_1-x^R_1)\right) \right|
\\
\leq\; CN^{-1/3+C\e}p^{-4}q^{-1}\,,
\end{multline}
where we used the remark after \eqref{defOMS} to treat the contribution on the
event $\Omega$.
Since there is no $x_2$ appearing in \eqref{FF123}, we can focus on the cases $k=0$ and $k = 1$. 

To streamline the notation, we introduce
\be\label{defRMij}
R ^{(m)}_{ij} \;\deq\; (-1)^m \big[(RV)^m R\big]_{ij}\,.
\ee
Then using \eqref{51} and the estimate $\max_{i\neq j}|R_{ij}|\leq p^{-1}$ we get
\be\label{382m}
\absb{R ^{(m)}_{ij}} \;\leq\; C_m   \left(N^{\e}q^{-1}\right)^m p^{- (2-k)+\delta_{0m}\delta_{0k}}\,.
\ee

Now we decompose the sum $x^S_k - x^R_k$ according to the number of matrix elements $h_{bd}$ and $h_{db}$. To that end, 
for $k \in \{0,1\}$ and $s,t \in \{0,1,2,3,4\}$ and $s + t \geq 1$, we define
\begin{align}\label{defqst}
Q^{(s,t)}_{k} \;\deq\; \eta  \int_{E_1}^{E_2} \rd y  \, \sum_{i \neq j} \indb{|\{i,j\}\cap \{b,d\}|=k} \, 
R^{(s)}_{ij}\overline{R ^{(t)}_{ji}}\,,
\end{align}
and set
\be
Q^{(\ell)}_{k} \;\deq\; \sum_{s+t=\ell} Q^{(s,t)}_{k}\,.
\ee
Using \eqref{382m} we get the estimates, valid on $\Omega$,
\be\label{byi1232}
\absb{Q^{(s,t)}_{k}} \;\leq\; C_{st} \left(N^{\e}q^{-1}\right)^{s+t} N^{2/3-k}p^{- 
(4-2k)+\delta_{0s}\delta_{0k}+\delta_{0t}\delta_{0k}} \,,
\qquad
\absb{Q^{(\ell)}_{k}} \;\leq\; C_{\ell} \left(N^{\e}q^{-1}\right)^{\ell}N^{2/3-k}p^{- (3-k)}\,,
\ee
where $\ell \geq 1$.
Using \eqref{pqrel}, \eqref{51}, and \eqref{666}, we find the decomposition
\be\label{yi1232}
x^S_k-x^R_k \;=\;  \sum_{1\leq s+t\leq 4}Q^{(s,t)}_{k}  + O(N^{-1/3+C\e}p^{-4}q^{-1}
)\,,
\ee
where $s$ and  $t$ are non-negative. By \eqref{byi1232} and \eqref{defOMR} we have for $s+t\geq 1$
\be\label{387h}
\left|\E_{bd}\,Q^{(s,t)}_{k}  \right| \;\leq\;  C_{st} q ^{2-s-t} N^{-1/3-k}p^{- 
(4-2k)+\delta_{0s}\delta_{0k}+\delta_{0t}\delta_{0k}}\,,
\ee
where $\E_{bd}$ denotes partial expectation with respect to the variable $h_{bd}$. Here we used that only terms with at
least two elements $h_{bd}$ or $h_{db}$ survive. Recalling \eqref{moment bounds on v and w}, we find that taking the
partial expectation $\E_{bd}$ improves the bound \eqref{byi1232} by a factor $q^2 / N$.
Thus we also have
\be\label{388h}
 \left|\E_{bd} \, Q^{(\ell)}_{k}  \right|\leq  C_{\ell} \, q^{2-\ell} N^{-1/3-k}p^{- (3-k)}
 \ee
Similarly, for $s+t\geq 1$ and $u+v\geq 1$ we have
\be
\left|\E_{bd}\, Q^{(s,t)}_{k} Q^{(u,v)}_{k} \right| \;\leq\; q^{2-s-t-u-v}N^{ 1/3-2k}p^{- 
(8-4k)+\delta_{0s}\delta_{0k}+\delta_{0t}\delta_{0k} +\delta_{0u}\delta_{0k}+\delta_{0v}\delta_{0k}}\,,
\ee
which implies
\be\label{390h}
\left|\E_{bd} \, Q^{(\ell_1)}_{k} Q^{(\ell_2)}_{k} \right| \;\leq\; q^{2-\ell _1-\ell _2}N^{ (1/3  -2k  ) 
+C\e}p^{-6+2k}\,.
\ee
Inserting \eqref{388h} and \eqref{390h} into the second term of the left-hand side of \eqref{FF123}, and using the 
assumption $F$ as well as \eqref{pqrel}, we find
\begin{align}
&\mspace{-100mu} \E\left( F'(x^R)(x^S_0-x^R_0) + F'(x^R)(x^S_1-x^R_1) +
\frac12 F''(x^R)(x^S_0-x^R_0)^2\right)
\notag
\\
& =\; B +  \E F'(x^R)   Q_0^{(3)}   +  O\left(N^{-1/3+C\e}p^{-4}q^{-1}\right)
\notag
\\ \label{6755}
& =\; B +  \E F'(x^R)  \left( Q_0^{(0,3)}+ Q_0^{(3,0)} \right) +  O\left(N^{-1/3+C\e}p^{-4}q^{-1}\right)\,,
\end{align}
where we defined
\begin{align}
B  \;\deq &\;\; \E\left( \sum_{k=0, 1}  F'(x^R) \pB{Q_k^{(1)}+Q_k^{(2)}} +\frac12 F''(x^R) \pB{Q_0^{(1)}}^2  \right)
\notag \\ \label{Adef}
\;=&\;\;  \E\left( \sum_{k=0, 1}  F'(x^R) \E_{bd} \pB{Q_k^{(1)}+Q_k^{(2)}} +\frac12 F''(x^R) \E_{bd} \pB{Q_1^{(0)}}^2  
\right)\,.
\end{align}
Note that $B$ depends on $h_{bd}$ only through its expectation (which is zero) and on its
second moment. Thus, $B$ will be $B_N(m_2(v_{bd}), \law(Q))$ from \eqref{c3f}.

In order to estimate \eqref{6755}, it only remains to estimate $ \E F'(x^R)  Q_0^{(0,3)}$ and  $ \E F'(x^R)  
Q_0^{(3,0)}$. Using \eqref {387h}, \eqref{FF123}, \eqref{6755}, and \eqref{388h}, we have
\be
\absB{\E  \pb{F(x^S)- F(x^R)} -B} \;\leq\; N^{-1/3+C\e}p^{-3}q^{-1}\,,
\ee
which implies \eqref{c3} in the case $b=d$.

Let us therefore from now on assume $b \neq d$. Since we estimate $Q_0^{(3,0)}$ and  $Q_0^{(0,3)}$, this implies that 
$i,j,b,d$ are all distinct.
In order to enforce this condition in sums, it is convenient to introduce the indicator function $\chi \equiv 
\chi(i,j,b,d) \deq \indb{|\{i,j,b,d\}| \;=\; 4}$.

Recalling \eqref{defRMij}, we introduce the notation $R_{ij}^{(m, s)}$ to denote the sum of the terms in the definition
\eqref{defRMij}
of $R_{ij}^{(m )}$ in which the number of the  off-diagonal elements of $R$ is  $s$. For example,
\be
R_{ij}^{(3, 0)}=R_{ij}^{(3, 1)} \;=\;0\,, \qquad R_{ij}^{(3, 2)} \;=\; 
R_{ib}h_{bd}R_{dd}h_{db}R_{bb}h_{bd}R_{dj}+R_{id}h_{db}R_{bb}h_{bd}R_{dd}h_{db}R_{bj}\,.
\ee
Then in the case $\chi = 1$ we have
\be
R_{ij}^{(3)}=\sum_{s=2}^4R_{ij}^{(3, s)}
\ee
Now from the definition \eqref{defqst} we get
\be{\label{397b} }
Q^{(0,3)}_{0}  \;=\; \sum_{s=2}^4Q^{(0,3, s)}_{0} \where Q^{(0,3, s)}_{0} \;\deq\; \eta  \int_{E_1}^{E_2} \rd y  
\sum_{i,j} \chi \,
 R^{(0)}_{ij}\overline{R ^{(3,s)}_{ji}}\,,
\ee
and
\be{\label{397c} }
Q^{(3,0)}_{0} \;=\; \sum_{s=2}^4Q^{(3,0, s)}_{0} \where Q^{(3,0, s)}_{0} \;\deq\; \eta  \int_{E_1}^{E_2} \rd y  
\sum_{i,j}  \chi \,
 R ^{(3,s)}_{ij}\overline{R^{(0)}_{ji}}\,.
\ee

As above, it is easy to see that for $s \geq 3$ we have
\be
 \E F'(x^R)  Q_0^{(0,3,s) } \;\leq\; N^{-1/3+C\e}p^{-4}q^{-1}\,,
 \ee
which implies, using \eqref{6755},
 \be
 \left|\E  \pb{ F(x^S)- F(x^R)} -B\right| \;\leq\;   \E F'(x^R)  Q_0^{(0,3,2) }+ \E F'(x^R)  Q_0^{(3,0,2) }+ 
N^{-5/3+C\e} q^{-1}\,.
 \ee
By symmetry, it only remains to prove that
 \be
 \E F'(x^R)  Q_0^{(3,0,2) } \;\leq\; N^{-1/3+C\e}p^{-4}q^{-1}\,.
  \ee
Using the definition \eqref{397b} and the estimate \eqref{defOMR} to replace some diagonal resolvent matrix elements
with $m_{sc}$, we find
\begin{align}
\E F'(x^R)Q_0^{(3,0,2)}  & \;=\; \eta \int_{E_1}^{E_2} \rd y\,  \E F'(x^R)
\sum_{i,j}  \chi \, \Bigg[   R_{ib}h_{bd}R_{dd}h_{db}R_{bb}h_{bd}R_{dj}\overline{R_{ji}} +(b \leftrightarrow d)\Bigg]
\notag \\
& \;=\; \eta  \int_{E_1}^{E_2} \rd y \,  \E F'(x^R)
 \sum_{i,j} \chi\,\Bigg[m_{sc}^2 R_{ib} R_{dj}\overline{R_{ji}}\left(\E_{bd}|h_{bd}|^2h_{bd}\right)
(b\leftrightarrow d)\Bigg] 
\notag \\ \label{Q30}
&\mspace{40mu}
+ O(N^{-1/3 + C \e} p^{-4} q^{-1})\,,
 \end{align}
where we used the estimate $\absb{\E_{bd}|h_{bd}|^2h_{bd}}\leq \frac{C}{Nq}$ to control the errors for the replacement.
Combining \eqref{Q30} with \eqref{6755} and \eqref{FF123}, we therefore obtain
\begin{multline}\label{680}
\left|\E  [ F(x^S)- F(x^R) ] -B \right|
\;\leq\;  C N^{-1/3+C\e}p^{-4}q^{-1}
+Cq^{-1}N^{-1/3+C\e} \\
{}\times{}
\max_{y\in I_\e}\max_{i,j} \, \chi\,
\Big[
\big|\E F'(x^R)  R_{ij}\overline{R_{jb}R_{di}}\big|
+\big| \E F'(x^R) R_{ib}R_{dj}\overline{R_{ji}}\big| +(b\leftrightarrow d)\Big],
\end{multline}
where we used the trivial bounds on $F'$ and $\abs{m_{sc}}$,
and that every estimate is uniform in $y$.

In order to complete the proof of Lemma \ref{lemGamma}, we need to estimate the expectations in \eqref{680} by a better 
bound than the naive high-probability bound on the argument of $\E$.  This is accomplished in Lemma \ref{lem: 52} 
below.  From Lemma \ref{lem: 52} and \eqref{680} we get in the case $b \neq d$ that
\be\label{FF1232}
\absb{\E  [ F(x^S)- F(x^R) ] - B} \;\leq\; N^{-1/3+C\e}p^{-4}q^{-1}\,,
\ee
where $B$ was defined in \eqref{Adef}. This completes the proof of Lemma \ref{lemGamma}.
\end{proof}

\begin{lemma} \label{lem: 52} Under the assumptions of Lemma \ref{lemGamma}, in particular fixing $f = f_*$,
and assuming that $a, b, i, j$ are all distinct,
we have
\be\label{new628} \max_{y\in I_\e} \absb{\E F'(x^R)R_{ib}R_{dj}\overline{R_{ji}}(y+\ii\eta)} \;\leq\; Cp^{-4}\,.
\ee
The same estimate holds for the other three terms on the right-hand side of \eqref{680}.
\end{lemma}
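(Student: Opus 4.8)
The goal is to show that the expectation $\E F'(x^R)R_{ib}R_{dj}\overline{R_{ji}}$ (and its three siblings) is bounded by $Cp^{-4}$, rather than the naive high-probability bound, which only gives $p^{-3}$ on $\Omega$ (three off-diagonal resolvent entries) together with the polynomial control of $F'$. The extra factor $p^{-1}$ must come from exploiting cancellation in the expectation. The natural mechanism is the local semicircle law for $Q$, specifically \eqref{defOMR}: diagonal entries of $R$ are close to the deterministic $m_{sc}$, and off-diagonal entries are not just $O(p^{-1})$ but \emph{mean-zero to leading order}. So the plan is to expand the three off-diagonal resolvent entries $R_{ib},R_{dj},R_{ji}$ of $Q=(Q-z)^{-1}$ into resolvent-minor expansions (removing rows/columns among the distinct indices $b,d,i,j$), writing each $R_{xy}$ ($x\ne y$) as $-R_{xx}R^{(x)}_{yy}Q_{xy}+\cdots$ in the usual way, so that the leading term becomes a product of (near-deterministic) diagonal entries times a product of \emph{independent} entries $Q_{ib},Q_{dj},Q_{ji}$ of $Q$. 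Since $F'(x^R)$ and the diagonal resolvent entries depend only weakly on those three off-diagonal entries of $Q$, taking the partial expectation over them produces the gain: either an entry appears to an odd power (killed, since the centred entries have mean zero — note $Q_{xy}=h_{xy}+f/N$, but $h_{xy}$ centred, so $\E_{xy}$ of an odd centred power vanishes up to $O(N^{-1})$), or we pick up a variance factor $\E|h_{xy}|^2=O(1/N)$ with the surviving $Q$-entries recombining.

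Concretely, I would proceed as follows. First, fix $y\in I_\e$, set $z=y+\ii\eta$, and work throughout on the event $\Omega$ where \eqref{defOMS}, \eqref{defOMR}, \eqref{364d} hold; the complement contributes negligibly since the integrand is at most polynomially large and $F'$ grows polynomially. Second, apply the standard minor expansion to each of $R_{ib}$, $R_{dj}$, $\overline{R_{ji}}$ with respect to the indices in $\{b,d,i,j\}$, truncating after enough orders that the remainder is $O(p^{-4})$ after multiplication by the trivial bounds; each off-diagonal $R_{xy}$ is written as a bounded (diagonal) coefficient times $Q_{xy}$ plus higher-order terms carrying additional factors of $p^{-1}$. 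Third, in the leading term, the prefactor $F'(x^R)$ together with the diagonal resolvent entries is, on $\Omega$, \emph{almost independent} of $Q_{ib},Q_{dj},Q_{ji}$: one replaces $R$ by the resolvent of $Q$ with the relevant rows/columns removed, paying an error $O(p^{-1})$ per removed entry (as in \eqref{relRS}–\eqref{defOMR}), and then the remaining dependence is only through these three (now free) matrix elements. Fourth, take the partial expectation over $h_{ib},h_{dj},h_{ji}$ (all distinct since $a,b,i,j$ — i.e. $b,d,i,j$ — are distinct): each of the three entries appears to the first power in the leading term, so the product of their (centred) expectations vanishes, leaving only contributions where at least one pair is contracted; contracting a pair costs a variance $O(1/N)$ and reduces the number of free off-diagonal factors, and a careful bookkeeping shows the surviving terms are all $O(p^{-4})$ using $p\ge N^{1/3-2\e}$ from \eqref{pqrel}. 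The three other terms on the right of \eqref{680} have the same structure — a product of three off-diagonal $R$-entries indexed within $\{b,d,i,j\}$ times $F'(x^R)$ — and are handled identically.

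The main obstacle, and where most of the work lies, is the bookkeeping in the last step: one must organize the minor expansions so that (a) the "almost independence" of $F'(x^R)$ and the diagonal entries from the three distinguished off-diagonal entries is made quantitative with errors genuinely smaller than $p^{-4}$, and (b) every term that survives the partial expectation — there are many, coming from the various ways the three free entries can be paired or left uncontracted in combination with the error terms of the minor expansion — is checked against the target $Cp^{-4}$. The delicate point, emphasized in the surrounding discussion, is the nonzero mean $f/N$ of the entries of $A$: it means $Q_{xy}$ is not centred, so one must separate $Q_{xy}=h_{xy}+f/N$ and track the extra $f/N$ terms, which is precisely the source of the "heavy notation" the authors warn about; fortunately $f/N = O(N^{-1})$ is small enough that these contributions are subleading, but verifying this uniformly requires care. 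I expect the resolvent identity \eqref{relRS} applied repeatedly, the bound \eqref{moment bounds on v and w} on moments of $h_{bd}$, and the local law \eqref{defOMR} for $R$ to be the only inputs needed beyond elementary estimates.
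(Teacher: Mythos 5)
Your overall strategy---gain a factor $p^{-1}$ beyond the naive $p^{-3}$ bound by exploiting cancellation in expectation, using the minor expansion and the fact that the centred entries of $Q$ have mean zero---is the right one, and the paper's proof indeed works this way. But the specific mechanism you propose has two genuine gaps, and the paper's argument is organized quite differently.

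The first gap is that centring only the three entries $h_{ib},h_{dj},h_{ji}$ does not kill the off-diagonal resolvent entries. You write ``$R_{xy}=-R_{xx}R^{(x)}_{yy}Q_{xy}+\cdots$'', implicitly treating the $\cdots$ as smaller; but the correct identity, $R_{ib}=-R_{bb}\sum_{k\neq b}R^{(b)}_{ik}a_{kb}$ (Lemma \ref{lemma: 376}), has $N-1$ summands, and the off-diagonal part $-R_{bb}\sum_{k\neq b,i}R^{(b)}_{ik}a_{kb}$ is of the same size $O(p^{-1})$ as the single term you keep. Thus $\E_{h_{ib}}R_{ib}$ is still $O(p^{-1})$, and your ``odd power kills it'' step fails. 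To gain the extra $p^{-1}$ you must centre the \emph{entire} $b$-th column, i.e.\ take $\E_b$ (partial expectation over $(a_{1b},\dots,a_{Nb})$), not just over $h_{ib}$. This is exactly what the paper does: after decoupling via $S_{dj}\approx S^{(b)}_{dj}$, $S_{ji}\approx S^{(b)}_{ji}$, $x^S\approx (x^S)^{(b)}$ (each costing only $O(p^{-2})$, giving an overall error $O(p^{-4})$), the quantities other than $S_{ib}$ are independent of the $b$-th column, and the lemma reduces to showing $\E_b S_{ib}=O(p^{-2})$.

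The second, more serious gap is your treatment of the nonzero mean. After taking $\E_b$, all centred $h_{kb}$ terms vanish but the deterministic part $f/N$ of $a_{kb}$ survives, and summed over $k$ it produces $\frac{-m_{sc}f}{N}\sum_{k}S^{(b)}_{ik}$. You assert this is ``subleading since $f/N=O(N^{-1})$'', but this is not so: a single term $\frac{f}{N}S^{(b)}_{ii}=O(N^{-1})$ is negligible, yet the full sum $\frac{f}{N}\sum_{k\neq i}S^{(b)}_{ik}$ is $N-1$ terms of size $p^{-1}$ each, and one cannot bound it by $O(p^{-2})$ without a genuine cancellation argument. This is where the bulk of the paper's proof lives: it introduces $X\deq \frac{1}{N}\sum_{k\neq i}\E_k S_{ik}$, derives the self-consistent equation $X=-m_{sc}f\,X+O(\cdot)+O(p^{-2})$, and then uses that $f=1+\epsilon_0$ forces $1+m_{sc}f\approx -\epsilon_0$ to be bounded away from zero (this is precisely where the lower bound $f\ge 1+\epsilon_0$ is used). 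Finally, the remaining fluctuation term $\frac{1}{N}\sum_{k\neq i}(S_{ik}-\E_k S_{ik})$ is controlled by the concentration estimate of Lemma \ref{non-centred Z-lemma} (a fluctuation-averaging result imported from \cite{EKYY}). None of this appears in your plan, and without it the $f/N$ contribution would ruin the bound.

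So: the right intuition, but the cancellation has to be implemented by centring an entire column (not three individual entries) and the mean contribution needs the self-consistent equation and Lemma \ref{non-centred Z-lemma}, which are the actual content of the paper's proof.
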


In order to prove Lemma \ref{lem: 52}, we shall need the following result, which is crucial when estimating terms 
arising from the nonvanishing expectation of $\E a_{ij} = f N^{-1}$. Before stating it, we introduce some notation.

Recall that we set $A \equiv A_{\gamma - 1} = (a_{ij})$, where the matrix entries are given by $a_{ij} = h_{ij} + f / N$ 
and $\E h_{ij} = 0$. We denote by $A^{(b)}$ the matrix obtained from $A$ by setting all entries with index $b$ to zero, 
i.e.\ $(A^{(b)})_{ij} \deq \ind{i \neq b} \ind{j \neq b} a_{ij}$. If $Z \equiv Z(A)$ is a function of $A$, we define 
$Z^{(b)} \deq Z(A^{(b)})$. See also Definitions 5.2 and 3.3 in \cite{EKYY}. We also use the notation $\E_b$ to denote 
partial expectation with respect to all variables $(a_{1b}, \dots, a_{Nb})$ in the $b$-th column of $A$.

\begin{lemma} \label{non-centred Z-lemma}
For any fixed $i$ we have, \hp{\xi}{\nu},
\begin{equation*}
\absbb{\frac{1}{N} \sum_{k \neq i} \sum_{l \neq k} S_{il}^{(k)} h_{lk}} \;\leq\; p^{-2}\,.
\end{equation*}
\end{lemma}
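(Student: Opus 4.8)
The plan is to treat the quantity $X \deq \frac{1}{N} \sum_{k \neq i} \sum_{l \neq k} S_{il}^{(k)} h_{lk}$ by first fixing the column index $k$ and extracting the randomness in the $k$-th column of $A$. For a fixed $k \neq i$, the matrix entries $S_{il}^{(k)}$ with $l \neq k$ do not depend on the $k$-th column of $A$ (since $S^{(k)}$ is built from $A^{(k)}$), whereas $h_{lk}$ for $l \neq k$ are precisely the (centred) entries of that column. So the inner sum $Y_k \deq \sum_{l \neq k} S_{il}^{(k)} h_{lk}$ is, conditionally on $A^{(k)}$, a sum of independent centred random variables with deterministic coefficients. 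I would apply the large-deviation bound for such sums (the standard estimate used throughout \cite{EKYY}, e.g.\ the analogue of their Lemma 3.x for $\sum_l a_l h_{lk}$ with $h_{lk}$ satisfying the moment bounds \eqref{moment conditions}): with $(\xi,\nu)$-high probability,
\begin{equation*}
\absbb{\sum_{l \neq k} S_{il}^{(k)} h_{lk}} \;\leq\; (\log N)^{C\xi} \pbb{\frac{1}{q} \max_l \absb{S_{il}^{(k)}} + \pbb{\frac{1}{N} \sum_{l \neq k} \absb{S_{il}^{(k)}}^2}^{1/2}}\,.
\end{equation*}
The two inputs needed here are the entrywise bound $\max_l |S_{il}^{(k)}| \leq p^{-1} + |m_{sc}| \leq C$ and the $\ell^2$ bound $\frac{1}{N}\sum_l |S_{il}^{(k)}|^2 = \frac{1}{N\eta} \im S_{ii}^{(k)} \leq C(N\eta)^{-1}$, which follows from the spectral decomposition (Ward identity). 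Both follow from \eqref{basic}/\eqref{defOMS} applied to $A^{(k)}$, using that removing one row and column of $A$ perturbs the Green function bounds only negligibly (cf.\ the resolvent expansion in $A - A^{(k)}$, which has rank $\leq 2$, exactly as in the derivation of \eqref{defOMR}). Recalling $p^{-1} = N^\e(q^{-1} + (N\eta)^{-1})$ from \eqref{def of p}, this gives $|Y_k| \leq (\log N)^{C\xi}\, p^{-1}$ with high probability, for each fixed $k$.

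Next I would take the remaining average over $k$. Writing $X = \frac{1}{N}\sum_{k \neq i} Y_k$, one cannot simply union-bound over $k$ and use $|Y_k| \leq (\log N)^{C\xi}p^{-1}$, since that only yields $|X| \leq (\log N)^{C\xi} p^{-1}$, which is weaker than the claimed $p^{-2}$. Instead I would exploit a second layer of cancellation: the $Y_k$ themselves have small conditional expectation and are "almost independent" across $k$. More precisely, I would bound $\E_b$-type partial expectations or, more simply, estimate $\E |X|^2$ (or a high moment) directly. In $\E \frac{1}{N^2}\sum_{k,k'} Y_k \overline{Y_{k'}}$ the diagonal terms $k = k'$ contribute $\frac{1}{N^2}\sum_k \E|Y_k|^2 \leq \frac{1}{N}(\log N)^{C\xi} p^{-2}$, which is far smaller than $p^{-4}$, while the off-diagonal terms require expanding $S^{(k)}$ and $S^{(k')}$ to make the dependence on the two columns explicit and then using that the leading contributions cancel against the square of $\E X$. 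This is the same mechanism that produces the extra factor $p^{-1}$ in the heuristic discussion following \eqref{naive power counting bound}. Combining the moment bound with Chebyshev / Markov (taking a sufficiently high moment to upgrade to $(\xi,\nu)$-high probability) then yields $|X| \leq p^{-2}$ with high probability.

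The main obstacle is precisely this second step: getting the bound down from $p^{-1}$ to $p^{-2}$. The naive per-$k$ estimate loses, and the gain must come from averaging over $k$, which is delicate because the $S^{(k)}_{il}$ for different $k$ are correlated (they are all built from the single matrix $A$), so one has to carefully expand the resolvent in the rank-two perturbations relating $A^{(k)}$, $A^{(k')}$ and $A$, track which terms survive after taking expectations in the respective columns, and verify that the surviving off-diagonal terms carry the extra factor of $p^{-1}$ coming from an additional off-diagonal resolvent entry. The presence of the nonzero mean $f/N$ in the entries $a_{lk} = h_{lk} + f/N$ is what forces the appearance of $A^{(k)}$ (rather than the full $A$) in the statement in the first place, and one must check that the "extra" deterministic rank-one piece $f\ket{\f e}\bra{\f e}$ does not reintroduce a large term — this is handled by the same kind of argument as in \eqref{defHg1}--\eqref{defOMR}, noting $Q = A^{(bd)} + f/N$ on the relevant entries. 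I expect the bookkeeping here to be the only genuinely technical part; the probabilistic inputs are all standard consequences of the local semicircle law Theorem \ref{LSCTHM}.
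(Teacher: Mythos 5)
Your outline identifies the right mechanism and the right obstacle, and it does match the paper's approach in spirit: the per-$k$ large-deviation bound gives only $p^{-1}$, and the claimed $p^{-2}$ must come from an additional cancellation when averaging over $k$. This is precisely the content of a \emph{fluctuation averaging} estimate. The paper's proof is in fact a one-line citation: it invokes Proposition~7.11 of the companion paper \cite{EKYY} -- which is exactly such a fluctuation averaging lemma for sums of the form $\frac{1}{N}\sum_k \sum_l G^{(k)}_{il}h_{lk}$ -- and then observes that for $E \in I_\epsilon$, $\eta = N^{-2/3-\epsilon}$, $q \geq N^\phi$, the error parameter $(\log N)^C(q^{-1} + \sqrt{\im m_{sc}/(N\eta)} + (N\eta)^{-1})$ that appears there is $\leq p^{-1}$, so its square is $\leq p^{-2}$. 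Your plan to establish that same gain via a second (or higher) moment estimate, expanding the correlations between $S^{(k)}$ and $S^{(k')}$ through rank-two resolvent perturbations, is indeed the standard way such fluctuation averaging lemmas are proved.

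The genuine gap is that you stop at the step you yourself flag as ``the only genuinely technical part.'' That bookkeeping \emph{is} the proof: one must show that after expanding $S^{(k)}$ and $S^{(k')}$ around a common minor $S^{(kk')}$ and taking partial expectations $\E_k$, $\E_{k'}$, the surviving off-diagonal contributions each carry an extra off-diagonal resolvent factor, and one must control the combinatorics and the non-generic terms (coinciding indices) to sum the resulting estimate to $O(p^{-4})$ for $\E|X|^2$, and analogously for higher moments to upgrade to $(\xi,\nu)$-high probability. Without either carrying that out or pointing to the companion paper's Proposition~7.11 (which has already done it in the generality needed here, including the $q^{-1}$-dependent error terms coming from the sparse moment conditions), the proposal is an accurate roadmap rather than a proof. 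Two small points worth noting: first, the removal of the column $k$ is what makes $S^{(k)}_{il}$ independent of the $k$-th column, but you should also make sure that the large deviation estimate you invoke in the first step is the one from \cite{EKYY} that accommodates the third-moment scale $N^{-1}q^{-1}$ rather than the Wigner scale $N^{-3/2}$ -- otherwise the $q^{-1}\max_l|S^{(k)}_{il}|$ term would be missing. Second, the diagonal bound $\frac{1}{N}\sum_k \E|Y_k|^2 \lesssim N^{-1}p^{-2}$ is indeed far below $p^{-4}$, so all the difficulty is concentrated in the off-diagonal sum, exactly as you say.
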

\begin{proof}
The claim is an immediate consequence of Proposition 7.11 in \cite{EKYY} and the observation that, for $E \in 
I_\epsilon$, $\eta = N^{-2/3 - \epsilon}$, and $q \geq N^\phi$ we have
\begin{equation*}
(\log N)^C \pBB{\frac{1}{q} + \sqrt{\frac{\im m_{sc}}{N \eta}} + \frac{1}{N \eta}} \;\leq\; p^{-1}
\end{equation*}
for large enough $N$.
\end{proof}

Another ingredient necessary for the proof of Lemma \ref{lem: 52} is the following resolvent identity.

\begin{lemma} \label{lemma: 376}
Let $A = (a_{ij})$ be a square matrix and set $S = (S_{ij}) = (A - z)^{-1}$. Then for $i \neq j$ we have
\begin{align} \label{sq exp formula}
S_{ij} \;=\; - S_{ii} \sum_{k \neq i} a_{ik}  S_{kj}^{(i)}\,,\qquad S_{ij} \;=\; - S_{jj} \sum_{k \neq j}  
S_{ik}^{(j)}a_{kj}\,.
\end{align}
\end{lemma}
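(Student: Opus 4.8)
The statement to prove is the pair of resolvent identities in Lemma~\ref{lemma: 376}, namely $S_{ij} = -S_{ii}\sum_{k\neq i}a_{ik}S_{kj}^{(i)}$ and $S_{ij} = -S_{jj}\sum_{k\neq j}S_{ik}^{(j)}a_{kj}$ for $i\neq j$, where $S = (A-z)^{-1}$ and $S^{(i)}$ denotes the resolvent of the matrix $A^{(i)}$ obtained by deleting the $i$-th row and column (equivalently, by the convention above, setting all entries in row and column $i$ to zero, restricted to the complementary index set). The plan is to derive these from the standard Schur complement / block-inverse formula. I would only prove the first identity; the second follows by applying the first to the transpose $A^T$ (which has the same diagonal, since $A$ is symmetric in our setting, though symmetry is not even needed if one is careful), or equally by a symmetric argument exchanging the roles of rows and columns.

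\textbf{Key steps.} First, fix $i$ and decompose the index set as $\{i\}\cup T$ where $T = \{1,\dots,N\}\setminus\{i\}$, and write $A - z$ in block form relative to this splitting:
\begin{equation*}
A - z \;=\; \begin{pmatrix} a_{ii} - z & \mathbf{a}_i^T \\ \mathbf{b}_i & B \end{pmatrix}\,,
\end{equation*}
where $\mathbf{a}_i = (a_{ik})_{k\in T}$ is the off-diagonal part of row $i$, $\mathbf{b}_i = (a_{ki})_{k\in T}$ is the off-diagonal part of column $i$, and $B = (A-z)|_{T\times T} = A^{(i)} - z$ restricted to $T$. Second, recall the Schur complement formula for the inverse of a block matrix: the $(i,j)$ entry of the inverse (with $j\in T$) is
\begin{equation*}
S_{ij} \;=\; -\,S_{ii}\,\bigl(\mathbf{a}_i^T B^{-1}\bigr)_j\,,
\end{equation*}
where $S_{ii} = \bigl(a_{ii} - z - \mathbf{a}_i^T B^{-1} \mathbf{b}_i\bigr)^{-1}$ is the $(i,i)$ entry of $S$. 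This is the standard cofactor/Schur identity for the off-diagonal block of an inverse; I would cite it or give the one-line verification by multiplying out $\begin{pmatrix}a_{ii}-z & \mathbf{a}_i^T \\ \mathbf{b}_i & B\end{pmatrix}\begin{pmatrix}S_{ii} & -S_{ii}\mathbf{a}_i^T B^{-1} \\ \cdots & \cdots\end{pmatrix}$ and checking it equals the identity in the relevant block. Third, identify $B^{-1} = S^{(i)}$ (the resolvent of $A$ with row/column $i$ removed) and expand the product in coordinates: $\bigl(\mathbf{a}_i^T B^{-1}\bigr)_j = \sum_{k\in T} a_{ik} S_{kj}^{(i)} = \sum_{k\neq i} a_{ik} S_{kj}^{(i)}$. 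Combining the last two displays gives $S_{ij} = -S_{ii}\sum_{k\neq i}a_{ik}S_{kj}^{(i)}$, which is the first claim. Finally, apply the same argument with rows and columns interchanged (i.e.\ to the column index $j$, using the block decomposition relative to $\{j\}\cup (\{1,\dots,N\}\setminus\{j\})$) to obtain $S_{ij} = -S_{jj}\sum_{k\neq j}S_{ik}^{(j)}a_{kj}$, completing the proof.

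\textbf{Main obstacle.} There is no real obstacle here: this is a purely algebraic identity and the only care needed is bookkeeping with the index-deletion notation $S^{(i)}$ and making sure the Schur complement formula is applied in the correct orientation (off-diagonal block $\mathbf{a}_i^T$ from row $i$ multiplies $B^{-1}$ on the left for the first identity, and column $\mathbf{b}_j$ for the second). One should note that the identity requires $B = A^{(i)} - z$ to be invertible, i.e.\ $z$ not an eigenvalue of $A^{(i)}$; since $z = E + \ii\eta$ with $\eta > 0$ and $A^{(i)}$ is real symmetric (or more generally has real spectrum), $B$ is automatically invertible, so this is not an issue. Thus the proof is short and self-contained, essentially a citation of the standard block-inverse formula followed by coordinate expansion.
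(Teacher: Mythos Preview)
Your proof is correct. Both arguments are standard resolvent algebra, but the routes differ slightly: the paper starts from the sum $\sum_{k\neq i}a_{ik}S_{kj}^{(i)}$, substitutes the resolvent identity $S_{kj}^{(i)}=S_{kj}-S_{ki}S_{ij}/S_{ii}$ (cited from the companion paper), and then uses $AS=I$ to simplify each piece until the sum collapses to $-S_{ij}/S_{ii}$. You instead go straight to the block-inverse (Schur complement) formula for $(A-z)^{-1}$ relative to the splitting $\{i\}\cup\{i\}^c$, which produces the identity in one step. Your approach is more self-contained since it does not appeal to an external identity, whereas the paper's approach is natural in context because the identity \eqref{SijSijk} is already in their toolkit. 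Either way the content is the same algebraic fact.
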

\begin{proof}
We prove the first identity in \eqref{sq exp formula}; the second one is proved analogously. We use the resolvent 
identity
\begin{equation} \label{SijSijk}
S_{ij} \;=\; S_{ij}^{(k)} + \frac{S_{ik} S_{kj}}{S_{kk}} \qquad \text{for } i,j \neq k
\end{equation}
from \cite{EKYY}, (3.8).
Without loss of generality we assume that $z = 0$. Then \eqref{SijSijk} and the identity $A S = \umat$ yield
\begin{equation*}
\sum_{k \neq i} a_{ik} S_{kj}^{(i)} \;=\; \sum_{k \neq i} a_{ik} S_{kj} - \sum_{k \neq i} a_{ik}
\frac{S_{ki} S_{ij}}{S_{ii}}
\;=\; - a_{ii} S_{ij} - \frac{S_{ij}}{S_{ii}} (1 - a_{ii} S_{ii}) \;=\; \frac{S_{ij}}{S_{ii}}\,. \qedhere
\end{equation*}
\end{proof}

Armed with Lemmas \ref{non-centred Z-lemma} and \ref{lemma: 376}, we may now prove Lemma \ref{lem: 52}.

\begin{proof}[Proof of Lemma \ref{lem: 52}]
With the relation between $R$ and $S$ in \eqref{relRS} and \eqref{51}, we find that \eqref{new628} is implied by 
\be\label{new629}
\absb{\E F'(x^S)S_{ib}S_{dj}\overline{S_{ji}}} \;\leq\; Cp^{-4}N^{C\e}, \ee under the assumption that $b,d,i,j$ are all 
distinct.
This replacement is only a technical convenience
when we apply a large deviation
estimate below.

Recalling the definition of $\Omega$  after \eqref{defOMS}, we get using \eqref{SijSijk}
\be\label{GiiGjiif}
 |S_{ij}-S^{(b)}_{ij}| \;=\; \Big | S_{ib}S_{bj}(S_{bb})^{-1} \Big | \;\le\; Cp^{-2} \qquad \text{in } \Omega\,.
\ee
This yields
\be\label{c5}
|x^{S} - ( x^S)^{(b)}| \;\le\; p^{-1}N^{ C \e} \qquad \text{in }\Omega\,.
\ee
Similarly, we have
\be\label{Sbi}
\left|S_{ib}S_{dj}\overline{S_{ji}}-S_{ib}S^{(b)}_{dj}\overline{S^{(b)}_{ji}}\right|
\;\leq\; C p^{-4} \qquad \text{in } \Omega\,.
\ee
Hence by assumption on $F$ we have
\be\label{632}
|\E F'(x^S)S_{ib} S_{dj} \ol{S_{ji}}| \;\leq\; \left|\E \pB{F' \pb{( x^S)^{(b)}}} 
S_{ib}S^{(b)}_{dj}\overline{S^{(b)}_{ji}}\right|+O\left(p^{-4}N^{ C\e}\right).
\ee
Since $(x^S)^{(b)}$ and $S_{dj}^{(b)} \ol{S_{ji}^{(b)}}$ are independent of the $b$-th row of $A$, we find from 
\eqref{632} that \eqref{new629}, and hence \eqref{new628}, is proved if we can show that
\begin{equation} \label{Eb S}
\E_b S_{ib} \;=\; O(p^{-2})
\end{equation}
for any fixed $i$ and $b$.

What remains therefore is to prove \eqref{Eb S}.
Using \eqref{defOMS} and \eqref{GiiGjiif} we find in $\Omega$ that
\be\label{SSS}
S_{bb} \;=\; m_{sc} + O(p^{-1})\,,
\qquad S_{ik}^{(b)}\;=\; O(p^{-1})\,.
\ee
Using $a_{kb} = h_{kb} + f / N$ we write
\begin{equation} \label{SmS1}
S_{ib} \;=\; - m_{sc} \sum_{k \neq b} S^{(b)}_{ik} \pbb{h_{kb} + \frac{f}{N}} - (S_{bb} - m_{sc}) \sum_{k \neq b} 
S^{(b)}_{ik} \pbb{h_{kb} + \frac{f}{N}}\,.
\end{equation}
By \eqref{SSS} and the large deviation estimate (3.15) in \cite{EKYY}, the second sum in \eqref{SmS1} is bounded by 
$O(p^{-1})$ \hp{\xi}{\nu}. Therefore, using \eqref{SSS} and $\E_b h_{kb} = 0$, we get
\begin{equation} \label{EbSibeq}
\E_b S_{ib} \;=\; \frac{- m_{sc} f}{N} \sum_{k \neq b} S_{ik}^{(b)} + O(p^{-2})
 \;=\; \frac{- m_{sc} f}{N} \sum_{k \neq i} S_{ik} + O(p^{-2})\,,
\end{equation}
where in the second step we used \eqref{SijSijk}.

In order to estimate the right-hand side of \eqref{EbSibeq}, we introduce the quantity
\begin{equation*}
X \;\deq\; \frac{1}{N} \sum_{k \neq i} \E_k S_{ik}\,.
\end{equation*}
Note that $X$ depends on the index $i$, which is omitted from the notation as it is fixed.
Using \eqref{sq exp formula}, \eqref{SSS}, and \eqref{SijSijk} as above, we find \hp{\xi}{\nu}
\begin{align*}
X &\;=\; \frac{- m_{sc}}{N} \sum_{k\neq i} \sum_{l \neq k} \E_k \, S_{il}^{(k)} \pbb{h_{lk} + \frac{f}{N}} + O(p^{-2})
\\
&\;=\;
\frac{- m_{sc} f}{N^2} \sum_{k \neq i} \sum_{l \neq k} S_{il}^{(k)} + O(p^{-2})
\\
&\;=\;
\frac{- m_{sc} f}{N} \sum_{l \neq i} S_{il} + O(p^{-2})
\\
&\;=\; -m_{sc} f \, X + O \pbb{\frac{1}{N} \sum_{l \neq i} \pb{S_{il} - \E_l S_{il}}} + O(p^{-2})\,.
\end{align*}
Now recall that the spectral parameter $z = E + \ii \eta$ satisfies $E \in I_\epsilon$ (see \eqref{def Ie}) and $\eta = 
N^{-2/3 - \epsilon}$.  Therefore \eqref{formula for m sc} implies that $m_{sc}(z) = -1 + o(1)$. Recalling that $f = 1 + 
\epsilon_0$, we therefore get, \hp{\xi}{\nu},
\begin{equation} \label{bound on X}
X \;=\; O \pbb{\frac{1}{N} \sum_{l \neq i} \pb{S_{il} - \E_l S_{il}}} + O(p^{-2})\,.
\end{equation}

We now return to \eqref{EbSibeq}, and estimate, \hp{\xi}{\nu}
\begin{align*}
\E_b S_{ib} \;=\; \frac{- m_{sc} f}{N} \sum_{k \neq i} S_{ik} + O(q^{-2})
\;=\; -m_{sc} f X + O\pbb{\frac{1}{N} \sum_{k \neq i} \pb{S_{ik} - \E_k S_{ik}}} + O(p^{-2})\,.
\end{align*}
Together with \eqref{bound on X} this yields, \hp{\xi}{\nu},
\begin{equation} \label{bound on Sib 1}
\E_b S_{ib} \;=\; O \pbb{\frac{1}{N} \sum_{k \neq i} \pb{S_{ik} - \E_k S_{ik}}} + O(p^{-2})\,.
\end{equation}
In order to estimate the quantity in parentheses, we abbreviate $\IE_k Z \deq Z - \E_k Z$ for any random variable $Z$ 
and write, using \eqref{sq exp formula},
\begin{align*}
\frac{1}{N} \sum_{k \neq i} \pb{S_{ik} - \E_k S_{ik}} &\;=\; \frac{-1}{N} \sum_{k \neq i} \sum_{l \neq k} \IE_k \, 
S_{kk} S_{il}^{(k)} a_{lk}
\\
&\;=\; \frac{-m_{sc}}{N} \sum_{k \neq i} \sum_{l \neq k} \IE_k \, S_{il}^{(k)} \pbb{h_{lk} + \frac{f}{N}} - \frac{1}{N} 
\sum_{k \neq i} \IE_k (S_{kk} - m_{sc}) \sum_{l \neq k} S_{il}^{(k)} \pbb{h_{lk} + \frac{f}{N}}\,.
\end{align*}
Using the large deviation estimate (3.15) in \cite{EKYY}, \eqref{SSS}, and the bound $\abs{h_{lk}} \leq p^{-1}$ which 
holds \hp{\xi}{\nu} (see Lemma 3.7 in \cite{EKYY}), we find that the second term is bounded by $O(p^{-2})$ 
\hp{\xi}{\nu}.  Thus we get
\begin{equation*}
\frac{1}{N} \sum_{k \neq i} \pb{S_{ik} - \E_k S_{ik}} \;=\; \frac{- m_{sc}}{N} \sum_{k \neq i} \sum_{l \neq k} 
S_{il}^{(k)} h_{lk} + O(p^{-2})
\end{equation*}
\hp{\xi}{\nu}.
Therefore \eqref{bound on Sib 1} and Lemma \ref{non-centred Z-lemma} imply \eqref{Eb S}, and the proof is complete.
\end{proof}

\section{Universality of generalized Wigner matrices with finite moments}\label{sec4}

This section is an application of our results to the problem of universality of generalized Wigner matrices (see 
Definition \ref{def gen Wigner} below) whose entries have heavy tails. We prove the bulk universality of generalized 
Wigner matrices under the assumption that the matrix entries have a finite $m$-th moment for some $m > 4$. We also prove 
the edge universality of Wigner matrices under the assumption that $m > 12$. (This lower bound can in fact be improved to $m \geq 7$; see Remark \ref{remark: m = 7} below.)
The Tracy-Widom law for the largest eigenvalue of Wigner matrices was first proved in \cite{Sosh} under a Gaussian decay 
assumption, and was proved later in \cite{Ruz,  TV2, EYYrigidity, Kho} under various weaker restrictions
on the distributions of the matrix elements. In particular, in \cite{Kho} the Tracy-Widom law was proved for entries with symmetric distribution and $m > 12$.
In \cite{J2} similar results were derived for complex Hermitian Gaussian divisible matrices, where the GUE component is of order one. For this case it is proved in \cite{J2} that bulk universality holds provided the entries of the Wigner component have finite second moments, and edge universality holds provided they have finite fourth moments.

\begin{definition} \label{def gen Wigner}
We call a Hermitian or real symmetric random matrix $H = (h_{ij})$ a \emph{generalized Wigner matrix} if the two 
following conditions hold. First, the family of upper-triangluar entries $( h_{ij} : i \leq j)$ is independent.  Second, 
we have
\begin{equation*}
\E h_{ij} \;=\; 0\,, \qquad \E \abs{h_{ij}}^2 \;=\; \sigma_{ij}^2 \,,
\end{equation*}
where the variances $\sigma_{ij}^2$ satisfy
\begin{equation*}
\sum_{j} \sigma_{ij}^2 \;=\; 1\,, \qquad C_- \leq \inf_{i,j} (N \sigma_{ij}^2) \;\leq\; \sup_{i,j} (N \sigma_{ij}^2) 
\;\leq\; C_+\,,
\end{equation*}
and $0<C_- \leq C_+ < \infty$ are constants independent of $N$.
\end{definition}

%

\begin{theorem}[Bulk universality] \label{theorem: bulk universality for Wigner}
Suppose that $H = (h_{ij})$ satisfies Definition \ref{def gen Wigner}. Let $m > 4$ and assume that for all $i$ and $j$ we have
\begin{equation} \label{assumptions for 4+e Wigner}
\E \absb{h_{ij} / \sigma_{ij}}^m \;\leq\; C_m\,,
\end{equation}
for some constant $C_m$, independent of $i$, $j$, and $N$.

Let $n \in \N$ and $O : \R^n \to \R$ be compactly supported and continuous.
Let $E$ satisfy $-2 < E < 2$ and let $\epsilon > 0$. Then for any sequence $b_N$ satisfying $N^{-1 + \epsilon} \leq b_N 
\leq \abs{\abs{E} - 2} / 2$ we have
\begin{multline*}
\lim_{N \to \infty} \int_{E - b_N}^{E + b_N} \frac{\dd E'}{2 b_N} \int \dd \alpha_1 \cdots \dd \alpha_n\, O(\alpha_1, 
\dots, \alpha_n)
\\
{}\times{} \frac{1}{\varrho_{sc}(E)^n}\pb{p_{N}^{(n)} - p_{{\rm G}, N}^{(n)}} \pbb{E' +
\frac{\alpha_1}{N\varrho_{sc}(E)}, \dots, E' + \frac{\alpha_n}{N\varrho_{sc}(E)}} \;=\; 0\,.
\end{multline*}
Here $\varrho_{sc}$ was defined in \eqref{def rho sc}, $p_N^{(n)}$ is the $n$-point marginal of the eigenvalue 
distribution of $H$, and $p^{(n)}_{{\rm G},N}$ the $n$-point marginal of an $N \times N$ GUE/GOE matrix.
\end{theorem}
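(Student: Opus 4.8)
The plan is to follow the scheme of the proof of Theorem~\ref{theorem: bulk universality}, with a truncated version of $H$ playing the role of the sparse matrix $A$ there. The starting observation is that the finite-moment hypothesis \eqref{assumptions for 4+e Wigner} already controls the low moments of $H$ at the Wigner scale: interpolating between $\E\abs{h_{ij}}^2 = \sigma_{ij}^2 \sim N^{-1}$ and $\E\abs{h_{ij}}^m \le C_m\sigma_{ij}^m$ gives $\E\abs{h_{ij}}^3 \le CN^{-3/2}$ and $\E\abs{h_{ij}}^4 \le CN^{-2}$ for all $i,j$, uniformly in $N$.

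First I would truncate. Fix a small constant $\phi_0>0$, depending only on $m$, and let $\wh H=(\wh h_{ij})$ be obtained from $H$ by cutting each entry off at scale $N^{-\phi_0}$ and then adjusting it so that $\E\wh h_{ij}=0$ and $\E\abs{\wh h_{ij}}^2=\sigma_{ij}^2$. Because $m>4$, the choice $\phi_0<\tfrac12-\tfrac2m$ makes the expected number of truncated entries, $N^2\,\P(\abs{h_{ij}}>N^{-\phi_0})$, tend to $0$, so that $\wh H=H$ with overwhelming probability; in particular $H$ and $\wh H$ have the same local bulk statistics. At the same time $\wh H$ satisfies the natural variance-profile analogue of Definition~\ref{definition of H} with $q=N^{\phi_0}$, so the local semicircle law \eqref{scm}, the resolvent bound \eqref{Gij estimate}, and the rigidity estimate \eqref{main estimate on Q} hold for $\wh H$ by \cite{EKYY} and \cite{EYYrigidity}; moreover a short computation using \eqref{assumptions for 4+e Wigner} shows $\E\abs{\wh h_{ij}}^5\le CN^{-2-c}$ for some $c=c(m)>0$, which is the only property of the fifth moment needed in the resolvent expansion behind Proposition~\ref{proposition: Green function comparison}.

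Next I would run the comparison. Exactly as in the last step of the proof of Theorem~\ref{theorem: bulk universality}, for each pair $(i,j)$ one constructs a finitely supported random variable whose first four moments agree with those of $\wh h_{ij}$; since the third and fourth moments of $\wh h_{ij}$ are $O(N^{-3/2})$ and $O(N^{-2})$ respectively, this construction produces support points of size $O(N^{-1/2})$, so the resulting matrix $\wt H$ is a generalized Wigner matrix with bounded (hence trivially subexponential) entries. Bulk universality for $\wt H$ — convergence of the locally averaged $n$-point functions to those of the GUE in the Hermitian case and the GOE in the real symmetric case — is then supplied by \cite{EYY, EYYrigidity}; equivalently one may invoke Theorems~\ref{theorem: universality of Gaussian divisibles} and~\ref{theorem: correlation function comparison} with $f=0$, in which case the Dyson Brownian motion step reduces to the one in \cite{ESYY, EYYrigidity}, there being no largest eigenvalue to integrate out. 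Finally, Proposition~\ref{proposition: Green function comparison} (using the fifth-moment bound recorded above in place of $\E\abs{a_{ij}}^5\le N^{-1-3\phi}$) together with the correlation-function comparison Theorem~\ref{theorem: correlation function comparison} transfers the conclusion from $\wt H$ to $\wh H$, and hence to $H$.

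The main obstacle is the reconciliation performed in the truncation step: the cutoff scale $N^{-\phi_0}$ must be low enough that the discarded tail is invisible to the comparison — which forces $\phi_0<\tfrac12-\tfrac2m$ and is precisely where $m>4$ (rather than $m=4$) is consumed — yet the surviving matrix $\wh H$ must still carry a usable local semicircle law and have fifth moment below $N^{-2-c}$; establishing these estimates, that is, pushing the sparse local semicircle law of \cite{EKYY} through the heavy-tailed truncation, is the delicate point. The same tension, transplanted to the spectral edge where the relevant rigidity scale is $N^{-2/3}$ rather than $N^{-1}$, is what forces the stronger moment hypothesis in the edge-universality result of this section.
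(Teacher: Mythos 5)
Your proposal is correct and follows essentially the same route as the paper: truncate the entries at a scale dictated by $m>4$ (the paper's $\rho$ with $\rho m>2$, your $\phi_0 = 1/2-\rho$), observe that the truncated matrix satisfies the sparse-matrix moment hypotheses with $q = N^{\phi_0}$ so that the local semicircle law and rigidity of \cite{EKYY} apply, match four moments with a three-point ensemble whose entries have uniformly bounded support (hence subexponential decay), invoke the known bulk universality for such ensembles from \cite{EYYrigidity}, and close via a Lindeberg/resolvent-expansion Green function comparison in which the crucial gain comes from the improved fifth moment $\E|\wh h_{ij}|^5 \lesssim N^{-\phi_0}\E|\wh h_{ij}|^4 \lesssim N^{-2-\phi_0}$ (using the $m>4$-uniform fourth moment, not just the truncation). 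The paper implements the last step by citing the Green function comparison of \cite{EYY}, Section 8, directly rather than by modifying Proposition~\ref{proposition: Green function comparison}, but the underlying computation — four exact moments plus the sharpened fifth-moment bound, summed over $O(N^2)$ replacements — is the same; your only minor slip is calling the coupling event $\{H=\wh H\}$ ``overwhelming,'' whereas it holds only with polynomially small failure probability $O(N^{2-\rho m})$, which is nevertheless sufficient for the bounded averaged observables in question.
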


\begin{theorem}[Edge universality] \label{thm: edge Wigner}
Suppose that $H^{\f v} = (h^{\f v}_{ij})$ and $H^{\f w} = (h^{\f w}_{ij})$ both satisfy Definition \ref{def gen Wigner}.  Assume that 
the entries of $H^{\f v}$ and $H^{\f w}$ all satisfy \eqref{assumptions for 4+e Wigner} for some $m > 12$, and that the 
two first two moments of the entries of $h^{\f v}_{ij}$ and $h^{\f w}_{ij}$ match:
\begin{equation*}
\E^{\f v} (\ol{h_{ij}^{\f v}})^l (h_{ij}^{\f v})^u \;=\;
\E^{\f w} (\ol{h_{ij}^{\f w}})^l (h_{ij}^{\f w})^u \for 0 \leq l + u \leq 2\,.
\end{equation*}
Then there is a $\delta > 0$ such that for any $s \in \R$ we have
\begin{equation}\label{tw for Wigner}
\P^{\f v} \pB{N^{2/3}(\lambda_N - 2) \leq s - N^{-\delta}} - N^{- \delta} \;\leq\; \P^{\f w} \pb{N^{2/3}(\lambda_N - 2) 
\leq s}
\;\leq\;
\P^{\f v} \pB{N^{2/3}(\lambda_N - 2) \leq s + N^{-\delta}} + N^{- \delta}\,.
\end{equation}
Here $\P^{\f v}$ and $\P^{\f w}$ denote the laws of the ensembles $H^{\f v}$ and $H^{\f w}$ respectively, and 
$\lambda_N$ denotes the largest eigenvalue of $H^{\f v}$ or $H^{\f w}$.
\end{theorem}

\begin{remark}
A similar result holds for the smallest eigenvalue $\lambda_1$. Moreover, a result analogous to \eqref{tw for Wigner} 
holds for the $n$-point joint distribution functions of the extreme eigenvalues. (See \cite{EYYrigidity}, Equation 
(2.40)).
\end{remark}

\begin{remark} \label{remark: m = 7}
With some additional effort, one may in fact improve the condition $m > 12$ in Theorem \ref{thm: edge Wigner} to $m \geq 
7$. The basic idea is to match seven instead of four moments in Lemma \ref{lemma: four moment matching for Wigner}, and 
to use the resolvent expansion method from Section \ref{sect: proof of green fn comp}. We omit further details.
\end{remark}

The rest of this section is devoted to the proof of Theorems \ref{theorem: bulk universality for Wigner} and \ref{thm: 
edge Wigner}.

\subsection{Truncation}
For definiteness, we focus on real symmetric matrices, but the following truncation argument applies trivially to complex 
Hermitian matrices by truncating the real and imaginary parts separately. To simplify the presentation, we consider 
Wigner matrices for which $\sigma_{ij} = N^{-1/2}$. The proof for the more general matrices from Definition \ref{def gen Wigner} is the same; see also Remark 2.4 in \cite{EKYY}.

We begin by noting that, without loss of generality, we may assume that the distributions of the entries of $H$ are absolutely 
continuous. Otherwise consider the matrix $H + \epsilon_N V$, where $V$ is a GUE/GOE matrix independent of $H$, and 
$(\epsilon_N)$ is a positive sequence that tends to zero arbitrarily fast. (Note that the following argument is 
insensitive to the size of $\epsilon_N$.)

Let $H \equiv H^{\f x} = (h^{\f x}_{ij})$ be a Wigner matrix whose entries are of the form $h^{\f x}_{ij} = N^{-1/2} 
x_{ij}$ for some $x_{ij}$. We assume that the family $(x_{ij} \col i \leq j)$ is independent, and that each $x_{ij}$ satisfies
\begin{equation*}
\E x_{ij} \;=\; 0\,, \qquad \E \abs{x_{ij}}^2 = 1\,.
\end{equation*}
Moreover, we assume that there is an $m > 4$ and a constant $C_m \geq 1$, independent of $i$, $j$, and $N$, such that
\begin{equation*}
\E \abs{x_{ij}}^m \;\leq\; C_m\,.
\end{equation*}
In a first step we construct a truncated Wigner matrix $H^{\f y}$ from $H^{\f x}$. This truncation is performed in the 
following lemma.

\begin{lemma} \label{lemma: truncation}
Fix $m > 2$ and let $X$ be a real random variable, with absolutely continuous law, satisfying
\begin{equation*}
\E X \;=\; 0\,, \qquad \E X^2 \;=\; 1\,, \qquad \E \abs{X}^m \;\leq\; C_m\,.
\end{equation*}
Let $\lambda > 0$.
Then there exists a real random variable $Y$ that satisfies
\begin{equation*}
\E Y \;=\; 0\,, \qquad \E Y^2 \;=\; 1\,, \qquad \abs{Y} \;\leq\; \lambda\,, \qquad \P(X \neq Y) \;\leq\; 2 C_m \lambda^{-m}\,.
\end{equation*}
\end{lemma}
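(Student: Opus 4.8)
The plan is to construct $Y$ from $X$ by truncating at a level $K$ slightly below $\lambda$ and then rescaling and recentering to restore the first two moments. First I would fix a cutoff $K \in (0, \lambda)$ (for instance $K = \lambda/2$, the exact value is irrelevant as long as $K \sim \lambda$) and define the truncated variable $\widehat X \deq X \, \ind{\abs{X} \leq K}$. The point of truncating strictly below $\lambda$ is to leave room for the affine correction that follows. I would then estimate the corrections: since $\E \abs{X}^m \leq C_m$, Markov's inequality gives $\P(\abs{X} > K) \leq C_m K^{-m}$, and by H\"older (or a direct truncation of the $m$-th moment integral) one gets $\abs{\E \widehat X} = \abs{\E X \ind{\abs{X} > K}} \leq C_m K^{1 - m}$ and $\abs{\E \widehat X^2 - 1} = \abs{\E X^2 \ind{\abs{X} > K}} \leq C_m K^{2 - m}$; in particular these corrections are $O(K^{2-m})$, which is small since $m > 2$.

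Next I would set $Y \deq s (\widehat X - \E \widehat X)$ where $s \deq (\var \widehat X)^{-1/2} = (\E \widehat X^2 - (\E \widehat X)^2)^{-1/2}$. By construction $\E Y = 0$ and $\E Y^2 = 1$ exactly. For the sup bound, note $\abs{\widehat X - \E \widehat X} \leq 2K$ and $s = 1 + O(K^{2-m})$, so $\abs{Y} \leq 2K (1 + o(1)) \leq \lambda$ provided $K$ is chosen a fixed fraction of $\lambda$ below $\lambda$ and $\lambda$ (equivalently $N$) is large enough; for small $\lambda$ a constant in $C_m$ absorbs the discrepancy, or one simply takes $K = \lambda/3$ and checks the inequality $2K s \leq \lambda$ holds once $K^{2-m}$ is below an absolute threshold — and if $\lambda$ is so small that this fails, one can reduce to a fixed small $\lambda_0$ since shrinking $\lambda$ only makes the claimed probability bound weaker while $Y$ for $\lambda_0$ still works. (Here I would be slightly careful: the cleanest route is to observe the statement is monotone in $\lambda$ in the sense that a $Y$ valid for small $\lambda$ is valid for larger $\lambda$, so it suffices to treat $\lambda \leq \lambda_0(C_m)$.) Finally, $\P(X \neq Y)$: on the event $\{\abs{X} \leq K\}$ we have $\widehat X = X$, but $Y = s(X - \E\widehat X) \neq X$ in general, so the naive bound fails — this is the one genuine subtlety.

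To handle the last point I would not define $Y$ globally by the affine map, but rather \emph{couple}: on $\{\abs{X} \leq K\}$ set $Y = X$, and on $\{\abs{X} > K\}$ let $Y$ be an independent draw from the conditional law of $s(\widehat X - \E \widehat X)$ given $\abs{X} \leq K$ (which is possible since this law is supported in $[-\lambda,\lambda]$ with, using absolute continuity of $X$, a density). A short computation then pins down the required first two moments: writing $p \deq \P(\abs{X} > K)$, one has $\E Y = (1-p)\E[X \mid \abs X \le K] + p\,\E[s(\widehat X - \E\widehat X)\mid \abs X\le K]$, and choosing the scalar $s$ and an additive shift (absorbed into the conditional law) so that both $\E Y = 0$ and $\E Y^2 = 1$ hold is just solving two linear/quadratic equations whose coefficients are $1 + O(K^{2-m})$. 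The bound $\P(X \neq Y) \leq \P(\abs{X} > K) \leq C_m K^{-m}$ is then immediate, and taking $K = \lambda$ up to the constant — i.e. absorbing the factor $2^m$ from $K = \lambda/2$ into the constant, giving the stated $2 C_m \lambda^{-m}$ — finishes it. The main obstacle is precisely this reconciliation of the exact moment constraints $\E Y = 0$, $\E Y^2 = 1$ with the sharp coupling probability; the resolution is to put the affine correction only on the small-probability event rather than on all of $\mathbb{R}$, which costs nothing in $\P(X\neq Y)$ and only an $O(K^{2-m})$ perturbation in the moments that is easily compensated.
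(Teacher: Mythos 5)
Your fix (keep $Y=X$ on $\{\abs X \le K\}$ and redraw $Y$ on the complement) does not close the gap, and in fact no choice of the redrawn law can. Set $A \deq \{\abs X \leq K\}$. If $Y=X$ on $A$ and $\abs Y \le \lambda$, then the constraints $\E Y = \E X$ and $\E Y^2 = \E X^2$ force
\begin{equation*}
\E[Y \mid A^c] \;=\; \E[X\mid A^c]\,, \qquad \E[Y^2 \mid A^c] \;=\; \E[X^2\mid A^c]\,,
\end{equation*}
while $\abs Y \le \lambda$ gives $\E[Y^2\mid A^c]\le \lambda^2$. Thus a \emph{necessary} condition for your scheme is $\E[X^2\mid A^c]\le \lambda^2$, and this is a genuine extra assumption on the law of $X$, not implied by $\E X^2 = 1$ and $\E\abs{X}^m \le C_m$. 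For instance (taking $C_m>1$, which is the only nontrivial case), a symmetric $X$ with most of its mass near $\pm 1$ and mass of order $M^{-m}$ near $\pm M$, $M\gg\lambda$, satisfies all hypotheses, yet $\E[X^2\mid A^c]\approx M^2 \gg \lambda^2$, so no law on $[-\lambda,\lambda]$ can have the required conditional moments. The intuition ``the moment deficit is $O(K^{2-m})$, so easily compensated'' is where you go wrong: the deficit is small in \emph{unconditioned} moments but must be repaid on an event of probability $\lesssim K^{-m}$, so the compensating variable needs conditional second moment at least $K^2$ (and generically much larger), which is exactly what the sup bound forbids. Concretely, your two parameters $s,c$ are fully pinned down by the two moment equations --- $s^2 = \var(X\mid A^c)/\var(X\mid A)$ can be as large as $K^2$ even for nice heavy-tailed $X$ --- leaving no slack whatsoever to enforce $\abs Y\le \lambda$.

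The paper escapes this bind precisely by refusing to keep $Y=X$ on all of $\{\abs X\le\lambda\}$: it additionally removes a small interval $[-a_t,a_t]$ near the origin carrying probability $t$, and replaces all removed mass (tails plus middle) by Dirac weights $p,q$ at $\pm\lambda$. This gives three parameters $(p,q,t)$ for the three constraints (total probability, $\E Y=0$, $\E Y^2=1$), and placing the Dirac masses exactly at $\pm\lambda$ makes $\abs Y\le\lambda$ automatic. Absolute continuity of $X$, which you invoke only to say the redrawn law has a density, is in the paper what guarantees $t\mapsto a_t$ is continuous so the nonlinear system can be solved by the intermediate value theorem. The extra excised probability is only $t=O(C_m\lambda^{-m})$, so the final coupling bound $\P(X\neq Y)=p+q=P+t\le 2C_m\lambda^{-m}$ keeps the right order. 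This third degree of freedom --- trading away a tiny bit of coupling probability near $0$ to relax the conditional-moment constraint at the edge --- is the key idea missing from your proposal.
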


\begin{proof}
We introduce the abbreviations
\begin{equation*}
P \;\deq\; \P(\abs{X} > \lambda) \,, \qquad E \;\deq\; \E \pb{X \, \ind{\abs{X} > \lambda}} \,, \qquad V \;\deq\; \E 
\pb{X^2 \, \ind{\abs{X} > \lambda}}\,.
\end{equation*}
Using the assumption on $X$, Markov's inequality, and H\"older's inequality, we find
\begin{equation} \label{basic bounds for cutoff}
P \;\leq\; C_m \lambda^{-m} \,, \qquad \abs{E} \;\leq\; C_m \lambda^{-m + 1} \,, \qquad V \;\leq\; C_m \lambda^{-m + 2}\,.
\end{equation}

The idea behind the construction of $Y$ is to cut out the tail $\abs{X} > \lambda$, to add appropriate Dirac weights at 
$\pm \lambda$, and to adjust the total probability by cutting out the values $X \in [-a, a]$, where $a$ is an 
appropriately chosen small nonnegative number. For any $t$ satisfying $0 \leq t \leq 1/2$, we choose a nonnegative 
number $a_t$ such that $\P (X \in [-a_t, a_t]) = t$. Note that since $X$ is absolutely continuous such a number $a_t$ 
exists and the map $t \to a_t$ is continuous. Moreover, using $\E X^2 = 1$ and Markov's inequality we find that $a_t 
\leq 2$ for $t \leq 1/2$.  We define the quantities
\begin{equation*}
e_t \;\deq\; \E \pb{X \ind{-a_t \leq X \leq a_t}}\,, \qquad
v_t \;\deq\; \E \pb{X^2 \ind{-a_t \leq X \leq a_t}}\,,
\end{equation*}
which satisfy the trivial bounds
\begin{equation}\label{trivial bounds for truncation}
\abs{e_t} \;\leq\; 2 t\,, \qquad v_t \;\leq\; 4 t\,.
\end{equation}

We shall remove the values $(-\infty, - \lambda) \cup [-a_t, a_t] \cup (\lambda, \infty)$ from the range of $X$, and 
replace them with Dirac weights at $\lambda$ and $-\lambda$ with respective probabilities $p$ and $q$. Thus we are led to 
the system
\begin{equation} \label{system for truncation}
p + q \;=\; P + t \,, \qquad p - q \;=\; \lambda^{-1}(E + e_t) \,, \qquad p + q \;=\; \lambda^{-2}(V + v_t)\,.
\end{equation}
In order to solve \eqref{system for truncation}, we abbreviate the right-hand sides of the equations in \eqref{system 
for truncation} by $\alpha(t)$, $\beta(t)$, and $\gamma(t)$ respectively.

In a first step, we solve $t$ from the equation $\alpha(t) = \gamma(t)$. To that end, we observe that $\alpha(0) \leq 
\gamma(0)$, as follows from the trivial inequality $V \geq \lambda^2 P$. Moreover, $\alpha(1/2) \gg \gamma(1/2)$, by 
\eqref{basic bounds for cutoff} and \eqref{trivial bounds for truncation}. Since $\alpha(t)$ and $\gamma(t)$ are 
continuous, the equation $\alpha(t) = \gamma(t)$ has a solution $t_0$. Moreover, \eqref{basic bounds for cutoff} and 
\eqref{trivial bounds for truncation} imply that $t_0 \leq C_m \lambda^{-m} + 4 \lambda^{-2} t_0$, from which we get that 
$t_0 \leq 2 C_m \lambda^{-m}$. For the following we fix $t \deq t_0$.

In a second step, we solve the equations $p + q = \alpha(t)$ and $p - q = \beta(t)$ to get
\begin{equation*}
p \;=\; \frac{\alpha(t) + \beta(t)}{2} \,, \qquad q \;=\; \frac{\alpha(t) - \beta(t)}{2}\,.
\end{equation*}
We now claim that $\abs{\beta(t)} \leq \alpha(t)$. Indeed, a simple application of Cauchy-Schwarz yields $\abs{\beta(t)} 
\leq (\alpha(t) + \gamma(t))/2 = \alpha(t)$. Hence $p$ and $q$ are nonnegative. Moreover, the bounds \eqref{basic bounds 
for cutoff} and \eqref{trivial bounds for truncation} yield
\begin{equation*}
p + q \;\leq\; 2 C_m \lambda^{-m}\,.
\end{equation*}
Thus we have proved that \eqref{system for truncation} has a solution $(p,q,t)$ satisfying
\begin{equation*}
0 \;\leq\; p,q,t\;\leq\; 2 C_m \lambda^{-m}\,.
\end{equation*}

Next, let $I \deq (-\infty, - \lambda) \cup [-a_t, a_t] \cup (\lambda, \infty)$. Thus, $\P(X \in I) = p + q$.  Partition 
$I = I_1 \cup I_2$ such that $\P(X \in I_1) = p$ and $\P(X \in I_2) = q$.  Then we define
\begin{equation*}
Y \;\deq\; X \ind{X \notin I} + \lambda \ind{X \in I_1} - \lambda \ind{X \in I_2}\,.
\end{equation*}
Recalling \eqref{system for truncation}, we find that $Y$ satisfies $\E Y = 0$ and $\E Y^2 = 1$.
Moreover,
\begin{equation*}
\P (X \neq Y) \;=\; \P(X \in I) \;=\; p + q \;\leq\; 2 C_m \lambda^{-m}\,.
\end{equation*}
This concludes the proof.
\end{proof}

Note that the variable $Y$ constructed in Lemma \ref{lemma: truncation} satisfies $\E \abs{Y}^m \leq 3 C_m$.
Let $\rho$ be some exponent satisfying $0 < \rho < 1/2$ and assume that $m > 4$.
Using Lemma \ref{lemma: truncation} with $\lambda \deq N^\rho$, we construct, for each $x_{ij}$, a random variable $y_{ij}$ such that the family 
$(y_{ij} \col i \leq j)$ is independent and
\begin{equation} \label{properties of y}
\E y_{ij} \;=\; 0\,, \qquad \E y_{ij}^2 \;=\; 1\,, \qquad \abs{y_{ij}} \;\leq\; N^\rho\,,
\qquad \P(x_{ij} \neq y_{ij}) \;\leq\; 2 C_m N^{-\rho m}\,, \qquad \E \abs{y_{ij}}^m \;\leq\; 3 C_m\,.
\end{equation}
We define the new matrix $H^{\f y} = (h^{\f y}_{ij})$ through $h^{\f y}_{ij} \deq N^{-1/2} y_{ij}$.
In particular, we have
\begin{equation} \label{moment conditions on y}
\E h^{\f y}_{ij} \;=\; 0\,, \qquad \E \abs{h^{\f y}_{ij}}^2 \;=\; \frac{1}{N} \,,\qquad
\E \abs{h_{ij}^{\f y}}^p \;\leq\; \frac{1}{N q^{p - 2}}\,,
\end{equation}
where we set
\begin{equation} \label{definition of q for Wigner}
q \;\deq\; N^{1/2 - \rho}\,.
\end{equation}
Thus, $H^{\f y}$ satisfies Definition \ref{definition of H}.

\subsection{Moment matching}
Next, we construct a third Wigner matrix, $H^{\f z} = (h_{ij}^{\f z})$, whose entries are of the form $h_{ij}^{\f z} = 
N^{-1/2} z_{ij}$. We require that $z_{ij}$ have uniformly subexponential decay, i.e.
\begin{equation} \label{Wigner conditions}
\E z_{ij} \;=\; 0\,, \qquad \E \abs{z_{ij}}^2 \;=\; 1\,, \qquad \P(\abs{z_{ij}} \geq \xi) \;\leq\; \theta^{-1} \me^{- 
\xi^\theta}\,,
\end{equation}
for some $\theta > 0$ independent of $i,$ $j$, and $N$. We choose $z_{ij}$ so as to match the first four moments of 
$y_{ij}$.

\begin{lemma} \label{lemma: four moment matching for Wigner}
Let $y_{ij}$ satisfy \eqref{properties of y} with some $m > 4$. Then there exists a $z_{ij}$ satisfying \eqref{Wigner conditions} such that $\E 
z_{ij}^l = \E y_{ij}^l$ for $l = 1, \dots , 4$.
\end{lemma}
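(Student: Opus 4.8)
## Proof Proposal for Lemma~\ref{lemma: four moment matching for Wigner}

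The plan is to explicitly construct $z_{ij}$ as a random variable supported on a small number of points (in fact, three or four will suffice), chosen so that its first four moments match those of $y_{ij}$, while ensuring that the support is contained in a bounded interval — this gives the uniform subexponential decay \eqref{Wigner conditions} for free (indeed, a compactly supported random variable trivially satisfies the tail bound for any $\theta$). First I would record the moments we must match: write $m_1 = 0$, $m_2 = 1$, $m_3 \deq \E y_{ij}^3$, $m_4 \deq \E y_{ij}^4$, and observe from \eqref{properties of y} and Jensen/H\"older that these quantities are bounded uniformly in $i,j,N$: in particular $\abs{m_3} \leq (\E\abs{y_{ij}}^m)^{3/m} \leq (3C_m)^{3/m} \leq C$ and $m_4 \leq (3C_m)^{4/m} \leq C$ since $m > 4$, and moreover $m_4 \geq m_2^2 = 1$ by Cauchy--Schwarz. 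The key structural constraint, which we will need for solvability, is the elementary inequality $m_4 m_2 - m_3^2 \geq m_2^3$ valid whenever $m_1 = 0$ (this is exactly the inequality already invoked in the proof of Theorem~\ref{theorem: bulk universality}), i.e. $m_4 - m_3^2 \geq 1$.

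Next I would carry out the construction. Following the same device used for $\wt\xi_0$ in the proof of Theorem~\ref{theorem: bulk universality}, I take the law of $z_{ij}$ to be a mixture of point masses at three locations, say $p\delta_a + r\delta_{-b} + (1-p-r)\delta_0$ with $a, b > 0$ and $p, r \geq 0$, $p + r \leq 1$. The constraints $\E z_{ij} = 0$ and $\E z_{ij}^2 = 1$ determine $p = \frac{1}{a(a+b)}$ and $r = \frac{1}{b(a+b)}$, and then $\E z_{ij}^3 = a - b$ and $\E z_{ij}^4 = (a-b)^2 + ab$ — exactly as in \eqref{m in terms of ab} up to the absence of the $N^{-1}$ normalization. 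Matching the third and fourth moments therefore requires $a - b = m_3$ and $ab = m_4 - m_3^2$. Since $m_4 - m_3^2 \geq 1 > 0$, this system has a unique positive solution $a, b$ (they are the two roots of $x^2 - m_3 x - (m_4 - m_3^2) = 0$ in absolute value, with signs arranged so $a - b = m_3$), and since $m_3, m_4$ are bounded, $a$ and $b$ are bounded above by a constant. The remaining requirement $p + r \leq 1$, i.e. $\frac{a+b}{ab(a+b)} = \frac{1}{ab} \leq 1$, amounts to $ab = m_4 - m_3^2 \geq 1$ — which is precisely the inequality we already have. Hence a valid three-point law exists; if $ab$ happens to equal exactly $1$ one gets a genuine three-point law, and if strict inequality holds there is positive mass at $0$. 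Since $a, b$ are bounded by a constant independent of $N$, the support of $z_{ij}$ lies in a fixed compact set, so \eqref{Wigner conditions} holds with (any) $\theta$, say $\theta = 1$, and a suitable constant.

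The one genuine point requiring care — the ``main obstacle,'' such as it is — is the degenerate case: it may happen that $m_3 = 0$ and $m_4 = 1$ simultaneously (for instance if $y_{ij}$ is symmetric Bernoulli $\pm 1$), in which case the construction above degenerates ($a = b = 1$, $p = r = 1/2$, mass $0$ at the origin), which is perfectly fine; but if $m_4 = 1$ and $m_3 \neq 0$ this is impossible by $m_4 - m_3^2 \geq 1$, so that case does not arise. More generally one should check that whenever $m_4 - m_3^2 = 1$ exactly, the three-point solution is still a bona fide probability measure (it is, with no atom at $0$), and whenever $m_4 - m_3^2 > 1$ one genuinely needs the atom at $0$ and everything is consistent. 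A clean way to package all of this uniformly is to instead allow a four-point support $\{a, -b, c, -c\}$ with a symmetric pair absorbing any slack, which removes edge-case bookkeeping entirely; either route works. In all cases $z_{ij}$ is compactly supported with bounds depending only on $C_m$ and $m$, so taking the family $(z_{ij} : i \leq j)$ independent and setting $h_{ij}^{\f z} = N^{-1/2} z_{ij}$ yields a generalized Wigner matrix satisfying \eqref{Wigner conditions} with the required moment matching, which completes the proof.
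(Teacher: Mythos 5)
Your proposal is correct and takes essentially the same approach the paper has in mind: the paper's proof is a one-line pointer to the three-point construction from the proof of Theorem~\ref{theorem: bulk universality} (``We omit further details''), and you have carried out exactly that construction, including the verification that $m_4 - m_3^2 \geq 1$ guarantees $p + r = (ab)^{-1} \leq 1$ and that the resulting support is bounded uniformly in $N$.
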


\begin{proof}
In fact, using an explicit construction similar to the one used in the proof of Theorem \ref{theorem: bulk 
universality}, $z_{ij}$ can be chosen to be supported at only three points.  We omit further details.
%
%
%
\end{proof}

It was proved in \cite{EYYrigidity}, Section 2.1, that the statement of Theorem \ref{theorem: bulk universality for 
Wigner} holds if the entries of $H$ satisfy the subexponential decay condition \eqref{Wigner conditions}.  Theorem 
\ref{theorem: bulk universality for Wigner} will therefore follow if we can prove that, for $b_N$ and $O$ as in Theorem 
\ref{theorem: bulk universality for Wigner}, we have
\begin{multline} \label{bulk statistics}
\lim_{N \to \infty} \int_{E - b_N}^{E + b_N} \frac{\dd E'}{2 b_N} \int \dd \alpha_1 \cdots \dd \alpha_n\, O(\alpha_1, 
\dots, \alpha_n)
\\
{}\times{} \frac{1}{\varrho_{sc}(E)^n}\pb{p_{\f x, N}^{(n)} - p_{\f z, N}^{(n)}} \pbb{E' +
\frac{\alpha_1}{N\varrho_{sc}(E)}, \dots, E' + \frac{\alpha_n}{N\varrho_{sc}(E)}} \;=\; 0\,,
\end{multline}
where $p_{\f x, N}^{(n)}$ and $p_{\f z, N}^{(n)}$ are the $n$-point marginals of the eigenvalue distributions of $H^{\f 
x}$ and $H^{\f z}$, respectively.

Similarly, it was proved in \cite{EYYrigidity}, Section 2.2, that the statement of Theorem \ref{thm: edge Wigner} holds 
if the entries of $H^{\f v}$ and $H^{\f w}$ both satisfy the subexponential decay condition \eqref{Wigner conditions}.  
Thus, Theorem \ref{thm: edge Wigner} will follow if we can prove that
\begin{equation}\label{edge statistics}
\P^{\f x} \pB{N^{2/3}(\lambda_N - 2) \leq s - N^{-\delta}} - N^{- \delta} \;\leq\; \P^{\f z} \pb{N^{2/3}(\lambda_N - 2) 
\leq s}
\;\leq\;
\P^{\f x} \pB{N^{2/3}(\lambda_N - 2) \leq s + N^{-\delta}} + N^{- \delta}\,,
\end{equation}
for some $\delta > 0$. Here we use $\P^{\f x}$ and $\P^{\f z}$ to denote the laws of ensembles $H^{\f x}$ and $H^{\f z}$ 
respectively.

We shall prove both \eqref{bulk statistics} and \eqref{edge statistics} by first comparing $H^{\f x}$ to $H^{\f y}$ and 
then comparing $H^{\f y}$ to $H^{\f z}$. The first step is easy: from \eqref{properties of y} we get
\begin{equation} \label{W error 2}
\P (H^{\f x} \neq H^{\f y}) \;\leq\; 2 C_m N^{2 - \rho m}\,.
\end{equation}
Thus, \eqref{bulk statistics} and \eqref{edge statistics} hold with $\f z$ replaced by $\f y$ provided that
\begin{equation} \label{W error 2 condition}
\rho m \;>\; 2\,.
\end{equation}

\subsection{Comparison of $H^{\f y}$ and $H^{\f z}$ in the bulk}
In this section we prove that \eqref{bulk statistics} holds with $\f x$ replaced by $\f y$, and hence complete the proof 
of Theorem \ref{thm: edge Wigner}.

We compare the local spectral statistics of $H^{\f y}$ and $H^{\f z}$ using the Green function comparison method from 
\cite{EYY}, Section 8. The key additional input is the local semicircle theorem for sparse matrices, Theorem 
\ref{LSCTHM}. We merely sketch the differences to \cite{EYY}. As explained in \cite{EYY}, the $n$-point correlation 
functions $p_N^{(n)}$ can be expressed (up to an error $N^{-c}$) in terms of expectations of observables $F$, whose 
arguments are products of expressions of the form $m(z_i + \ii \eta)$ where $\eta \deq N^{-1 - \epsilon}$. We assume 
that the first five derivatives of $F$ are polynomially bounded, uniformly in $N$. Using the local semicircle law for 
sparse matrices, Theorem \ref{LSCTHM}, we may control the Green function matrix elements down to scales $N^{-1 - 
\epsilon}$, uniformly in $E$.  (Note that in \cite{EYY}, the spectral edge had to be excluded since the bounds derived 
there were unstable near the edge, unlike our bound \eqref{Gij estimate}.) This allows us to compare the local 
eigenvalue statistics of the matrix ensembles at scales $N^{-1 - \epsilon}$, which is sufficiently accurate for both 
Theorems \ref{theorem: bulk universality for Wigner} and \ref{thm: edge Wigner}.

We use the telescopic summation and the Lindeberg replacement argument from \cite{EYY}, Chapter 8, whose notations we 
take over without further comment; see also Section \ref{sect: proof of green fn comp}. A resolvent expansion yields
\begin{equation*}
S \;=\; R - N^{-1/2} RVR + N^{-1} (RV)^2 R - N^{-3/2} (RV)^3 R + N^{-2} (RV)^4 R - N^{-5/2} (RV)^5 S\,.
\end{equation*}
Next, note that, by \eqref{moment conditions on y} and \eqref{Wigner conditions}, both ensembles $H^{\f y}$ and $H^{\f 
z}$ satisfy Definition \ref{definition of H} with $q$ defined in \eqref{definition of q for Wigner}.
Hence we may invoke Theorem \ref{LSCTHM} with $f = 0$, and in particular \eqref{Gij estimate}, for the matrices $H^{\f 
y}$ and $H^{\f z}$. Choosing $\eta = N^{-1 - \epsilon}$ for some $\epsilon > 0$, we therefore get
\begin{equation*}
\abs{R_{ij}(E + \ii \eta)} \;\leq\; N^{2 \epsilon} \,, \qquad \abs{S_{ij}(E + \ii \eta)} \;\leq\; N^{2 \epsilon}
\end{equation*}
\hp{\xi}{\nu}.

Consider now the difference $\E^{\f y} - \E^{\f z}$ applied to some fixed observable $F$ depending on traces (normalized 
by $N^{-1}$) of resolvents and whose derivatives have at most polynomial growth.
Since the first four moments of the entries of $H^{\f y}$ and $H^{\f z}$ coincide by Lemma \ref{lemma: four moment 
matching for Wigner}, the error in one step of the telescopic summation is bounded by the expectation of the rest term 
in the resolvent expansion, i.e.
\begin{equation*}
N^{C \epsilon} N^{-5/2} \E (RV)^5 S \;\leq\; N^{-5/2 + C \epsilon} \max_{a,b} \E \abs{V_{ab}}^5 \;\leq\; N^{-5/2 + C 
\epsilon} C_m N^\rho\,,
\end{equation*}
where in the last step we used \eqref{properties of y}. The first factor $N^{C \epsilon}$ comes from the polynomially 
bounded derivatives of $F$. Summing up all $O(N^2)$ terms of the telescopic sum, we find that the difference $\E^{\f 
y} - \E^{\f z}$ applied to $F$ is bounded by
\begin{equation} \label{W error 1}
N^{-1/2 + C \epsilon + \rho}\,.
\end{equation}
Combining \eqref{W error 1} and \eqref{W error 2}, we find that both \eqref{bulk statistics} follows provided that
\begin{equation} \label{final condition for 4 moments}
-\frac{1}{2} + C \epsilon + \rho \;<\; 0 \,, \qquad \rho m \;>\; 2\,.
\end{equation}
Since $m > 4$ is fixed, choosing first $1/2 - \rho$ small enough and then $\epsilon$ small enough yields \eqref{final 
condition for 4 moments}. This completes the proof of Theorem \ref{theorem: bulk universality for Wigner}.

\subsection{Comparison of $H^{\f y}$ and $H^{\f z}$ at the edge} \label{Hy Hz edge}
In order to prove \eqref{tw for Wigner} under the assumption $m > 12$, we may invoke Proposition \ref{twthm}, which 
implies that \eqref{edge statistics} holds with $\f x$ replaced by $\f y$, provided that $\phi = 1/2 - \rho > 1/3$, 
i.e.\ $\rho < 1/6$. Together with the condition \eqref{W error 2 condition}, this implies that \eqref{tw for Wigner} 
holds if $m > 12$.

\appendix

\section{Regularization of the Dyson Brownian motion} \label{appendix: DBM}

In this appendix we sketch a simple
regularization argument needed to prove two results
concerning the Dyson Brownian motion (DBM). This argument can be used as a substitute for earlier, more involved, proofs
given in Appendices A and B of \cite{ESYY}
on the existence of the dynamics restricted
to the subdomain $\Sigma_N \deq \{ \bx \col x_1<x_2<\cdots<x_N\}$,
and on the applicability of the Bakry-Emery method. The argument presented in this section is also more probabilistic in 
nature than the earlier proofs of \cite{ESYY}.

For applications in Section \ref{section: DBM} of this paper, some minor adjustments to the argument below are needed to 
incorporate the separate treatment
of the largest eigenvalue.  These modifications  are straightforward, and we shall only sketch the argument for the 
standard DBM.

\begin{theorem} \label{theorem: flow2}
Fix $n \geq 1$ and let $\f m = (m_1, \dots, m_n) \in \N^n$ be an
 increasing family of indices. Let $G \col \R^n \to \R$ be a 
continuous function of compact support and set
\begin{align*}
\cal G_{i, \f m}(\f x) \;\deq\; G \pb{N(x_i - x_{i + m_1}), N(x_{i + m_1} - x_{i + m_2}),
 \dots, N(x_{i + m_{n - 1}} - x_{i + m_n})}\,.
\end{align*}
Let $\gamma_1, \dots, \gamma_{N }$ denote the classical locations of the  eigenvalues
and set
\begin{equation}
Q \;\deq\; \sup_{t \in [t_0, \tau]} \sum_{i = 1}^{N } \int (x_i - \gamma_i)^2 f_t \, \dd \mu\, 
\end{equation}
Choose an $\epsilon > 0$.
Then for any $\rho$ satisfying $0 < \rho < 1$, and setting $\tau = N^{-\rho}$, there exists a $\bar \tau \in [\tau/2, 
\tau]$ such that, for any $J \subset \{1, 2, \dots, N - m_n - 1\}$, we have
\begin{align} \label{local ergodicity of DBM 2}
\absbb{\int \frac{1}{\abs{J}} \sum_{i \in J} \cal G_{i, \f m} \, f_{\bar \tau} \, \dd \mu - \int \frac{1}{\abs{J}} 
\sum_{i \in J} \cal G_{i, \f m} \, \dd \mu  } \;\leq\; C N^\epsilon \sqrt{\frac{N Q + 1
}{\abs{J}  \tau }}
\end{align}
for all $N \geq N_0(\rho)$.
Here $\mu=\mu^{(N )}$ is the equilibrium measure of the $N$ eigenvalues of the GOE.
\end{theorem}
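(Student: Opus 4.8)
The plan is to re-run the entire argument of Section \ref{section: DBM} in the simpler setting where no eigenvalue needs special treatment, so that the marginal constructions collapse to the full measure and all the cutoff functions $\theta_1,\theta_2,\theta_3$ disappear. First I would set up the regularized dynamics: rather than working directly on the open simplex $\Sigma_N$, introduce a small regularization of the Dyson Brownian motion — for instance by adding a confining term or by a reflection/absorption argument at the boundary $\{x_i = x_{i+1}\}$ — prove well-posedness and that $\Sigma_N$ is invariant, then remove the regularization. This is the content promised by the appendix title, and it replaces Appendices A and B of \cite{ESYY}; the probabilistic version I have in mind is to observe that the drift $\frac{1}{2N}\sum_{j\neq i}(x_i-x_j)^{-1}$ pushes eigenvalues apart strongly enough that collisions occur with probability zero, which can be made rigorous by an It\^o/comparison argument applied to $\prod_{i<j}(x_j-x_i)$ or to $\log$ of consecutive gaps.

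Once the dynamics is under control, I would introduce the pseudo-equilibrium measure exactly as in the body of the paper but without the largest-eigenvalue modification: set $R \deq \sqrt{\tau N^{-\epsilon}}$, $W(\f x) \deq \sum_{i=1}^N \frac{1}{2R^2}(x_i-\gamma_i)^2$ with $\gamma_i$ now the classical locations of \emph{all} $N$ eigenvalues, and $\omega(\dd\f x) \deq \psi(\f x)\mu(\dd\f x)$ with $\psi \propto \me^{-NW}$. The key analytic input is the logarithmic-convexity bound $\nabla^2(\cal H + W) \geq R^{-2}$, which here is immediate since $\cal H$ is already convex on $\Sigma_N$ and $\nabla^2 W = R^{-2}$; no Brascamp--Lieb integration is needed because we do not integrate out $x_N$. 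This yields the logarithmic Sobolev inequality $S_\omega(q) \leq CR^2 D_\omega(\sqrt q)$ directly. Then I would prove the entropy-dissipation estimate $\partial_t S_\omega(g_t) \leq -D_\omega(\sqrt{g_t}) + CNQR^{-4} + C$, which is the analogue of Proposition \ref{proposition: dtS} but much shorter: the error terms $\cal E_1,\dots,\cal E_4$ coming from $\nabla\theta$ and from the singular factor $\sum_i (x_N-x_i)^{-1}$ in $\nabla\log\omega$ are all absent, so only the term $\frac{1}{N}\int \nabla_{\f x}(\log\psi)\cdot\nabla_{\f x}(\log g_t)\,\theta g_t\,\psi\,\dd\mu$ survives, bounded by $NQ\eta^{-1}R^{-4} + \eta D_\omega(\sqrt{g_t})$. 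Combining with the LSI and integrating from $t_0$ to $\tau$, using $S_\mu(f_{t_0}) \leq N^C$ from Lemma \ref{lemma: initial entropy}, gives $D_\omega(\sqrt{g_{\bar\tau}}) \leq CNR^{-4}Q + C$ and $S_\omega(g_{\bar\tau}) \leq CNR^{-2}Q + CR^2$ for some $\bar\tau \in [\tau/2,\tau]$.

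Finally I would invoke the comparison of observables depending only on eigenvalue differences: the analogue of Proposition \ref{proposition: q omega}, namely $\big|\int \frac{1}{|J|}\sum_{i\in J}\cal G_{i,\f m}\,q\,\dd\omega - \int\frac{1}{|J|}\sum_{i\in J}\cal G_{i,\f m}\,\dd\omega\big| \leq C\sqrt{D_\omega(\sqrt q)\,t/|J|} + C\sqrt{S_\omega(q)}\,\me^{-ct/R^2}$ (Theorem 4.3 of \cite{ESYY}), applied with $q = g_{\bar\tau}$ and $t = \tau$; then the analogue comparing $\omega$ with $\mu$, using the standard GOE rigidity bound $\int \sum_i (x_i-\gamma_i)^2\,\dd\mu \leq CN^{-1+\epsilon'}$ from \cite{EYYrigidity}. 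Plugging in $R^2 = \tau N^{-\epsilon}$, $\tau = N^{-\rho}$, and collecting terms yields the bound $CN^\epsilon\sqrt{(NQ+1)/(|J|\tau)}$, as claimed. The main obstacle is the first step — making the regularization of the DBM on $\Sigma_N$ rigorous and showing it does not affect the short-time statistics — since the rest is a streamlined copy of arguments already carried out in detail in the body of the paper; everything after the LSI is essentially bookkeeping.
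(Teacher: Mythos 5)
Your overall skeleton---pseudo-equilibrium measure $\omega = \psi\mu$ with $\psi\propto \me^{-NW}$, a convexity bound $\nabla^2(\cal H + W)\geq R^{-2}$ giving a log-Sobolev inequality, an entropy-dissipation estimate giving $D_\omega(\sqrt{g_{\bar\tau}}) \lec NR^{-4}Q + 1$ for some $\bar\tau\in[\tau/2,\tau]$, then the gap-observable comparison lemma and the final bookkeeping---is the right one, and you correctly observe that the marginal construction, the cutoffs $\theta_1,\theta_2,\theta_3$, and the Brascamp--Lieb step all disappear because no eigenvalue is being integrated out. But the step you single out as the ``main obstacle,'' namely well-posedness of the DBM on $\Sigma_N$ and invariance of $\Sigma_N$, is not reproved in the paper at all: it is cited from \cite{AGZ}, Lemma 4.3.3. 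Your proposed regularizations (a confining term, or reflection/absorption at $\{x_i=x_{i+1}\}$ plus an It\^o non-collision argument) are aimed at that already-settled probabilistic problem, and moreover they are the wrong tool for what \emph{does} require regularization here.

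What the appendix's regularization is actually for is the analytic side: the Bakry--Emery/LSI step and, above all, the gap-observable comparison Lemma~\ref{411} (the analogue of Proposition~\ref{proposition: q omega}, i.e.\ Theorem 4.3 of \cite{ESYY}) rely on integration by parts on the simplex $\Sigma_N$, and the boundary terms are precisely what made Appendices A and B of \cite{ESYY} ``more involved.'' The paper replaces the singular logarithm by a globally $C^2$, concave $\log_\delta$, thereby extending $\cal H$ and $\omega$ smoothly to all of $\R^N$. On $\R^N$ there is no boundary, so Lemma~\ref{41} is a routine rewrite of \cite{ESYY}; Lemma~\ref{411} for $\omega$ on $\Sigma_N$ is then recovered by sending $\delta\to0$ and approximating the density in $H^1(\omega)$. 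A reflecting or absorbing modification, by contrast, retains the boundary and therefore does not achieve this---you would still be left with exactly the boundary terms you were trying to avoid, so your ``streamlined copy'' of the entropy-dissipation and LSI arguments would not actually be justified on $\Sigma_N$ without further work. The remaining step in the paper's proof---approximating, for each fixed starting point $\bx_0\in\Sigma_N$, the DBM path by the regularized path on $\R^N$ and letting $\delta\to0$ in the observable expectation---is a short stability-of-SDEs argument that is separate from, and easier than, proving well-posedness of the singular DBM, and should not be conflated with it.
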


Define $\mu_\beta(\f x) = C  e^{ - N \beta \cal H(\f x) }$ as in \eqref{GOE measure} and \eqref{GOEH}, but introducing a 
parameter $\beta$ so that $\mu_\beta$ is the equilibrium measure of the usual $\beta$-ensemble which is invariant under 
the ($\beta$-dependent) DBM. We remark that Theorem \ref{theorem: flow2}  holds for all $\beta \ge 1$ with an identical 
proof.  The following lemma holds more generally for  $\beta > 0$.

Let $\om \deq C \mu_\beta e^{ - N \sum_j U_j(x_j)}$, where $U_j$ is a $C^2$-function satisfying
\be
\min_j  U_j''(x) \;\ge\; \tau^{-1}
\ee
for some $\tau < 1$. For the following lemma we recall the definition \eqref{DBMD} of the Dirichlet form.

\begin{lemma} \label{411} Let $\beta>0$ and $q \in H^1(\dd   \omega) $ be a  probability density with respect to
 $\om$. 
 Then for  any $\beta > 0$  and  any $J \subset \{1, 2, \dots, N - m_n - 1\}$ and 
any $t > 0$ we have
\begin{align}\label{443}
\absbb{\int \frac{1}{\abs{J}} \sum_{i \in J} \cal G_{i, \f m} \, q \, \dd   \omega - \int \frac{1}{\abs{J}} \sum_{i 
\in J} \cal G_{i, \f m} \, \dd   \omega} \;\leq\; C \sqrt{\frac{D_{ \omega }(\sqrt{q}) \, t}{\abs{J}}} + C 
\sqrt{S_{ \omega}(q)} \, \me^{- c t / \tau}\,.
\end{align}
\end{lemma}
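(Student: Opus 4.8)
This lemma is a variant of Theorem 4.3 in \cite{ESYY}, now formulated for a general log-concave reference measure $\omega = C\mu_\beta \me^{-N\sum_j U_j(x_j)}$ with $\min_j U_j'' \geq \tau^{-1}$, and the plan is to follow that proof while tracking the one quantitative input that changes, namely the logarithmic Sobolev constant of $\omega$. First I would observe that, since $\mu_\beta$ is log-concave (the Hessian of $\cal H$ is bounded below by the convexity of $x\mapsto x^2/4$ plus the convexity of $-\log|x_i-x_j|$ on $\Sigma_N$) and $\sum_j U_j(x_j)$ is convex with Hessian $\geq \tau^{-1}$, the Bakry--Émery criterion gives that $\omega$ satisfies a logarithmic Sobolev inequality with constant $O(\tau)$:
\begin{align*}
S_\omega(q) \;\leq\; C\tau\, D_\omega(\sqrt q)\,.
\end{align*}
Equivalently, the reversible dynamics generated by the Dirichlet form $D_\omega$ relaxes to equilibrium with rate $\gtrsim \tau^{-1}$ in entropy.

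Next I would run the time-dependent argument of \cite{ESYY}. Introduce the flow $q_s$ solving $\partial_s q_s = \cal L_\omega q_s$ with $q_0 = q$, where $\cal L_\omega$ is the generator associated with $D_\omega$; then $\frac{\dd}{\dd s} S_\omega(q_s) = -4 D_\omega(\sqrt{q_s})$, so integrating over $s \in [0,t]$ controls $\int_0^t D_\omega(\sqrt{q_s})\,\dd s \leq \tfrac14 S_\omega(q)$, while the LSI gives the exponential decay $S_\omega(q_s) \leq \me^{-cs/\tau} S_\omega(q)$. The observable $\cal G_{i,\f m}$ depends only on differences of consecutive (in the ordering $m_1 < \dots < m_n$) eigenvalues on scale $N^{-1}$; using the equations of motion for $x_i$ under the $\omega$-dynamics, one writes $\frac{\dd}{\dd s}\int \frac1{|J|}\sum_{i\in J}\cal G_{i,\f m}\,q_s\,\dd\omega$ as a sum of terms, each of which is estimated by Cauchy--Schwarz against $D_\omega(\sqrt{q_s})^{1/2}$ times a factor coming from $\sum_{i\in J}|\nabla \cal G_{i,\f m}|^2$; the key combinatorial point (as in \cite{ESYY}, and already used in Proposition \ref{proposition: q omega} above) is that $\cal G_{i,\f m}$ and $\cal G_{j,\f m}$ have overlapping supports for only $O(1)$ values of $j$ given $i$, so $\int \sum_{i\in J}|\nabla\cal G_{i,\f m}|^2 q_s\,\dd\omega \leq C N^\epsilon |J|$ after using the a priori bound on $q_s$ in $L^\infty$ or a cutoff. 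Combining, one gets
\begin{align*}
\absBB{\int \frac1{|J|}\sum_{i\in J}\cal G_{i,\f m}\,q_t\,\dd\omega - \int\frac1{|J|}\sum_{i\in J}\cal G_{i,\f m}\,q_0\,\dd\omega} \;\leq\; C\sqrt{\frac{N^\epsilon\, S_\omega(q)\, t}{|J|}}\,,
\end{align*}
and a parallel estimate comparing $q_t\,\dd\omega$ with $\dd\omega$ using the decay $S_\omega(q_t)\leq \me^{-ct/\tau}S_\omega(q)$.

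Finally I would combine the two bounds: the difference $|\int \frac1{|J|}\sum_{i\in J}\cal G_{i,\f m}\,q\,\dd\omega - \int\frac1{|J|}\sum_{i\in J}\cal G_{i,\f m}\,\dd\omega|$ is bounded by the first displayed quantity plus $C\sqrt{S_\omega(q_t)}\,\me^{-ct/\tau}$, and expressing $S_\omega(q) \leq C\tau D_\omega(\sqrt q)$ via the LSI in the first term and keeping $\sqrt{S_\omega(q)}\,\me^{-ct/\tau}$ in the second yields exactly \eqref{443}, where one absorbs the $N^\epsilon$ into the constant after renaming. The main obstacle is the same one as in \cite{ESYY}: justifying the manipulations with the $\omega$-dynamics near the boundary of $\Sigma_N$, i.e.\ that the flow stays in $\Sigma_N$ and that integrations by parts are legitimate despite the logarithmic singularities of $\cal H$; this is precisely the regularization issue addressed by the argument of Theorem \ref{theorem: flow2} and Appendix \ref{appendix: DBM}, which supplies the cutoff $\theta$-type approximation and the limiting procedure, so I would invoke that machinery rather than redo it. Given these inputs, the proof is a line-by-line transcription of the proof of Theorem 4.3 in \cite{ESYY} with $R^2$ replaced by $\tau$.
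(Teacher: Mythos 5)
Your proposal correctly locates the analytic core — the ESYY Theorem 4.3 argument, with the log-Sobolev constant $\tau$ coming from Bakry--\'Emery applied to the $U_j$-part of $\omega$ — but that computation is the content of the companion Lemma~\ref{41} (stated for the mollified measure $\omega^\delta$ on $\R^N$), not of Lemma~\ref{411} itself. The specific job of Lemma~\ref{411} is the regularization-and-limit step that Appendix~\ref{appendix: DBM} introduces precisely to \emph{avoid} running the singular $\omega$-dynamics on $\Sigma_N$: one extends $q$ by zero outside $\Sigma_N$, picks a $C^2$ approximation $q_\epsilon \to q$ in $H^1(\omega)$, renormalizes $q_\epsilon^\delta \deq C_{\epsilon,\delta}\,q_\epsilon$ to be a probability density with respect to $\omega^\delta$, applies Lemma~\ref{41} (where the dynamics lives on all of $\R^N$ with no boundary), and then takes $\delta \to 0$ using the weak convergence of $\omega^\delta$ to the restriction of $\omega$ to $\Sigma_N$, followed by $\epsilon \to 0$ using the $H^1(\omega)$ convergence, to recover $D_\omega(\sqrt q)$ and $S_\omega(q)$ in the bound. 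None of this appears in your sketch.

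Instead, you propose to run the reversible dynamics directly on $\Sigma_N$ and dispatch the boundary and well-posedness questions by ``invoking the machinery of Theorem~\ref{theorem: flow2} and Appendix~\ref{appendix: DBM}'' — but Lemma~\ref{411} \emph{is} that machinery, so the appeal is circular. You also conflate two different regularizations: the $\theta$-type cutoffs belong to Section~\ref{section: DBM}, where $\theta = \theta_1\theta_2\theta_3$ removes the far-out largest eigenvalue for the marginal problem; the regularization relevant here is the $\log_\delta$ mollification of the Vandermonde singularity, which is what extends the dynamics to $\R^N$. Finally, the intermediate $N^\epsilon$ factor in your bound does not appear in, and cannot be absorbed into the $N$-independent constant of, \eqref{443}; the ESYY-type estimate produces the bound without it once the sparse-overlap structure of the $\cal G_{i, \f m}$ is used as in Proposition~\ref{proposition: q omega}, and in any case Lemma~\ref{411} only requires $q \in H^1(\dd\omega)$, not an $L^\infty$ bound on the evolved density, which is another reason the smoothing-and-renormalization step cannot be skipped.
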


Recall that the DBM is defined via the stochastic differential equation
\begin{align} \label{DBMbeta}
\dd x_i \;=\; \frac{\dd B_i}{\sqrt{N}} +
 \beta \pbb{-\frac{1}{4} x_i + \frac{1}{2N} \sum_{j \neq i} \frac{1}{x_i - x_j}} \dd t 
\for i = 1, \dots, N\,,
\end{align}
where $B_1, \dots, B_N$ is a family of independent standard Brownian motions.
It was proved in \cite{AGZ}, Lemma 4.3.3, that there is a unique strong solution to \eqref{DBMbeta} for all $\beta \ge 
1$.

For any $\delta>0$
define the extension $ \mu_\beta^\delta$  of  the measure $\mu_\beta$ from  $\Sigma_N$
 to $\R^N$ by replacing  the singular logarithm with a $C^2$-function. To that end, we introduce the approximation 
parameter $\delta > 0$ and define, as in Section \ref{section: DBM}, for
$\bx \in \R^{N}$,
\begin{align*}
   \cal H_\delta (\f x) \;\deq\; \sum_i \frac{1}{4} x_i^2 - \frac{1}{N} \sum_{i < j} \log_\delta
 (x_j - x_i) \,
\end{align*}
where we set
\begin{align*}
\log_\delta(x) \;\deq\; \ind{x \geq \delta} \log x + \ind{x < \delta} \pbb{\log \delta + \frac{x - \delta}{\delta} -
\frac{1}{2 \delta^2} (x - \delta)^2}\,.
\end{align*}
It is easy to check that $\log_\delta \in C^2(\R)$, is concave, and satisfies
\begin{align*}
\lim_{\delta \to 0} \log_\delta(x) \;=\;
\begin{cases}
\log x &\text{if } x > 0
\\
-\infty &\text{if } x \leq 0\,.
\end{cases}
\end{align*}
Furthermore, we have the lower bound 
\begin{align*}
\partial_x^2 \log_\delta(x) \;\ge\;
\begin{cases}
- \frac 1 {x^2} &\text{if } x > \delta 
\\
-\frac 1 {\delta^2}  &\text{if } x \leq \delta\,.
\end{cases}
\end{align*}
Similarly, we can extend the measure $\omega$ to $\R^N$ by setting $\om^\delta = C e^{ - N \sum_j U_j(x_j)} 
\mu_\beta^\delta$.

\begin{lemma} \label{41}
Let $q \in L^\infty(\dd   \omega^\delta) $ be a $C^2$ probability density. Then for  $\delta \le 1/N$, $\beta > 0$  and  any $J \subset \{1, 2, \dots, N - m_n - 1\}$ and 
any $t > 0$ we have
\begin{align}\label{44}
\absbb{\int \frac{1}{\abs{J}} \sum_{i \in J} \cal G_{i, \f m} \, q \, \dd   \omega^\delta - \int \frac{1}{\abs{J}} \sum_{i 
\in J} \cal G_{i, \f m} \, \dd   \omega^\delta} \;\leq\; C \sqrt{\frac{D_{ \omega^\delta }(\sqrt{q}) \, t}{\abs{J}}} + C 
\sqrt{S_{ \omega^\delta}(q)} \, \me^{- c t / \tau}\,.
\end{align}
\end{lemma}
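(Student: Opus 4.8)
The plan is to prove \eqref{44} by running the reversible diffusion with invariant measure $\omega^\delta$ and interpolating between $q\,\omega^\delta$ and $\omega^\delta$, exactly along the lines of the proof of Theorem~4.3 in \cite{ESYY}; the purpose of the regularization is to make all the analytic input of that argument elementary. Write $\omega^\delta\propto\me^{-\Phi_\delta}$ with $\Phi_\delta(\f x) = N\beta\,\cal H_\delta(\f x) + N\sum_j U_j(x_j)$. Since $\log_\delta$ is concave, the interaction $\f x\mapsto -\tfrac1N\sum_{i<j}\log_\delta(x_j - x_i)$ is convex, and together with the convex confining term $\tfrac14\sum_i x_i^2$ this gives $\nabla^2(N\beta\cal H_\delta)\geq0$, hence $\nabla^2\Phi_\delta\geq N\,\diag(U_j'')\geq N\tau^{-1}$ on all of $\R^N$. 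Thus $\omega^\delta$ is a smooth, uniformly log-concave probability measure on $\R^N$, with no hard walls. Consequently: the generator $L^\delta$ attached to $D_{\omega^\delta}$ (as in \eqref{DBMD}) generates a unique reversible diffusion; the Bakry--Émery criterion yields the logarithmic Sobolev inequality $S_{\omega^\delta}(r)\leq C\tau\,D_{\omega^\delta}(\sqrt r)$ and a spectral gap $\geq c\tau^{-1}$; and, writing $q_s$ for the solution of $\partial_s q_s = L^\delta q_s$ with $q_0 = q$, one has $\partial_s S_{\omega^\delta}(q_s) = -4\,D_{\omega^\delta}(\sqrt{q_s})$, the entropy decay $S_{\omega^\delta}(q_s)\leq\me^{-cs/\tau}S_{\omega^\delta}(q)$, and monotonicity of $s\mapsto D_{\omega^\delta}(\sqrt{q_s})$. (The hypothesis $q\in L^\infty$ ensures $S_{\omega^\delta}(q)<\infty$, and $q\in C^2$ keeps $q_s$ classical, legitimizing the integrations by parts below.)

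Abbreviating $\cal G\deq\abs{J}^{-1}\sum_{i\in J}\cal G_{i,\f m}$, the next step is the split
\[
\int\cal G\,q\,\dd\omega^\delta - \int\cal G\,\dd\omega^\delta \;=\; \pbb{\int\cal G\,(q-q_t)\,\dd\omega^\delta} + \pbb{\int\cal G\,(q_t-1)\,\dd\omega^\delta}\,.
\]
The relaxation term is handled by $\absb{\cal G}\leq\norm{G}_\infty$, the entropy inequality $\norm{q_t-1}_{L^1(\omega^\delta)}^2\leq2\,S_{\omega^\delta}(q_t)$, and the entropy decay, giving $\absb{\int\cal G\,(q_t-1)\,\dd\omega^\delta}\leq C\sqrt{S_{\omega^\delta}(q)}\,\me^{-ct/\tau}$, which is the second term of \eqref{44}. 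For the short-time term, the fundamental theorem of calculus and $\partial_s q_s = L^\delta q_s$ give $\int\cal G\,(q-q_t)\,\dd\omega^\delta = \tfrac1{2N}\int_0^t\!\int\nabla\cal G\cdot\nabla q_s\,\dd\omega^\delta\,\dd s$; Cauchy--Schwarz together with $\int\abs{\nabla q_s}^2 q_s^{-1}\,\dd\omega^\delta = 8N\,D_{\omega^\delta}(\sqrt{q_s})$ reduces everything to a bound on $\int\abs{\nabla\cal G}^2 q_s\,\dd\omega^\delta$. Here the key structural fact is that $\cal G$ is an average of $\abs{J}$ observables, each depending only on the $n+1$ coordinates $x_i,x_{i+m_1},\dots,x_{i+m_n}$: at most $O(\abs{J})$ of the $N$ partial derivatives $\partial_k\cal G$ are nonzero, each of size $O(N/\abs{J})$, whence $\int\abs{\nabla\cal G}^2 q_s\,\dd\omega^\delta\leq C_n N^2/\abs{J}$. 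Feeding this back, together with the monotonicity of $D_{\omega^\delta}(\sqrt{q_s})$, Cauchy--Schwarz in $s$, the identity $\int_0^t D_{\omega^\delta}(\sqrt{q_s})\,\dd s = \tfrac14\pb{S_{\omega^\delta}(q)-S_{\omega^\delta}(q_t)}$, and the logarithmic Sobolev inequality --- precisely the chain of estimates carried out in the proof of Theorem~4.3 of \cite{ESYY} --- yields the first term $C\sqrt{D_{\omega^\delta}(\sqrt q)\,t/\abs{J}}$ of \eqref{44}.

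I do not expect a genuine obstacle in \eqref{44} itself: once $\omega^\delta$ is recognized as a smooth, uniformly log-concave measure on $\R^N$, this is the standard local-relaxation estimate. The effort goes into two bookkeeping points. First, verifying that the replacement of $\log$ by $\log_\delta$ (concave, $C^2$, preserving the global convexity of the interaction) really does make the Bakry--Émery machinery directly applicable, so that the delicate hard-wall constructions of Appendices~A and~B of \cite{ESYY} are bypassed. Second, checking that every constant above is uniform in $\delta$ --- which holds because the curvature bound $N\tau^{-1}$ does not involve $\delta$. That uniformity is exactly what is needed afterwards, in Lemma~\ref{411} and (with the minor modification that isolates the largest eigenvalue) in Theorem~\ref{theorem: flow2}, to pass to the limit $\delta\to0$ in \eqref{44} by dominated convergence, using $q\in L^\infty$ and the pointwise convergence $\omega^\delta\to\omega$ on $\Sigma_N$.
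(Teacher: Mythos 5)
You have the relaxation piece right: after regularization, $\omega^\delta$ is a smooth, uniformly log-concave measure on $\R^N$, Bakry--\'Emery gives the logarithmic Sobolev inequality with constant of order $\tau$, and the second term of \eqref{44} follows from entropy decay and Pinsker. The short-time piece, however, does not close by the route you describe. With $D_{\omega^\delta}(f)=\tfrac{1}{2N}\int\abs{\nabla f}^2\,\dd\omega^\delta$, the bound $\int\abs{\nabla\cal G}^2 q_s\,\dd\omega^\delta\le C_n N^2/\abs J$ together with Cauchy--Schwarz gives
\begin{align*}
\absbb{\int\cal G\,(q-q_t)\,\dd\omega^\delta}\;\le\; C\sqrt{\frac{N}{\abs J}}\int_0^t D_{\omega^\delta}(\sqrt{q_s})^{1/2}\,\dd s\,,
\end{align*}
and no combination of the tools you then list (monotonicity of $D$ in $s$, Cauchy--Schwarz in $s$, $\int_0^t D\,\dd s=\tfrac14(S_0-S_t)$, the LSI) produces anything better than $C\sqrt{N\tau\,t\,D_{\omega^\delta}(\sqrt q)/\abs J}$. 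Compared to the claimed bound this carries a spurious factor $\sqrt{N\tau}$, which diverges exactly in the regime $\tau=N^{-\rho}$, $\rho<1$, needed in Theorem~\ref{theorem: flow2}; the naive estimate only proves the lemma at the trivial time scale $\tau\lesssim N^{-1}$.

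The step buried in your phrase ``precisely the chain of estimates carried out in the proof of Theorem~4.3 of \cite{ESYY}'' is the nontrivial one, and it is also where the hypothesis $\delta\le1/N$ enters --- a hypothesis your sketch never invokes, which is a sign that something is missing. In that proof one does not bound $\abs{\nabla\cal G}^2$ in $L^\infty$. Because $G$ has compact support, $\partial_l G\bigl(N(x_{i+m_{l-1}}-x_{i+m_l}),\dots\bigr)\ne0$ forces $\abs{x_{i+m_{l-1}}-x_{i+m_l}}\le C/N$, and on that event the dangerous factor $N^2\abs{\partial_l G}^2$ is dominated by $C(x_{i+m_{l-1}}-x_{i+m_l})^{-2}$, i.e.\ by the off-diagonal curvature $-\partial_x^2\log(x_{i+m_l}-x_{i+m_{l-1}})$ of the interaction. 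The time integral of the resulting interaction-weighted Dirichlet form is then controlled by the Bakry--\'Emery dissipation identity, and this is what removes the extra $\sqrt{N\tau}$. For the regularized Hamiltonian this requires $-\partial_x^2\log_\delta(x)\ge cN^2$ for $0<x\le C/N$; since $-\partial_x^2\log_\delta(x)=\delta^{-2}$ for $x\le\delta$, one needs $\delta\le C/N$. So your second ``bookkeeping point'' is not right: uniformity in $\delta$ of the curvature lower bound $N\tau^{-1}$ is not the issue --- the proof also needs the $\delta$-dependent lower bound $-\partial_x^2\log_\delta\ge cN^2$ at the scale $x\lesssim1/N$ where the observable lives, and that is exactly what $\delta\le1/N$ guarantees. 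A correct proof of \eqref{44} must carry out this argument, not only the log-concavity bookkeeping.
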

\begin{proof}
The proof of Theorem 4.3 in \cite{ESYY} applies with merely cosmetic changes; now however the dynamics is defined on 
$\R^N$ instead of $\Sigma_N$, so that complications arising from the boundary are absent. The condition $\delta \le 1/N$ 
is needed since we use the singularity of $\partial_x^2 \log x $ to generate a factor $1/N^2$ in the regime $ x \le C 
/N$ in the proof.
\end{proof}

\begin{proof}[Proof of Lemma~\ref{411}]
Suppose that  $q$ is a probability density in $\Sigma_N$ with respect to $\om$.
We extend $q$ to be zero outside $\Sigma$ and let $q_\e\in C^2$ be any regularization
 of $q$ on $\R^N$ that converges to $q$ in $H^1(\om)$.
 Then there is a constant $C_{\e, \delta}$ such that $ q_\e^\delta \deq C_{\e, \delta} \, q_\e $ is a probability 
density with respect to $\om_\delta$.  Thus \eqref{44} holds with $q$ replaced by $q_\e^\delta$.  Taking the limit 
$\delta \to 0$ and then $\e \to 0$, we have
\begin{multline}\label{441}
\absBB{\int \frac{1}{\abs{J}} \sum_{i \in J} \cal G_{i, \f m} \, q \, \dd   \omega  - \int \frac{1}{\abs{J}} \sum_{i \in 
J} \cal G_{i, \f m} \, \dd   \omega}
\\
\leq\; C  \lim_{\e \to 0}  \lim_{\delta \to 0} \sqrt{\frac{D_{ \omega^\delta }(\sqrt{q_\e^\delta}) \, t}{\abs{J}}} + C 
\lim_{\e \to 0}  \lim_{\delta \to 0}  \sqrt{S_{ \omega^\delta}(q^\delta_\e)} \, \me^{- c t / \tau}\,.
\end{multline}
Notice that $\om_\delta \to {\bf 1}(\Sigma_N) \om$ weakly as $\delta \to 0$. Thus
\[
\lim_{\e \to 0}  \lim_{\delta \to 0}  D_{ \omega^\delta }(\sqrt{q_\e^\delta}) \;=\; \lim_{\e \to 0}  D_{ \omega 
}(\sqrt{q_\e})  \;=\;  D_{ \omega }(\sqrt{q})
\]
provided that $q \in H^1(\omega)$. This proves Lemma \ref{411}.  Notice that the proof did  not use the existence of 
DBM; instead, it used the existence of the regularized DBM.
\end{proof}

\begin{proof}[Proof of Theorem \ref{theorem: flow2}]
Write
\[
\int \frac{1}{\abs{J}} \sum_{i \in J} \cal G_{i, \f m} \, f_{t} \, \dd \mu
\;=\;  \E^ {f_{0}   \mu}  \E^{\bx_0} \frac{1}{\abs{J}} \sum_{i \in J} \cal G_{i, \f m} (\bx(t) )\,.
\]
Here $\E^{\bx_0}$ denotes expectation with respect to the law
of the DBM $(\f x(t))_t$ starting from $\bx_0$, and $\E^{f_0 \mu}$ denotes expectation of $\f x_0$ with respect to the 
measure $f_{0}   \mu$. Let $\E_\delta$ denote expectation with respect to the regularized DBM. Then we have
\[
  \E^{\bx_0} \frac{1}{\abs{J}} \sum_{i \in J} \cal G_{i, \f m} (\bx(t) )
  \;=\; \lim_{\delta \to 0}   \E^{\bx_0}_\delta  \frac{1}{\abs{J}}
 \sum_{i \in J} \cal G_{i, \f m} (\bx(t) ),
\]
where we have used the existence of a strong solution to the DBM (see \cite{AGZ}, Lemma 4.3.3)
and that the dynamics remains in $\Sigma_N$ almost surely. Hence \[
\int \frac{1}{\abs{J}} \sum_{i \in J} \cal G_{i, \f m} \, f_{t} \, \dd \mu
\;=\;  \lim_{\delta \to 0}  \int \frac{1}{\abs{J}}
 \sum_{i \in J} \cal G_{i, \f m} \, f_{t}^\delta \, \dd \mu^\delta,
\]
where $f_{t}^\delta $ is the solution to the regularized DBM at the time $t$ with
 initial data $f_0 \mu/\mu^\delta$. Using that \eqref{local ergodicity of DBM 2} holds for the regularized dynamics, and 
taking the limit $\delta \to 0$, we complete the proof.
\end{proof}

\thebibliography{hhhhhh}





\bibitem{AGZ}  Anderson, G., Guionnet, A., Zeitouni, O.:  An Introduction
to Random Matrices. Studies in advanced mathematics, {\bf 118}, Cambridge
University Press, 2009.


\bibitem{AK}
Aptekarev, A., Khabibullin, R.: Asymptotic expansions for polynomials orthogonal with respect to a complex non-constant weight function, {\it Trans. Moscow Math. Soc.} {\bf 68}, 1--37 (2007).

\bibitem{ABAP} Auffinger, A., Ben Arous, G.,
 P\'ech\'e, S.: Poisson Convergence for the largest eigenvalues of
heavy-tailed matrices.
{\it  Ann. Inst. Henri Poincar\'e Probab. Stat.}
{\bf 45},  no. 3, 589--610 (2009). 


%
%

\bibitem{BBP} Biroli, G., Bouchaud, J.-P., Potters, M.: On the top eigenvalue of heavy-tailed random matrices. {\it 
Europhysics Lett.} {\bf 78}, 10001 (2007).

\bibitem{BI} Bleher, P.,  Its, A.: Semiclassical asymptotics of 
orthogonal polynomials, Riemann-Hilbert problem, and universality
 in the matrix model. {\it Ann. of Math.} {\bf 150}, 185--266 (1999).

\bibitem{BrascampLieb} Brascamp, H.\ J.\ and Lieb, E.\ H.: On extensions of the Brunn-Minkowski and
Pr\'ekopa-Leindler theorems, including inequalities for log concave functions, and with an application to the diffusion
equation. {\it J.\ Funct.\ Anal.} {\bf 22}, 366--389 (1976).


%


\bibitem{DKMVZ1} Deift, P., Kriecherbauer, T., McLaughlin, K.T-R,
 Venakides, S., Zhou, X.: Uniform asymptotics for polynomials 
orthogonal with respect to varying exponential weights and applications
 to universality questions in random matrix theory.  {\it  Comm. Pure Appl. Math.} {\bf 52}, 1335--1425 (1999).

\bibitem{DKMVZ2} Deift, P., Kriecherbauer, T., McLaughlin, K.T-R,
 Venakides, S., Zhou, X.: Strong asymptotics of orthogonal polynomials with respect to exponential weights.  {\it  
Comm.  Pure Appl. Math.} {\bf 52}, 1491--1552 (1999).



\bibitem{Dy} Dyson, F.J.: A Brownian-motion model for the eigenvalues
of a random matrix. {\it J. Math. Phys.} {\bf 3}, 1191--1198 (1962).

\bibitem{Enotes} Erd{\H o}s, L.: Universality of Wigner random matrices: a Survey of Recent Results (lecture notes).
Preprint arXiv:1004.0861.

\bibitem{EKYY} Erd{\H o}s, L., Knowles, A., Yau, H.-T., Yin, J.:
Spectral statistics of sparse random matrices I: local semicircle law. Preprint arXiv:1103.1919, to appear in {\it Ann.\ Prob.}

\bibitem{ESY1} Erd{\H o}s, L., Schlein, B., Yau, H.-T.:
Semicircle law on short scales and delocalization
of eigenvectors for Wigner random matrices.
{\it Ann. Probab.} {\bf 37}, No. 3, 815--852 (2009).

\bibitem{ESY2} Erd{\H o}s, L., Schlein, B., Yau, H.-T.:
Local semicircle law  and complete delocalization
for Wigner random matrices. {\it Commun.
Math. Phys.} {\bf 287}, 641--655 (2009).

\bibitem{ESY3} Erd{\H o}s, L., Schlein, B., Yau, H.-T.:
Wegner estimate and level repulsion for Wigner random matrices.
{\it Int. Math. Res. Notices.} {\bf 2010}, No. 3, 436-479 (2010).

\bibitem{ESY4} Erd{\H o}s, L., Schlein, B., Yau, H.-T.: Universality
of random matrices and local relaxation flow. Preprint arXiv:0907.5605.

%


%


\bibitem{ESYY} Erd{\H o}s, L., Schlein, B., Yau, H.-T., Yin, J.:
The local relaxation flow approach to universality of the local
statistics for random matrices. To appear in \emph{\it Ann. Inst. H. Poincar\'e Probab. Statist}.
Preprint arXiv:0911.3687.

\bibitem{EYY} Erd{\H o}s, L.,  Yau, H.-T., Yin, J.: 
Bulk universality for generalized Wigner matrices. 
 Preprint arXiv:1001.3453.

\bibitem{EYY2}  Erd{\H o}s, L.,  Yau, H.-T., Yin, J.: 
Universality for generalized Wigner matrices with Bernoulli
distribution. To appear in \emph{J.\ Combinatorics}.
Preprint arXiv:1003.3813.

\bibitem{EYYrigidity}  Erd{\H o}s, L.,  Yau, H.-T., Yin, J.: Rigidity of eigenvalues of generalized Wigner matrices.  
Preprint arXiv:1007.4652.

\bibitem{ER1} Erd{\H o}s, P.; R\'enyi, A.: On random graphs. I. {\it Publicationes Mathematicae} {\bf 6}, 290--297  
(1959).

\bibitem{ER2} Erd{\H o}s, P.; R\'enyi, A.: The evolution of random graphs.  {\it Magyar Tud. Akad. Mat. Kutat\'o Int.  
K\"ozl.} {\bf 5}: 17--61 (1960).




%


\bibitem{J} Johansson, K.: Universality of the local spacing
distribution in certain ensembles of Hermitian Wigner matrices.
{\it Comm. Math. Phys.} {\bf 215}, no. 3., 683--705 (2001).

\bibitem{J2} Johansson, K.: Universality for certain Hermitian Wigner matrices under weak moment conditions.
{\it Ann. Inst. H. Poincar\'e Probab. Statist.} {\bf 48}, 47--79 (2012).



\bibitem{Kho}
Khorunzhi, O.: High moments of large Wigner random matrices and asymptotic properties of the spectral norm. To appear in {\it Rand. Op. Stoch. Eqs}.

\bibitem{KY} Knowles, A., Yin, J.: Eigenvector distribution of Wigner matrices. Preprint arXiv:1102.0057, to appear in {\it Prob.\ Theor.\ Rel.\ Fields.}


\bibitem{MNS} Miller, S.\ J., Novikoff, T., Sabelli, A.: The distribution of the largest nontrivial eigenvalues in 
families of random regular graphs. \emph{Exper. Math.} {\bf 17}, 231--244 (2008).



\bibitem{PS2} Pastur, L., Shcherbina M.:
Universality of the local eigenvalue statistics for a class of unitary invariant random matrix ensembles.
{\it J. Stat. Phys.} {\bf 86}, 109--147 (1997).

\bibitem{PS} Pastur, L., Shcherbina M.:
Bulk universality and related properties of Hermitian matrix models.
{\it J. Stat. Phys.} {\bf 130}, 205--250  (2008).




\bibitem{Ruz}
Ruzmaikina, A.: Universality of the edge distribution of eigenvalues of Wigner random 
matrices with polynomially decaying distributions of entries,
 {\it Comm. Math. Phys.} {\bf 261}, no. 2, 277--296 (2006).

\bibitem{Sa} Sarnak, P.: Private communication.

\bibitem{SS} Sinai, Y. and Soshnikov, A.: 
A refinement of Wigner's semicircle law in a neighborhood of the spectrum edge.
{\it Functional Anal. and Appl.} {\bf 32}, no. 2, 114--131 (1998).


\bibitem{So2} Sodin, S.: The Tracy--Widom law for some sparse random matrices. Preprint
arXiv:0903.4295.

\bibitem{Sosh} Soshnikov, A.: Universality at the edge of the spectrum in
Wigner random matrices. {\it  Comm. Math. Phys.} {\bf 207}, no. 3, 697--733 (1999).



\bibitem{TV} Tao, T. and Vu, V.: Random matrices: Universality of the 
local eigenvalue statistics, to appear in {\it Acta Math.}, Preprint arXiv:0906.0510. 

\bibitem{TV2} Tao, T. and Vu, V.: Random matrices: Universality of local eigenvalue statistics up to the edge. Preprint 
arXiv:0908.1982.


%

\bibitem{TW}  Tracy, C., Widom, H.: Level-Spacing Distributions and the Airy Kernel,
{\it Comm. Math. Phys.} {\bf 159}, 151--174 (1994).

\bibitem{TW2} Tracy, C., Widom, H.: On orthogonal and symplectic matrix ensembles,
{\it Comm. Math. Phys.} {\bf 177}, no. 3, 727--754 (1996).


%

%

\end{document}